\documentclass[11pt,reqno]{amsart}
%%% PACKAGES %%%%
\usepackage[utf8]{inputenc}  
\usepackage[T1]{fontenc}
\usepackage{graphicx}
\usepackage{cite}
\usepackage{fixltx2e}
\usepackage{newunicodechar}

%%% FOR READING ON SCREEN %%% 
%%% HFBRIGHT FONTS NEEDED FOR NICE RENDERING %%%
%\usepackage{type1ec} %Make Type1 fonts scalable
%\usepackage{cmbright}
\usepackage{color}
\definecolor{MyLinkColor}{rgb}{0,0,0.4}
\numberwithin{equation}{section}

%%% COMMANDS %%%

\newcommand{\im}{\mathop{\rm Im}\nolimits}

\newcommand{\re}{\mathop{\rm Re}\nolimits}

\newcommand{\sign}{\mathop{\rm sign}\nolimits}
\newcommand{\PV}{\mathop{\rm PV}\nolimits}

\newcommand{\0}{\Omega}
\newcommand{\e}{\varepsilon}

\newcommand{\p}{\partial}

\newcommand{\ov}{\overline}
\newcommand{\wh}{\widehat}
\newcommand{\G}{\Gamma}

\newcommand{\bA}{\mathbb{A}}

\newcommand{\bB}{\mathbb{B}}

\newcommand{\cO}{\mathcal{O}}

\newcommand{\kH}{\mathcal{H}}

\newcommand{\kL}{\mathcal{L}}

\newcommand{\C}{\mathbb{C}}

\newcommand{\E}{\mathbb{E}}
\newcommand{\R}{\mathbb{R}}
\newcommand{\s}{\mathbb S}
\newcommand{\N}{\mathbb{N}}
\newcommand{\Z}{\mathbb{Z}}

\DeclareMathOperator{\supp}{supp}
 
\newtheorem{thm}{Theorem}[section]
\newtheorem{prop}[thm]{Proposition}
\newtheorem{lemma}[thm]{Lemma}

\newtheorem{rem}[thm]{Remark}

\textwidth16.5cm
\oddsidemargin0.025cm
\evensidemargin0.025cm
\textheight21cm

\numberwithin{equation}{section}

\title[On some periodic Muskat problems]{Well-posedness and stability results for some  periodic Muskat problems}

\author[Bogdan--Vasile Matioc]{Bogdan--Vasile Matioc}
\address{Fakult\"at f\"ur Mathematik, Universit\"at Regensburg,   93053 Regensburg, Deutschland.}
\email{bogdan.matioc@ur.de}

%%% NEED TO ADD MSC %%%
\subjclass[2010]{35B35; 35B65; 35K55;  35Q35; 42B20}
\keywords{Muskat problem; Singular integral; Well-posedness; Parabolic smoothing; Stability}

%%% HYPERREF (MUST END PREAMBLE) %%% we pointMath. (2)}, 173(1):477-542, 2011
%\usepackage[colorlinks=true]{hyperref} %pdftex
\usepackage[colorlinks=true,linkcolor=MyLinkColor,citecolor=MyLinkColor]{hyperref} %dvips

\begin{document}

\begin{abstract}
We study the two-dimensional Muskat problem  in a horizontally periodic setting and for fluids with arbitrary densities and viscosities.
We show that in the presence of surface tension effects the Muskat problem is a quasilinear parabolic problem which is well-posed 
in the Sobolev space $H^r(\mathbb{S})$ for each $r\in(2,3)$.
When neglecting surface tension effects, the Muskat problem is a fully nonlinear evolution equation and of parabolic type in the regime where the Rayleigh-Taylor condition is satisfied.
We then establish the well-posedness of the Muskat problem   in the open subset of $H^2(\mathbb{S})$ defined by the  Rayleigh-Taylor condition. 
Besides, we identify  all equilibrium solutions and study the stability properties of trivial and of small finger-shaped equilibria.
Also other qualitative properties of solutions such as parabolic smoothing, blow-up behavior, and criteria for global existence are outlined. 
\end{abstract}

\maketitle
\tableofcontents

 %%%%%%%%%%%%%%%%%%%%%%%%%%%%%%%%%%%%%%%%%%%%%%%%%%%%%%%%%%%%%%%%%%%
%%%%%%%%%%%%%%%%%%%%%%%%%%%%%%%%%%%%%%%%%%%%%%%%%%%%%%%%%%%%%%%%%%%%
%%%%%%%%%%%%%%%%%%%%%%%%%%%%%%%%%%%%%%%%%%%%%%%%%%%%%%%%%%%%%%%%%%%%
%%%%%%%%%%%%%%%%%%%%%%%%%%%%%%%%%%%%%%%%%%%%%%%%%%%%%%%%%%%%%%%%%%%
%%%%%%%%%%%%%%%%%%%%%%%%%%%%%%%%%%%%%%%%%%%%%%%%%%%%%%%%%%%%%%%%%%%%
%%%%%%%%%%%%%%%%%%%%%%%%%%%%%%%%%%%%%%%%%%%%%%%%%%%%%%%%%%%%%%%%%%%%

 %%%%%%%%%%%%%%%%%%%%%%%%%%%%%%%%%%%%%%%%%%%%%%%%%%%%%%%%%%%%%%%%%%%
%%%%%%%%%%%%%%%%%%%%%%%%%%%%%%%%%%%%%%%%%%%%%%%%%%%%%%%%%%%%%%%%%%%%
%%%%%%%%%%%%%%%%%%%%%%%%%%%%%%%%%%%%%%%%%%%%%%%%%%%%%%%%%%%%%%%%%%%%
%%%%%%%%%%%%%%%%%%%%%%%%%%%%%%%%%%%%%%%%%%%%%%%%%%%%%%%%%%%%%%%%%%%
%%%%%%%%%%%%%%%%%%%%%%%%%%%%%%%%%%%%%%%%%%%%%%%%%%%%%%%%%%%%%%%%%%%%
%%%%%%%%%%%%%%%%%%%%%%%%%%%%%%%%%%%%%%%%%%%%%%%%%%%%%%%%%%%%%%%%%%%%
\section{Introduction and the main results}\label{Sec:1}
 %%%%%%%%%%%%%%%%%%%%%%%%%%%%%%%%%%%%%%%%%%%%%%%%%%%%%%%%%%%%%%%%%%%
%%%%%%%%%%%%%%%%%%%%%%%%%%%%%%%%%%%%%%%%%%%%%%%%%%%%%%%%%%%%%%%%%%%%
%%%%%%%%%%%%%%%%%%%%%%%%%%%%%%%%%%%%%%%%%%%%%%%%%%%%%%%%%%%%%%%%%%%%
%%%%%%%%%%%%%%%%%%%%%%%%%%%%%%%%%%%%%%%%%%%%%%%%%%%%%%%%%%%%%%%%%%%
%%%%%%%%%%%%%%%%%%%%%%%%%%%%%%%%%%%%%%%%%%%%%%%%%%%%%%%%%%%%%%%%%%%%
%%%%%%%%%%%%%%%%%%%%%%%%%%%%%%%%%%%%%%%%%%%%%%%%%%%%%%%%%%%%%%%%%%%%

In this paper we study the coupled system of equations
\begin{subequations}\label{P}
\begin{equation}\label{EP}
\left\{ 
\begin{array}{rllll}
 \displaystyle\p_tf(t,x)\!\!\!\!&=&\!\!\!\!  \displaystyle\frac{1}{4\pi} \PV\int_{-\pi}^{\pi}\frac{f'(t,x) (1+t_{[s]}^2)(T_{[x,s]}f(t))+t_{[s]}[1-(T_{[x,s]}f(t))^2] }{t_{[s]}^2+(T_{[x,s]}f(t))^2 }\overline\omega(t,x-s)\, ds,\\[2ex]
\overline\omega(t,x)\!\!\!\!&=&\!\!\!\! \displaystyle\frac{2k}{\mu_-+\mu_+}(\sigma\kappa(f(t))-\Theta f(t))'(x)\\[2ex]
&&\!\!\!\!-\displaystyle \frac{a_\mu}{2\pi } \PV\int_{-\pi}^{\pi}\frac{f'(t,x)t_{[s]}[1-(T_{[x,s]}f(t))^2]- (1+t_{[s]}^2)T_{[x,s]}f(t)}{t_{[s]}^2+(T_{[x,s]}f(t))^2 }\overline\omega(t,x-s)\, ds
\end{array}\right.
\end{equation}
 for $t> 0 $\footnote{When $\sigma=0$ we require that the equations \eqref{EP} are satisfied also at $t=0$.} and $ x\in\R,$ which is supplemented by the initial condition 
 \begin{equation}\label{ICP}
 f(0)=f_0.
\end{equation}
\end{subequations}
The   evolution problem \eqref{P} describes the motion
 of the  boundary $[y=f(t,x)+tV]$ separating two  immiscible fluid layers with unbounded heights
 located   in a 
homogeneous porous medium with permeability 
$k\in(0,\infty)$ or in a vertical/horizontal Hele-Shaw cell.
It is assumed that the fluid system moves with  constant velocity $(0,V)$, $V\in\R$, that the motion is periodic with respect to the horizontal variable $x$ (with period $2\pi$), and  that the fluid  velocities are
asymptotically equal  to $(0,V)$ far away from the interface.
The unknowns of the evolution problem \eqref{P} are  the functions  $(f,\overline\omega)=(f,\overline\omega)(t,x)$.
We denote by $\s:=\R/2\pi\Z$ the unit circle, functions that depend on $x\in\s $ being $2\pi$-periodic with respect to the real variable $x$. 
To be concise, we have set
\[
\delta_{[x,s]}f:=f(x)-f(x-s),\qquad T_{[x,s]} f=\tanh\Big(\frac{\delta_{[x,s]}f}{2}\Big), \qquad t_{[s]}=\tan\Big(\frac{s}{2}\Big),
\]
and $(\,\cdot\,)'$ denotes  the spatial derivative  $\p_x.$ 
We further   denote by $g$ the Earth's gravity, $\sigma\in[0,\infty)$ is the surface tension coefficient, 
$\kappa(f(t))$ is the curvature of the free boundary $[y=f(t,x)+tV]$, while
$\mu_\pm$  and $\rho_\pm$  are  the viscosity and the density, respectively,  of the fluid $\pm$ which occupies the unbounded periodic strip
\[ \0_\pm^V(t):=\{(x,y)\in\R^2\,:\,\pm(f(t,x)+tV-y)<0\}.\]
Moreover,  the real constant  $\Theta$ and  the Atwood number $a_\mu$   that appear  in $\eqref{EP}_2$ are defined by
\begin{equation*}
\Theta:=g(\rho_--\rho_+)+\frac{\mu_--\mu_+}{k}V,\qquad a_\mu:=\frac{\mu_--\mu_+}{\mu_-+\mu_+}.
\end{equation*}
The integrals in \eqref{EP} are singular at $s=0$ and  $\PV$ denotes the Cauchy principle value.
In this paper we consider a general setting where
\[
\mu_--\mu_+,\, \rho_--\rho_+\in\R.
\]
The observation that  $|a_\mu|<1$ is crucial for our analysis.
This property enables us to  prove, for suitable  $f(t)$, that  the equation $\eqref{EP}_2$  
has a unique solution $\overline\omega(t)$ (which depends in an intricate way on $f(t)$, see Sections \ref{Sec:4} and \ref{Sec:5}).
Therefore we shall only refer to   $f$ as being the solution to \eqref{P}.   

The Muskat problem, in the classical formulation \eqref{PB*}, dates back to M. Muskat's paper \cite{Mu34} from 1934.
However, many of the mathematical studies on this topic are quite recent and they cover various physical scenarios and
mathematical aspects related to the original model proposed in \cite{Mu34},
cf. \cite{Y96, A04, BCG14, BCS16, BS16x, CGSV17, CCG11, CCG13b, CG07, CG10, CCFG13, CGO14, CGCSS14x, SCH04, BV14, 
EMM12a, M17x, M16x, EMW18, M16x, PS17, CCGS13, GB14,   CCFGL12, GG14, CGFL11, 
GS14,   PS16, PS16x, A14, FT03, HTY97, CL18x, DLL17, To17} (see also \cite{PS16xx, PSW18x} for some recent 
research on the  compressible analogue of the Muskat problem, the so-called Verigin problem).

Below we  discuss only the literature pertaining to \eqref{P}  and its nonperiodic counterpart. 
In the presence of surface tension effects, that is for $\sigma>0$,   \eqref{P}  has  been studied previously only in \cite{A14} 
where the author proved well-posedness of the problem in $H^r$ 
(with $r\geq 6$) in the more general setting of interfaces which are parameterized by curves, and the zero surface
tension limit of the problem has been also considered there.   
 The nonperiodic counterpart to \eqref{P} has been investigated in  \cite{M17x} where it was shown that the problem 
 is well-posed in $H^r(\s)$ for each $r\in(2,3)$ 
 by exploiting the fact that the problem is   quasilinear parabolic   together with the abstract 
 theory outlined in  \cite{Am93, Am95} for such problems.
 Additionally, it was shown in \cite{M17x} that the problem exhibits the effect of parabolic smoothing 
 and   criteria for global existence of solutions were found.
 We shown herein that the results in the nonperiodic framework \cite{M17x} hold also for \eqref{P}. 
 Besides, this paper  provides the full  picture of the set of equilibrium solutions to \eqref{P} -- which are 
 described by either flat of finger-shaped interfaces (similarly as in the bounded periodic case 
 \cite{EMM12a}) --  and  the stability properties of the flat equilibria and of small finger-shaped equilibria are studied in the  phase space $H^r(\s)$.
 For the latter purpose we use   a   quasilinear   principle of linearized stability
 derived recently in \cite{MW18x}.

 The first main result of this paper is the following theorem establishing the well-posedness of the Muskat problem with surface tension
  in the setting of classical solutions and for general initial data together with other qualitative properties of the solutions.
  \begin{thm} \label{MT:1}
Let  $\sigma>0$ and  $r\in(2,3) $  be given.  Then, the following hold:
 \begin{itemize}
 \item[$(i)$] {\em (Well-posedness in $H^r(\s)$)} The problem \eqref{P} possesses for each   $f_0\in H^r(\s)$ a unique maximal solution 
\[ \qquad f:=f(\cdot; f_0)\in {\rm C}([0,T_+(f_0)),H^r(\s))\cap {\rm C}((0,T_+(f_0)), H^3(\s))\cap {\rm C}^1((0,T_+(f_0)), L_2(\s)),\]
with $T_+(f_0)\in(0,\infty]$, and $[(t,f_0)\mapsto f(t;f_0)]$ defines a  semiflow on $H^r(\s)$. \\[-2ex]
  \item[$(ii)$] {\em (Global existence/blow-up criterion)} If   
\[
\sup_{[0,T_+(f_0))\cap[0,T]}\|f(t;f_0)\|_{H^r}<\infty \qquad\text{for all $T>0$,}
\]
then  $T_+(f_0)=\infty$.\\[-2ex]
  \item[$(iii)$] {\em (Parabolic smoothing)}
  The mapping  $[(t,x)\mapsto f(t,x)]:(0,T_+(f_0))\times\R\to\R$ is real-analytic. In particular, $f(t)$
  is a real-analytic function for all $t\in (0,T_+(f_0))$.
 \end{itemize}
\end{thm}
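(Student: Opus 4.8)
The plan is to recast \eqref{P} with $\sigma>0$ as an abstract quasilinear parabolic evolution equation of the form $\dot f = \Phi(f)[f] + \Psi(f)$ on a suitable pair of interpolation spaces, and then invoke the abstract theory of Amann for such problems (as referenced in the excerpt via \cite{Am93, Am95}) together with the quasilinear parameter-trick for parabolic smoothing. First I would analyze the equation $\eqref{EP}_2$ for the vorticity-type unknown $\overline\omega$: since $|a_\mu|<1$, for $f\in H^r(\s)$ with $r\in(2,3)$ one shows that the operator $\mathbb{B}(f)$ appearing on the right-hand side of $\eqref{EP}_2$ is a contraction-type perturbation of the identity on the appropriate function space, so that $\overline\omega(f)$ is well-defined, and the map $f\mapsto \overline\omega(f)$ is smooth (real-analytic) with good mapping properties $H^r(\s)\to H^{r-1}(\s)$. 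Substituting this back into $\eqref{EP}_1$, the highest-order contribution comes from the surface-tension term $\sigma\kappa(f)'=\sigma\bigl(f'/(1+f'^2)^{1/2}\bigr)''$, which is quasilinear of order three, with the leading coefficient $a(f):=\sigma/(1+f'^2)^{3/2}$ strictly positive; the remaining terms (including the singular integral operators and the $\Theta$-term) are of lower order. The outcome of this step is a representation $\p_t f = -a(f)\,\p_x^3 f + (\text{lower order, }f\text{-dependent})$, i.e. $\p_t f = \Phi(f)[f]$ with $\Phi(f)\in\mathcal{L}(H^3(\s),L_2(\s))$ depending real-analytically (in particular smoothly) on $f\in H^r(\s)$.

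Second I would verify that $\Phi(f)$ generates an analytic semigroup on $L_2(\s)$ with domain $H^3(\s)$, uniformly for $f$ in bounded subsets of $H^r(\s)$; this is where the strict positivity of the leading coefficient $a(f)$ and the fact that the lower-order singular-integral perturbations are of order strictly less than three are used, via a localization/freezing-coefficients argument and the classical characterization of generators of analytic semigroups (a Fourier-multiplier or perturbation estimate suffices since the leading part is constant-coefficient after freezing). Since $r\in(2,3)$, the pair $(L_2(\s),H^3(\s))$ together with the interpolation space $H^r(\s)=\bigl(L_2(\s),H^3(\s)\bigr)_{r/3,2}$ (up to equivalence of norms) fits exactly the functional-analytic setting of \cite{Am93}; the embedding $H^r(\s)\hookrightarrow C^1(\s)$ (valid since $r>3/2$) guarantees the required smoothness of $\Phi$ on $H^r(\s)$. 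Applying Amann's existence theorem for quasilinear parabolic equations then yields assertions $(i)$ and $(ii)$: a unique maximal classical solution in the indicated regularity class, the semiflow property, and the global-existence/blow-up dichotomy (no blow-up if $f$ stays bounded in $H^r(\s)$ on bounded time intervals).

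For the parabolic smoothing in $(iii)$ I would use the parameter trick (as in \cite{M17x}): for $\lambda>1$ and small $\mu\in\R$ consider the rescaled function $f_{\lambda,\mu}(t,x):=f(t,\lambda t/\,?\,)$—more precisely one introduces shift and scaling parameters and exploits translation- and scaling-covariance properties of the equation, so that $f_{\lambda,\mu}$ solves a perturbed evolution equation depending real-analytically on $(\lambda,\mu)$; the implicit function theorem applied to the solution map in the parabolic maximal-regularity setting then shows that $(\lambda,\mu)\mapsto f_{\lambda,\mu}$ is real-analytic with values in the solution space, and evaluating at appropriate values recovers joint real-analyticity of $(t,x)\mapsto f(t,x)$ on $(0,T_+(f_0))\times\R$. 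The main obstacle, I expect, is the first step: extracting the precise quasilinear structure of $\eqref{EP}_1$ after eliminating $\overline\omega$, i.e. showing that all the singular-integral terms (both the direct ones in $\eqref{EP}_1$ and those inherited through $\overline\omega(f)$) are genuinely of order $\le 2$ with the correct continuity and analytic-dependence estimates on $H^r(\s)$, and isolating the positive leading coefficient—this requires careful commutator and mapping-property estimates for the periodic singular integral operators $T_{[x,s]}$, presumably carried out in Sections \ref{Sec:4} and \ref{Sec:5}.
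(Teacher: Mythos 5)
Your overall skeleton (eliminate $\overline\omega$, recast \eqref{P} as a quasilinear evolution equation, apply Amann's theory, then a parameter trick for smoothing) is indeed the route the paper takes, but two of your key steps contain genuine errors. First, the claimed leading-order structure $\p_t f=-a(f)\,\p_x^3 f+\text{(lower order)}$ with $a(f)=\sigma/(1+f'^2)^{3/2}>0$ cannot be right and would destroy the generation argument: a \emph{local} third-order operator has, after freezing coefficients, the purely imaginary symbol $\mp i a k^3$, so $-a(f)\p_x^3$ is not sectorial no matter the sign of $a(f)$. The parabolicity of the problem comes precisely from the nonlocal operator $\bB(f)$ in $\eqref{EP}_1$, whose flat-interface limit is the periodic Hilbert transform; the correct frozen-coefficient model is $-a\,(\p_x^4)^{3/4}$, i.e. the Fourier multiplier with symbol $-a|k|^3$, and the paper's generator proof (Theorems \ref{T:K1} and \ref{T:GP1}) consists exactly in showing, via a partition-of-unity localization and lengthy commutator estimates for the operators $C_{n,m}$, that $\Phi_{\sigma,1}(\tau f)$ is locally close to such multipliers. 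One cannot split off a local differential leading part plus order-$<3$ remainders as you propose; the nonlocal structure must be kept in the principal part.

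Second, your solvability argument for $\eqref{EP}_2$ is unjustified: $|a_\mu|<1$ does not make $a_\mu\bA(f)$ a ``contraction-type perturbation of the identity'', since $\|\bA(f)\|_{\kL(L_2)}$ is not small for general $f$ (it depends on $\|f'\|_\infty$ and need not be $\le 1$). The paper instead proves a resolvent estimate: using the velocity field of Lemma \ref{L:220} and a Rellich-type identity obtained from Stokes' theorem (see \eqref{Ea1}), it shows that every real $\lambda$ with $|\lambda|\ge 1$ lies in the resolvent set of $\bA(f)$ on $\wh L_2(\s)$ and on $\wh H^1(\s)$ (Theorems \ref{T:I1}--\ref{T:I2}); invertibility of $1+a_\mu\bA(f)$ then follows because $1/|a_\mu|>1$. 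Without this your operator $\overline\omega(f)$, and hence $\Phi_\sigma$, is not even well defined. (A further point you pass over: uniqueness in the class stated in Theorem \ref{MT:1} does not come for free from the abstract theorem; the paper must show that any solution in that class has the extra time-H\"older regularity $C^{\eta}([0,T],H^2(\s))$, which requires the $H^{-1}$-bounds on $\Phi_\sigma(f)[f]$ proved through the adjoint operators of Theorem \ref{T:I3} and Lemma \ref{L:33}.)
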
\medskip

\begin{rem}\label{R:1}
\begin{itemize}
\item[$(i)$] 
Despite that  we deal with a third order problem in the setting of classical solutions,
the curvature of the initial data  in Theorem \ref{MT:1}
may be unbounded and/or discontinuous.
Moreover, it becomes instantaneously real-analytic under the flow. \\[-2ex]
\item[$(ii)$] Solutions which are not global have, in view of Theorem \ref{MT:1}, the property that
\[
\sup_{[0,T_+(f_0)) }\|f(t)\|_{H^s}=\infty \qquad\text{for each $s\in(2,3)$.}
\]
\end{itemize}
\end{rem}

Concerning the stability of equilibria, we also have to differentiate between the cases $\sigma=0$ and $\sigma>0$.
 Before doing this  we point out two features that are common for both cases. 
 Firstly, the integral mean of the solutions to \eqref{P} (found in Theorem  \ref{MT:1} or Theorem \ref{MT:2} below) is
 constant with respect to time, see Section \ref{Sec:6}.
 Secondly,   \eqref{P} has the following invariance property: If $f$ is a solution to \eqref{P}, then  the translation 
\begin{align}\label{HVT}
f_{a,c}(t,x):=f(t,x-a)+c,\qquad a,\, c\in\R, 
\end{align}
is also a solution to \eqref{P}.
For these two reasons, we shall  only address the stability issue for  equilibria to \eqref{P} which have zero integral 
mean and under perturbations with zero integral mean. 
  However, because of the invariance property \eqref{HVT}, our stability results can be transferred  also to other 
  equilibria, see Remark \ref{R:0}.
  
To set the stage, let 
 \[\wh H^r(\s):=\Big\{h\in H^r(\s)\,:\, \langle h\rangle:=\frac{1}{2\pi}\int_{-\pi}^\pi h\, dx=0\Big\}, \qquad \text{$r\geq0$}.\]
  In Theorem \ref{MT:4} below we describe the stability properties of some of the equilibria to \eqref{P} when $\sigma>0$.
   In this case the equilibrium solutions to \eqref{P} are either constant functions or  finger-shaped as in Figure \ref{Fig:1}.
 The  finger-shaped equilibria exist only in the regime where $\Theta<0$, that is  when  either the fluid located below 
 has a larger density or when the 
 less viscous fluid advances into the region occupied by the other one with sufficiently high speed $|V|$.
  Furthermore, these equilibria form  global bifurcation branches 
  (see Section \ref{Sec:6} for the complete picture of the set of equilibria).

 \begin{thm}\label{MT:4}
  Let $\sigma>0$ and $r\in(2,3)$ be given. The following hold:
  \begin{itemize}
  \item[$(i)$] If $\Theta+\sigma>0$, then   $f=0$ is exponentially stable. More precisely, 
given $$\omega\in(0,k(\sigma+\Theta)/(\mu_-+\mu_+)),$$  there exist constants $\delta>0$ and $M>0$, 
with the property that if $f_0\in \wh H^r(\s)$ satisfies 
 $\|f_0\|_{H^{r}}\leq\delta$, the solution to \eqref{P} exists globally  and
  \begin{align*}
   \|f(t;f_0)\|_{H^r } \leq Me^{-\omega t}\|f_0\|_{H^{r} }\qquad \text{for all $t\geq0.$}
  \end{align*}

    \item[$(ii)$] If $\Theta+\sigma<0$, then   $f=0$ is  unstable. More precisely, there exists   
   $R>0$ and a sequence $(f_{0,n})\subset \wh  H^{r}(\s)$ of initial data  such that: \\[-2ex]
\begin{itemize}
\item[$\bullet$] $f_{0,n}\to0 $ in $\wh  H^{r}(\s);$\\[-2ex]
\item[$\bullet$] There exists $t_n\in(0,T_+(f_{0,n}))$ with $\|f(t_n;f_{0,n})\|_{H^r}=R$.\\[-2ex]
\end{itemize}

 \item[$(iii)$] {\em (Instability of small finger shaped equilibria)}
 Given $1\leq \ell\in\N$, there exists a  real-analytic bifurcation curve 
 $(\lambda_\ell,f_\ell):(-\e_\ell,\e_\ell)\to (0,\infty)\times \wh H^3(\s)$, $\e_\ell>0$,  with
  \[
  \left\{
\begin{array}{lll}
\lambda_\ell(s)=\ell^2-\cfrac{3\ell^4}{8}s^2+O(s^4)  \quad \text{in $\R$,}\\[2ex]
f_\ell(s)= s\cos(\ell x)+O(s^2)  \quad \text{in $ \wh H^3(\s)$}
\end{array}
\right.\qquad\text{for $s\to0$,}
  \]
such that  $f_{\ell}(s)$ is an even equilibrium to \eqref{P} if $\Theta=-\sigma\lambda_\ell(s)$.
The finger-shaped  equilibrium $f_\ell(s)$, $0<|s|<\e_\ell$, is unstable if $\e_\ell$ is sufficiently small in the sense there exists   
   $R>0$ and a sequence $(f_{0,n})\subset \wh  H^{r}(\s)$   such that: \\[-2ex]
\begin{itemize}
\item[$\bullet$] $f_{0,n}\to f_\ell(s) $ in $\wh  H^{r}(\s);$\\[-2ex]
\item[$\bullet$] There exists $t_n\in(0,T_+(f_{0,n}))$ with $\|f(t_n;f_{0,n})-f_{\ell}(s)\|_{H^r}=R$.\\[-2ex]
\end{itemize} 
  \end{itemize}  
 \end{thm}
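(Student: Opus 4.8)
\emph{Strategy and parts $(i)$, $(ii)$.} The plan is to regard \eqref{P} with $\sigma>0$ as an abstract quasilinear parabolic evolution equation $\dot f=\Phi(f)$ on $\wh H^r(\s)$, where $\Phi$ is the real-analytic vector field obtained after solving $\eqref{EP}_2$ for $\overline\omega=\overline\omega(f)$ (possible since $|a_\mu|<1$), and to feed the spectral analysis of the Fréchet derivative $\partial\Phi$ at equilibria into the quasilinear principle of linearized stability of \cite{MW18x} and its instability counterpart. For $(i)$ and $(ii)$ one first checks that $A_0:=\partial\Phi(0)$ is the Fourier multiplier acting on $e^{ijx}$, $j\in\Z\setminus\{0\}$, by $-c\,|j|(\sigma j^2+\Theta)$ with $c:=k/(\mu_-+\mu_+)>0$; here one uses that for a flat interface the kernel in $\eqref{EP}_1$ reduces to the periodic Hilbert transform and $\overline\omega(f)=\tfrac{2k}{\mu_-+\mu_+}(\sigma\kappa(f)-\Theta f)'+O(f^2)$. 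If $\Theta+\sigma>0$, then the spectral bound $s(A_0)=-c(\sigma+\Theta)<0$ (the supremum over $j\ge1$ of $-|j|(\sigma j^2+\Theta)$ being attained at $j=1$), and since the quasilinear parabolic structure of \eqref{P} already used for Theorem \ref{MT:1} makes $A_0$ the generator of an analytic semigroup, the principle of linearized stability gives exponential stability of $f=0$ with any rate $\omega<-s(A_0)=c(\sigma+\Theta)$, which is $(i)$. If $\Theta+\sigma<0$, the modes $e^{\pm ix}$ give the positive eigenvalue $-c(\sigma+\Theta)>0$, so $\sigma(A_0)\cap\{\re z>0\}\neq\emptyset$ and the instability counterpart of the principle of linearized stability produces the sequence $(f_{0,n})$ of $(ii)$.

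\emph{Part $(iii)$: the bifurcation curve.} By the classification of equilibria carried out in Section \ref{Sec:6}, every non-flat equilibrium of \eqref{P} satisfies $\overline\omega\equiv0$, hence $\sigma\kappa(f)-\Theta f\equiv\mathrm{const}$; writing $\Theta=-\sigma\lambda$ and dividing by $\sigma$ this reads $\kappa(f)+\lambda f\equiv\mathrm{const}$, whose linearization at $f=0$ is $h\mapsto h''+\lambda h$. I would apply the Crandall--Rabinowitz theorem on bifurcation from a simple eigenvalue to
\[
\Psi:\R\times\wh H^3_{\mathrm{even}}(\s)\to\wh H^1_{\mathrm{even}}(\s),\qquad
\Psi(\lambda,f):=\kappa(f)+\lambda f-\big\langle\kappa(f)+\lambda f\big\rangle,
\]
which is real-analytic with $\Psi(\lambda,0)=0$, $D_f\Psi(\lambda,0)=\partial_x^2+\lambda$ Fredholm of index $0$, non-invertible precisely for $\lambda=\ell^2$ ($\ell\ge1$) with one-dimensional kernel $\spa\{\cos(\ell x)\}$, and satisfying the transversality condition that $\partial_\lambda D_f\Psi(\ell^2,0)\cos(\ell x)=\cos(\ell x)$ is not in the range of $\partial_x^2+\ell^2$. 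This produces the real-analytic curve $(\lambda_\ell,f_\ell)$ with $f_\ell(s)=s\cos(\ell x)+o(s)$. The expansion follows by inserting $f=s\cos(\ell x)+s^2f_2+\cdots$ and $\lambda=\ell^2+s\lambda_1+s^2\lambda_2+\cdots$ into $\kappa(f)+\lambda f\equiv\mathrm{const}$, using $\kappa(f)=f''-\tfrac32(f')^2f''+O(\|f\|^5)$ and the Fredholm alternative order by order: the invariance $(\lambda,f,\mathrm{const})\mapsto(\lambda,-f,-\mathrm{const})$ forces $\lambda_\ell$ to be even and $f_\ell$ odd in $s$ (so $\lambda_1=0$, $f_2=0$), and at order $s^3$ the term $-\tfrac32(f')^2f''$ contributes $\tfrac32\ell^4\sin^2(\ell x)\cos(\ell x)=\tfrac{3\ell^4}{8}(\cos(\ell x)-\cos(3\ell x))$, whose $\cos(\ell x)$-component in the solvability condition gives $\lambda_2=-\tfrac{3\ell^4}{8}$. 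Hence $\lambda_\ell(s)=\ell^2-\tfrac{3\ell^4}{8}s^2+O(s^4)$ and $f_\ell(s)=s\cos(\ell x)+O(s^3)$.

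\emph{Part $(iii)$: instability.} Set $A_s:=\partial\Phi(f_\ell(s))$, a sectorial operator on $\wh H^r(\s)$ depending real-analytically on $s$ (since $f_\ell$ is a real-analytic curve into $\wh H^3(\s)$, the parameter $\Theta=-\sigma\lambda_\ell(s)$ is real-analytic in $s$, and $\Phi$ is real-analytic). At $s=0$ the parameter is $\Theta=-\sigma\ell^2$, so $A_0$ acts on $e^{ijx}$ by $-c\sigma|j|(j^2-\ell^2)$. If $\ell\ge2$, the modes $e^{\pm ijx}$ with $1\le j\le\ell-1$ give a nonzero finite-dimensional spectral subspace of $A_0$ inside $\{\re z>0\}$, separated from the rest of the spectrum; by Kato's analytic perturbation theory it persists, so $\sigma(A_s)\cap\{\re z>0\}\neq\emptyset$ for small $|s|$. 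If $\ell=1$, the only non-stable part of $\sigma(A_0)$ is the eigenvalue $0$ of algebraic multiplicity $2$ with eigenspace $\spa\{\cos x,\sin x\}$; since \eqref{P} is invariant under the reflection $x\mapsto-x$ and $f_1(s)$ is even, $A_s$ preserves the splitting of $\wh H^r(\s)$ into even and odd functions. On the odd part the perturbed eigenvalue stays pinned at $0$ because $\partial_x f_1(s)\in\Kern A_s$ by the translation invariance \eqref{HVT}, while on the even part it becomes an eigenvalue $\gamma(s)$ for which the principle of exchange of stability (applied to the even part of the steady-state problem with bifurcation parameter $\lambda$) gives that $\gamma(s)$ and $-s\lambda_\ell'(s)\,\mu'(\ell^2)$ differ by a factor tending to $1$ as $s\to0$, where $\mu(\lambda):=-c\sigma\ell(\ell^2-\lambda)$ is the eigenvalue of the flat linearization at parameter $\lambda$ along $\cos(\ell x)$. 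As $\mu'(\ell^2)=c\sigma\ell>0$ and $-s\lambda_\ell'(s)=\tfrac{3\ell^4}{4}s^2+O(s^4)>0$, we obtain $\gamma(s)>0$ for $0<|s|$ small, hence again $\sigma(A_s)\cap\{\re z>0\}\neq\emptyset$. In all cases $A_s$ is sectorial with a finite-dimensional spectral part in $\{\re z>0\}$ separated from the remaining spectrum, so the instability counterpart of the principle of linearized stability of \cite{MW18x} yields, after shrinking $\e_\ell$ if necessary, the radius $R>0$ and the sequence $(f_{0,n})\subset\wh H^r(\s)$ with $f_{0,n}\to f_\ell(s)$ and $\|f(t_n;f_{0,n})-f_\ell(s)\|_{H^r}=R$ for suitable $t_n$.

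\emph{Main obstacle.} The heart of the matter is the spectral analysis of $A_s=\partial\Phi(f_\ell(s))$. Because \eqref{P} is genuinely quasilinear, this Fréchet derivative differs from any frozen-coefficient generator by unbounded lower-order terms and inherits the intricate dependence of $\overline\omega(f)$ on $f$ established in Sections \ref{Sec:4}--\ref{Sec:5}; one must show that $A_s$ is nonetheless sectorial with a spectral gap as required by \cite{MW18x}, and that, apart from the finitely many eigenvalues tracked by perturbation from $s=0$, its spectrum stays in a left half-plane. For $\ell=1$ there is in addition the task of making the exchange-of-stability computation rigorous for the parabolic problem — conveniently done through the factorization $A_s=\sigma\,\M_s\circ\kL_s+(\text{lower order})$, with $\M_s$ a positive first-order operator and $\kL_s=\kappa'(f_\ell(s))+\lambda_\ell(s)$ the self-adjoint elliptic linearization of the steady equation, so that the number of eigenvalues of $A_s$ with positive real part equals that of $\kL_s$ and the sign of the bifurcating eigenvalue is inherited from the transparent family $\partial_x^2+\lambda$. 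Once these functional-analytic facts are secured, parts $(i)$, $(ii)$ and the construction of the bifurcation curve are comparatively routine.
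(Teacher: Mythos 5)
Your proposal is correct and follows essentially the same route as the paper: parts $(i)$–$(ii)$ via the Fourier-multiplier linearization at $f=0$ and the quasilinear stability/instability principles of \cite{MW18x}, the branch via Crandall--Rabinowitz for the capillarity equation in even spaces, instability for $\ell\geq2$ by Kato-type eigenvalue perturbation, and for $\ell=1$ by exchange of stability using $s\lambda_1'(s)<0$. The only cosmetic difference is that the paper applies \cite[Theorem 1.16]{CR73} directly to the parabolic vector field $\Psi(\lambda,f):=\Phi(\lambda,f)[f]$ on the even subspace (with the transversality $\p_{\lambda f}\Psi(1,0)[\cos x]=\sigma b_\mu\cos x\notin\im\p_f\Psi(1,0)$), rather than passing through a factorization over the elliptic steady-state linearization as you suggest in your closing remark.
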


With respect to Theorem  \ref{MT:4}  we add the following remarks (Remark \ref{R:0} $(i)$  
remains valid for Theorem \ref{MT:3} below  as well).

\begin{rem}\label{R:0}
\begin{itemize}
\item[$(i)$] If $f$ is an even equilibrium to \eqref{P}, the translation $f(\cdot-a)+c$, $a,\, c\in\R $,
is also an equilibrium solution. In fact, all equilibria can be obtained in this way (see Section~\ref{Sec:6}).
The invariance property \eqref{HVT} shows that $f$ and  $f(\cdot-a)+c$ have the same stability properties.
\item[$(ii)$]  It is shown in  Theorem \ref{T:GBP} that the local curves $(\lambda_\ell,f_\ell)$ 
can be continued to global bifurcation branches consisting entirely of  
equilibrium solutions to \eqref{P}. The stability issue for the large finger-shaped equilibria remains an open problem.
\end{itemize}
\end{rem}

When  switching to the regime where $\sigma=0$, many aspects in the analysis of the Muskat problem with surface tension have to be reconsidered. 
A first major difference to the case $\sigma>0$ is due to the fact that the quasilinear character of the problem, which 
is mainly due to the curvature term, is lost (excepting for the very special case when $\mu_-=\mu_+$, cf. \cite{MM17x}), 
and the problem \eqref{P} is now fully nonlinear. The second important difference, is that  the problem is of parabolic type 
only when the Rayleigh-Taylor condition holds.
The Rayleigh-Taylor condition originates from \cite{ST58} and is expressed in terms 
of the pressures  $p_\pm$ associated of the fluid~$\pm$ as follows
\begin{align}\label{RT}
 \p_\nu p_-< \p_\nu p_+ \qquad\text{on $[y=f_0(x)]$,}
\end{align}
 with  $\nu$ denoting  the  unit normal to the curve $[y=f_0(x)]$ pointing towards   $\0_+^V(0)$ .
The first result in this setting is a local existence result in $H^k(\s)$, with $k\geq 3$,
established in \cite{CCG11} in the more general  setting of interfaces parametrized by periodic curves 
(for initial data such that the Rayleigh-Taylor conditions holds).
The particular case of fluids with equal densities has been in fact investigated  previously in \cite{SCH04} and the authors have shown
the existence of global solutions for small data.
The methods from \cite{CCG11} have been then generalized in \cite{CCG13b}  to the three-dimensional case,
the analysis emerging in a local existence result in $H^k$ with $k\geq4$.
More recently in \cite{BCS16} the authors have established   global existence and uniqueness  of solutions to \eqref{P}  for small 
  data in $H^2(\s)$ together with some exponential decay estimates in  $H^r$-norms with $r\in[0,2)$.
For the nonperiodic Muskat problem with $\sigma=0$ it is moreover shown  in \cite{BCS16} there exist unique local solutions
for initial data in $H^2(\R)$ which are small in the weaker  $H^{3/2+\e}$-norm with $\e\in(0,1)$ arbitrarily small.
The latter smallness size condition on the data was dropped out in  \cite{M17x} where it is shown that the nonperiodic Muskat  
possesses for   initial data in $H^2(\R)$ 
that satisfy the Rayleigh-Taylor condition  a unique local solution  and that the solution
depends continuously on the   data. 
Lastly, we mention the paper \cite{CCPS17} where the existence and uniqueness of a weaker notion of solutions is established for the nonperiodic Muskat problem with
initial data in   critical spaces, together with some algebraic decay of the global solutions.
In this paper we first generalize the methods from the nonperiodic setting \cite{M17x} to prove the well-posedness of  
\eqref{P} for general initial data in $H^2(\s)$ and instantaneous parabolic smoothing for solutions which satisfy an additional bound.
Before presenting our result, we point out that if  $\Theta=0$, then \eqref{P} has only constant solutions for each
$f_0\in H^r(\R)$, with $r>3/2$,   as Theorem \ref{T:I1} shows that in this case $\overline\omega=0$ 
is the only solution to $\eqref{EP}_2$  that lies in $\wh L_2(\s).$
When $\Theta \neq 0$, the situation is much more complex. Letting
 \[
 \cO:=\{f_0\in H^2(\s)\,:\, \text{$\p_\nu p_-< \p_\nu p_+$ on $[y=f_0(x)]$}\}
 \]
 denote the set of initial data in $H^2(\s)$ for which the Rayleigh-Taylor condition holds,
  it is shown in  Section \ref{Sec:5} that $\cO$ is   nonempty   precisely when $\Theta>0$.
This condition on the constants has been identified also in the nonperiodic case.    
 In fact, we prove that  if $\Theta>0$, then $\cO$ is an open subset of  $H^2(\s)$ which contains  all  constant functions.
Using the abstract fully nonlinear parabolic theory  established in \cite{DG79, L95}, we prove below that
the Muskat problem without surface tension is well-posed in the set $\cO$, cf. Theorem \ref{MT:2}.
 Physically,   in the particular situation when gravity is neglected $\Theta>0$ is equivalent to
 the fact that the more viscous fluid
  enters the region occupied by less viscous one, while in the case $V=0$ the condition
  $\Theta>0$ means that the fluid located below has a larger density.

  \begin{thm} \label{MT:2}
Let  $\sigma=0$, $\mu_-\neq\mu_+$\footnote{Theorem \ref{MT:2} is still valid if $\mu_-=\mu_+$,
however its claims can be improved, 
cf. \cite[Theorem 1.1]{MM17x}, as the problem \eqref{P} is under this restriction  of quasilinear type.}, and    assume that 
$\Theta>0$.
Given $f_0\in\cO$, the  problem \eqref{P} possesses  a   solution
\[ f\in C([0,T],\cO)\cap C^1([0,T], H^1(\s))\cap C^{\alpha}_{\alpha}((0,T], H^2(\s)) \] 
for some $T>0$ and an arbitrary  $\alpha\in(0,1)$.
Additionally, the following statements are true:
\begin{itemize}
 \item[$(i)$] $f$ is the unique solution to \eqref{P} belonging to 
 \[\bigcup_{\beta\in(0,1)} C([0,T],\cO)\cap C^1([0,T], H^1(\s))\cap C^{\beta}_{\beta}((0,T], H^2(\s)).\]
 \item[$(ii)$] $f$ may be extended to a maximally defined solution 
 $$f(\,\cdot\,; f_0)\in C([0,T_+(f_0)),\cO)\cap C^1([0,T_+(f_0)), H^1(\s))\cap \bigcap_{\beta\in(0,1)}C^{\beta}_{\beta}((0,T], H^2(\s))$$
 for all $T<T_+(f_0)$, where $T_+(f_0)\in (0,\infty].$\\[-2ex]
 \item[$(iii)$] The solution map   $[(t,f_0)\mapsto f(t;f_0)]$ defines a  semiflow 
 on $\cO$ which is real-analytic in the open set $\{(t,f_0)\,:\, f_0\in\cO,\, 0<t<T_+(f_0)\}$.\\[-2ex]
 \item[$(iv)$] If $f(\,\cdot\,; f_0):[0,T_+(f_0))\cap[0,T]\to\cO$ is uniformly continuous for all $T>0$, then either  $T_+(f_0)=\infty $, or
 \[\text{$T_+(f_0)<\infty$ \,  and \, $ {\rm dist}(f(t;f_0),\p\cO)\to 0$ for $t\to T_+(f_0).$}\]
 \item[$(v)$] If $f(\,\cdot\,; f_0)\in B((0,T), H^{2+\e}(\s))$ for some $T\in(0,T_+(f_0))$ and $\e\in(0,1)$ arbitrary small, then 
 \[
 f\in C^\omega((0,T)\times\R,\R).
 \]  
\end{itemize} 
\end{thm}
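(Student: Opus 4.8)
The plan is to recast \eqref{P} with $\sigma=0$ as an abstract fully nonlinear parabolic Cauchy problem
$\dot f = \Phi(f)$ on the interpolation couple $(E_0,E_1)=(H^1(\s),H^2(\s))$, with phase space $\cO$ open in $H^2(\s)$, and then to invoke the theory of Da Prato--Grisvard and Lunardi (\cite{DG79, L95}). The first and decisive step is to show that the right-hand side of $\eqref{EP}_1$, after substituting the unique solution $\overline\omega(t)=\overline\omega(f(t))$ of $\eqref{EP}_2$ guaranteed by Sections \ref{Sec:4}--\ref{Sec:5} (this is where $|a_\mu|<1$ and $f\in\cO$ are used), defines a map $\Phi\in C^\omega(\cO,H^1(\s))$ whose Fréchet derivative $\partial\Phi(f_0)\in\kL(H^2(\s),H^1(\s))$, viewed as an unbounded operator in $H^1(\s)$ with domain $H^2(\s)$, generates an analytic semigroup; equivalently, $\partial\Phi(f_0)$ has the property of \emph{maximal $L_p$-regularity} or, what suffices here, belongs to the class $\mathcal{H}(H^2(\s),H^1(\s))$. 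The generator property is exactly the analytic reformulation of the Rayleigh--Taylor condition: one computes the principal symbol of the linearization and identifies its leading part as a negative multiple of $|\xi|$ times the positive quantity $\partial_\nu p_+-\partial_\nu p_-$ evaluated along $[y=f_0(x)]$; positivity of this quantity on all of $\s$, i.e. $f_0\in\cO$, yields a spectral/resolvent estimate of parabolic type. Lower-order and nonlocal (Cauchy-integral) contributions are then absorbed by a perturbation argument, using that the singular integral operators in \eqref{EP} are, modulo their principal parts, of order $\le 1$, hence lower order relative to the generator.

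Granting this, parts $(i)$ and $(ii)$ follow from the abstract existence and uniqueness theorem for fully nonlinear problems in \cite{DG79, L95}: for $f_0$ in the open set $\cO\subset H^2(\s)=E_1$ one obtains, for any $\alpha\in(0,1)$, a unique maximal solution in the class $C([0,T_+),\cO)\cap C^1([0,T_+),H^1(\s))\cap C^\alpha_\alpha((0,T_+),H^2(\s))$, the uniqueness being the one stated in $(i)$ over the union of these classes (the Lunardi framework gives uniqueness precisely in $\bigcup_{\beta}C^\beta_\beta$). Part $(iii)$, the semiflow property together with real-analytic dependence on $(t,f_0)$ on $\{0<t<T_+(f_0)\}$, is obtained from the analyticity of $\Phi$ combined with the smoothing inherent in analytic semigroups — concretely, via the implicit function theorem applied to the time-shifted problem and a bootstrap in the interpolation scale (this is the standard argument behind the analytic-dependence results in \cite{L95}). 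Part $(iv)$ is the abstract blow-up alternative for fully nonlinear equations: a maximal solution that stays uniformly continuous into $\cO$ on bounded time intervals either is global or must approach the boundary $\partial\cO$ in $H^2(\s)$, since otherwise the solution would re-enter a region where $\Phi$ satisfies the generation hypotheses uniformly and could be continued.

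For the parabolic smoothing statement $(v)$ I would use the by-now classical parameter-trick (as in \cite{M17x} in the nonperiodic case and going back to Escher--Simonett--type arguments): exploit the translation and scaling covariance of \eqref{P} — here only the horizontal translation invariance \eqref{HVT} and a suitable time rescaling are needed — to introduce a two-parameter family $f_{\lambda,\mu}(t,x):=f(\mu t,\lambda x)$ solving a perturbed equation depending real-analytically on $(\lambda,\mu)$ near $(0,1)$. If $f\in B((0,T),H^{2+\e}(\s))$, this extra $\e$ of regularity gives enough room for the implicit function theorem to produce a real-analytic branch $(\lambda,\mu)\mapsto f_{\lambda,\mu}$ in the solution space on a slightly smaller time interval, and differentiating this branch at $(\lambda,\mu)=(0,1)$ upgrades $f$ to real-analyticity jointly in $(t,x)$ on $(0,T)\times\R$. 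The main obstacle, as always in this circle of ideas, is the \emph{generation step} in the first paragraph: one must control the precise principal symbol of the full linearization of the composite map $f\mapsto$ (right side of $\eqref{EP}_1$ with $\overline\omega=\overline\omega(f)$), including the implicit dependence of $\overline\omega$ on $f$ through the second equation, and verify that the Rayleigh--Taylor sign condition is exactly what makes the leading symbol of the correct (negative) parabolic type; estimating the nonlocal remainder terms so that they are genuinely subordinate requires the mapping properties of the singular integral operators developed in Sections \ref{Sec:4}--\ref{Sec:5}.
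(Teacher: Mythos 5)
Your overall route coincides with the paper's: solve $\eqref{EP}_2$ for $\overline\omega(f)$ (Proposition \ref{P:O}), rewrite \eqref{P} as the fully nonlinear problem $\dot f=\Phi(f)$ with $\Phi\in C^\omega(H^2(\s),H^1(\s))$, prove $-\p\Phi(f_0)\in\kH(H^2(\s),H^1(\s))$ for $f_0\in\cO$, and then quote the Da Prato--Grisvard/Lunardi theory for $(i)$--$(iv)$ and a parameter trick for $(v)$; the paper indeed reduces everything to Theorem \ref{T:GP2} and defers the abstract details to the nonperiodic case. The genuine gap is in your mechanism for the generation step. You propose to read off the principal symbol $-a_{\mathrm{RT}}|\xi|$ (up to positive factors) and to absorb the remaining nonlocal contributions as a perturbation ``of order $\le 1$, hence lower order relative to the generator''. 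In the $\sigma=0$ problem the generator itself is of \emph{first} order ($\Phi$ maps $H^2(\s)$ to $H^1(\s)$), so order-one terms are not subordinate and cannot be swept into a perturbation: the linearization \eqref{PPH} contains several genuinely first-order pieces with nonconstant coefficients, e.g.\ $f''\big(\bB_1(f_0)[\overline\omega_0]+\pi^{-1}C_{1,1}(f_0)[f_0,\overline\omega_0]\big)$, the terms built from $(f'\overline\omega_0)'$, and a transport part which survives in the local model as $\beta_\tau\p_x$ and is admissible only because its symbol is purely imaginary, not because it is lower order. The paper's proof therefore does not run a symbol-plus-lower-order argument: it freezes coefficients by the localization of Theorem \ref{T:K2}, obtaining local models $-\alpha_\tau(x_j^p)(-\p_x^2)^{1/2}+\beta_\tau(x_j^p)\p_x$; it has to build the Rayleigh--Taylor function into the homotopy artificially (the term $(1-\tau)a_\mu\big(f'\Phi(f_0)-\langle f'\Phi(f_0)\rangle\big)$ in \eqref{WTF}); and it needs a separate argument (Proposition \ref{P:GOS}) for the endpoint operator $\Psi(0)[f]=-H[f'a_{\mathrm{RT}}]$, which is \emph{not} a Fourier multiplier since $a_{\mathrm{RT}}$ is nonconstant. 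Without these ingredients the resolvent estimate you assert does not follow, so the decisive step of your proposal is not established by the argument you sketch.

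A smaller but real slip concerns $(v)$: the family you write, $f_{\lambda,\mu}(t,x)=f(\mu t,\lambda x)$, is a spatial dilation, which destroys $2\pi$-periodicity and is inconsistent with your base point $(\lambda,\mu)=(0,1)$; the trick in the periodic setting must use the translation coupled to time, $f_{\lambda,\mu}(t,x)=f(\mu t,x+\lambda t)$, which is what the invariance \eqref{HVT} provides. With that correction, your use of the extra $H^{2+\e}$ bound to compensate for the failure of boundedness of space--time translations on the singular H\"older spaces $C^\beta_\beta$ matches the paper's (and \cite{M17x}'s) argument.
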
\medskip

The assertions of Theorem \ref{MT:2} are weaker compared to that of Theorem \ref{MT:1}. 
For example the uniqueness claim at $(i)$ is established in the setting of strict solutions (in the sense of \cite[Chapter 8]{L95}) 
 which belong additionally to some singular H\"older space
 \[
 C^\beta_\beta((0,T], H^2(\s)):=\Big\{u\in B((0,T], H^2(\s))\,:\, \sup_{s\neq t}\frac{\|t^\beta u(t)-s^\beta u(s)\|_{H^2}}{|t-s|^\beta}<\infty\Big\}
 \]
 with $\beta\in(0,1).$
This drawback results from the fact that  in the absence of  surface tension effects we deal with a fully nonlinear (and nonlocal) problem.
We also  point out that the  parabolic smoothing property    established at $(v)$ holds only for solutions  $f(\,\cdot\,; f_0)\in B((0,T), H^{2+\e}(\s))$ for some $\e>0$.
This additional boundedness condition is needed  because the space-time translation 
\[\big[u\mapsto [(t,x)\mapsto u(at, x+bt)]\big] \]
does not define  for $a,\,b >0$ a bounded operator between these singular Hölder spaces. 
This property hiders us to use the parameter trick from the proof of Theorem \ref{MT:1} to establish parabolic smoothing for all solutions in Theorem \ref{MT:2}.
 However, the boundedness hypothesis imposed at $(v)$ is satisfied if $f_0\in\cO\cap H^3(\s)$ because the statements $(i)-(iv)$ in Theorem \ref{MT:2} remain true when replacing $H^k(\s)$
 by $H^{k+1}(\s) $ for $k\in\{1,2\}$ (possibly with a smaller maximal existence time).
 
 Finally, we point out that in the case when $\sigma=0$ the equilibrium solutions to \eqref{P} are the constant functions.
  Theorem \ref{MT:3}states that the zero solution to \eqref{P} (and therewith all other equilibria)   is exponentially stable under
 perturbations with zero integral mean.

 \begin{thm}[Exponential stability]\label{MT:3}
  Let $\sigma=0$ and  $\Theta>0$. Then, given $\omega\in(0,k\Theta/(\mu_-+\mu_+))$,  there exist constants $\delta>0$ and $M>0$,  with the property that if $f_0\in \wh H^2(\s)$ satisfies 
$\|f_0\|_{H^{2}}\leq\delta,$ 
 then $T_+(f_0)=\infty$  and\footnote{We write $\dot f$ to denote the derivative $df/dt$.}
  \begin{align*}
   \|f(t)\|_{H^2}+\|\dot f(t)\|_{H^1}\leq Me^{-\omega t}\|f_0\|_{H^{2}}\qquad \text{for all $t\geq0.$}
  \end{align*} 
 \end{thm}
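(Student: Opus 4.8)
\emph{Proof proposal.}\quad
The plan is to realize \eqref{P} with $\sigma=0$ as an abstract fully nonlinear parabolic evolution equation on the Banach couple $\big(\wh H^1(\s),\wh H^2(\s)\big)$ and to invoke the principle of linearized stability for such equations from \cite{DG79,L95}, in the functional setting used to prove Theorem~\ref{MT:2}. By the analysis of $\eqref{EP}_2$ carried out there (Section~\ref{Sec:5}), the second equation determines $\overline\omega$ as a real-analytic function of $f$, so that \eqref{P} becomes an autonomous equation $\dot f=\Phi(f)$ with $\Phi\colon\cO\to H^1(\s)$ real-analytic and $\Phi(0)=0$ (the flat interface being an equilibrium). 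Since the integral mean of solutions is conserved (Section~\ref{Sec:6}) one has $\langle\Phi(g)\rangle=0$ for all $g\in\cO$, so $\Phi$ restricts to a real-analytic map $\cO\cap\wh H^2(\s)\to\wh H^1(\s)$; moreover $\cO\cap\wh H^2(\s)$ is open in $\wh H^2(\s)$ and, $\cO$ being open with $0\in\cO$, it contains a ball $\{\|g\|_{H^2}\le\delta_0\}\cap\wh H^2(\s)$ around $0$. As $\langle f_0\rangle=0$ forces $\langle f(t)\rangle=0$ for all admissible $t$, it suffices to establish exponential stability of the null equilibrium of this reduced equation.

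The heart of the matter is the linearization $A:=\partial\Phi(0)\in\kL\big(\wh H^2(\s),\wh H^1(\s)\big)$. Evaluating $\eqref{EP}_2$ at $f=0$ gives $\overline\omega=0$, and differentiating both equations of \eqref{EP} at $f=0$ one checks that the two terms produced by the singular integral in $\eqref{EP}_2$ both vanish (one carries a factor equal to $\overline\omega|_{f=0}=0$, the other a factor equal to the kernel $N/D|_{f=0}=0$); hence $\overline\omega$ linearizes to $\overline\omega_1:=-\tfrac{2k\Theta}{\mu_-+\mu_+}h'$, while $\eqref{EP}_1$ linearizes to $h\mapsto\tfrac1{4\pi}\,\PV\int_{-\pi}^{\pi}\overline\omega_1(x-s)/\tan(s/2)\,ds$. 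Recognizing $g\mapsto\tfrac1{2\pi}\,\PV\int_{-\pi}^{\pi}g(x-s)/\tan(s/2)\,ds$ as the periodic Hilbert transform, which acts on $e_n(x)=e^{inx}$ by $e_n\mapsto-i\,\mathrm{sgn}(n)\,e_n$, one obtains
\[
Ah=-\frac{k\Theta}{\mu_-+\mu_+}\,|D|h,\qquad |D|e_n=|n|\,e_n .
\]
This is precisely the model operator whose negativity on the zero-mean subspace (equivalent to $\Theta>0$) expresses the parabolicity of \eqref{P} exploited in Theorem~\ref{MT:2}. Thus $A$ generates an analytic semigroup on $\wh H^1(\s)$ with domain $D(A)=\wh H^2(\s)$, and, the mode $n=0$ being excluded, $\sigma(A)=\{-k\Theta|n|/(\mu_-+\mu_+):n\in\Z\setminus\{0\}\}$, so the spectral bound is $s(A)=-k\Theta/(\mu_-+\mu_+)<0$ — in agreement with the admissible range of $\omega$ in the statement.

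With this in place I would apply the fully nonlinear principle of linearized stability (\cite[Ch.~9]{L95}; cf.\ \cite{DG79}): given $\omega\in(0,k\Theta/(\mu_-+\mu_+))=(0,-s(A))$, since $\Phi$ is $C^1$ near $0$ with $\Phi(0)=0$ and $A=\partial\Phi(0)$ generates an analytic semigroup with $s(A)<0$ and domain $\wh H^2(\s)$, there are $\delta\in(0,\delta_0]$ and $M\ge1$ such that $\|f_0\|_{H^2}\le\delta$ implies $T_+(f_0)=\infty$ and $\|f(t)\|_{H^2}\le Me^{-\omega t}\|f_0\|_{H^2}$ for all $t\ge0$. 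For the velocity, after shrinking $\delta$ so that $f(t)$ stays in a ball on which $\Phi$ is Lipschitz with some constant $L$, the equation gives $\|\dot f(t)\|_{H^1}=\|\Phi(f(t))\|_{H^1}\le L\|f(t)\|_{H^2}\le LMe^{-\omega t}\|f_0\|_{H^2}$; adding the two estimates and enlarging $M$ yields the assertion. I expect the main obstacle to be not any individual estimate but the verification that the abstract hypotheses hold in the present framework: one must check that the continuous interpolation spaces between $\wh H^1(\s)$ and $\wh H^2(\s)$ coincide with those in which the solutions of Theorem~\ref{MT:2} were constructed (so that the solution furnished there is the one governed by the stability theorem), that the conclusion of that theorem — a priori phrased in the natural maximal-regularity norm — controls $\|\cdot\|_{H^2}$ (immediate here since $D(A)=\wh H^2(\s)$; a short parabolic bootstrap through the semigroup generated by $A$ would settle it otherwise), and that restricting the whole construction to the closed zero-mean subspaces is harmless (which it is, these subspaces being invariant under the reduced equation). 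Once these compatibility points are secured, Theorem~\ref{MT:3} follows from the general theory with no further computation.
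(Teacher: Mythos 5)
Your proposal is correct and follows essentially the same route as the paper: restrict to the zero-mean spaces $\wh H^2(\s)\to\wh H^1(\s)$ using conservation of the mean, compute the linearization at $f=0$ (your $-\tfrac{k\Theta}{\mu_-+\mu_+}|D|$ is exactly the paper's $\p\Phi(0)=-c_\Theta(-\p_x^2)^{1/2}$, obtained there from $\overline\omega(0)=0$, $\bA(0)=0$, $\bB(0)=H$), note the spectral bound $-c_\Theta$ and the analytic semigroup property, and invoke the fully nonlinear principle of linearized stability \cite[Theorem 9.1.1]{L95}. The compatibility points you flag are unproblematic in this setting (the paper's \eqref{R111} and the generator property on the zero-mean subspace are precisely what is fed into Lunardi's theorem), and your separate Lipschitz argument for $\|\dot f(t)\|_{H^1}$ is harmless, since that bound is already part of the cited theorem's conclusion.
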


 Before proceeding with our analysis we emphasize that the periodic case considered herein 
 is more involved that the ``canonical'' nonperiodic Muskat problem because abstract results from harmonic analysis,
 cf. \cite[Theorem 1]{TM86}, which
 directly apply to the nonperiodic case (in order to establish useful mapping properties and commutator estimates)
 have no correspondence in the set of periodic functions.
However,  we derive in  Appendix \ref{S:A},   by  using the results from the nonperiodic case \cite{M16x,M17x},    the 
boundedness of certain multilinear singular integral operators which can be directly applied in the proofs.
A further drawback of the equations \eqref{EP} is that some of the integral terms   are of lower order 
and   some of the arguments are therefore lengthy. 
Finally, we  point out that the stability issue remains an open question for  the nonperiodic counterpart of \eqref{P}.

%%%%%%%%%%%%%%%%%%%%%%%%%%%%%%%%%%%%%%%%%%%%%%%
%%%%%%%%%%%%%%%%%%%%%%%%%%%%%%%%%%%%%%%%%%%%%%
%%%%%%%%%%%%%%%%%%%%%%%%%%%%%%%%%%%%%%%%%%%%%%%
%%%%%%%%%%%%%%%%%%%%%%%%%%%%%%%%%%%%%%%%%%%%%%
%%%%%%%%%%%%%%%%%%%%%%%%%%%%%%%%%%%%%%%%%%%%%%%
%%%%%%%%%%%%%%%%%%%%%%%%%%%%%%%%%%%%%%%%%%%%%%
%%%%%%%%%%%%%%%%%%%%%%%%%%%%%%%%%%%%%%%%%%%%%%%
%%%%%%%%%%%%%%%%%%%%%%%%%%%%%%%%%%%%%%%%%%%%%%
\section{The equations of motion and the equivalence of the formulations}\label{Sec:2}
%%%%%%%%%%%%%%%%%%%%%%%%%%%%%%%%%%%%%%%%%%%%%%%
%%%%%%%%%%%%%%%%%%%%%%%%%%%%%%%%%%%%%%%%%%%%%%
%%%%%%%%%%%%%%%%%%%%%%%%%%%%%%%%%%%%%%%%%%%%%%%
%%%%%%%%%%%%%%%%%%%%%%%%%%%%%%%%%%%%%%%%%%%%%%
%%%%%%%%%%%%%%%%%%%%%%%%%%%%%%%%%%%%%%%%%%%%%%%
%%%%%%%%%%%%%%%%%%%%%%%%%%%%%%%%%%%%%%%%%%%%%%
%%%%%%%%%%%%%%%%%%%%%%%%%%%%%%%%%%%%%%%%%%%%%%%
%%%%%%%%%%%%%%%%%%%%%%%%%%%%%%%%%%%%%%%%%%%%%%

 In this section we  present  the classical formulation  of the Muskat problem (see \eqref{PB*} below) introduced in
 \cite{Mu34} and prove that this formulation is equivalent to    the contour integral formulation \eqref{P} in a quite general setting,
 cf. Proposition \ref{P:21}.
 
 We first introduce the equations of motion.
 In the fluid layers the dynamic is governed by the  equations 
 \begin{subequations}\label{PB*}
\begin{equation}\label{eq:S1}
\left\{\begin{array}{rllllll}
{\rm div}\,  v_\pm(t)\!\!\!\!&=&\!\!\!\!0  , \\[1ex]
v_\pm(t)\!\!\!\!&=&\!\!\!\!-\cfrac{k}{\mu_\pm}\big(\nabla p_\pm(t)+(0,\rho_\pm g)\big) 
\end{array}
\right.\qquad\text{in $ \0_\pm^V(t)$},
\end{equation} 
  where $v_\pm(t):=(v_\pm^1(t),v_\pm^2(t))$   denotes the velocity field of the fluid $\pm.$
 While $\eqref{eq:S1}_1$ is the incompressibility condition, the equation $\eqref{eq:S1}_2$ is known as Darcy's  law. 
 This linear relation is frequently used for flows which are laminar, cf. \cite{Be88}.
 These equations are supplemented by the following boundary conditions  at the free interface 
\begin{equation}\label{eq:S2}
\left\{\begin{array}{rllllll}
p_+(t)-p_-(t)\!\!\!\!&=&\!\!\!\!\sigma \kappa(f(t)), \\[1ex]
 \langle v_+(t)| \nu(t)\rangle\!\!\!\!&=&\!\!\!\!  \langle v_-(t)| \nu(t)\rangle 
\end{array}
\right.\qquad\text{on $[y=f(t,x)+tV]$},
\end{equation} 
where    $\nu(t) $ is the unit normal at $[y=f(t,x)+tV]$ pointing into $\0_+^V(t)$   
and   $\langle \, \cdot\,|\,\cdot\,\rangle$  the inner product in~$\R^2$.
Additionally, we impose the  far-field boundary  condition
 \begin{equation}\label{eq:S3}
\begin{array}{llllll}
v_\pm(t,x,y)\to (0,V) &\text{for  $|y|\to\infty$ (uniformly in $x$)}.
\end{array}
\end{equation} 
The motion of the free interface  is  described by  the kinematic boundary condition
 \begin{equation}\label{eq:S4}
 \p_tf(t)\, =\, \langle v_\pm(t)| (-f'(t),1)\rangle - V\qquad\text{on   $ [y=f(t,x)+tV],$}
\end{equation}  
and, since we consider $2\pi$-periodic flows,   $f(t),$ $v_\pm(t)$, and $p_\pm(t)$ are assumed to be $2\pi$-periodic with respect to $x$ for all $t\geq 0$. 
Finally, we supplement the system with the initial condition
\begin{equation}\label{eq:S5}
f(0)\, =\, f_0.
\end{equation} 
\end{subequations}

It is convenient to  rewrite the equations \eqref{PB*} in a reference frame that moves with the constant velocity $(0,V).$
To this end we let
\[
\0_\pm(t):= \{(x,y)\in\R^2\,:\, \pm(f(t,x)-y)<0\}=\0_\pm^V(t)-(0,tV),
\]
and 
\[\left\{\begin{array}{rllllll}
P_\pm(t,x,y)\!\!\!\!&=&\!\!\!\!p_\pm(t,x,y+tV), \\[1ex]
 V_\pm(t,x,y)\!\!\!\!&=&\!\!\!\!v_\pm(t,x,y+tV)-(0,V)
\end{array}
\right.\qquad\text{for $t\geq0$ and $(x,y)\in\0_\pm(t).$}
\]
 Direct computations show that \eqref{PB*} is equivalent to
\begin{equation}\label{PB}
\left\{
\begin{array}{rlllll}
 {\rm div}\,  V_\pm(t)\!\!\!\!&=&\!\!\!\! 0 &&\text{in $ \0_{\pm}(t),$}\\[1ex]
\mu_\pm\big(V_\pm(t)+(0,V)\big)\!\!\!\!&=&\!\!\!\!-{k}\big(\nabla P_\pm(t)+(0,\rho_\pm g)\big)&&\text{in $ \Omega_{\pm}(t),$} \\[1ex]
\langle V_+(t)| \nu(t)\rangle\!\!\!\!&=&\!\!\!\! \langle V_-(t)| \nu(t)\rangle&& \text{on $ [y=f(t,x)], $}\\[1ex]
P_+(t)-P_-(t)\!\!\!\!&=&\!\!\!\! \sigma\kappa(f(t))&&\text{on $ [y=f(t,x)], $}\\[1ex]
V_\pm(t,x,y)&\to& 0 &&\text{for  $|y|\to\infty $,}\\[1ex]
\partial_tf(t) \!\!\!\!&=&\!\!\!\!\langle V_\pm(t)| (-f'(t),1)\rangle && \text{on   $ [y=f(t,x)], $}\\[1ex]
f(0)\!\!\!\!&=&\!\!\!\!f_0. 
 \end{array}
\right.
 \end{equation}

 In Proposition \ref{P:21}  we establish the equivalence of the two formulations \eqref{P} and \eqref{PB}. 
It is important to point out that  the function $\overline\omega$  in $\eqref{EP}_1$ is uniquely identified by $f$  in the space $\wh L_2(\s).$ 
(this feature is established rigorously only later on in Theorem \ref{T:I1}). 
 This aspect is   essential  at several places in this paper, see  Proposition \ref{P:21}  and the preparatory lemma below.

\begin{lemma}\label{L:220}
 Given  $f\in H^1(\s)$ and $ \overline\omega \in \wh L_2(\s)$ let  
 \begin{equation}\label{VFD}
\begin{aligned}
 V^1(x,y)&:=-\frac{1}{4\pi} \int_{\s}\overline\omega(s)\frac{\tanh((y-f(s))/2)\big[1+\tan^2((x-s)/2)\big]}{\tan^2((x-s)/2)+\tanh^2((y-f(s))/2) }\, ds,\\[1ex]
 V^2(x,y)&:=\frac{1}{4\pi} \int_{\s}\overline\omega(s)\frac{\tan ((x-s)/2)\big[1-\tanh^2((y-f(s))/2)\big]}{\tan^2((x-s)/2)+\tanh^2((y-f(s))/2) }\, ds
\end{aligned}
\end{equation}
for $(x,y)\in  \R^2 \setminus[y=f(x)] $ and set $V:=(V^1,V^2)$ and
$V_\pm:=V|_{\0_\pm},$  where  $$\0_\pm :=\{(x,y)\in\R^2\,:\, \pm(f(x)-y)<0\}.$$ 
  Then, there exists a constant $C=C(\|f\|_\infty)>0$ such that
 \[
 |V_\pm(x,y)|\leq C\|\overline\omega\|_1e^{-|y|/2} 
 \]
for all $(x,y)\in\0_\pm $ satisfying $|y|\geq 1+2\|f\|_\infty$.
\end{lemma}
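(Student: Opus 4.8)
The plan is to obtain the exponential bound directly from the integral formulas \eqref{VFD} by exploiting the decay of the kernels as $|y|\to\infty$. Fix $(x,y)\in\0_\pm$ with $|y|\geq 1+2\|f\|_\infty$; then for every $s\in\s$ we have $|y-f(s)|\geq |y|-\|f\|_\infty\geq 1$, and in fact $|y-f(s)|\geq |y|/2$. First I would record the elementary facts that for $|u|\geq 1/2$,
\[
1-\tanh^2(u)=\frac{4}{(e^u+e^{-u})^2}\leq 16\,e^{-2|u|},\qquad
\big|1-|\tanh(u)|\big|\leq 2e^{-2|u|},
\]
and consequently $\tanh^2(u)=1-(1-\tanh^2 u)\geq 1-16e^{-2|u|}\geq c_0>0$ once $|u|$ is bounded below by a fixed constant; enlarging the threshold on $|y|$ if necessary (absorbing the change into the constant $C$) we may assume $\tanh^2((y-f(s))/2)\geq 1/2$ for all $s$. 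This is the mechanism that forces the exponential factor $e^{-|y|/2}$ and simultaneously keeps the denominators away from zero.

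Next I would bound the two integrands separately. The denominator $\tan^2((x-s)/2)+\tanh^2((y-f(s))/2)$ is $\geq 1/2$ uniformly, by the previous step. For $V^1$, the numerator is $\tanh((y-f(s))/2)\big[1+\tan^2((x-s)/2)\big]$; the factor $1+\tan^2((x-s)/2)=\sec^2((x-s)/2)$ is not bounded on $\s$, so I would not estimate it in absolute value. Instead I would write $\tanh((y-f(s))/2)=\sign(y)\big(1-(1-|\tanh((y-f(s))/2)|)\big)$ and split: the contribution of the constant $\sign(y)$ part combines with $1+\tan^2$ to give $\sign(y)\,\PV\int_\s \overline\omega(s)\sec^2((x-s)/2)\,ds$; since $\langle\overline\omega\rangle=0$ and $\sec^2((x-s)/2)=\big(\tan((x-s)/2)\big)'$ this principal-value integral vanishes (equivalently, one integrates $\overline\omega$ against the $2\pi$-periodic, mean-free function $\tan((x-\cdot)/2)$ after an integration by parts; this is where $\overline\omega\in\wh L_2$ is used). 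The remaining part has numerator $\big(1-|\tanh((y-f(s))/2)|\big)\big[1+\tan^2((x-s)/2)\big]\leq 2e^{-|y-f(s)|}\sec^2((x-s)/2)$, and combined with the denominator bound $\geq 1/2$ one gets an integrand bounded by $4e^{-|y|/2}\sec^2((x-s)/2)/(\text{denominator})$; but $\sec^2/(\tan^2+\tanh^2)\leq \sec^2/(\tanh^2\cdot\sec^2\cdot\text{something})$ — more carefully, since $\tan^2+\tanh^2\geq \tanh^2\cdot(1+\tan^2)\cdot(\text{const})$ is false in general, I would instead note $\tan^2+\tanh^2\geq c_0(1+\tan^2)$ is also false; the clean bound is simply $\sec^2((x-s)/2)/(\tan^2((x-s)/2)+1/2)\leq 2$, which holds because $\sec^2=1+\tan^2\leq 2(\tan^2+1/2)$. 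Hence the integrand is $\leq 8e^{-|y|/2}$ and $|V^1|\leq C\|\overline\omega\|_1 e^{-|y|/2}$. For $V^2$ the numerator $\tan((x-s)/2)\big[1-\tanh^2((y-f(s))/2)\big]$ already carries the exponential factor $1-\tanh^2\leq 16e^{-|y-f(s)|}\leq 16e^{-|y|/2}$, and $\tan((x-s)/2)/(\tan^2((x-s)/2)+1/2)$ is bounded by a fixed constant ($|t|/(t^2+1/2)\leq 1/\sqrt2$), so no cancellation is needed and $|V^2|\leq C\|\overline\omega\|_1 e^{-|y|/2}$ follows immediately.

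The main obstacle is the $V^1$ estimate: the factor $\sec^2((x-s)/2)$ is unbounded near $s=x$, so a crude absolute-value bound on the numerator fails, and one genuinely needs the mean-zero cancellation of $\overline\omega$ to kill the non-decaying piece $\sign(y)\,\overline\omega(s)\sec^2((x-s)/2)$. Everything else is a matter of the two elementary kernel bounds above and choosing the threshold on $|y|$ large enough (depending only on $\|f\|_\infty$) that $|\tanh((y-f(s))/2)|$ is uniformly close to $1$, which makes all denominators uniformly positive; the final constant $C$ depends only on $\|f\|_\infty$ through this threshold.
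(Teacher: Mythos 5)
Your $V^2$ estimate is fine (and is essentially the paper's), but the central step of your $V^1$ argument does not work. After writing $\tanh((y-f(s))/2)=\sign(y)\bigl(1-(1-|\tanh((y-f(s))/2)|)\bigr)$ you claim that the ``constant $\sign(y)$ part'' of $V^1$ equals $\sign(y)\,\PV\int_{\s}\overline\omega(s)\sec^2((x-s)/2)\,ds$ and that this vanishes because $\langle\overline\omega\rangle=0$. Neither claim is true. The denominator does not disappear: the constant part is
\[
\sign(y)\int_{\s}\overline\omega(s)\,\frac{1+\tan^2((x-s)/2)}{\tan^2((x-s)/2)+\tanh^2((y-f(s))/2)}\,ds,
\]
whose kernel still depends on $s$ through both $\tan$ and $\tanh$, so the mean-zero property of $\overline\omega$ gives no cancellation for it. And even as a stand-alone object, $\PV\int_{\s}\overline\omega(s)\sec^2((x-s)/2)\,ds$ is not zero for mean-free $\overline\omega$ (pairing a mean-free function against a non-constant kernel does not vanish); worse, it is not even well defined for $\overline\omega\in\wh L_2(\s)$, since $\sec^2((x-s)/2)$ has a second-order, non-integrable pole at $s=x-\pi$ which a principal value does not remove, and the proposed integration by parts onto $\tan((x-\cdot)/2)$ would require a derivative of $\overline\omega$ that you do not have and still would not produce zero. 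Note also that your diagnosis of the difficulty is off: in the genuine kernel the factor $1+\tan^2$ is compensated by the denominator, so the $V^1$ kernel is bounded (uniformly once $|y-f(s)|\geq1$); the obstruction is that it tends to $\pm1$, not to $0$, as $|y|\to\infty$ — it is the lack of decay, not unboundedness, that forces the use of $\langle\overline\omega\rangle=0$.

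The correct implementation of the cancellation — and this is what the paper does — is to subtract the constant $\mp1$ (sign according to $\0_\pm$) from the \emph{whole} kernel, which is legitimate precisely because $\int_{\s}\overline\omega\,ds=0$, and then to check the pointwise bound
\[
\Bigl|\frac{\tanh((y-f(s))/2)\bigl[1+\tan^2((x-s)/2)\bigr]}{\tan^2((x-s)/2)+\tanh^2((y-f(s))/2)}\mp1\Bigr|
\leq C(\|f\|_\infty)\,\bigl(1-\tanh(|y|/4)\bigr)\leq C(\|f\|_\infty)\,e^{-|y|/2},
\]
uniformly in $s$, using that the kernel $\mp1$ factors through $\bigl(1\mp\tanh((y-f(s))/2)\bigr)$ and that $|\tanh((y-f(s))/2)|$ is bounded below in terms of $\|f\|_\infty$ on the stated range of $y$. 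With this replacement your outline goes through; your remaining ingredients (the elementary $\mathrm{sech}^2$ bounds, the lower bound on the denominator, the bound $|t|/(t^2+c)\leq C$ for $V^2$) are correct. One small additional point: you cannot simply ``enlarge the threshold on $|y|$'' — the lemma asserts the bound for all $|y|\geq1+2\|f\|_\infty$ — but this is harmless, since on any intermediate range the kernels are bounded (because $|y-f(s)|\geq1$ there), so the crude bound $|V_\pm|\leq C\|\overline\omega\|_1$ can be absorbed into a constant depending only on $\|f\|_\infty$.
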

\begin{proof} Let first $f\neq0$.
 Taking advantage of 
 \[
\max\Big\{\tanh\Big(\frac{\|f\|_\infty}{2}\Big),\tanh\Big(\frac{|y|}{4}\Big)\Big\}\leq\Big|\tanh\Big(\frac{y-f(s)}{2}\Big)\Big|  \qquad \text{for $|y|\geq 2\|f\|_\infty$}, 
 \]
   for $|y|\geq2\|f\|_\infty,$ it follows that
 \begin{align*}
 |V^2_\pm(x,y)|\leq \frac{\|\overline\omega\|_1}{\tanh(\|f\|_\infty/2)} \big|1-\tanh^2(y/4)\big|\leq\frac{\|\overline\omega\|_1}{ \tanh(\|f\|_\infty/2)}e^{-|y|/2}.
 \end{align*}
 In order to estimate   $V^1_\pm$ we use the fact that $\langle\overline\omega\rangle=0$ to derive, after performing some elementary estimates, that  
 \begin{align*}
 |V^1_\pm(x,y)|&\leq  \int_{\s}|\overline\omega(s)|\Big|\frac{\tanh((y-f(s))/2)\big[1+\tan^2((x-s)/2)\big]}{\tan^2((x-s)/2)+\tanh^2((y-f(s))/2) }\mp 1\Big|\, ds \\[1ex]
 &\leq C \|\overline\omega\|_1  (1- \tanh(|y|/4)) \leq C\|\overline\omega\|_1e^{-|y|/2}
 \end{align*}
 for all $|y|\geq2\|f\|_\infty$.
 The claim for $f=0$ follows in a similar way.
\end{proof}
 
 In Proposition \ref{P:21} we show that, given a solution to \eqref{P}, the velocity field in the classical formulation \eqref{PB*} at time $t$   can be expressed in terms of 
   $f:=f(t)$ and $\overline\omega:=\overline\omega(t)$ according to Lemma \ref{L:220}, provided that $f$ and $\overline\omega$ 
   have suitable  regularity properties.
   We point out that a formal derivation of the formula \eqref{VFD}  is provided, in a more general context, in \cite[Section 2]{CCG11}.
  In Lemma~\ref{L:221} we establish further properties of the velocity field  defined in Lemma \ref{L:220}.

\begin{lemma}\label{L:221} Let  $f\in H^2(\s)$ and $ \overline\omega \in \wh H^1(\s)$.   The vector field  $V_\pm$ introduced in Lemma \ref{L:220} belongs to
${\rm C}(\overline{\Omega}_\pm)\cap {\rm C}^1({\Omega_\pm})$,  it is  divergence free and irrotational, and
\begin{equation}\label{VFB}
\begin{aligned}
V_\pm (x,f(x))&=\frac{1}{4\pi} \PV\int_{-\pi}^{\pi}\overline\omega(x-s)\frac{\big(-(T_{[x,s]}f) (1+t_{[s]}^2),t_{[s]}[1-(T_{[x,s]}f)^2]\big) }{t_{[s]}^2+(T_{[x,s]}f)^2 }\, ds\\[1ex]
 &\hspace{0.424cm}\mp\frac{1}{2}\frac{\overline\omega(x)(1,f'(x))}{1+f'^2(x)},\qquad x\in\R.
\end{aligned}
\end{equation} 
Letting further  
\begin{align}\label{Pres}
 P_\pm(x,y):=c_\pm-\frac{\mu_\pm}{k}\int_0^x V_\pm^1(s,\pm d)\, ds-\frac{\mu_\pm}{k}\int_{\pm d}^y V_\pm^2(x,s)\, ds-\Big(\rho_\pm g+\frac{\mu_\pm V}{k}\Big)y
\end{align}
for $ (x,y)\in\ov\0_\pm$,  where $c_\pm\in\R$ and $d>\|f\|_\infty$, it holds that
$ P_\pm\in{\rm C}^1(\overline{\Omega}_\pm)\cap {\rm C}^2({\Omega_\pm})$ and the relations $\eqref{PB}_1$-$\eqref{PB}_3$, $\eqref{PB}_5$ are all satisfied.
\end{lemma}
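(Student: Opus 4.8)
The plan is to exploit the complex structure hidden in the representation \eqref{VFD}. Setting $z=x+iy$, a short trigonometric computation --- using $1+\tan^2a=1/\cos^2a$, $1-\tanh^2b=1/\cosh^2b$ and $\tan^2a+\tanh^2b=(\sin^2a+\sinh^2b)/(\cos^2a\cosh^2b)$ with $a=(x-s)/2$, $b=(y-f(s))/2$ --- shows that
\[
V^2(x,y)+iV^1(x,y)=\frac{1}{4\pi}\int_{-\pi}^{\pi}\ov\omega(s)\,\cot\Big(\frac{z-s-if(s)}{2}\Big)\,ds,
\]
i.e.\ $V^2+iV^1$ is a periodic Cauchy-type integral over the interface. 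For $z$ in a compact subset of $\0_\pm$ the integrand and all its $z$-derivatives are bounded and continuous in $s$, so one may differentiate under the integral sign; since for each fixed $s$ the integrand is holomorphic in $z$ on $\C\setminus(\{s+if(s)\}+2\pi\Z)$, this gives $V_\pm\in {\rm C}^\infty(\0_\pm)$ and shows $V^2+iV^1$ to be holomorphic on $\0_\pm$. The Cauchy--Riemann equations $\p_xV^2=\p_yV^1$ and $\p_yV^2=-\p_xV^1$ then yield $\dv V_\pm=\p_xV^1+\p_yV^2=0$ and $\p_xV^2-\p_yV^1=0$, i.e.\ $V_\pm$ is divergence free and irrotational in $\0_\pm$.

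Next I would establish the continuity $V_\pm\in {\rm C}(\ov\0_\pm)$ together with the trace formula \eqref{VFB}. Writing $ds=(1+if'(s))^{-1}\,d(s+if(s))$ recasts the integral above as a contour integral over the graph $\G:=[y=f(x)]$ of the Cauchy kernel with density $\ov\omega(s)/(1+if'(s))$; here $f\in H^2(\s)\hookrightarrow {\rm C}^{1,1/2}(\s)$, so $\G$ is a ${\rm C}^{1,1/2}$ curve, and $\ov\omega\in H^1(\s)\hookrightarrow {\rm C}^{1/2}(\s)$ is Hölder continuous. Near any interface point, $\cot((z-\zeta)/2)$ differs from the Cauchy kernel $2/(z-\zeta)$ by a remainder that is jointly smooth in $(z,\zeta)$ --- hence contributing a term extending continuously across $\G$ with no jump --- so the classical Plemelj--Sokhotski formulas show that $V^2+iV^1$ extends continuously to each of $\ov\0_+$ and $\ov\0_-$; separating real and imaginary parts of the boundary value and dividing out $1+if'(x)$, the trace at $x+if(x)$ is seen to be exactly the principal value integral together with the jump $\mp\tfrac12\,\ov\omega(x)(1,f'(x))/(1+f'^2(x))$ appearing in \eqref{VFB}, and in particular $V_\pm\in {\rm C}(\ov\0_\pm)$. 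I expect this step to be the main obstacle: carrying the nonperiodic Plemelj theory over to the periodic kernel on a merely ${\rm C}^{1,1/2}$ graph, and checking that the principal value thus obtained coincides with the explicit expression written in \eqref{VFB}; the remaining arguments are essentially bookkeeping.

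Finally I would treat the pressure $P_\pm$ defined by \eqref{Pres}. Differentiation in $y$ gives $\p_yP_\pm=-(\mu_\pm/k)V_\pm^2(x,y)-(\rho_\pm g+\mu_\pm V/k)$ at once. For the $x$-derivative I would differentiate under the integral and use the Cauchy--Riemann relation $\p_xV_\pm^2=\p_yV_\pm^1$ from the first step together with the fundamental theorem of calculus to telescope,
\[
\p_xP_\pm(x,y)=-\frac{\mu_\pm}{k}\,V_\pm^1(x,\pm d)-\frac{\mu_\pm}{k}\int_{\pm d}^{y}\p_xV_\pm^2(x,s)\,ds=-\frac{\mu_\pm}{k}\,V_\pm^1(x,y),
\]
the segment joining $(x,\pm d)$ to $(x,y)$ lying in $\0_\pm$ since $d>\|f\|_\infty$. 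Hence $\nabla P_\pm=-(\mu_\pm/k)V_\pm-(0,\rho_\pm g+\mu_\pm V/k)$ in $\0_\pm$, which is Darcy's law $\eqref{PB}_2$; since the right-hand side belongs to ${\rm C}(\ov\0_\pm)\cap {\rm C}^\infty(\0_\pm)$ by the previous steps, $P_\pm\in {\rm C}^1(\ov\0_\pm)\cap {\rm C}^2(\0_\pm)$. The remaining relations then follow immediately: $\eqref{PB}_1$ is the divergence-free property established in the first step, $\eqref{PB}_5$ is the exponential decay of Lemma \ref{L:220}, and $\eqref{PB}_3$ holds because, by \eqref{VFB}, the jump $V_+(x,f(x))-V_-(x,f(x))=-\ov\omega(x)(1,f'(x))/(1+f'^2(x))$ is parallel to the tangent vector $(1,f'(x))$ and hence orthogonal to the unit normal $\nu$.
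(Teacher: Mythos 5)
Your proposal is correct and follows essentially the same route as the paper: both recast $V_\pm$ as a periodic Cauchy-type (cotangent-kernel) integral over the graph, split off the standard Cauchy kernel from $\cot((\cdot)/2)$ so that the remainder passes continuously across the interface, and invoke Plemelj's formula to get the one-sided traces and the jump in \eqref{VFB}, with interior smoothness, divergence-freeness and irrotationality obtained by differentiating under the integral (your Cauchy--Riemann phrasing is an equivalent packaging of the paper's parameter-integral argument). The only difference is that you write out explicitly the verification of Darcy's law and of $\eqref{PB}_1$--$\eqref{PB}_3$, $\eqref{PB}_5$ for the pressure \eqref{Pres}, which the paper dismisses as simple consequences of Lemmas \ref{L:220}--\ref{L:221}; your computation there is correct.
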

\begin{proof}
The theorem on the differentiation of parameter integrals shows that  $V_\pm $ is continuously differentiable in $\0_\pm,$   divergence free, and irrotational.
In order to show that $V_\pm\in {\rm C}(\overline{\Omega}_\pm)$ it suffices to show that the one-sided limits when approaching a point $(x_0,f(x_0))\in[y=f(x)]$ from $\0_-$ and $\0_+$, respectively,
exist.
To this end we note that the complex conjugate of $(V^1_\pm,V^2_\pm)$ satisfies
\[
\overline{(V^1_\pm,V^2_\pm)}(z)=\frac{1}{4\pi i}\int_\G \frac{g(\xi)}{\tan\big((\xi-z)/2\big)}\, d\xi\qquad \text{for $z=(x,y)\not\in[y=f(x)]$,}
\]
 with  $\G$ being  a $2\pi$-period of the  graph $[y=f(x)]$ and with $g:\G\to \C$ defined by
\[
g(\xi)=-\frac{\overline\omega(s)(1-if'(s))}{1+f'^2(s)}\qquad \text{for $\xi=(s,f(s))\in\G$}. 
\]
Given $z=(x,y)\not\in[y=f(x)]$, it is convenient to write 
\[
\overline{(V^1_\pm,V^2_\pm)}(z)=\frac{1}{4\pi i}\int_\G g(\xi)\Big[\frac{1}{\tan\big((\xi-z)/2\big)}-\frac{1}{(\xi-z)/2}\Big]\, d\xi+\frac{1}{2\pi i}\int_\G  \frac{g(\xi)}{ \xi-z }\, d\xi,
\]
because Lebesgue's theorem now shows that if $z_n$ approaches $z_0=(x_0,f(x_0))$ from $\0_+$ (or $\0_-$), then
\[
\frac{1}{4\pi i}\int_\G g(\xi)\Big[\frac{1}{\tan\big((\xi-z_n)/2\big)}-\frac{1}{(\xi-z_n)/2}\Big]\, d\xi
\underset{n\to \infty}\longrightarrow\frac{1}{4\pi i}\int_\G g(\xi)\Big[\frac{1}{\tan\big((\xi-z_0)/2\big)}-\frac{1}{(\xi-z_0)/2}\Big]\, d\xi.
\]
Moreover, using Plemelj's formula, cf. e.g.  \cite[Theorem 2.5.1]{JKL93},  we find that 
 \[
\frac{1}{2\pi i}\int_\G  \frac{g(\xi)}{ \xi-z_n }\, d\xi\underset{n\to \infty}\longrightarrow\pm\frac{g(z_0)}{2} + \frac{1}{2\pi i}\PV\int_\G  \frac{g(\xi)}{ \xi-z_0 }\, d\xi,
\]
where the $\PV$ is taken at $\xi=z_0$, and we conclude that
\[
\overline{(V^1_\pm,V^2_\pm)}(z_n)\underset{n\to \infty}\longrightarrow\pm\frac{g(z_0)}{2}
+\frac{1}{4\pi i}\PV\int_\G  \frac{g(\xi)}{\tan\big((\xi-z_0)/2\big)} \, d\xi.
\]
The formula \eqref{VFB} and the property $V_\pm\in {\rm C}(\overline{\Omega}_\pm)$  follow at once.
The remaining claims are simple consequences of  Lemma \ref{L:220} and of the already established properties.
\end{proof}

 Using Lemma \ref{L:220} and \ref{L:221},  we conclude this section with the following equivalence result.
 
\begin{prop}[Equivalence of  formulations]\label{P:21}
Let   $T\in(0,\infty] $ be given.
\begin{itemize}
 \item[$(a)$] Let $\sigma=0$. 
The following are equivalent:\\[-1.5ex]
 \begin{itemize}
 \item[$(i)$] the  problem \eqref{PB} for $ f\in   {\rm C}^1([0,T), L_2(\mathbb{S}))$ and
 \begin{align*}
  \bullet &\, \, \, \, f(t)\in  H^2(\mathbb{S}),\, \, \overline\omega(t):=\big\langle (V_-(t)-V_+(t))|_{[y=f(t,x)]} \big|(1,f'(t))\big\rangle \in  \wh H^1(\mathbb{S}),\\ 
  \bullet &\, \, \, V_\pm(t)\in {\rm C}(\overline{\Omega}_\pm(t))\cap {\rm C}^1({\Omega_\pm(t)}), \, P_\pm(t)\in {\rm C}^1(\overline{\Omega}_\pm(t))\cap {\rm C}^2({\Omega_{\pm}(t)}) 
 \end{align*}
 for all $t\in[0,T)$;\\[-2ex]
\item[$(ii)$] the evolution  problem \eqref{P} for $f\in   {\rm C}^1([0,T), L_2(\mathbb{S}))$, $f(t)\in  H^2(\mathbb{S})$, and $\overline\omega(t)\in  \wh H^1(\mathbb{S})$ for all $t\in[0,T)$.\\[-1.5ex]
 \end{itemize}
\item[$(b)$] Let $\sigma>0$. 
The following are equivalent:\\[-1.5ex]
 \begin{itemize}
 \item[$(i)$] the problem \eqref{PB} for $f\in   {\rm C}^1((0,T), L_2(\mathbb{S}))\cap{\rm C}([0,T), L_2(\mathbb{S}))$ and
 \begin{align*}
  \bullet &\, \, \,  f(t)\in  H^4(\mathbb{S}),   \,\, \overline\omega(t):=\big\langle (V_-(t)-V_+(t))|_{[y=f(t,x)]} \big|(1,f'(t))\big\rangle \in  \wh H^1(\mathbb{S}),  \\ 
  \bullet &\, \, \, V_\pm(t)\in {\rm C}(\overline{\Omega}_\pm(t))\cap {\rm C}^1({\Omega_\pm(t)}), \, P_\pm(t)\in {\rm C}^1(\overline{\Omega}_\pm(t))\cap {\rm C}^2({\Omega_{\pm}(t)}) 
 \end{align*}
 for all $t\in(0,T);$\\[-2ex]
\item[$(ii)$] the Muskat problem \eqref{P} for $f\in   {\rm C}^1((0,T), L_2(\mathbb{S}))\cap {\rm C}([0,T), L_2(\mathbb{S}))$, $ f(t)\in  H^4(\mathbb{S})$, and 
$ \overline\omega(t)\in  \wh H^1(\mathbb{S}) $ for all $t\in(0,T)$.
 \end{itemize}
 \end{itemize}
 \end{prop}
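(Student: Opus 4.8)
The plan is to prove $(a)$ in detail and indicate the (essentially identical) modifications needed for $(b)$. The core of the argument is to reconcile the contour-integral formulation \eqref{P} with the PDE system \eqref{PB}, using the explicit velocity field from Lemma~\ref{L:220} as the bridge. For the implication $(i)\Rightarrow(ii)$, I would start from a solution of \eqref{PB} with the stated regularity. The key point is that the velocity field of an incompressible, irrotational flow in the periodic strip, which decays at $|y|\to\infty$ and has prescribed normal-component jump (here zero) and prescribed tangential jump $\overline\omega$ across $[y=f(x)]$, is uniquely given by the single-layer-type integral \eqref{VFD}; this is the periodic Biot--Savart law. Concretely, one checks that the difference between the given $V_\pm$ and the field reconstructed from $f$ and $\overline\omega$ via \eqref{VFD} is a bounded, divergence-free, curl-free (hence harmonic, componentwise) vector field on all of $\R^2$ that is $2\pi$-periodic in $x$, decays as $|y|\to\infty$, and has no jump across the interface (both the normal jump and the tangential jump agree, the latter by the definition of $\overline\omega$ adopted in $(i)$); by Liouville it vanishes. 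Thus $V_\pm$ is exactly the field of Lemma~\ref{L:220}, so by Lemma~\ref{L:221} its boundary trace is \eqref{VFB}. Plugging the tangential component of \eqref{VFB} into the kinematic condition $\eqref{PB}_6$, namely $\p_t f=\langle V_\pm|(-f',1)\rangle$, yields $\eqref{EP}_1$ after a short computation, and one still has to recover $\overline\omega$ itself: combining the two equations of Darcy's law $\eqref{PB}_2$ for the $\pm$ phases, subtracting, using the pressure jump $\eqref{PB}_4$ (which for $\sigma=0$ is just $P_+=P_-$ on the interface, though in $(b)$ it carries the curvature term), differentiating tangentially along the interface, and inserting once more the trace formula \eqref{VFB} for the averaged and jumped velocity, produces exactly $\eqref{EP}_2$. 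This last step is where the arbitrary Atwood number $a_\mu$ and the constant $\Theta$ enter, and it is the step that needs the most care.

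For the converse $(ii)\Rightarrow(i)$, I would simply \emph{define} $V_\pm$ by \eqref{VFD} and $P_\pm$ by \eqref{Pres} (with, say, $c_\pm$ chosen so that the pressure-jump condition holds at one point, and then shown to hold everywhere). By Lemma~\ref{L:221} these have the claimed regularity, $V_\pm$ is divergence free and irrotational, and $\eqref{PB}_1$, $\eqref{PB}_3$, $\eqref{PB}_5$ hold, as does the boundary formula \eqref{VFB}. One then verifies that $\eqref{EP}_2$ is precisely the statement that the tangential derivative of $P_+-P_-$ along the interface vanishes (resp. equals $\sigma\p_x\kappa(f)$ in case $(b)$), using \eqref{VFB} to express the tangential traces appearing in \eqref{Pres}; integrating, and fixing the constant, gives $\eqref{PB}_4$. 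Darcy's law $\eqref{PB}_2$ then follows from the definition \eqref{Pres} by differentiation, again invoking that $V_\pm$ is curl-free so that $\p_y V^1_\pm=\p_x V^2_\pm$ makes the mixed-partials computation consistent. Finally $\eqref{EP}_1$ together with \eqref{VFB} gives the kinematic condition $\eqref{PB}_6$, and one must check that the definition $\overline\omega=\langle (V_--V_+)|_{[y=f(x)]}\,|\,(1,f')\rangle$ is consistent with \eqref{VFB}, which reads off directly from the $\mp\frac12\,\overline\omega(1,f')/(1+f'^2)$ jump term.

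The regularity bookkeeping is routine but must be stated: that $f(t)\in H^2(\s)$ and $\overline\omega(t)\in\wh H^1(\s)$ are exactly what Lemmas~\ref{L:220} and \ref{L:221} require, and the time-regularity $f\in C^1([0,T),L_2(\s))$ transfers unchanged between the two formulations since it only concerns $f$. The far-field decay $\eqref{PB}_5$ for $V_\pm$, and the analogous exponential decay needed to make the integral in \eqref{Pres} (the $\int_{\pm d}^y V^2_\pm$ piece) well-defined as $|y|\to\infty$, are supplied by the estimate in Lemma~\ref{L:220}. For part $(b)$ the only changes are: the interface regularity is upgraded from $H^2$ to $H^4$ so that $\kappa(f)\in H^2\hookrightarrow C^1$ and the pressure jump term $\sigma\kappa(f)$ can be differentiated tangentially, and the time interval is the half-open $(0,T)$ (with continuity up to $0$ in $L_2$) to accommodate the parabolic-smoothing setting in which Theorem~\ref{MT:1} produces solutions; the algebra relating $\eqref{EP}_2$ to the tangential derivative of $\eqref{PB}_4$ is identical except for the extra $\sigma\p_x\kappa(f)$ term, which reappears verbatim in $\eqref{EP}_2$.

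\textbf{Main obstacle.} The delicate point is the step in $(i)\Rightarrow(ii)$ establishing that the given velocity field \emph{coincides} with the one in Lemma~\ref{L:220} — i.e. the uniqueness of the periodic Biot--Savart reconstruction — since one has to argue carefully that the difference field extends to a globally harmonic, bounded, periodic, decaying field across the interface (using that both the normal trace jump vanishes and the tangential trace jump is the prescribed $\overline\omega$), and then invoke a Liouville-type argument in the periodic strip. The bookkeeping of which boundary condition pins down which integral constant ($c_\pm$ in \eqref{Pres}) is a secondary nuisance but entirely mechanical.
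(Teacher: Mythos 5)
Your proposal is correct and follows essentially the same route as the paper: both directions hinge on identifying the velocity field with the representation \eqref{VFD} (via Lemmas~\ref{L:220}--\ref{L:221} and the trace formula \eqref{VFB}), deriving $\eqref{EP}_1$ from the kinematic condition and $\eqref{EP}_2$ by tangential differentiation of the pressure jump combined with Darcy's law, and, conversely, defining $V_\pm$ by \eqref{VFD} and $P_\pm$ by \eqref{Pres} with suitably chosen $c_\pm$. The only cosmetic difference is that where the paper obtains the representation by computing the distributional vorticity $\omega=\overline\omega\,\delta_{[y=f(x)]}$ and citing the $\mu_-=\mu_+$ case, you supply the equivalent Liouville-type uniqueness argument for the difference field directly.
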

\begin{proof}    
To prove the implication $(i)\Rightarrow (ii)$ of $(a)$, let $(f,V_\pm,P_\pm)$ be a solution to \eqref{PB}  on $[0,T)$ and
 choose $t\geq0$ fixed but arbitrary (the time dependence is not written explicitly in this proof).
Letting 
\[
\omega:=\p_x V^2-\p_yV^1\in\mathcal{D}'(\R^2)
\]
denote the vorticity associated to the global velocity field
\[
(V^1,V^2):=V_-{\bf 1}_{\0_-}+V_+{\bf 1}_{\0_+},
\]
 where ${\bf 1}_{\0_\pm}$ is the characteristic function of    $\0_\pm$,
 it follows from $\eqref{PB}_3$ and Stokes' theorem that 
\[
\omega=\overline\omega\delta_{[y=f(x)]},
\]
where
\begin{align*} 
 \overline\omega:= \big\langle(V_- -V_+)|_{[y=f(x)]}\big| (1,f')\big\rangle\in\wh H^1(\s).
\end{align*}
Similarly as in the particular case $\mu_-=\mu_+$, cf. \cite[Proposition 2.2]{MM17x}, we find that the global velocity field
$(V^1,V^2)$ is given by \eqref{VFD}. Lemma \ref{L:221} now shows, together with the kinematic boundary condition,   that $f$ solves    the equation $\eqref{EP}_1.$
 Besides, differentiating the Laplace-Young equation  $\eqref{PB}_4$, the relations $\eqref{PB}_2$ and \eqref{VFB} finally  lead us to  $\eqref{EP}_2,$ and the proof of this implication is complete.
 
 For the reverse implication, we define $V_\pm$ according to \eqref{VFD}, and the pressures by \eqref{Pres}. 
For suitable  $c_\pm$, it follows from $\eqref{EP}_2$ and Lemmas \ref{L:220}-\ref{L:221} that indeed $(f,V_\pm,P_\pm)$ solves \eqref{PB}.

The equivalence stated at $(b)$ follows in a similar way.
\end{proof}

 %%%%%%%%%%%%%%%%%%%%%%%%%%%%%%%%%%%%%%%%%%%%%%%%%%%%%%%%%%%%%%%%%%%
%%%%%%%%%%%%%%%%%%%%%%%%%%%%%%%%%%%%%%%%%%%%%%%%%%%%%%%%%%%%%%%%%%%%
%%%%%%%%%%%%%%%%%%%%%%%%%%%%%%%%%%%%%%%%%%%%%%%%%%%%%%%%%%%%%%%%%%%%
%%%%%%%%%%%%%%%%%%%%%%%%%%%%%%%%%%%%%%%%%%%%%%%%%%%%%%%%%%%%%%%%%%%
%%%%%%%%%%%%%%%%%%%%%%%%%%%%%%%%%%%%%%%%%%%%%%%%%%%%%%%%%%%%%%%%%%%%
%%%%%%%%%%%%%%%%%%%%%%%%%%%%%%%%%%%%%%%%%%%%%%%%%%%%%%%%%%%%%%%%%%%%
\section{The double layer potential and its adjoint}\label{Sec:3}
 %%%%%%%%%%%%%%%%%%%%%%%%%%%%%%%%%%%%%%%%%%%%%%%%%%%%%%%%%%%%%%%%%%%
%%%%%%%%%%%%%%%%%%%%%%%%%%%%%%%%%%%%%%%%%%%%%%%%%%%%%%%%%%%%%%%%%%%%
%%%%%%%%%%%%%%%%%%%%%%%%%%%%%%%%%%%%%%%%%%%%%%%%%%%%%%%%%%%%%%%%%%%%
%%%%%%%%%%%%%%%%%%%%%%%%%%%%%%%%%%%%%%%%%%%%%%%%%%%%%%%%%%%%%%%%%%%
%%%%%%%%%%%%%%%%%%%%%%%%%%%%%%%%%%%%%%%%%%%%%%%%%%%%%%%%%%%%%%%%%%%%
%%%%%%%%%%%%%%%%%%%%%%%%%%%%%%%%%%%%%%%%%%%%%%%%%%%%%%%%%%%%%%%%%%%%
We point out that the equation $\eqref{EP}_2$ is linear with respect to   $\overline\omega(t)$. 
The   main goal of this section is to address the solvability of this equation for $\overline\omega(t)$ in suitable function spaces, cf. Theorems \ref{T:I1} and
\ref{T:I2}. To this end we first associate to  \eqref{EP} two singular operators and study  their mapping properties (see Lemmas \ref{L:32} and \ref{L:31}).
 Finally, in Theorem \ref{T:I3} and Lemma \ref{L:33} we study the properties of  the adjoints of these singular operators.

To begin, we write  $\eqref{EP}_2$  in the more compact form
\begin{align}\label{EP2}
 (1+a_\mu\bA(f))[\overline\omega]  =  \displaystyle\frac{2k}{\mu_-+\mu_+}(\sigma\kappa(f)-\Theta f)' ,
&&\!\!\!\!
\end{align}
where $\bA(f)$ is the linear operator
\begin{align}\label{ADL}
 \bA(f)[\overline\omega](x):=\frac{1}{2\pi }\PV\int_{-\pi}^{\pi}\frac{f'(x)t_{[s]}[1-(T_{[x,s]}f)^2]- (1+t_{[s]}^2)T_{[x,s]}f}{t_{[s]}^2+(T_{[x,s]}f)^2 }\overline\omega(x-s)\, ds.
\end{align}
Given $f\in H^r(\s)$ with $r>3/2$, we prove in Lemma \ref{L:31}  that  $\bA(f)\in\kL(L_2(\s)).$  
Then, it is a matter of direct computation  to verify that $\bA(f)$ is the $L_2$-adjoint of the double layer potential
\begin{align}\label{DL}
 (\bA(f))^*[\xi](x):=\frac{1}{2\pi } \PV\int_{-\pi}^{\pi}\frac{(1+t_{[s]}^2)(T_{[x,s]}f)-f'(x-s)t_{[s]}[1-(T_{[x,s]}f)^2]}{t_{[s]}^2+(T_{[x,s]}f)^2 }\xi(x-s)\, ds.
\end{align}
A main part of the subsequent analysis is devoted to the study of the invertibility of the linear operator $1+a_\mu\bA(f) $  in the algebras $\kL(\wh L_2(\s))$ and  $\kL(\wh H^1(\s))$.
These invertibility properties   enable us    to solve $\eqref{EP2}$ and to formulate \eqref{P} as an evolution equation for $f$ only, that is 
\begin{equation}\label{EVE}
\p_tf=\frac{k}{\mu_-+\mu_+}\bB(f)\big[(1+a_\mu\bA(f))^{-1}[(\sigma\kappa(f)-\Theta f)']\big],
\end{equation}
 where we have associated to $\eqref{EP}_1$ the operator  $\bB(f)$  defined by
 \begin{align}\label{BDL}
  \bB(f)[\overline\omega](x):= \frac{1}{2\pi} \PV\int_{-\pi}^{\pi}\frac{f'(x) (1+t_{[s]}^2)(T_{[x,s]}f)+t_{[s]}[1-(T_{[x,s]}f)^2] }{t_{[s]}^2+(T_{[x,s]}f)^2 }\overline\omega(x-s)\, ds.
 \end{align}
  
As a first result we establish the following mapping properties.

\begin{lemma}\label{L:32} 
 Given $r>3/2$, it holds  that 
 \begin{equation}\label{E:RB}
   \bB\in {\rm C}^\omega (H^r(\s),\kL(\wh L_2(\s)))\cap  {\rm C}^\omega (H^2(\s),\kL(\wh H^1(\s))).
 \end{equation}
\end{lemma}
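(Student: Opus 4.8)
The plan is to regard $\bB(f)$ as a finite sum of multilinear singular integral operators of the abstract type introduced in Appendix~\ref{S:A}, whose coefficients depend real-analytically on $f$, and then to invoke the corresponding boundedness results together with the analyticity of the algebraic operations. First I would expand the kernel of $\bB(f)$. Writing $T_{[x,s]}f=\tanh(\delta_{[x,s]}f/2)$ and recalling that $\delta_{[x,s]}f=\int_0^1 f'(x-\tau s)\,s\,d\tau$, the quotient $\delta_{[x,s]}f/t_{[s]}$ is a smooth function of $s$ (no singularity at $s=0$); likewise $1+t_{[s]}^2$, $t_{[s]}$ and $\tanh$ are smooth. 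Hence the integrand of $\bB(f)$, after multiplying numerator and denominator by a suitable power of $t_{[s]}^{-2}$, takes the form
\[
\frac{1}{s}\,\Psi\!\left(x,s,\frac{\delta_{[x,s]}f}{s},\dots\right)\overline\omega(x-s),
\]
with $\Psi$ analytic in its arguments and with an overall Cauchy-type singularity $1/s$ carried only by the factor $t_{[s]}^{-1}\sim (s/2)^{-1}$. The remaining factors are bounded and (because $r>3/2$ gives $f\in C^1$) the Lipschitz-type quotients $\delta_{[x,s]}f/s$ are controlled by $\|f'\|_\infty$.

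Next I would recognise this as precisely a member of the class of periodic multilinear singular integral operators analysed in Appendix~\ref{S:A}: operators $B^0_{n}(f)[\overline\omega]$ built from one Cauchy kernel $1/t_{[s]}$, finitely many "finite-difference" factors $\delta_{[x,s]}f/t_{[s]}$, and a single factor $\overline\omega(x-s)$, possibly with $f'$ evaluated at shifted points. The stated results there (the periodic analogues of \cite[Theorem~1]{TM86}, proved in the appendix via \cite{M16x,M17x}) give, for $r>3/2$, the estimate $\|B^0_n(f)[\overline\omega]\|_{L_2}\le C(\|f\|_{C^1})\|f'\|_{C^{0}}^{\,n}\|\overline\omega\|_{L_2}$ when only $L_2$ regularity of $\overline\omega$ is used, and the analogous $H^1$-estimate when $f\in H^2$ (so that differentiating the kernel produces at worst $f''\in H^1$ inside, which is an $H^1$-multiplier by the algebra property of $H^1(\s)$). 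Since $\tanh$ and $s\mapsto(1+t_{[s]}^2)/t_{[s]}$ etc.\ are real-analytic, expanding $\Psi$ in its Taylor series in the arguments $\delta_{[x,s]}f/s$ exhibits $\bB(f)$ as a locally uniformly convergent series $\sum_n B^0_n(f)$ with the $n$-th term bounded in $\kL(\wh L_2)$ (resp.\ $\kL(\wh H^1)$) by $C\,\rho^{-n}\|f\|_{C^1}^{\,\text{(finite)}}$ for $\|f\|_{C^1}<\rho/2$; each $B^0_n$ is a polynomial, hence analytic, function of $f$, so the limit is analytic. (The restriction to $\wh L_2$ and $\wh H^1$ is automatic because the kernel is odd in $s$ modulo lower-order even pieces, but in any case one only needs that $\bB(f)$ maps into these spaces, which follows since $\langle \bB(f)[\overline\omega]\rangle$ can be checked directly, or one simply composes with the projection onto zero-mean functions.)

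The main obstacle, and the place where the bulk of the work sits, is not the algebra but the singular-integral estimate itself: one must check carefully that the Cauchy-principal-value operator with this precise periodic kernel — including the shifted evaluations $f'(x-\tau s)$ and the nonlinear dependence through $\tanh$ — falls within the scope of the appendix's boundedness theorem, i.e.\ that after the change of variables and Taylor expansion every term is genuinely of the admissible form $B^0_n$ with the right homogeneity in $f'$. A secondary (routine but tedious) point is the passage from the $L_2$-estimate to the $H^1$-estimate: one commutes $\p_x$ past the integral, uses $\p_x[\delta_{[x,s]}f]=f'(x)-f'(x-s)=\delta_{[x,s]}f'$ to keep the finite-difference structure, and controls the new terms with $f''\in H^1$ using that $H^1(\s)$ is a Banach algebra and that multiplication by an $H^1$ function is bounded on $H^1$. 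Analyticity in $f$ then follows because each term in the (locally uniformly convergent) expansion is multilinear in finitely many copies of $f$ and its derivatives, and a locally uniform limit of analytic maps between Banach spaces is analytic. This gives \eqref{E:RB}.
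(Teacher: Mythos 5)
There is a genuine gap at the heart of your argument: the Taylor expansion of the kernel in powers of $\delta_{[x,s]}f/s$ (equivalently of $T_{[x,s]}f/t_{[s]}$) cannot give \eqref{E:RB} for \emph{all} $f\in H^r(\s)$. The kernel contains the factor $\big[1+(T_{[x,s]}f/t_{[s]})^2\big]^{-1}$, and $z\mapsto (1+z^2)^{-1}$ has radius of convergence $1$; hence your series $\sum_n B^0_n(f)$ converges only under a smallness condition of the type $\|f'\|_\infty<1$ (your own restriction ``$\|f\|_{C^1}<\rho/2$'' is exactly this), while the lemma is asserted on all of $H^r(\s)$. This is precisely why the operators $C_{n,m}$ of Lemma \ref{L:A1} keep the denominators $\prod\big[1+(\delta_{[x,s]}a_i/s)^2\big]$ inside the operator class, with bounds valid for \emph{arbitrary} Lipschitz $a_i$: the correct route is not to expand the kernel in $f$, but to (i) subtract the flat Cauchy-type kernel, so that the differences (the operators $\bB_1(f)$, $\bB_2(f)$ in the paper) are non-singular and are handled by the elementary inequalities \eqref{aa}, and (ii) identify the genuinely singular remainder exactly as $\pi\bB_3(f)[\overline\omega]=C_{0,1}(f)[\overline\omega]+f'C_{1,1}(f)[f,\overline\omega]$, to which Lemma \ref{L:A1} applies with no size restriction. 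Analyticity in $f$ is then obtained by local expansions in the \emph{perturbation} (as in \cite[Section 5]{M16x}), again with the denominators retained, not by a global power series of the kernel.

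Two further steps are asserted too quickly. First, the mapping into the zero-mean spaces is not ``automatic'' and cannot be fixed by composing with a projection (that would change the operator): the paper proves $\langle\bB(f)[\overline\omega]\rangle=0$ by identifying $\bB(f)[\overline\omega]$ with $2\langle V_-|(-f',1)\rangle$ on the interface and applying Stokes' theorem to the divergence-free, decaying field $V_-$ of Lemma \ref{L:220}. Second, for the $\kL(\wh H^1(\s))$ bound, differentiating produces terms such as $C_{1,1}(f)[f',\overline\omega]$ and $C_{2,2}(f,f)[f',f,\overline\omega]$ in which one argument is only $f'\in H^1(\s)$, not Lipschitz; the Banach-algebra property of $H^1(\s)$ does not cover these, and one needs the refined mixed estimates of Lemma \ref{L:A1}\,$(ii)$--$(iii)$ (and a difference-quotient argument to justify the formula for $(\bB_3(f)[\overline\omega])'$ in the first place).
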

\begin{proof} 
Let us first assume that 
\begin{align}\label{DAR1}
 \bB\in {\rm C}^\omega (H^r(\s),\kL(L_{2}(\s)))\cap  {\rm C}^\omega (H^2(\s),\kL(H^1(\s))).
\end{align}
Given $f,\, \overline\omega\in {\rm C}^\infty(\s)$ with $\langle \overline\omega\rangle=0$ let $V_-$ be as defined in Lemma \ref{L:220}.
Observing that 
 \[
 \bB(f)[\overline\omega]=2\langle V_-|_{[y=f(x)]}|(-f',1)\rangle\in {\rm C}(\s), 
\]
Stokes' formula together with Lemmas \ref{L:220}-\ref{L:221}   yields
\begin{align*}
 \frac{1}{2}\langle \bB(f)[\overline\omega]\rangle=\int_\G \langle V_-|\nu\rangle\, d\sigma=\int_{\0_-}{\rm div}\, V_-\, d(x,y)=0,
\end{align*}
and therefore $\bB(f)[\overline\omega]\in \wh L_2(\s)$.
This immediately implies \eqref{E:RB}.

Hence, we are left to establish \eqref{DAR1}.
To this end it is convenient to write
\begin{align*}
 \bB(f)=f'\bB_1(f)-\bB_2(f)+\bB_3(f),
\end{align*}
where
\begin{align*}
 \bB_1(f)[\overline\omega](x)&:=\frac{1}{2\pi} \int_{-\pi}^{\pi}\frac{ t_{[s]}^2  T_{[x,s]}f }{t_{[s]}^2+(T_{[x,s]}f)^2 }
 \overline\omega(x-s)\, ds\\[1ex]
 &\hspace{0.424cm}+\frac{1}{2\pi} \int_{-\pi}^{\pi} \Big[\frac{ T_{[x,s]}f }{t_{[s]}^2+(T_{[x,s]}f)^2 }-\frac{(\delta_{[x,s]}f/2) }{(s/2)^2+(\delta_{[x,s]}f/2)^2}\Big]\overline\omega(x-s)\, ds,\\[1ex]
 \bB_2(f)[\overline\omega](x)&:=\frac{1}{2\pi} \int_{-\pi}^{\pi}\frac{ t_{[s]}  (T_{[x,s]}f)^2 }{t_{[s]}^2+(T_{[x,s]}f)^2 }\overline\omega(x-s)\, ds\\[1ex]
  &\hspace{0.424cm}-\frac{1}{2\pi} \int_{-\pi}^{\pi}  \Big[\frac{t_{[s]} }{t_{[s]}^2+(T_{[x,s]}f)^2 }-\frac{s/2 }{(s/2)^2+(\delta_{[x,s]}f/2)^2}\Big]\overline\omega(x-s)\, ds,\\[1ex]
        \bB_3(f)[\overline\omega](x)&:=\frac{1}{\pi} \PV\int_{-\pi}^{\pi}  \frac{s+f'(x)(\delta_{[x,s]}f)  }{s^2+(\delta_{[x,s]}f)^2}\overline\omega(x-s)\, ds.
\end{align*}
Taking advantage of the relations
\begin{equation}\label{aa}
\begin{aligned}
&\tanh (x)\leq x ,\, \, x\geq0, \qquad x\leq \tan (x),\, \, x\in[0,\pi/2), \\[1ex]
&|\tanh(x)-x|\leq |x|^3 ,\, \, x\in\R, \qquad |\tan (x)-x|\leq |x|\tan^2(x),\, \, |x|<\pi/2, 
\end{aligned}
\end{equation}
it is easy to see that $\bB_i(f)  \in\kL(L_2(\s), L_\infty(\s))$ for  $i\in\{1,2\}$ (and that $\PV$ is not needed).
In fact these mappings are real-analytic, that is
\begin{align}\label{EG1}
 \bB_i\in {\rm C}^\omega(H^r(\s), \kL(L_2(\s), L_\infty(\s))),\quad i\in\{1,2\}.
\end{align}
Furthermore, given $\tau\in(1/2,1)$,   classical (but lengthy) arguments (see \cite[Lemmas 3.2-3.3]{MM17x} where similar integral operators
are discussed) show that 
\begin{align}\label{EG2}
 \bB_i\in {\rm C}^\omega(H^r(\s), \kL(H^\tau(\s), {\rm {\rm C}^1(\s)})),\quad i\in\{1,2\},
\end{align}
and we are left to consider the operator $\bB_3.$

Recalling Lemma \ref{L:A1}, we  see that
\[
\pi\bB_3(f)[\overline\omega]=C_{0,1}(f)[\overline\omega]+f'C_{1,1}(f)[f,\overline\omega],
\]
and Lemma \ref{L:A1} $(i)$ immediately yields $\bB_3(f)\in \kL(L_2(\s)).$
Moreover, arguing as in \cite[Section 5]{M16x}, it follows that 
\begin{align}\label{EG3}
 \bB_3\in {\rm C}^\omega(H^r(\s), \kL(L_2(\s))).
\end{align}
In order to prove that $\bB_3(f)\in \kL(H^1(\s)),$ when additionally $f\in H^2(\s)$, we let $\{\tau_\e\}_{\e\in\R}$ denote the 
 $C_0$-group of right translations, that is $\tau_\e h(x)=h(x-\e)$ for $x\in\R$ and $h\in L_2(\s)$.
 Given $\e>0$ and~$\overline\omega\in H^1(\s)$, it holds that 
  \begin{align*}
  \pi\frac{\tau_\e(\bB_3(f)[\overline\omega])-\bB_3(f)[\overline\omega]}{\e}&=C_{0,1}(\tau_\e f)\Big[\frac{\tau_\e \overline\omega-\overline\omega}{\e}\Big]-C_{2,2}(f,\tau_\e f)\Big[\frac{\tau_\e f-f}{\e}, \tau_\e f+f,\overline\omega\Big]\\[1ex]
  &\hspace{0.424cm}+\frac{\tau_\e f'-f'}{\e}C_{1,1}(\tau_\e f)[\tau_\e f,\tau_\e\overline\omega]+f'C_{1,1}(\tau_\e f)\Big[\tau_\e f, \frac{\tau_\e \overline\omega-\overline\omega}{\e}\Big] \\[1ex]
  &\hspace{0.424cm}+f' C_{1,1}(\tau_\e f)\Big[\frac{\tau_\e f-f}{\e},\overline\omega\Big]-f'C_{3,2}(f,\tau_\e f)\Big[\frac{\tau_\e f-f}{\e}, \tau_\e f+f,f,\overline\omega\Big].
  \end{align*}
  Since
  \begin{align*}
 & \tau_\e \overline\omega\underset{\e\to0}\longrightarrow\overline\omega\quad\text{in $H^1(\s)$,}&&  \tau_\e f\underset{\e\to0}\longrightarrow f\quad\text{in $H^2(\s)$,}\\[1ex]
 &  \frac{\tau_\e \overline\omega-\overline\omega}{\e}\underset{\e\to0}\longrightarrow -\overline\omega'\quad\text{in $L_2(\s)$,}&&  \frac{\tau_\e f-f}{\e}\underset{\e\to0}\longrightarrow -f'\quad\text{in $H^1(\s)$,}
  \end{align*}
  we may pass, in view of Lemma \ref{L:A1} $(i)-(iii)$, to the limit $\e\to0$ in the identity above to conclude that 
  \begin{align*}
  -\pi\frac{\tau_\e(\bB_3(f)[\overline\omega])-\bB_3(f)[\overline\omega]}{\e}&\underset{\e\to0}\longrightarrow
  C_{0,1}( f)[\overline\omega']-2C_{2,2}(f, f)[f', f,\overline\omega]+f''C_{1,1}(f)[f,\overline\omega]\\[1ex]
  &\hspace{0.624cm}+f'C_{1,1}( f) [  f, \overline\omega'] +f' C_{1,1}( f)[f',\overline\omega]-2f'C_{3,2}(f, f)[f', f,f,\overline\omega]
  \end{align*}
  in $L_2(\s).$ This proves that $\bB_3(f)[\overline\omega]\in H^1(\s),$ with 
\begin{equation}\label{Bprime} 
  \begin{aligned}
  \pi (\bB_3(f)[\overline\omega])'&=\pi\bB_3(f)[\overline\omega']-2C_{2,2}(f, f)[f', f,\overline\omega]+f''C_{1,1}(f)[f,\overline\omega]\\[1ex]
  &\hspace{0.424cm} +f' C_{1,1}( f)[f',\overline\omega]-2f'C_{3,2}(f, f)[f', f,f,\overline\omega].
  \end{aligned}
  \end{equation}
  Lemma \ref{L:A1} and the arguments in \cite[Section 5]{M16x}  finally lead us to
\begin{align}\label{EG4}
 \bB_3\in {\rm C}^\omega(H^2(\s), \kL(H^1(\s))),
\end{align}
and \eqref{DAR1} follows now from \eqref{EG1}-\eqref{EG3} and \eqref{EG4}. This completes the proof.
\end{proof}

We now study  the mapping properties of the operator $\bA$ introduced in \eqref{ADL}.

\begin{lemma}\label{L:31} 
 Let $r>3/2$ be given. It then holds  
 \begin{equation}\label{E:RA}
   \bA\in {\rm C}^\omega (H^r(\s),\kL(\wh L_2(\s)))\cap  {\rm C}^\omega (H^2(\s),\kL(\wh H^1(\s))).
 \end{equation}
\end{lemma}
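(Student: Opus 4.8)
The plan is to mirror the proof of Lemma~\ref{L:32}, since $\bA(f)$ and $\bB(f)$ have the same singular structure. First I would decompose
\[
\bA(f)=f'\bA_1(f)-\bA_2(f)+\bA_3(f),
\]
where $\bA_1,\bA_2$ collect the kernels built from $t_{[s]}^2(T_{[x,s]}f)$ and $t_{[s]}(T_{[x,s]}f)^2$ together with the regularizing differences $t_{[s]}/(t_{[s]}^2+(T_{[x,s]}f)^2)-(s/2)/((s/2)^2+(\delta_{[x,s]}f/2)^2)$ (and the analogous one with $T_{[x,s]}f$ in the numerator), while
\[
\bA_3(f)[\overline\omega](x):=\frac{1}{\pi}\PV\int_{-\pi}^{\pi}\frac{f'(x)(\delta_{[x,s]}f)-s}{s^2+(\delta_{[x,s]}f)^2}\,\overline\omega(x-s)\,ds.
\]
As in Lemma~\ref{L:32}, the estimates \eqref{aa} show $\bA_1,\bA_2$ act continuously from $L_2(\s)$ into $L_\infty(\s)$, and from $H^\tau(\s)$ into $\mathrm{C}^1(\s)$ for $\tau\in(1/2,1)$, with real-analytic dependence on $f\in H^r(\s)$; the references \cite[Lemmas 3.2-3.3]{MM17x} apply verbatim. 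For the principal part $\bA_3$, Lemma~\ref{L:A1} identifies $\pi\bA_3(f)[\overline\omega]=f'C_{0,1}(f)[\overline\omega]-C_{1,1}(f)[f,\overline\omega]$ up to renaming, so Lemma~\ref{L:A1}$(i)$ gives $\bA_3(f)\in\kL(L_2(\s))$ and the arguments in \cite[Section~5]{M16x} give real-analyticity $\bA_3\in\mathrm{C}^\omega(H^r(\s),\kL(L_2(\s)))$. To handle the $H^1$-mapping property when $f\in H^2(\s)$, I would again apply the $C_0$-group of translations $\tau_\e$, write the difference quotient $\e^{-1}(\tau_\e(\bA_3(f)[\overline\omega])-\bA_3(f)[\overline\omega])$ as a sum of multilinear operators $C_{j,k}$ evaluated on $\tau_\e$-shifted arguments and on difference quotients of $f$ and $\overline\omega$, and pass to the limit $\e\to0$ using Lemma~\ref{L:A1}$(i)$--$(iii)$ together with the convergences $\tau_\e f\to f$ in $H^2$, $\e^{-1}(\tau_\e f-f)\to -f'$ in $H^1$, $\tau_\e\overline\omega\to\overline\omega$ in $H^1$, $\e^{-1}(\tau_\e\overline\omega-\overline\omega)\to-\overline\omega'$ in $L_2$. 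This yields $\bA_3(f)\in\kL(H^1(\s))$ and, via \cite[Section~5]{M16x}, real-analytic dependence $\bA_3\in\mathrm{C}^\omega(H^2(\s),\kL(H^1(\s)))$. Combining the three pieces proves $\bA\in\mathrm{C}^\omega(H^r(\s),\kL(L_2(\s)))\cap\mathrm{C}^\omega(H^2(\s),\kL(H^1(\s)))$.

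It remains to upgrade from $L_2$ and $H^1$ to the zero-mean subspaces $\wh L_2(\s)$ and $\wh H^1(\s)$, that is, to show $\bA(f)$ maps zero-mean functions to zero-mean functions. Here I would use the same potential-theoretic argument as in Lemma~\ref{L:32}: recalling that $\bA(f)$ is the $L_2$-adjoint of the double layer potential $(\bA(f))^*$, and that for $f,\overline\omega\in\mathrm{C}^\infty(\s)$ with $\langle\overline\omega\rangle=0$ the combination $\bB(f)[\overline\omega]$ arises as a trace of the divergence-free field $V_-$ from Lemma~\ref{L:220}, one checks directly from the kernels in \eqref{ADL} and \eqref{BDL} that $\langle\bA(f)[\overline\omega]\rangle$ equals $2\langle V_-|_{[y=f(x)]}\,|\,(1,f')\rangle$ integrated over a period up to sign, equivalently the flux of $V_-$ (or its conjugate harmonic field) through $\G$; Stokes' theorem applied on $\0_-$ together with $\dv V_-=0$ forces this to vanish. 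The density of $\mathrm{C}^\infty(\s)\cap\wh L_2(\s)$ in $\wh L_2(\s)$ and the already-established boundedness then give $\bA(f)\in\kL(\wh L_2(\s))$ for $f\in H^r(\s)$, and likewise $\bA(f)\in\kL(\wh H^1(\s))$ for $f\in H^2(\s)$, after which the real-analyticity transfers automatically since $\wh L_2(\s)$ and $\wh H^1(\s)$ are closed subspaces. Alternatively, one can compute $\langle\bA(f)[\overline\omega]\rangle$ directly: by Fubini the mean is $\langle\overline\omega\rangle$ times the $x$-average of the kernel, which (after the substitution $s\mapsto -s$ and using oddness/evenness of $t_{[s]}$ and $T_{[x,s]}f$ in an averaged sense) one shows to be a constant independent of $x$, so $\langle\bA(f)[\overline\omega]\rangle$ is a multiple of $\langle\overline\omega\rangle=0$; but the potential-theoretic route is cleaner and already in use in the paper.

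The main obstacle I anticipate is not conceptual but bookkeeping: one must correctly match the numerators $f'(x)t_{[s]}[1-(T_{[x,s]}f)^2]-(1+t_{[s]}^2)T_{[x,s]}f$ against the $C_{j,k}$-templates of Lemma~\ref{L:A1} after subtracting and adding the $\delta_{[x,s]}f$-based ``reference kernels,'' and verify that every leftover piece lands in the smoothing classes $\kL(L_2,L_\infty)$ and $\kL(H^\tau,\mathrm{C}^1)$ via \eqref{aa}. A secondary subtlety is that the analogue of formula \eqref{Bprime} for $(\bA_3(f)[\overline\omega])'$ must be produced in order to run the $M16x$ real-analyticity machinery on $H^2(\s)$; this is the same translation-group computation as for $\bB_3$ and carries over with only notational changes. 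Because $|a_\mu|<1$ plays no role at this stage (it enters only later for invertibility of $1+a_\mu\bA(f)$), the proof is genuinely just a transcription of Lemma~\ref{L:32}, and I would present it as such, writing ``the assertion follows by arguing exactly as in the proof of Lemma~\ref{L:32}'' after exhibiting the decomposition and the zero-mean verification.
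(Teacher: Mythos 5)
Your treatment of the boundedness and analyticity in $\kL(L_2(\s))$ and $\kL(H^1(\s))$ is essentially the paper's own proof: the paper decomposes $\bA(f)=-f'\bB_2(f)-\bB_1(f)+\bA_3(f)$, reusing the smoothing operators $\bB_1,\bB_2$ from Lemma \ref{L:32}, with $\pi\bA_3(f)[\overline\omega]=f'C_{0,1}(f)[\overline\omega]-C_{1,1}(f)[f,\overline\omega]$, and then runs exactly the translation-group computation you describe to obtain the analogue \eqref{Aprime} of \eqref{Bprime}. One slip: the kernel you display for $\bA_3$, namely $(f'(x)\,\delta_{[x,s]}f-s)/(s^2+(\delta_{[x,s]}f)^2)$, is not the principal part of \eqref{ADL}; expanding $t_{[s]}\approx s/2$ and $T_{[x,s]}f\approx \delta_{[x,s]}f/2$ gives $(f'(x)\,s-\delta_{[x,s]}f)/(s^2+(\delta_{[x,s]}f)^2)$, which is what your subsequent identification $f'C_{0,1}(f)[\overline\omega]-C_{1,1}(f)[f,\overline\omega]$ actually encodes, so this is a transcription error rather than a conceptual one.

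The genuine gap is in the zero-mean step. For $\bB$ the trace identity \eqref{VFB} produces the \emph{normal} component, $\bB(f)[\overline\omega]=2\langle V_-|_{[y=f(x)]}|(-f',1)\rangle$, so its mean is a flux through $\G$ and vanishes by the divergence theorem because $\dv V_-=0$. For $\bA$ you instead get the \emph{tangential} component, $(1+\bA(f))[\overline\omega]=2\langle V_-|_{[y=f(x)]}|(1,f')\rangle$, and its integral over a period is a circulation, about which $\dv V_-=0$ says nothing; ``Stokes' theorem together with $\dv V_-=0$'' does not close this step. What kills the circulation is the gradient/irrotational structure, and this is exactly how the paper argues: by Darcy's law (cf. \eqref{Pres} and $\eqref{PB}_2$) one has $V_-=-\tfrac{k}{\mu_-}(\nabla P_-+(0,\rho_-g))-(0,V)$ in $\ov\0_-$ with $P_-$ $2\pi$-periodic in $x$, hence $\langle V_-|(1,f')\rangle=-\tfrac{k}{\mu_-}\tfrac{d}{dx}\big[P_-(x,f(x))\big]-\big(\tfrac{k\rho_-g}{\mu_-}+V\big)f'(x)$, which integrates to zero over a period by periodicity of $P_-(\cdot,f(\cdot))$ and $f$. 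Alternatively, your parenthetical remark can be made into a proof: apply Green's theorem to the rotated (conjugate) field on one periodic cell truncated at $y=-R$, using the irrotationality of $V_-$ from Lemma \ref{L:221}, the $x$-periodicity of $V_-$ for the lateral sides, and the decay estimate of Lemma \ref{L:220} for the bottom as $R\to\infty$; but then the operative hypothesis is curl-freeness, not incompressibility, and this must be said. Your fallback ``direct computation'' is also not a proof as stated: it amounts to the Gauss-type identity that the double layer potential of the density $1$, i.e. $(\bA(f))^*[1]$, is constant along the curve, and that does not follow from the substitution $s\mapsto-s$ and parity considerations alone.
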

\begin{proof}
Pick first $f,\, \overline\omega\in {\rm C}^\infty(\s)$ with $\langle \overline\omega\rangle=0$ and 
let $V_-\in {\rm C}(\ov\0_-)\cap {\rm C}^1(\0_-)$ be as defined in Lemma~\ref{L:220}.
It then holds
\[
2\langle V_-|_{[y=f(x)]}|(1,f')\rangle=(1+\bA(f))[\overline\omega]\in {\rm C}(\s),
\]
and therefore $\bA(f)[\overline\omega]\in \wh L_2(\s)$  if and only if $\langle V_-|_{[y=f(x)]}|(1,f')\rangle\in \wh L_2(\s)$.
The latter property follows from   the periodicity of $f$ and $P_-$, where $P_-\in {\rm C}^1(\ov\0_-)$ is given 
in \eqref{Pres}, with respect to $x$ and the relation
\[
V_-=-\frac{k}{\mu_-}(\nabla P_-+(0,\rho_-g))-(0,V)\quad\text{in $\ov\0_-$}.
\]

We are thus left to prove that 
 \begin{equation}\label{DIR2}
   \bA\in {\rm C}^\omega (H^r(\s),\kL(  L_2(\s)))\cap  {\rm C}^\omega (H^2(\s),\kL(  H^1(\s))).
 \end{equation}
 We proceed as in the previous lemma and write
 \begin{align}\label{FAA} 
 \bA(f)=-f'\bB_2(f)-\bB_1(f)+\bA_3(f),
 \end{align}
 where, using the notation introduced in Lemma \ref{L:A1}, we have
\[
\pi\bA_3(f)[\overline\omega]=f'C_{0,1}(f)[\overline\omega]-C_{1,1}(f)[f,\overline\omega].
\] 
Similarly as  in Lemma \ref{L:32}, we get   
\begin{align}\label{EGG2}
 \bA_3\in  {\rm C}^\omega(H^r(\s), \kL(L_2(\s)))\cap {\rm C}^\omega(H^2(\s), \kL(H^1(\s))),
\end{align}
with
\begin{equation}\label{Aprime} 
  \begin{aligned}
  \pi (\bA_3(f)[\overline\omega])'&=\pi\bA_3(f)[\overline\omega']+f''C_{0,1}(f)[\overline\omega]-2f'C_{2,2}(f, f)[f', f,\overline\omega]\\[1ex]
  &\hspace{0.424cm} - C_{1,1}( f)[f',\overline\omega]+2f'C_{3,2}(f, f)[f', f,f,\overline\omega].
  \end{aligned}
  \end{equation}
  The properties \eqref{EG1}, \eqref{EG2}, and \eqref{EGG2} combined imply \eqref{DIR2}, and the proof is complete.
\end{proof}

We now address the solvability of equation $\eqref{EP2}$.
To this end we first establish  the invertibility of $1+a_\mu\bA(f) $  in   $\kL(\wh L_2(\s))$.

\begin{thm}\label{T:I1}
 Let $r>3/2$ and $M>0$. Then, there exists a constant $C=C(M)>0$ such that 
 \begin{align}\label{EQ:I1}
  \|\overline\omega\|_2\leq C\|(\lambda-\bA(f))[\overline\omega]\|_2 
 \end{align}
for all $\lambda\in\R$ with $|\lambda|\geq1,$ $\overline\omega\in \wh L_2(\s),$ and $f\in H^r(\s)$ with $\|f'\|_{\infty}\leq M.$

In particular, $\{\lambda\in\R\,:\,|\lambda|\geq1\}$  is contained in the resolvent set  $\bA(f)\in \kL(\wh L_2(\s))$ for each $f\in H^r(\s)$. 
\end{thm}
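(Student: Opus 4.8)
The plan is to prove the resolvent estimate \eqref{EQ:I1} directly, since the second assertion (that $\{|\lambda|\ge 1\}$ lies in the resolvent set of $\bA(f)$) follows from it: the estimate shows $\lambda-\bA(f)$ is injective with closed range on $\wh L_2(\s)$, and combined with a continuity/Fredholm argument — deforming $f$ to $0$, for which $\bA(0)$ is a nice translation-invariant operator (in fact $\pi \bA_3(0)[\overline\omega] = -C_{1,1}(0)[0,\overline\omega]=0$, so $\bA(0)=0$) — one gets surjectivity. More precisely, since $\{|\lambda|\ge 1\}$ is connected and $\lambda-\bA(f)$ is invertible for $\lambda$ large (by the Neumann series, as $\bA(f)\in\kL(\wh L_2(\s))$ by Lemma~\ref{L:31}), the uniform a priori bound \eqref{EQ:I1} propagates invertibility along the whole ray by the method of continuity.

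The heart of the matter is the estimate \eqref{EQ:I1}. The natural approach is an energy/positivity argument: test the equation against $\overline\omega$ and show that the quadratic form associated to $\bA(f)$ is, in a suitable sense, dissipative — i.e. $\langle \bA(f)[\overline\omega]\,|\,\overline\omega\rangle_{L_2} \le (1-\theta)\|\overline\omega\|_2^2$ for some $\theta>0$ depending only on $M$, and likewise a bound from below. This is precisely where the structural input comes from: recalling the proof of Proposition~\ref{P:21} and Lemmas~\ref{L:220}--\ref{L:221}, $(1\pm\bA(f))[\overline\omega]/2$ equals $\langle V_\mp|_{[y=f(x)]}\,|\,(1,f')\rangle$, so one has the transmission identity $\overline\omega = \langle(V_- - V_+)|_{[y=f(x)]}\,|\,(1,f')\rangle$ and $\bA(f)[\overline\omega] = \langle(V_- + V_+)|_{[y=f(x)]}\,|\,(1,f')\rangle$. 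Because $V_\pm$ are divergence-free and irrotational in $\0_\pm$ with the right decay at infinity (Lemma~\ref{L:220}) and continuous up to the boundary, Green's formula on $\0_\pm$ applied to the harmonic conjugates (stream functions) yields $\int_{\s}\langle V_\pm|_{[y=f]}\,|\,(1,f')\rangle \cdot (\text{trace of stream function})\,dx \gtrless 0$ with a definite sign, and this sign is exactly what forces $|\langle\bA(f)[\overline\omega]\,|\,\overline\omega\rangle_{L_2}| < \|\overline\omega\|_2^2$ and gives the required strict inequality once one quantifies it in terms of $M=\|f'\|_\infty$.

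In practice I would proceed in the following order. First, split $\bA(f)$ as in \eqref{FAA} and observe that the lower-order pieces $f'\bB_2(f)$ and $\bB_1(f)$ map $L_2(\s)$ compactly into itself (they are bounded into $L_\infty(\s)$ by \eqref{EG1}); so for the a priori estimate it suffices to control $\lambda-\bA_3(f)$ modulo compact and lower-order terms, or alternatively to run the full energy argument keeping track of all terms. Second, establish the key quadratic-form bound: using the potential-theoretic representation above, show there is $\theta=\theta(M)\in(0,1]$ with $\|(\lambda\mp\bA(f))[\overline\omega]\|_2 \ge \theta\|\overline\omega\|_2$ for $|\lambda|\ge 1$; here one exploits that $\langle(\lambda-\bA(f))[\overline\omega]\,|\,\overline\omega\rangle_{L_2}$ has real part bounded below by $(\lambda - (1-\theta))\|\overline\omega\|_2^2$ when $\lambda\ge 1$, and symmetrically for $\lambda\le -1$, using $|\langle\bA(f)\overline\omega|\overline\omega\rangle| \le (1-\theta)\|\overline\omega\|_2^2$. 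Third, deduce \eqref{EQ:I1} with $C=1/\theta$ and then invoke the method of continuity to conclude that $\lambda-\bA(f)$ is bijective on $\wh L_2(\s)$. The main obstacle I anticipate is the second step: making the positivity of the double-layer quadratic form quantitative and uniform in $f$ over the ball $\|f'\|_\infty\le M$. The delicate point is that $\bA(f)$ is not symmetric, so one cannot simply cite a classical double-layer-potential spectral bound; one must instead carefully combine the two one-sided Green's identities for $V_+$ and $V_-$ across the common interface, control the boundary terms in terms of $M$ alone (not higher norms of $f$), and handle the fact that the relevant potentials live on the unbounded strips $\0_\pm$ — which is exactly where the exponential decay from Lemma~\ref{L:220} is used to justify the integration by parts.
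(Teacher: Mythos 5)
Your scaffolding (reduce to smooth data by density, interpret $\tfrac12(1\mp\bA(f))[\overline\omega]$ and $\tfrac12\bB(f)[\overline\omega]$ as tangential and normal traces of $V_\pm$ on $[y=f(x)]$, then pass from an a priori bound to invertibility by the continuity method) is the same as the paper's, but the step carrying all the weight --- your ``key quadratic-form bound'' $|\langle\bA(f)[\overline\omega]\,|\,\overline\omega\rangle_{2}|\le(1-\theta)\|\overline\omega\|_2^2$ with $\theta=\theta(M)$ --- is a genuine gap. $\bA(f)$ is not symmetric, and for interfaces controlled only through $\|f'\|_\infty\le M$ there is no reason its symmetric part should be a strict contraction on $L_2(\s)$; the invertibility theory for $\lambda$ minus the adjoint double layer potential on Lipschitz graphs was developed via Rellich identities precisely because numerical-range/dissipativity arguments fail in that setting. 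The Green's-formula-with-stream-function sign argument you sketch would, at best, give positivity of a single-layer-type energy form (the basis of the symmetrization principle, where $K$ is a contraction in the energy norm, not in $L_2$), and you offer no mechanism to convert it into the unweighted $L_2$ bound you need, uniformly in $\|f'\|_\infty\le M$. So your ``second step'' has no proof, and with it the estimate \eqref{EQ:I1} for $\lambda=\pm1$ and for general $|\lambda|\ge1$ collapses.

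What the paper actually does at this point is different and worth internalizing: applying Stokes' theorem on $\0_\pm$ to the divergence-free quadratic field $\big(2V_\pm^1V_\pm^2,\,(V_\pm^2)^2-(V_\pm^1)^2\big)$ (this is where the decay from Lemma \ref{L:220} justifies the integration by parts) gives the Rellich identity $\mathcal{B}(F_\pm,F_\pm)=0$, i.e.\ \eqref{Ea1}, which couples the tangential trace $(1\mp\bA(f))[\overline\omega]$ to the normal trace $\bB(f)[\overline\omega]$ with weights $1/(1+f'^2)$ and $f'/(1+f'^2)$ controlled by $M$ alone. From it one reads off $\|(1\pm\bA(f))[\overline\omega]\|_2\le C\|\bB(f)[\overline\omega]\|_2$ and hence $\|\overline\omega\|_2\le C\|\bB(f)[\overline\omega]\|_2$ (this is \eqref{pm2}); the general case $|\lambda|\ge1$ is then obtained not by dissipativity but by substituting the algebraic identity for $|(1\mp\bA(f))[\overline\omega]|^2$ in terms of $(\lambda-\bA(f))[\overline\omega]$ into \eqref{Ea1} and eliminating the mixed term, which produces $(\lambda^2-1)\|\overline\omega\|_2+\|\bB(f)[\overline\omega]\|_2\le C\|(\lambda-\bA(f))[\overline\omega]\|_2$ and, together with \eqref{pm2}, the claim. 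Two further minor points: boundedness of $\bB_1(f),\,f'\bB_2(f)$ from $L_2$ into $L_\infty$ does not by itself give compactness on $L_2(\s)$, and in any case a Fredholm reduction could not deliver a constant $C=C(M)$ uniform over the ball $\|f'\|_\infty\le M$, which is exactly what the theorem asserts and what the later sections (e.g.\ Theorem \ref{T:I2} and the uniqueness argument) rely on; your continuity-method endgame, by contrast, is fine and coincides with the paper's.
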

\begin{proof} In view of Lemma \ref{L:31}, it suffices to establish the estimate \eqref{EQ:I1} for $\overline\omega,\, f\in {\rm C}^\infty(\s)$ with $\langle \overline\omega\rangle=0$ and $\|f'\|_{\infty}\leq M.$
Let   $V_\pm\in {\rm C}(\ov{\0}_\pm)\cap {\rm C}^1(\0_\pm)$ be as defined in Lemma \ref{L:220} and set
\begin{align}\label{TF}
 F_\pm:=(F^1_\pm, F^2_\pm):= V_\pm|_{[y=f(x)]}.
\end{align}
We denote by $\tau$ and $\nu$  the tangent and the  outward normal unit vectors at $\p\0_-$ and we decompose $F_\pm$ in tangential and normal components
$F_\pm=F_\pm^\tau+F_\pm^\nu,$ where   
\begin{equation}\label{NTC}
F_\pm^\tau=\mp\frac{(1\mp \bA(f))[\overline\omega]}{2(1+f'^2)}(1,f'),\qquad F^\nu_\pm=\frac{\bB(f)[\overline\omega]}{2(1+f'^2)} (-f',1),
\end{equation}
cf. \eqref{VFB}. Recalling the Lemmas \ref{L:32}-\ref{L:31}, we may view     $F_\pm^\tau$ and $F^\nu_\pm$ as being elements of $ L_2(\s,\R^2).$
  
  We next introduce the bilinear form 
 $\mathcal{B}:L_2(\mathbb{S},\mathbb{R}^2)\times L_2(\mathbb{S},\mathbb{R}^2)\to\mathbb{R}$ by the formula
\begin{equation*}
\mathcal{B}(F,G):=\int_{\s}G^2\langle F|(-f',1) \rangle +F^2\langle G|(-f',1) \rangle -\langle  F|G \rangle\, dx
\end{equation*}
 for $ F=(F^1,F^2),\, G=(G^1,G^2)\in L_2(\mathbb{S},\mathbb{R}^2).$  
Inserting the vector fields $F_\pm$ in \eqref{TF}, we find by using  Lebesgue's dominated convergence theorem, Stokes' formula,   and the Lemmas \ref{L:220}-\ref{L:221} 
that
\begin{align}\label{BB}
 \mathcal{B}(F_\pm,F_\pm)&=\int_{\G}\Big\langle\Big(\begin{array}{ccc}
                             2F_\pm^1F_\pm^2\\[0.5ex]
                             (F_\pm^2)^2-(F_\pm^1)^2
                             \end{array}\Big)\Big|\nu\Big\rangle\, d\sigma                            
                            =\mp\int_{\Omega_\pm}{\rm div\, }  \begin{pmatrix} 
                             2 V_\pm^1 V_\pm^2\\[0.5ex]
                             ( V_\pm^2)^2-( V_\pm^1)^2
                             \end{pmatrix} \, d(x,y)=0,
 \end{align}
 where $\Gamma$ denotes again a period of the graph $[y=f(x)].$
Moreover, in virtue of \eqref{NTC}, we may write  \eqref{BB} equivalently  as  
\begin{equation}\label{Ea1}
 \int_\mathbb{S}\frac{1}{1+f'^2}\Big[\big|\bB(f)[\overline\omega]\big|^2\mp2f'\big(\bB(f)[\overline\omega]\big)(1\mp\bA(f))[\overline\omega]-   \big|(1\mp\mathbb{A}(f))[\overline\omega]\big|^2\Big]\, dx=0,
 \end{equation}
 and, recalling that  $\|f'\|_{\infty}\leq M$, we infer from \eqref{Ea1} that  
\begin{align*}  
\|(1\pm\mathbb{A}(f))[\overline\omega]\|_2\leq C\| \bB(f)[\overline\omega]\|_2,
 \end{align*}
 with a positive constant $C=C(M)$.
In particular we get
\begin{align} \label{pm2}
  \|\overline\omega\|_2=\frac{1}{2}\|(1+\mathbb{A}(f))[\overline\omega]+(1-\mathbb{A}(f))[\overline\omega]\|_2\leq C\|\bB(f)[\overline\omega]\|_2.
\end{align}

Given $\lambda\in\R$ with $|\lambda|\geq1,$ it holds that 
\begin{align*}
  \big|(1\mp\mathbb{A}(f))[\overline\omega]\big|^2=  |(\lambda-\mathbb{A}(f))[\overline\omega] |^2
  -2(\lambda\mp1)\overline\omega(\lambda-\mathbb{A}(f))[\overline\omega] + (\lambda\mp 1)^2|\overline\omega|^2,
 \end{align*}
  and   eliminating the mixed term on the right hand side we obtain together with \eqref{Ea1}  that 
\begin{align*} 
\int_\mathbb{S}\frac{1}{1+f'^2}\Big[(\lambda^2-1)|\overline\omega|^2+ \big|\bB(f)[\overline\omega]\big|^2
 -|(\lambda-\mathbb{A}(f))[\overline\omega]|^2-2f' \big(\bB(f)[\overline\omega]\big)(\lambda-\mathbb{A}(f))[\overline\omega]\Big]\, dx=0,
 \end{align*}
 from where we conclude that  
 \[
 (\lambda^2-1)\|\overline\omega\|_2+\| \bB(f)[\overline\omega]\|_2\leq C \|(\lambda-\mathbb{A}(f))\overline\omega\|_2,
 \]
 with a constant  $C=C(M).$ 
 The latter estimate and  \eqref{pm2}   yield  \eqref{EQ:I1}.
 That   $\{\lambda\in\R\,:\,|\lambda|\geq1\}$ belongs to the resolvent set of $\bA(f)\in \kL(\wh L_2(\s))$ for all $f\in H^r(\s)$ is a straightforward consequence
  of \eqref{EQ:I1}, Lemma \ref{L:31},  and of the continuity method, 
 cf. e.g.\cite[Proposition I.1.1.1]{Am95}.
\end{proof}

The following remark is relevant in Section \ref{Sec:6} in the stability analysis of the Muskat problem.
\begin{rem}\label{R:2} The estimate 
 \begin{align*}  
\| \overline\omega \|_2\leq C\| \bB(f)[\overline\omega]\|_2
 \end{align*}
  derived in \eqref{pm2}  enables us to identify the equilibrium solutions to the Muskat problem \eqref{P} (see \eqref{EVE}) as being the  solutions  to the capillarity equation
 \begin{align}\label{CEqu}
 (\sigma \kappa(f)-\Theta f)'=0.
 \end{align}
\end{rem}

We now establish the invertibility of $1+a_\mu\bA(f) $  in the algebra  $\kL(\wh H^1(\s))$ under the assumption that  $f\in H^2(\s)$.

\begin{thm}\label{T:I2}
 Let   $M>0$. Then, there exists a constant $C=C(M)>0$ such that 
 \begin{align}\label{EQ:I2}
  \|\overline\omega\|_{H^1}\leq C\|(\lambda-\bA(f))[\overline\omega]\|_{H^1} 
 \end{align}
for all $\lambda\in\R$ with $|\lambda|\geq1,$ $\overline\omega\in \wh H^1(\s),$ and $f\in H^2(\s)$ with $\|f\|_{H^2}\leq M.$

In particular, $\{\lambda\in\R\,:\,|\lambda|\geq1\}$ is contained in the resolvent set  of $\bA(f)\in \kL(\wh H^1(\s))$ for each $f\in H^2(\s)$. 
\end{thm}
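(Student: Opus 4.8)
The plan is to mimic the structure of the proof of Theorem~\ref{T:I1}, reducing the $H^1$-estimate to the already-established $L_2$-estimate \eqref{EQ:I1} via a commutator-type argument for the derivative. The key algebraic input is the differentiation formula \eqref{Aprime}, which expresses $(\bA(f)[\overline\omega])'$ as $\bA(f)[\overline\omega']$ plus lower-order terms that are bounded on $L_2(\s)$ and depend analytically on $f\in H^2(\s)$; here I would also use the decomposition \eqref{FAA} of $\bA(f)$ into $-f'\bB_2(f)-\bB_1(f)+\bA_3(f)$, with $\bB_1,\bB_2$ mapping $L_2$ (indeed $H^\tau$) into $\mathrm{C}^1$ by \eqref{EG1}--\eqref{EG2}, so that only $\bA_3$ genuinely contributes a term of the form $\bA_3(f)[\overline\omega']$, and the full commutator formula \eqref{Aprime} handles the rest.

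First I would observe that since $\overline\omega\in\wh H^1(\s)$ we have $\overline\omega'\in\wh L_2(\s)$ (the mean of a derivative on the circle vanishes), so applying \eqref{EQ:I1} to $\overline\omega'$ in place of $\overline\omega$ is legitimate. Writing $g:=(\lambda-\bA(f))[\overline\omega]$, I would differentiate to get $g'=(\lambda-\bA(f))[\overline\omega'] - R(f)[\overline\omega]$, where $R(f):=\left(\,\cdot\mapsto\bigl(\bA(f)[\,\cdot\,]\bigr)'-\bA(f)[(\,\cdot\,)']\right)$ is, by \eqref{Aprime} together with \eqref{EG1}--\eqref{EGG2}, a bounded operator in $\kL(L_2(\s))$ whose norm is controlled by a constant $C=C(M)$ depending only on $\|f\|_{H^2}\le M$ (via Sobolev embedding $H^2(\s)\hookrightarrow\mathrm{C}^1(\s)$ to bound $f',f''$ in $L_\infty$, and products of such factors with the bounded singular operators $C_{i,j}$ from Lemma~\ref{L:A1}). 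Hence $(\lambda-\bA(f))[\overline\omega'] = g' + R(f)[\overline\omega]$, and applying \eqref{EQ:I1} gives
\begin{align*}
\|\overline\omega'\|_2 \le C\|g'+R(f)[\overline\omega]\|_2 \le C\bigl(\|g'\|_2 + \|\overline\omega\|_2\bigr).
\end{align*}
Combining this with \eqref{EQ:I1} applied directly to $\overline\omega$ (which bounds $\|\overline\omega\|_2$ by $C\|g\|_2$) yields $\|\overline\omega\|_{H^1}\le C(\|g\|_2+\|g'\|_2)=C\|g\|_{H^1}$, which is exactly \eqref{EQ:I2}.

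The final sentence, that $\{\lambda\in\R:|\lambda|\ge1\}$ lies in the resolvent set of $\bA(f)\in\kL(\wh H^1(\s))$, then follows from \eqref{EQ:I2}, the analyticity $\bA\in\mathrm{C}^\omega(H^2(\s),\kL(\wh H^1(\s)))$ from Lemma~\ref{L:31}, and the method of continuity (cf. \cite[Proposition I.1.1.1]{Am95}), exactly as at the end of the proof of Theorem~\ref{T:I1}: the estimate shows $\lambda-\bA(f)$ is injective with closed range for all such $\lambda$, and one connects to $\lambda$ large (where $\lambda-\bA(f)$ is invertible by a Neumann series, since $\bA(f)$ is bounded) along the ray $|\lambda|\ge1$ to conclude surjectivity. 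The main obstacle I anticipate is purely bookkeeping: verifying carefully that every term on the right-hand side of \eqref{Aprime} — in particular those involving $f''$ — is bounded on $L_2(\s)$ with the bound depending only on $\|f\|_{H^2}$, which requires the $L_2$-boundedness of the multilinear operators $C_{i,j}$ (Lemma~\ref{L:A1}) together with $L_\infty$-bounds on $f'$ and $f''$ coming from the embedding $H^2(\s)\hookrightarrow\mathrm{C}^1(\s)$ and, for $f''$, from treating $f''C_{1,1}(f)[f,\overline\omega]$ as a product of an $L_2$-function with an $L_\infty$-function (since $C_{1,1}(f)[f,\overline\omega]\in\mathrm{C}^1(\s)$ by the mapping properties used in Lemma~\ref{L:32}), so that no $H^2$-norm of $f$ beyond $M$ enters.
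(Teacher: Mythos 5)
Your overall skeleton is the same as the paper's: differentiate, use \eqref{FAA} and \eqref{Aprime} to write $(\bA(f)[\overline\omega])'=\bA(f)[\overline\omega']+T^A_{\rm lot}(f)[\overline\omega]$, apply the $L_2$-estimate \eqref{EQ:I1} to $\overline\omega'\in\wh L_2(\s)$, and conclude with the continuity method. The gap lies in your key quantitative claim that the commutator $R(f)=T^A_{\rm lot}(f)$ is bounded on $L_2(\s)$ with norm depending only on $M$. That is not established by the tools you cite, and it is not what the paper proves. The obstruction is the terms containing $f''$, for instance $f''C_{0,1}(f)[\overline\omega]$ and the contribution $f''\bB_2(f)[\overline\omega]$ hidden in $(f'\bB_2(f)[\overline\omega])'-f'\bB_2(f)[\overline\omega']$: for $f\in H^2(\s)$ the embedding $H^2(\s)\hookrightarrow{\rm C}^1(\s)$ controls $f$ and $f'$ in $L_\infty$, but it gives no $L_\infty$-bound on $f''$, which is merely an $L_2$-function. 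For $\overline\omega\in L_2(\s)$ one only knows $C_{0,1}(f)[\overline\omega]\in L_2(\s)$ (Lemma \ref{L:A1} $(i)$), so such products are in general only in $L_1$, not in $L_2$. The $L_\infty$- and ${\rm C}^1$-mapping properties you invoke to fix this --- \eqref{REF3} of Lemma \ref{L:A1} $(iii)$ and \eqref{EG2} --- require the density argument to lie in $H^\tau(\s)$ with $\tau>1/2$, not in $L_2(\s)$. This is precisely why the paper's estimate \eqref{Aprime3} reads $\|T^A_{\rm lot}(f)[\overline\omega]\|_2\le C\|\overline\omega\|_{H^\tau}$ and not $\le C\|\overline\omega\|_2$.

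With the correct bound, your chain only yields $\|\overline\omega'\|_2\le C\bigl(\|g'\|_2+\|\overline\omega\|_{H^\tau}\bigr)$, and the argument is not finished, since $\|\overline\omega\|_{H^\tau}$ is not controlled by \eqref{EQ:I1}. The missing step (present in the paper) is to absorb this term: by interpolation \eqref{IP} and Young's inequality, $\|\overline\omega\|_{H^\tau}\le\|\overline\omega\|_2^{1-\tau}\|\overline\omega\|_{H^1}^\tau\le\varepsilon\|\overline\omega\|_{H^1}+C(\varepsilon)\|\overline\omega\|_2$, with $\varepsilon$ chosen small relative to the constant $c=c(M)$ appearing in the lower bound, after which combining with \eqref{EQ:I1} gives \eqref{EQ:I2}. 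Your final step (injectivity plus the method of continuity to obtain the resolvent statement) is fine and matches the paper.
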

\begin{proof}
 Recalling \eqref{EQ:I1}, we are left  to estimate the term $\|((\lambda-\bA(f))[\overline\omega])'\|_{2} $ suitably.
 To this end, we infer from \eqref{FAA} and \eqref{Aprime} that 
 \begin{equation}\label{Aprime1} 
  \begin{aligned}
  (\bA(f)[\overline\omega])'&=\bA(f)[\overline\omega']+T^A_{\rm lot}(f)[\overline\omega],
  \end{aligned}
  \end{equation}
  where the operator $T^A_{\rm lot}(f)$  defined by
   \begin{equation}\label{Aprime2} 
  \begin{aligned}
  \pi T^A_{\rm lot}(f)[\overline\omega]&:= f''C_{0,1}(f)[\overline\omega]-2f'C_{2,2}(f, f)[f', f,\overline\omega] - C_{1,1}( f)[f',\overline\omega]\\[1ex]
  &\hspace{0.424cm}+2f'C_{3,2}(f, f)[f', f,f,\overline\omega] -\pi\big((f'\bB_2(f)[\overline\omega])'-f'\bB_2(f)[\overline\omega']\big)\\[1ex]
  &\hspace{0.424cm}-\pi\big((\bB_1(f)[\overline\omega])'-\bB_1(f)[\overline\omega']\big),
  \end{aligned}
  \end{equation}
   encompasses all lower order terms of  $(\bA_3(f)[\overline\omega])'$ with respect to $\overline\omega$ as, for each  $\tau\in(1/2,1)$ fixed, it holds
     \begin{equation}\label{Aprime3} 
  \|T^A_{\rm lot}(f)[\overline\omega]\|_2\leq C  \|\overline\omega\|_{H^\tau}\qquad\text{for all $\overline\omega\in \wh H^1(\s)$,}
  \end{equation}
  with $C=C(M)$.
   Indeed, letting $r:=(9-2\tau)/4,$ it  follows that $r\in(3/2,2) $ and $\tau\in(5/2-r,1)$, and  the estimates   \eqref{REF1} and \eqref{REF3} yield 
   \begin{equation*} 
   \|f''C_{0,1}(f)[\overline\omega]-2f'C_{2,2}(f, f)[f', f,\overline\omega] - C_{1,1}( f)[f',\overline\omega]
   +2f'C_{3,2}(f, f)[f', f,f,\overline\omega]\|_2\leq C\|\overline\omega\|_{H^\tau}.
   \end{equation*}
   Moreover, it follows from \eqref{EG1}-\eqref{EG2} and the compactness of the embedding $H^2(\s)\hookrightarrow H^r(\s)$ that also 
   \begin{equation*} 
   \|(f'\bB_2(f)[\overline\omega])'\|_2+\|(\bB_1(f)[\overline\omega])'\|_2\leq C\|\overline\omega\|_{H^\tau}.
   \end{equation*}
Finally, using  integration by parts in the formulas defining $\bB_1(f) $ and $\bB_2(f)$  we get
    \begin{equation*} 
   \|f'\bB_2(f)[\overline\omega'] \|_2+\|\bB_1(f)[\overline\omega']\|_2\leq C\|\overline\omega\|_{2},
   \end{equation*}
   and \eqref{Aprime3} follows.
   
   Invoking \eqref{EQ:I1} and \eqref{Aprime3} we find a constant  $c=c(M)\in(0,1)$ with
   \begin{align*}
  2\|(\lambda-\bA(f))[\overline\omega]\|_{H^1}& \geq   \|(\lambda-\bA(f))[\overline\omega]\|_{2}+\|((\lambda-\bA(f))[\overline\omega])'\|_{2} \\[1ex]
 &\geq \|(\lambda-\bA(f))[\overline\omega]\|_{2}+\|(\lambda-\bA(f))[\overline\omega']\|_{2}-\|T^A_{\rm lot}(f)[\overline\omega]\|_2\\[1ex]
 &\geq c\|\overline\omega\|_{H^1}- \frac{1}{c}\|\overline\omega\|_{H^\tau},
   \end{align*}
 and since by \eqref{IP}\footnote{Letting $[\,\cdot\,,\,\cdot\, ]_\theta $ denote complex interpolation functor, it is well-known that
\begin{align}\label{IP}
[H^{s_0}(\s),H^{s_1}(\s)]_\theta=H^{(1-\theta)s_0+\theta s_1}(\s),\qquad\theta\in(0,1),\, -\infty< s_0\leq s_1<\infty.
\end{align} } and Young's inequality
  $$\|\overline\omega\|_{H^\tau}\leq \|\overline\omega\|_2^{1-\tau}\|\overline\omega\|_{H^{1}}^\tau\leq \frac{c^2}{2}\|\overline\omega\|_{H^{1}}+C'\|\overline\omega\|_2,$$
  for some $C'=C'(M),$
  it follows that 
    \begin{align*}
  4\|(\lambda-\bA(f))[\overline\omega]\|_{H^1}\geq c\|\overline\omega\|_{H^1}- \frac{2C'}{c}\|\overline\omega\|_{2}.
   \end{align*}
   This estimate together with \eqref{EQ:I1} leads us to \eqref{EQ:I2} and the proof is complete.
\end{proof}

We conclude this section by considering the adjoints  of the operators defined in   \eqref{ADL} and \eqref{BDL}. 
Firstly we establish a similar estimate as in Theorem \ref{T:I2} for the operator $P(\bA(f))^*,$ where $(\bA(f))^*$ is the  double layer potential, cf. \eqref{DL},
and where  $P:L_2(\s)\to\wh L_2(\s)$, with $Ph:=h-\langle h\rangle,$ denotes the orthogonal projection on $\wh L_2(\s)$.
This estimate is important later  on in the uniqueness proof of Theorem \ref{MT:1}.  
Recalling that $(\bA(f))^*\in\kL(L_2(\s))$ is the $L_2$-adjoint of $\bA(f)\in\kL(L_2(\s))$, we obtain for $\overline\omega,\,\xi\in \wh L_2(\s)$ that
\[
\langle \bA(f)[\overline\omega],\xi \rangle_2=\langle \overline\omega,(\bA(f))^*[\xi] \rangle_2=\langle \overline\omega, P(\bA(f))^*[\xi] \rangle_2,
\]
meaning that the adjoint  $\big(\wh{\bA(f)}\big)^*:=\big(\bA(f)|_{\wh L_2(\s)}\big)^*\in\kL(\wh L_2(\s))$ is given by
$\big(\wh{\bA(f)}\big)^*=P(\bA(f))^*.$

\begin{thm}\label{T:I3}
 Let   $M>0$. Then, there exists a constant $C=C(M)>0$ such that 
 \begin{align}\label{EQ:I3}
  \|\xi\|_{H^1}\leq C\|\big(\lambda-\big(\wh{\bA(f)}\big)^*\big)[\xi]\|_{H^1} 
 \end{align}
for all $\lambda\in\R$ with $|\lambda|\geq1,$ $\xi\in \wh H^1(\s),$ and $f\in H^2(\s)$ with $\|f\|_{H^2}\leq M.$

In particular, $\{\lambda\in\R\,:\,|\lambda|\geq1\}$ is contained in the resolvent set  of $\big(\wh{\bA(f)}\big)^*\in \kL(\wh H^1(\s))$ for each $f\in H^2(\s)$. 
\end{thm}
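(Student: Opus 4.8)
The plan is to mirror the proof of Theorem~\ref{T:I2}, replacing $\bA(f)$ by its $L_2$-adjoint throughout. First I would observe that the $L_2$-version of \eqref{EQ:I3} comes for free: since $\bA(f)|_{\wh L_2(\s)}\in\kL(\wh L_2(\s))$ by Lemma~\ref{L:31} and $\big(\wh{\bA(f)}\big)^*=P(\bA(f))^*$ is precisely its $L_2$-adjoint, one has $\lambda-\big(\wh{\bA(f)}\big)^*=\big(\lambda-\bA(f)|_{\wh L_2(\s)}\big)^*$ for every $\lambda\in\R$; by Theorem~\ref{T:I1}, for $|\lambda|\geq1$ and $\|f\|_{H^2}\leq M$ the operator $\lambda-\bA(f)|_{\wh L_2(\s)}$ is invertible in $\kL(\wh L_2(\s))$ with inverse of norm $\leq C(M)$, hence so is its adjoint, and therefore
\[
\|\eta\|_2\leq C(M)\,\big\|\big(\lambda-\big(\wh{\bA(f)}\big)^*\big)[\eta]\big\|_2\qquad\text{for all $\eta\in\wh L_2(\s)$,}
\]
an estimate to be used below with $\eta=\xi$ and with $\eta=\xi'\in\wh L_2(\s)$.

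The core of the argument is the analogue of the commutator identity \eqref{Aprime1}--\eqref{Aprime3} for $\big(\wh{\bA(f)}\big)^*$. Fix $\tau\in(1/2,1)$. I would show that for $\|f\|_{H^2}\leq M$ and $\xi\in\wh H^1(\s)$ one has $\big(\wh{\bA(f)}\big)^*[\xi]\in\wh H^1(\s)$ together with
\[
\big(\big(\wh{\bA(f)}\big)^*[\xi]\big)'=\big(\wh{\bA(f)}\big)^*[\xi']+T^{A^*}_{\rm lot}(f)[\xi],\qquad\|T^{A^*}_{\rm lot}(f)[\xi]\|_2\leq C(M)\|\xi\|_{H^\tau}.
\]
Since $P$ commutes with $\p_x$, it suffices to treat $(\bA(f))^*$. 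Pairing \eqref{Aprime1} with $\xi$, integrating by parts, and using that $(\bA(f))^*$ is the $L_2$-adjoint of $\bA(f)$, one gets $\langle\overline\omega',(\bA(f))^*[\xi]\rangle_2=-\langle\overline\omega,(\bA(f))^*[\xi']\rangle_2-\langle T^A_{\rm lot}(f)[\overline\omega],\xi\rangle_2$ for all $\overline\omega\in\wh H^1(\s)$, which identifies the distributional derivative of $(\bA(f))^*[\xi]$ as $(\bA(f))^*[\xi']+R(f)[\xi]$, where $R(f)$ is the $L_2$-adjoint of $T^A_{\rm lot}(f)$. Reading off the structure of $T^A_{\rm lot}(f)$ from \eqref{Aprime2} --- after first replacing the terms $f'\bB_2(f)[\overline\omega']$, $\bB_1(f)[\overline\omega']$ by $f'\wt\bB_2(f)[\overline\omega]$, $\wt\bB_1(f)[\overline\omega]$ via integration by parts in the integrals defining $\bB_1(f),\bB_2(f)$, where $\wt\bB_i(f)\in\kL(L_2(\s),L_\infty(\s))$, exactly as at the end of the proof of Theorem~\ref{T:I2} --- one sees that $R(f)$ is a finite sum of operators of the form $\xi\mapsto\Psi(f)^*[g\xi]$, with $g\in\{1,f',f''\}$ and $\Psi(f)$ one of the $L_2(\s)$-bounded operators $C_{0,1}(f),C_{1,1}(f),C_{2,2}(f,f),C_{3,2}(f,f)$ of Lemma~\ref{L:A1}, or one of $\wt\bB_1(f),\wt\bB_2(f)$, plus operators of the form $\xi\mapsto\bB_i(f)^*[g\xi']$ ($g\in\{1,f'\}$) originating from the terms $(f'\bB_2(f)[\overline\omega])'$, $(\bB_1(f)[\overline\omega])'$. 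The first type is estimated using $\xi\in H^\tau(\s)\hookrightarrow L_\infty(\s)$, $g\in L_2(\s)$, so $g\xi\in L_2(\s)$ with $\|g\xi\|_2\leq C(M)\|\xi\|_{H^\tau}$, and $\Psi(f)^*\in\kL(L_2(\s))$; the second type is estimated by transferring the $x$-derivative off $\xi'$ onto the kernel of $\bB_i(f)$ --- legitimate since this kernel is of class ${\rm C}^1$ in $x$ when $f\in H^2(\s)$ --- which reduces it to integral operators with bounded kernels applied to a product of $\xi\in L_2(\s)$ with an $L_2(\s)$-function, and such operators map $L_1(\s)$ into $L_2(\s)$. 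Finally $(\bA(f))^*[\xi']=\big(\wh{\bA(f)}\big)^*[\xi']+\langle(\bA(f))^*[\xi']\rangle$, and $|\langle(\bA(f))^*[\xi']\rangle|=(2\pi)^{-1}|\langle\xi,(\bA(f)[1])'\rangle_2|\leq C(M)\|\xi\|_{H^\tau}$, since $\bA(f)[1]\in H^1(\s)$ for $f\in H^2(\s)$ by \eqref{FAA}, \eqref{EG2}, \eqref{EGG2}. Collecting, $T^{A^*}_{\rm lot}(f)[\xi]:=R(f)[\xi]+\langle(\bA(f))^*[\xi']\rangle$ does the job; in particular $\big(\wh{\bA(f)}\big)^*\in\kL(\wh H^1(\s))$.

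Granting this, the conclusion follows verbatim as in Theorem~\ref{T:I2}: writing $g:=\big(\lambda-\big(\wh{\bA(f)}\big)^*\big)[\xi]$ for $\xi\in\wh H^1(\s)$ and $|\lambda|\geq1$, one has $g'=\big(\lambda-\big(\wh{\bA(f)}\big)^*\big)[\xi']-T^{A^*}_{\rm lot}(f)[\xi]$, hence, applying the $L_2$-estimate above to $\xi$ and to $\xi'$,
\[
2\|g\|_{H^1}\geq\|g\|_2+\|g'\|_2\geq c\,(\|\xi\|_2+\|\xi'\|_2)-\tfrac1c\|\xi\|_{H^\tau}\geq c\|\xi\|_{H^1}-\tfrac1c\|\xi\|_{H^\tau}
\]
for some $c=c(M)\in(0,1)$; then interpolating $\|\xi\|_{H^\tau}\leq\|\xi\|_2^{1-\tau}\|\xi\|_{H^1}^{\tau}$ by \eqref{IP}, using Young's inequality to absorb the $\|\xi\|_{H^1}$-term, and bounding $\|\xi\|_2\leq C(M)\|g\|_{H^1}$ once more by the $L_2$-estimate, one arrives at \eqref{EQ:I3}. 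That $\{\lambda\in\R:|\lambda|\geq1\}$ lies in the resolvent set of $\big(\wh{\bA(f)}\big)^*\in\kL(\wh H^1(\s))$ then follows from \eqref{EQ:I3}, the $\kL(\wh H^1(\s))$-boundedness just obtained, and the continuity method, cf. \cite[Proposition I.1.1.1]{Am95}.

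The step I expect to be the main obstacle is the commutator estimate for $\big(\wh{\bA(f)}\big)^*$. The subtlety is that, although $T^A_{\rm lot}(f)\in\kL(\wh H^\tau(\s),\wh L_2(\s))$ by \eqref{Aprime3}, its $L_2$-adjoint must enjoy the \emph{same} mapping property, and this is not a mere formal consequence of duality: the contributions containing $\xi'$ have to be handled by moving the derivative onto the (${\rm C}^1$) kernels, using $f\in H^2(\s)$, rather than by naively dualizing the mapping properties of $\bB_1(f),\bB_2(f)$. Alternatively, one may dispense with duality entirely and re-derive a decomposition of $(\bA(f))^*$ of the type \eqref{FAA}, with its own lower order commutator estimate, directly from the integral representation \eqref{DL}, repeating the (lengthy but routine) arguments of Lemmas~\ref{L:31}--\ref{L:32} and \cite[Section~5]{M16x}.
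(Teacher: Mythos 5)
Your overall skeleton (the $\wh L_2$-resolvent bound by dualizing Theorem \ref{T:I1}, a commutator identity $\big(\big(\wh{\bA(f)}\big)^*[\xi]\big)'=\big(\wh{\bA(f)}\big)^*[\xi']+\langle(\bA(f))^*[\xi']\rangle+T^{A^*}_{\rm lot}(f)[\xi]$ with an $H^\tau\to L_2$ bound on the remainder, the mean-value term controlled via $\bA(f)[1]\in H^1(\s)$, and then interpolation, absorption and the continuity method) is exactly the structure of the paper's proof. The gap is in how you justify the remainder estimate. You obtain $T^{A^*}_{\rm lot}(f)$ by dualizing \eqref{Aprime2} and claim the resulting "first type" terms have the form $\Psi(f)^*[g\xi]$ with $\Psi(f)\in\{C_{0,1}(f),C_{1,1}(f),C_{2,2}(f,f),C_{3,2}(f,f)\}$ being "$L_2(\s)$-bounded operators of Lemma \ref{L:A1}". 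But in \eqref{Aprime2} the operators carrying the leading difficulty are $C_{1,1}(f)[f',\,\cdot\,]$, $C_{2,2}(f,f)[f',f,\,\cdot\,]$ and $C_{3,2}(f,f)[f',f,f,\,\cdot\,]$, i.e. with $f'$ occupying a $b$-slot. Lemma \ref{L:A1}~$(i)$ gives $L_2$-boundedness only when the $b$-arguments are Lipschitz, which for $f\in H^2(\s)$ fails for $b_1=f'$; for such arguments the lemma only yields boundedness from $H^\tau(\s)$ into $L_2(\s)$ (estimate \eqref{REF1}), and in fact $C_{1,1}(f)[f',\,\cdot\,]$ is a Calderón-commutator type operator which is not bounded on $L_2$ for general $f\in H^2$. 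Consequently the $L_2$-adjoints of these operators map $L_2$ only into $H^{-\tau}$, and your bound $\|R(f)[\xi]\|_2\leq C\|\xi\|_{H^\tau}$ does not follow from "$g\xi\in L_2$ plus $\Psi(f)^*\in\kL(L_2)$". This is precisely the duality subtlety you flag at the end, but your proposed fix (moving derivatives onto the $\bB_i$-kernels) only addresses the terms containing $\xi'$, not these three.

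The repair requires keeping $\xi$ (or $f'\xi$, $f''\xi$) in the smoothing slot of a $C_{n,m}$-operator, so that \eqref{REF1} and \eqref{REF3} can be applied with $\overline\omega$-slot in $H^\tau$. One way is to exploit the antisymmetry of the kernels of $C_{n,m}$ with fixed $b$-arguments, which shows that the adjoint of, say, $C_{1,1}(f)[f',\,\cdot\,]$ is again (minus) an operator of the same type; the paper instead avoids dualizing $T^A_{\rm lot}(f)$ altogether: it differentiates the explicit representation \eqref{DL} directly, writing $\big(\wh{\bA(f)}\big)^*[\xi]=\bB_1(f)[\xi]+\bB_2(f)[f'\xi]+\bA_{3,*}(f)[\xi]-\langle(\bA(f))^*[\xi]\rangle$ with $\pi\bA_{3,*}(f)[\xi]=C_{1,1}(f)[f,\xi]-C_{0,1}(f)[f'\xi]$, and then repeats the translation argument of Lemma \ref{L:32} to produce a remainder built from terms like $C_{1,1}(f)[f',\xi]$, $C_{0,1}(f)[f''\xi]$, $C_{2,2}(f,f)[f',f,f'\xi]$, $C_{3,2}(f,f)[f',f,f,\xi]$ and commutators with $\bB_1,\bB_2$, all of which fall under Lemma \ref{L:A1}~$(ii)$--$(iii)$. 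This is exactly the "alternative" you mention in your last sentence; as written, however, your primary route does not close.
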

\begin{proof}
Let $M>0$.  
Taking advantage of the fact that $\lambda-\big(\wh{\bA(f)}\big)^*$ is the $\kL(\wh L_2(\s))$-adjoint of $\lambda-\bA(f)$ for each
 $\lambda\in\R$ and $f\in H^r(\R)$, $r>3/2$, it follows from \eqref{EQ:I1}   there exists a constant $C=C(M)$ such that 
 \begin{align}\label{EQ:I3a}
  \|\xi\|_2\leq C\|\big(\lambda-\big(\wh{\bA(f)}\big)^*\big)[\xi]\|_2 
 \end{align}
for all $\lambda\in\R$ with $|\lambda|\geq1,$ $\xi\in \wh H^1(\s),$ and $f\in H^2(\s)$ with $\|f\|_{H^2}\leq M.$
In order to show that $\big(\wh{\bA(f)}\big)^*[\xi]\in \wh H^1(\s)$,  we note that
\[
\big(\wh{\bA(f)}\big)^*[\xi]=(\bA(f))^*[\xi]-\langle (\bA(f))^*[\xi]\rangle=\bB_1(f)[\xi]+\bB_2(f)[f'\xi]+ \bA_{3,*}(f)[\xi] -\langle (\bA(f))^*[\xi]\rangle,
\] 
where $\bB_1(f)$ and $\bB_2(f)$ are introduced in the proof of Lemma \ref{L:32} and where
\begin{align*}
\pi\bA_{3,*}(f)[\xi]:=C_{1,1}(f)[f,\xi]-C_{0,1}(f)[f'\xi].
\end{align*}
The arguments used to derive \eqref{Bprime} show that $\bA_{3,*}(f)[\xi]\in H^1(\s)$ with
\begin{align*}
\pi(\bA_{3,*}(f)[\xi])'&=\pi\bA_{3,*}(f)[\xi']+C_{1,1}(f)[f',\xi]-C_{0,1}(f)[f''\xi]+2C_{2,2}(f,f)[f',f,f'\xi]\\[1ex]
&\hspace{0.424cm}-2C_{3,2}(f,f)[f',f,f,\xi],
\end{align*}
and together with  \eqref{EG1}-\eqref{EG2} we conclude that indeed $\big(\wh{\bA(f)}\big)^*[\xi]\in \wh H^1(\s)$.
Proceeding as in  Theorem~\ref{T:I2}, we may write  
\begin{equation*} 
  \begin{aligned}
  \big(\big(\wh{\bA(f)}\big)^*[\xi]\big)'&=(\bA(f))^*[\xi']+T^{A^*}_{\rm lot}(f)[\xi]=\big(\wh{\bA(f)}\big)^*[\xi']+\langle(\bA(f))^*[\xi']\rangle+T^{A^*}_{\rm lot}(f)[\xi],
  \end{aligned}
  \end{equation*}
  with   
   \begin{equation*}
  \begin{aligned}
  \pi T^{A^*}_{\rm lot}(f)[\xi]&:= C_{1,1}(f)[f',\xi]-C_{0,1}(f)[f''\xi]+2C_{2,2}(f,f)[f',f,f'\xi]\\[1ex]
  &\hspace{0.424cm}-2C_{3,2}(f,f)[f',f,f,\xi] +\pi\big((\bB_2(f)[f'\xi])'-\bB_2(f)[(f'\xi)']\big)\\[1ex]
  &\hspace{0.424cm}+\pi\big((\bB_1(f)[\xi])'-\bB_1(f)[\xi']\big) 
  \end{aligned}
  \end{equation*}
 satisfying
     \begin{equation}\label{Aprime3*} 
  \|T^{A^*}_{\rm lot}(f)[\xi]\|_2\leq C  \|\xi\|_{H^\tau}\qquad\text{for all $\xi\in \wh H^1(\s)$,}
  \end{equation}
   for any fixed  $\tau\in(1/2,1)$ and  with a constant  $C=C(M)$.
   Moreover, since $\bA(f)[1]\in H^1(\s)$,  it follows that 
\begin{equation}\label{Aprime4*}    
   \begin{aligned}
   |\langle(\bA(f))^*[\xi']\rangle|&\leq |\langle(\bA(f))^*[\xi'],1\rangle_2|=|\langle \xi',\bA(f)[1]\rangle_2|=\Big|\int_{-\pi}^\pi\xi' \bA(f)[1]\, dx\Big|\\[1ex]
   &  =\Big|\int_{-\pi}^\pi\xi (\bA(f)[1])'\, dx\Big|\leq \|\xi\|_2\|\bA(f)[1]\|_{H^1}\leq C\|\xi\|_2,
   \end{aligned}
   \end{equation}
   again  with $C=C(M)$.
The desired claim \eqref{EQ:I3} follows now from \eqref{EQ:I3a}, \eqref{Aprime3*}, and \eqref{Aprime4*} by arguing as  in Theorem \ref{T:I2}.
\end{proof}

Finally, given $f\in H^r(\s)$, $r>3/2$,   let $(\bB(f))^*\in\kL(L_2(\s))$ denote the  adjoint of $\bB(f)\in \kL(L_2(\s))$.
The next lemma is also used later on in the uniqueness proof of Theorem \ref{MT:1}.  

\begin{lemma}\label{L:33}
Given $M>0$, there exists a constant $C=C(M)$ such that for all $f\in H^2(\s)$ with $\|f\|_{H^2}\leq M$ it holds that $(\bB(f))^*\in\kL(H^1(\s))$ and
\[
\|(\bB(f))^*\|_{\kL(H^1(\s))}\leq C.
\]
\end{lemma}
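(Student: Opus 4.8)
The strategy is to reduce the claim for $(\bB(f))^*$ to the mapping properties of $\bB(f)$ itself, which have already been established in Lemma \ref{L:32}, by exploiting the duality between $H^1(\s)$ and $H^{-1}(\s)$ together with the $L_2$-duality that defines $(\bB(f))^*$. The key observation is that boundedness of $(\bB(f))^*$ on $H^1(\s)$ is equivalent, by taking adjoints once more, to boundedness of $\bB(f)$ on the dual space $H^{-1}(\s)$, with the same operator norm up to a constant. So the main task is to show that $\bB(f)$ extends to a bounded operator on $H^{-1}(\s)$ with norm controlled by $\|f\|_{H^2}\leq M$.

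\textbf{Main steps.} First I would recall the decomposition $\bB(f)=f'\bB_1(f)-\bB_2(f)+\bB_3(f)$ from the proof of Lemma \ref{L:32}, together with $\pi\bB_3(f)[\overline\omega]=C_{0,1}(f)[\overline\omega]+f'C_{1,1}(f)[f,\overline\omega]$. Since $\bB_1(f)$ and $\bB_2(f)$ map $L_2(\s)$ into $C^1(\s)$ with operator norm bounded in terms of $\|f\|_{H^2}$ (by \eqref{EG1}--\eqref{EG2}), their adjoints map the dual of $C^1(\s)$ — hence in particular $H^{-1}(\s)$ or even the space of measures — into $L_2(\s)$, and multiplication by $f'\in H^1(\s)$ preserves $H^1(\s)$ and its dual by the Banach algebra property; this handles the $f'\bB_1$ and $\bB_2$ contributions cleanly. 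The heart of the matter is therefore $\bB_3(f)$, equivalently the singular integral operators $C_{0,1}(f)$ and $C_{1,1}(f)$. For these I would appeal to the same commutator-type bounds from \cite{M16x} (Appendix \ref{S:A}, Lemma \ref{L:A1}) that were used in Lemma \ref{L:32}: the point is that these multilinear singular integral operators are bounded not only on $L_2(\s)$ but on a scale of spaces, and in particular their adjoints inherit $H^1$-boundedness. Concretely, I would verify that $(\bB_3(f))^*$ itself has a representation of the same multilinear singular-integral form (its kernel is obtained from that of $\bB_3(f)$ by swapping the roles of $x$ and $x-s$, which changes the operator only by lower-order commutator terms), so that formula \eqref{Bprime} has an analogue for $(\bB_3(f))^*$, from which $H^1$-boundedness with the stated norm control follows from Lemma \ref{L:A1} exactly as \eqref{EG4} was derived.

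\textbf{Alternative and the main obstacle.} A cleaner route avoids computing $(\bB_3(f))^*$ explicitly: since $\bB_3(f)\in\kL(H^1(\s))$ by \eqref{EG4} and $\bB_3(f)\in\kL(L_2(\s))$ by \eqref{EG3}, with norms bounded by $C(M)$, duality gives $(\bB_3(f))^*\in\kL(L_2(\s))$ and $(\bB_3(f))^*\in\kL(H^{-1}(\s))$; interpolating via \eqref{IP} would then place $(\bB_3(f))^*$ in $\kL(H^{-1/2}(\s))$ — but that is the wrong direction, since we need boundedness on $H^1(\s)$, not on $H^{-1}(\s)$. So genuine work on the adjoint kernel does seem unavoidable. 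The main obstacle I anticipate is precisely the bookkeeping for $(\bB_3(f))^*$: one must produce a derivative formula of the type \eqref{Bprime} for the adjoint and check that every term is either a singular integral operator covered by Lemma \ref{L:A1} $(i)$ or a genuinely lower-order term controlled by \eqref{REF1}--\eqref{REF3}, with all constants depending only on $\|f\|_{H^2}$. This is entirely analogous to the computation already carried out in Theorem \ref{T:I3} for $T^{A^*}_{\rm lot}$, and I would expect to follow that template almost verbatim — the structural difference between $\bA(f)$ and $\bB(f)$ being only a relabelling of the trigonometric numerators, which does not affect any of the estimates. Once the adjoint of each piece is shown to lie in $\kL(H^1(\s))$ with the right norm bound, summing the contributions yields the claim.
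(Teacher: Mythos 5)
Your plan for the genuinely singular part is the paper's proof: the paper simply records the explicit adjoint formula
\[
(\bB(f))^*[\xi]=-\bB_1(f)[f'\xi]+\bB_2(f)[\xi]-\tfrac{1}{\pi}\big(C_{0,1}(f)[\xi]+C_{1,1}(f)[f,f'\xi]\big),
\]
obtained exactly by the kernel transposition $x\leftrightarrow x-s$ you describe for $\bB_3(f)$, and then argues as in Lemma \ref{L:32} (a \eqref{Bprime}-type derivative identity plus Lemma \ref{L:A1}, in the spirit of $T^{A^*}_{\rm lot}$ in Theorem \ref{T:I3}). So for $\bB_3(f)$ your proposal is correct and coincides with the paper's route.

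There is, however, a genuine flaw in the step you call clean, the treatment of the $f'\bB_1(f)$ and $\bB_2(f)$ contributions. From \eqref{EG1}--\eqref{EG2} one has $\bB_i(f)\in\kL(L_2(\s),L_\infty(\s))$ and $\bB_i(f)\in\kL(H^\tau(\s),{\rm C}^1(\s))$; dualizing these statements only yields that $(\bB_i(f))^*$ maps $(L_\infty(\s))^*$ into $L_2(\s)$, respectively $({\rm C}^1(\s))^*\supset H^{-1}(\s)$ into $H^{-\tau}(\s)$. Composing with multiplication by $f'$ therefore lands you in $L_2(\s)$ (or worse), not in $H^1(\s)$, which is what $(\bB(f))^*\in\kL(H^1(\s))$ requires: smoothing of an operator does not dualize into smoothing of its adjoint, it dualizes into boundedness from rougher spaces. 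For the same reason the opening reduction to $\bB(f)\in\kL(H^{-1}(\s))$ is not actually carried out by this argument (one would need mapping properties of $\bB_1(f),\bB_2(f)$ themselves on $H^{-1}(\s)$, which the paper never establishes). The repair is exactly the observation you already invoke for $\bB_3(f)$: the kernels of $\bB_1(f)$ and $\bB_2(f)$ are antisymmetric under the swap, so $(\bB_i(f))^*=-\bB_i(f)$, and hence the corresponding pieces of $(\bB(f))^*[\xi]$ are $-\bB_1(f)[f'\xi]$ and $\bB_2(f)[\xi]$; since $f'\xi\in H^1(\s)\hookrightarrow H^\tau(\s)$ (Banach algebra), \eqref{EG2} puts these in ${\rm C}^1(\s)\hookrightarrow H^1(\s)$ with norm $\leq C(M)\|\xi\|_{H^1}$. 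With that replacement, and your $\bB_3$-analysis via Lemma \ref{L:A1} as in \eqref{EG4}, the proof closes and agrees with the paper's.
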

\begin{proof}
Given $f\in H^2(\s)$, it is not difficult to show that
\[
(\bB(f))^*[\xi]=-\bB_1(f)[f'\xi]+\bB_2(f)[\xi]-\frac{1}{\pi}\big(C_{0,1}(f)[\xi]+C_{1,1}(f)[f,f'\xi]\big),\qquad \xi\in L_2(\s).
\] 
The desired estimate follows now by arguing as in Lemma \ref{L:32}.
\end{proof}

 %%%%%%%%%%%%%%%%%%%%%%%%%%%%%%%%%%%%%%%%%%%%%%%%%%%%%%%%%%%%%%%%%%%
%%%%%%%%%%%%%%%%%%%%%%%%%%%%%%%%%%%%%%%%%%%%%%%%%%%%%%%%%%%%%%%%%%%%
%%%%%%%%%%%%%%%%%%%%%%%%%%%%%%%%%%%%%%%%%%%%%%%%%%%%%%%%%%%%%%%%%%%%
%%%%%%%%%%%%%%%%%%%%%%%%%%%%%%%%%%%%%%%%%%%%%%%%%%%%%%%%%%%%%%%%%%%
%%%%%%%%%%%%%%%%%%%%%%%%%%%%%%%%%%%%%%%%%%%%%%%%%%%%%%%%%%%%%%%%%%%%
%%%%%%%%%%%%%%%%%%%%%%%%%%%%%%%%%%%%%%%%%%%%%%%%%%%%%%%%%%%%%%%%%%%%
\section{The  Muskat problem with surface tension effects} \label{Sec:4}
 %%%%%%%%%%%%%%%%%%%%%%%%%%%%%%%%%%%%%%%%%%%%%%%%%%%%%%%%%%%%%%%%%%%
%%%%%%%%%%%%%%%%%%%%%%%%%%%%%%%%%%%%%%%%%%%%%%%%%%%%%%%%%%%%%%%%%%%%
%%%%%%%%%%%%%%%%%%%%%%%%%%%%%%%%%%%%%%%%%%%%%%%%%%%%%%%%%%%%%%%%%%%%
%%%%%%%%%%%%%%%%%%%%%%%%%%%%%%%%%%%%%%%%%%%%%%%%%%%%%%%%%%%%%%%%%%%
%%%%%%%%%%%%%%%%%%%%%%%%%%%%%%%%%%%%%%%%%%%%%%%%%%%%%%%%%%%%%%%%%%%%
%%%%%%%%%%%%%%%%%%%%%%%%%%%%%%%%%%%%%%%%%%%%%%%%%%%%%%%%%%%%%%%%%%%%
In this section we study the Muskat problem in the case when surface tension effects are included, that is for $\sigma>0$.
The main goal of this section  is to prove Theorem \ref{MT:1} which is postponed to the end of the section.
As a first step  we shall take advantage of the results established in the previous sections  to reexpress the contour integral formulation \eqref{P}
as an abstract   evolution equation of the form
\begin{align}\label{MPS}
\dot f(t) =\Phi_\sigma(f(t) )[f(t)],\quad t>0,\qquad f(0)=f_0,
\end{align}
 with an operator $[f\mapsto \Phi_\sigma(f)]: H^2(\s)\to\kL(H^3(\s), L_2(\s))$  defined in \eqref{PHS}.
 The quasilinear character of the  contour integral equation for $\sigma>0$ -- which is not obvious because of the coupling in $\eqref{EP}_2$ --   
 is expressed  in \eqref{MPS} by the fact that $\Phi_\sigma$  is nonlinear with respect to the first variable $f\in H^2(\s)$, but is linear with respect to the second variable $f\in H^3(\s)$ which corresponds to the third spatial derivatives of the function $f=f(t,x)$ in the curvature term in $\eqref{EP}_2$.        
A central part of the analysis in this section is devoted to showing that \eqref{MPS} is a parabolic problem in the
 sense that $\Phi_\sigma(f)$ -- viewed as an unbounded operator on $L_2(\s)$
with definition domain $H^3(\s)$ -- is, for each $f\in H^2(\s)$, the generator of a strongly  continuous and analytic semigroup in $\kL(L_2(\s))$, which we denote by writing 
\begin{align}\label{GP}
-\Phi_\sigma(f)\in\kH(H^3(\s), L_2(\s)).
\end{align}
This property   needs to be verified before applying the abstract quasilinear parabolic theory outlined in \cite{Am93, Am95, Am86, Am86b, Am88} (see also \cite{MW18x})
 in the particular  context of \eqref{MPS}.

We begin   by solving the equation $\eqref{EP}_2$ for $\overline\omega$. We shall rely on the invertibility properties provided in Theorems \ref{T:I1} and \ref{T:I2}
and the fact  that the Atwood number satisfies $|a_\mu|<1$.
 In  order to disclose the  quasilinear structure of  the Muskat problem with surface tension  we address at this point the solvability of the   equation
\begin{align}\label{CE}
 (1+a_\mu \bA(f))[\overline\omega] =b_\mu\Big[\sigma\frac{h'''}{(1+f'^2)^{3/2}}-3\sigma\frac{f'f''h''}{(1+f'^2)^{3/2}}-\Theta h'\Big] ,
\end{align}
which  for $h=f$   coincides, up to a factor of $2$, with $\eqref{EP}_2$.
The quasilinearity of the curvature term is essential  here.
For the sake of brevity  we introduce
\begin{equation}\label{bmu}
b_\mu:=\displaystyle\frac{k}{\mu_-+\mu_+}.
\end{equation}
Since  the values of $\sigma>0$ and $b_\mu>0$ are not  important in the proof of Theorem \ref{MT:1} we set in this section
\[
b_{\mu}=\sigma=1.
\]
The solvability result in Proposition \ref{P:I1} $(a)$ below is the main step towards writing \eqref{P} in the form \eqref{MPS}.
The decomposition of the solution operator provided at Proposition \ref{P:I1} $(b)$ is essential later on in the proof of the generator property, as it enables us to
 use integration by parts when estimating some terms of leading order.  

\begin{prop}\label{P:I1}
\begin{itemize}
\item[$(a)$]
Given $f\in H^2(\s)$ and $h\in H^3(\s)$, the function 
 \[
\overline\omega(f)[h]:=(1+a_\mu\bA(f))^{-1}\Big[\frac{h'''}{(1+f'^2)^{3/2}}-3\frac{f'f''h''}{(1+f'^2)^{3/2}}-\Theta h'\Big]
 \]
 is the unique solution to \eqref{CE} in $\wh L_2(\s)$ and 
\begin{align}\label{P:I1R}
\overline\omega\in {\rm C}^\omega(H^2(\s), \kL(H^3( \s), \wh L_2(\s))).
\end{align}
\item[$(b)$]   Given $f\in H^2(\s) $  and $h\in H^3(\s)$, let  
\begin{align*}
 \overline\omega_1(f)[h]&:=(1+a_\mu\bA(f))^{-1}\Big[\frac{h''}{(1+f'^2)^{3/2}}-\Big\langle\frac{h''}{(1+f'^2)^{3/2}}\Big\rangle\Big],\\[1ex]
 \overline\omega_2(f)[h]&:=(1+a_\mu\bA(f))^{-1}\big[-\Theta h'+a_\mu T_{\rm lot}^A(f)[\overline\omega_1(f)[h]]\big],
\end{align*}
where  $T_{\rm lot}^A $ is defined in \eqref{Aprime2}. 
Then:\\[-2ex]
\begin{itemize}
 \item[$(i)$] $\overline\omega_1\in {\rm C}^\omega(H^2(\s), \kL(H^3(\s),\wh  H^1(\s)))$ and $\overline\omega_2\in {\rm C}^\omega(H^2(\s), \kL(H^3(\s), \wh L_2(\s)));$
 \item[$(ii)$] \[\overline\omega(f)=\frac{d}{dx}\circ\overline\omega_1(f)+\overline\omega_2(f); \]
  \item[$(iii)$] Given $\tau\in (1/2,1)$, there exists a constant $C$ such that 
\begin{equation}\label{p1} 
\begin{aligned}
&\|\overline\omega_1(f)[h]\|_2\leq C\|h\|_{H^2}\\[1ex] 
&\|\overline\omega_1(f)[h]\|_{H^{\tau}}+\|\overline\omega_2(f)[h]\|_2\leq C\|h\|_{H^{2+\tau}} 
\end{aligned}\qquad\text{for all $h\in H^3(\s)$.}
\end{equation}
\end{itemize}
\end{itemize}
\end{prop}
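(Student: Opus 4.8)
The plan is to bootstrap from the two invertibility theorems of Section~\ref{Sec:3} together with the analyticity of $\bA$ and the differentiation formulas already derived. For part~$(a)$, observe that the bracketed right-hand side of \eqref{CE} (divided by the irrelevant factor $b_\mu\sigma=1$) is, for fixed $f\in H^2(\s)$, a bounded linear operator in $h\in H^3(\s)$ with values in $\wh L_2(\s)$; indeed each of the three terms maps $H^3(\s)$ linearly and boundedly into $L_2(\s)$, and since $h'''$, $f'f''h''$, $h'$ are all derivatives of $L_2$-functions (or products thereof) they have zero mean, so the image lies in $\wh L_2(\s)$. The pointwise multipliers $(1+f'^2)^{-3/2}$ and $f'f''(1+f'^2)^{-3/2}$ depend real-analytically on $f\in H^2(\s)$ with values in $\kL(L_2(\s))$ — this is the usual fact that $H^1(\s)$ is a Banach algebra and that $u\mapsto (1+u^2)^{-3/2}$ is real-analytic on $H^1(\s)$ — so the whole right-hand side operator is in $\mathrm{C}^\omega(H^2(\s),\kL(H^3(\s),\wh L_2(\s)))$. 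By Theorem~\ref{T:I1} (with $\lambda=1/a_\mu$, which has $|\lambda|\geq 1$ since $|a_\mu|<1$, rescaled appropriately), $1+a_\mu\bA(f)$ is invertible in $\kL(\wh L_2(\s))$ for every $f\in H^r(\s)$, $r>3/2$; combining Lemma~\ref{L:31} with the real-analyticity of inversion in a Banach algebra gives $[f\mapsto (1+a_\mu\bA(f))^{-1}]\in\mathrm{C}^\omega(H^2(\s),\kL(\wh L_2(\s)))$. Composing the two real-analytic maps yields \eqref{P:I1R}, and uniqueness in $\wh L_2(\s)$ is exactly the invertibility statement.

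For part~$(b)$, the idea is to split the worst term $h'''/(1+f'^2)^{3/2}$ into a total derivative plus a remainder. Write $h'''/(1+f'^2)^{3/2} = \big(h''/(1+f'^2)^{3/2}\big)' + 3f'f''h''/(1+f'^2)^{3/2}$, so the second and third terms combine: $h'''/(1+f'^2)^{3/2} - 3f'f''h''/(1+f'^2)^{3/2} = \big(h''/(1+f'^2)^{3/2}\big)'$. Subtracting its mean (which is harmless since we apply it only inside $\wh L_2(\s)$ and the derivative already has zero mean), the right-hand side of \eqref{CE} becomes $\big(h''/(1+f'^2)^{3/2}-\langle\cdots\rangle\big)' - \Theta h'$. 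Now use the commutator identity \eqref{Aprime1}: $(\bA(f)[\overline\omega])' = \bA(f)[\overline\omega'] + T^A_{\rm lot}(f)[\overline\omega]$, equivalently $(1+a_\mu\bA(f))[\phi'] = \big((1+a_\mu\bA(f))[\phi]\big)' - a_\mu T^A_{\rm lot}(f)[\phi]$ for $\phi\in\wh H^1(\s)$. Applying $(1+a_\mu\bA(f))^{-1}$ and matching terms shows that with $\overline\omega_1(f)[h]$ as defined, $\big(1+a_\mu\bA(f)\big)\big[(\overline\omega_1(f)[h])' + \overline\omega_2(f)[h]\big]$ reproduces the full right-hand side; well-definedness of $(\overline\omega_1(f)[h])'$ requires $\overline\omega_1(f)[h]\in\wh H^1(\s)$, which is precisely where Theorem~\ref{T:I2} enters — it guarantees $(1+a_\mu\bA(f))^{-1}\in\kL(\wh H^1(\s))$ for $f\in H^2(\s)$, and since $h''/(1+f'^2)^{3/2}\in H^1(\s)$ (product of $H^1$ and $H^1$), $\overline\omega_1(f)[h]$ lands in $\wh H^1(\s)$. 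By uniqueness from part~$(a)$, this forces the decomposition $(ii)$. The regularity claims in $(i)$ follow the same composition-of-real-analytic-maps pattern: $\overline\omega_1$ uses the $\wh H^1$-invertibility of Theorem~\ref{T:I2} plus Lemma~\ref{L:31}'s $\mathrm{C}^\omega(H^2,\kL(\wh H^1))$-statement and the real-analytic dependence of $h\mapsto h''/(1+f'^2)^{3/2}$ on $(f,h)$; $\overline\omega_2$ then uses the $\wh L_2$-invertibility, the already-established $\overline\omega_1\in\mathrm{C}^\omega$, and the fact (from \eqref{Aprime2}–\eqref{Aprime3}) that $T^A_{\rm lot}\in\mathrm{C}^\omega(H^2(\s),\kL(\wh H^1(\s),\wh L_2(\s)))$.

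For the estimates $(iii)$, the bound $\|\overline\omega_1(f)[h]\|_2\leq C\|h\|_{H^2}$ is immediate from $\|(1+a_\mu\bA(f))^{-1}\|_{\kL(\wh L_2)}\leq C(M)$ (Theorem~\ref{T:I1}) and $\|h''/(1+f'^2)^{3/2}\|_2\leq C\|h\|_{H^2}$, uniformly for $\|f\|_{H^2}\leq M$. For the interpolation bound: first get $\|\overline\omega_1(f)[h]\|_{H^1}\leq C\|h''/(1+f'^2)^{3/2}\|_{H^1}\leq C\|h\|_{H^3}$ from Theorem~\ref{T:I2}; then interpolate between $\|\overline\omega_1(f)[h]\|_2\leq C\|h\|_{H^2}$ and $\|\overline\omega_1(f)[h]\|_{H^1}\leq C\|h\|_{H^3}$ using \eqref{IP} and the corresponding interpolation $[H^2,H^3]_\tau = H^{2+\tau}$ to land at $\|\overline\omega_1(f)[h]\|_{H^\tau}\leq C\|h\|_{H^{2+\tau}}$. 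For $\overline\omega_2$: from its definition and Theorem~\ref{T:I1}, $\|\overline\omega_2(f)[h]\|_2\leq C(\|h'\|_2 + \|T^A_{\rm lot}(f)[\overline\omega_1(f)[h]]\|_2)$; by \eqref{Aprime3}, $\|T^A_{\rm lot}(f)[\overline\omega_1(f)[h]]\|_2\leq C\|\overline\omega_1(f)[h]\|_{H^\tau}\leq C\|h\|_{H^{2+\tau}}$, and $\|h'\|_2\leq\|h\|_{H^{2+\tau}}$, giving the claim. The main obstacle — and the conceptual heart of the proposition — is precisely the bookkeeping around the commutator identity: one must check that the algebraic identity $\overline\omega(f) = \frac{d}{dx}\circ\overline\omega_1(f) + \overline\omega_2(f)$ really comes out, with the mean-value subtraction in $\overline\omega_1$ arranged so that everything stays inside $\wh H^1(\s)$ and $\wh L_2(\s)$ where the invertibility theorems apply; once that identity is pinned down, the regularity and estimates are routine composition and interpolation.
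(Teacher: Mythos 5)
Your proposal follows the paper's proof essentially step for step: part $(a)$ is the invertibility of $1+a_\mu\bA(f)$ in $\wh L_2(\s)$ from Theorem \ref{T:I1} together with analyticity of the inversion (Lemma \ref{L:31}), part $(b)$ rests on the commutator identity \eqref{Aprime1}--\eqref{Aprime2} combined with the $\wh H^1$-invertibility of Theorem \ref{T:I2} and uniqueness from $(a)$, and the estimates in $(iii)$ come from Theorems \ref{T:I1}--\ref{T:I2}, interpolation via \eqref{IP}, and \eqref{Aprime3}, exactly as in the paper. The only imprecision is your claim that each of the three terms on the right-hand side of \eqref{CE} is individually mean-free --- only their combination is, being $(h''/(1+f'^2)^{3/2})'-\Theta h'$, a total derivative --- but your own splitting in part $(b)$ supplies precisely this observation, so the argument stands.
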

 \begin{proof} 
 Observing that the right hand side of \eqref{CE} belongs to $\wh L_2(\s)$, the claim $(a)$   follows  from Theorems \ref{T:I1}.
 
 In order to prove $(b)$ we first note that 
 \begin{align*}
  \left[f\mapsto \Big[h\mapsto\frac{h''}{(1+f'^2)^{3/2}}-\Big\langle\frac{h''}{(1+f'^2)^{3/2}}\Big\rangle\Big]\right]\in {\rm C}^\omega(H^2(\s),\kL(H^3(\s),\wh H^1(\s))),
 \end{align*}
  and since by Theorem \ref{T:I2} $[f\mapsto (1+a_\mu\bA(f))^{-1}]\in {\rm C}^\omega(H^2(\s),\kL(\wh H^1(\s))),$ we conclude that 
  $\overline\omega_1$ is well-defined together with $\overline\omega_1\in {\rm C}^\omega(H^2(\s), \kL(H^3(\s), \wh H^1(\s)))$.
  Recalling \eqref{Aprime1} and \eqref{Aprime2}, it holds
  \begin{align*}
  (1+a_\mu\bA(f))[\overline\omega(f)[h]-(\overline\omega_1(f)[h])']=-\Theta h'+a_\mu T_{\rm lot}^A(f)[\overline\omega_1(f)[h]]=(1+a_\mu\bA(f))[\overline\omega_2(f)[h]].
  \end{align*}
  This proves  $\overline\omega_2\in {\rm C}^\omega(H^2(\s), \kL(H^3(\s), \wh L_2(\s))) $ together with the claim $(ii)$.
  
As for $(iii)$, we note that  the Theorems \ref{T:I1} and \ref{T:I2} imply that 
  \begin{equation*}
\|\overline\omega_1(f)[h]\|_2\leq C\|h\|_{H^2}\quad\text{and}\quad\|\overline\omega_1(f)[h]\|_{H^{1}}\leq C\|h\|_{H^{3}}\qquad \text{for all $h\in H^3(\s)$,}
\end{equation*}
and the  estimate  
$\|\overline\omega_1(f)[h]\|_{H^{\tau}}\leq C\|h\|_{H^{2+\tau}}$, $h\in H^3(\s)$, follows from the latter via interpolation. 
Finally, recalling Theorem \ref{T:I1} and \eqref{Aprime3}, it holds
\begin{align*}
\|\overline\omega_2(f)[h]\|_2&\leq C(\|h\|_{H^1}+\|T_{\rm lot}^A(f)[\overline\omega_1(f)[h]]\|_2)\leq C(\|h\|_{H^1}+\| \overline\omega_1(f)[h] \|_{H^\tau})\\[1ex]
&\leq C\|h\|_{H^{2+\tau}},
\end{align*}
and the proof is complete.
 \end{proof}
 
 Proposition \ref{P:I1} enables us to recast the contour integral formulation  \eqref{P} of the Muskat problem with surface tension
 as the abstract quasilinear evolution problem \eqref{MPS}, where  
\begin{equation}\label{PHS}
\Phi_\sigma(f)[h]:=\bB(f)[\overline\omega(f)[h]]\qquad\text{for $f\in H^2(\s)$ and $h\in H^3(\s)$.}
\end{equation}
Proposition \ref{P:I1} and Lemma \ref{L:32} imply that 
\begin{equation}\label{RegS}
\Phi_\sigma\in {\rm C}^\omega(  H^2(\s), \kL(  H^3(\s),  \wh L_2(\s)))\cap{\rm C}^\omega(H^2(\s), \kL(H^3(\s),  L_2(\s))).
\end{equation}

In the following $f\in H^2(\s)$ is kept fixed. 
In order to establish the generator property \eqref{GP} for $\Phi_\sigma(f)$ it is suitable to decompose this operator as the sum
\[
\Phi_\sigma(f)=\Phi_{\sigma,1}(f)+\Phi_{\sigma,2}(f),
\]
where 
\begin{align*}
\Phi_{\sigma,1}(f)[h]=\bB(f)[(\overline\omega_1(f)[h])']\qquad\text{and}\qquad \Phi_{\sigma,2}(f)[h]=\bB(f)[ \overline\omega_2(f)[h]].
\end{align*}
The operator $\Phi_{\sigma,1}(f)$ can be viewed as the leading order part of $\Phi_\sigma(f),$ while $\Phi_{\sigma,2}(f)$ is a lower order perturbation, 
see the proof of Theorem \ref{T:GP1}.
We study first the leading order part $\Phi_{\sigma,1}(f).$
In order to establish \eqref{GP} we follow a   direct and self-contained   approach pursued  previously in \cite{E94,ES95, ES97} and generalized more 
recently in \cite{EMW18, M16x, M17x, MM17x} in the context of the Muskat problem. 
 The proof of \eqref{GP} uses a localization procedure which necessitates the introduction of certain partitions of unity for the unit circle.
 
To proceed,  we choose for each  integer $p\geq 3$  a set   $\{\pi_j^p\,:\,{1\leq j\leq 2^{p+1}}\}\subset  {\rm C}^\infty(\s,[0,1])$, called {\em $p$-partition of unity}, such that
\begin{align*}
\bullet\,\,\,\, \,\, & \text{$\supp \pi_j^p=\bigcup _{n\in\Z} \big(2\pi n+ I_j^p\big)$ and $I_j^p:=[j-5/3,j-1/3] \frac{\pi}{2^p};$}\\[1ex]
 \bullet\,\,\,\, \,\, & \text{ $\sum_{j=1}^{2^{p+1}}\pi_j^p=1 $ in ${\rm C}(\s)$.}
\end{align*}
To each such $p$-partition of unity we associate a  set $\{\chi_j^p\,:\,{1\leq j\leq 2^{p+1}}\}\subset  {\rm C}^\infty(\s,[0,1])$ satisfying
\begin{align*}
\bullet\,\,\,\, \,\, & \text{$\supp \chi_j^p=\bigcup_{n\in\Z} \big(2\pi n+ J_j^p\big)$ with  $I_j^p\subset J_j^p:=[j-8/3,j+2/3] \frac{\pi}{2^p}$;}\\[1ex]
 \bullet\,\,\,\, \,\, & \text{ $\chi_j^p=1$ on $I_j^p$.}
\end{align*}

As a further step we  introduce  the continuous path 
\[[\tau\mapsto \Phi_{\sigma,1}(\tau f)]:[0,1]\to \kL(H^3(\s), L_2(\s)),\]
which connects the operator $\Phi_{\sigma,1}(f)$ with the Fourier multiplier 
\begin{align*}
 \Phi_{\sigma,1}(0)[h](x)=\bB(0)[h'''](x)=\frac{1}{2\pi}\PV\int_{-\pi}^\pi\frac{h'''(x-s)}{t_{[s]}}\, ds= H[h'''](x),
\end{align*}
where $H$ denotes as usually the periodic Hilbert transform.
Since $H$ is the Fourier multiplier with symbol $(-i\sign(k))_{k\in\Z},$ it follows that $\Phi_{\sigma,1}(0)=- (\p_x^4)^{3/4},$ that is  
 the symbol of $\Phi_{\sigma,1}(0)$ is $(- |k|^3)_{k\in\Z}.$
In Theorem \ref{T:K1}, which is the key argument in the proof of \eqref{GP}, we establish some  commutator type estimates relating
   $\Phi_{\sigma,1}(\tau f)$   locally to some  explicit Fourier multipliers.
   The proof of this result is quite technical and lengthy and uses to a large extent the outcome of Lemma \ref{L:A1}.

\begin{thm}\label{T:K1} 
Let  $f\in H^2(\s)$   and     $\mu>0$ be given.
Then, there exist $p\geq3$, a  $p$-partition of unity  $\{\pi_j^p\,:\, 1\leq j\leq 2^{p+1}\} $, a constant $K=K(p)$, and for each  $ j\in\{1,\ldots,2^{p+1}\}$ and $\tau\in[0,1]$ there 
exist   operators $$\bA_{j,\tau}\in\kL(H^3(\s), L_2(\s))$$
 such that 
 \begin{equation}\label{DE3}
  \|\pi_j^p\Phi_{\sigma,1}(\tau f)[h]-\bA_{j,\tau}[\pi^p_j h]\|_{2}\leq \mu \|\pi_j^p h\|_{H^3}+K\|  h\|_{H^{11/4}}
 \end{equation}
 for all $ j\in\{1,\ldots, 2^{p+1}\}$, $\tau\in[0,1],$ and  $h\in H^3(\s)$. 
 The operator  $\bA_{j,\tau}$ is defined  by 
  \begin{align} 
 \bA_{j,\tau }:=&-\frac{1}{(1+f_\tau'^2(x_j^p))^{3/2}}(\p_x^4)^{3/4},\label{FM1}
 \end{align}
 where $x_j^p\in I_j^p$ is arbitrary, but fixed.
\end{thm}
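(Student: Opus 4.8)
The strategy is a localization/freezing-of-coefficients argument. For a fixed $f\in H^2(\s)$ and a prescribed tolerance $\mu>0$, we want, after cutting $h$ into pieces supported on short arcs $I_j^p$, to replace the variable-coefficient operator $\Phi_{\sigma,1}(\tau f)$ by the constant-coefficient Fourier multiplier $\bA_{j,\tau}=-(1+f_\tau'^2(x_j^p))^{-3/2}(\p_x^4)^{3/4}$, the error being absorbable by $\mu\|\pi_j^p h\|_{H^3}$ plus lower-order ($H^{11/4}$) terms. The key structural input is the decomposition $\overline\omega(f)=\frac{d}{dx}\circ\overline\omega_1(f)+\overline\omega_2(f)$ from Proposition~\ref{P:I1}(b), together with $\Phi_{\sigma,1}(f)[h]=\bB(f)[(\overline\omega_1(f)[h])']$. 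Since $\overline\omega_2$ contributes only $H^{2+\tau}$-bounds, it is harmless here; the whole analysis concentrates on the leading part.

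\textbf{Main steps.} First I would unfold the definitions: $\bB(f)$ and $\bA(f)$ are written via the kernels $C_{k,\ell}(\cdot)$ of Lemma~\ref{L:A1} (and the regular pieces $\bB_1,\bB_2$), and $\overline\omega_1(f)[h]=(1+a_\mu\bA(f))^{-1}[h''/(1+f'^2)^{3/2}+\text{const}]$. Thus $\Phi_{\sigma,1}(\tau f)[h]$ is, up to lower-order operators, a composition of three operators each built from Cauchy-type singular integrals with kernels depending on $\tau f$. The plan is: (1) show that in the highest-order count, differentiating $\overline\omega_1$ once and applying $\bB$ both contribute one derivative, and $(1+a_\mu\bA)^{-1}$ contributes none, so the principal symbol is order $3$; (2) use the commutator estimates packaged in Lemma~\ref{L:A1} to show that multiplying by $\pi_j^p$ and then by $\chi_j^p$ (which equals $1$ on $\supp\pi_j^p$) allows one to freeze the coefficients $f_\tau'$, $f_\tau''$ at the point $x_j^p$ with an error controlled by the oscillation $\osc_{I_j^p} f_\tau'$ times $\|\pi_j^p h\|_{H^3}$, plus commutator terms which are of lower order $H^{11/4}$ by the Sobolev embedding $H^2\hookrightarrow C^{1+\alpha}$; (3) choose $p$ large enough that $\osc_{I_j^p}f' \cdot (\text{structural constant}) \leq \mu$ uniformly in $j$ — this is possible since $f'\in H^1(\s)\hookrightarrow C(\s)$ is uniformly continuous and the estimate is uniform in $\tau\in[0,1]$ because $\|f_\tau'\|_{C^{1/2}}\le\|f'\|_{C^{1/2}}$; (4) once coefficients are frozen, $(1+a_\mu\bA(x_j^p\text{-frozen}))^{-1}$ becomes a constant-coefficient operator whose symbol multiplies the others, and one computes that the net symbol of the frozen composition is exactly $-(1+f_\tau'^2(x_j^p))^{-3/2}|k|^3$, i.e.\ $\bA_{j,\tau}$; the discrepancy between the frozen operator acting on $\pi_j^p h$ and $\bA_{j,\tau}[\pi_j^p h]$ is again a commutator of lower order. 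Summing these error contributions and renaming the lower-order constant $K=K(p)$ yields \eqref{DE3}.

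\textbf{The main obstacle.} The delicate point is the treatment of the resolvent $(1+a_\mu\bA(\tau f))^{-1}$ in the composition: it is not a differential operator and its "symbol" is not local, so one cannot literally Fourier-analyze the composition. The way around this is the same trick as in \cite{M16x,M17x,MM17x}: one does not invert globally but observes that $(1+a_\mu\bA(\tau f))\overline\omega_1(\tau f)[h] = h''/(1+\tau^2 f'^2)^{3/2}+\text{const}$, so instead of estimating $\Phi_{\sigma,1}(\tau f)[h]$ directly one estimates $\bB(\tau f)\circ\frac{d}{dx}$ applied to $\overline\omega_1$, then transfers the localized frozen-coefficient identity through the relation $(1+a_\mu\bA)$-times-$\overline\omega_1=\dots$ using that $1+a_\mu\bA(\tau f)$, after localization and freezing at $x_j^p$, is an invertible constant-coefficient operator (invertibility uniformly in $\tau$ and $j$ because $|a_\mu|<1$ and the frozen symbol of $\bA$ has modulus $<1$, cf.\ Theorem~\ref{T:I1}). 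Concretely this means deriving the estimate for the frozen operator and then using a Neumann-series / perturbation argument to pass to $\bA_{j,\tau}$, the perturbation being exactly the localized commutators handled in step (2)–(3). Keeping track that every such commutator genuinely gains enough regularity to land in $H^{11/4}$ rather than merely $H^3$ — this requires the precise $C_{k,\ell}$-mapping properties of Lemma~\ref{L:A1} and a careful interpolation bookkeeping with the exponent $11/4 = 3 - 1/4$ — is where essentially all of the technical length of the proof resides.
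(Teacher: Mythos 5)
Your plan follows the paper's proof in all essentials: localize with a $p$-partition of unity, write $\Phi_{\sigma,1}(\tau f)[h]=\bB(\tau f)[(\overline\omega_1(\tau f)[h])']$, freeze the coefficient $f_\tau'$ at $x_j^p$ with an error controlled by its oscillation (made $\leq\mu$ by choosing $p$ large, uniformly in $\tau$ since $\|f_\tau'\|_{C^{1/2}}\leq\|f'\|_{C^{1/2}}$), and handle the resolvent not by inverting it inside the composition but through the identity $(1+a_\mu\bA(\tau f))[\overline\omega_1]=h''/(1+f_\tau'^2)^{3/2}-\langle h''/(1+f_\tau'^2)^{3/2}\rangle$ applied to the localized density, with all commutators estimated via Lemma \ref{L:A1} and the uniform $\wh L_2$-bound of Theorem \ref{T:I1} giving $\|(\pi_j^p\overline\omega_1)'\|_2\leq C\|\pi_j^ph\|_{H^3}+K\|h\|_{H^{11/4}}$ with $C$ independent of $p$ — exactly the mechanism of the paper. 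The one point to state more precisely is that the resolvent drops out of the local model not via a Neumann series around a frozen symbol of modulus $<1$, but because the frozen principal part of $\bA$ vanishes identically (the frozen $f_\tau'(x_j^p)C_{0,1}$ term cancels against the frozen $C_{1,1}$ term, consistent with $\bA(0)=0$), which is also why $\bB$ enters at order zero (as the Hilbert transform at frozen slope) rather than "contributing one derivative", so that the net frozen operator is exactly $-(1+f_\tau'^2(x_j^p))^{-3/2}(\p_x^4)^{3/4}$ as required.
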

\begin{proof} 
 Let   $p\geq 3$ be an integer which we fix later on in this proof and let  $\{\pi_j^p\,:\, 1\leq j\leq 2^{p+1}\}$ be a $p$-partition of unity, respectively, 
  let $\{\chi_j^p\,:\, 1\leq j\leq 2^{p+1}\} $ be a family associated to this $p$-partition of unity as described above.
 In the following, we denote by $C$   constants which are
independent of $p\in\N$, $h\in H^3(\s)$, $\tau\in [0,1]$, and $j \in \{1, \ldots, 2^{p+1}\}$, while the constants  denoted by $K$ may depend only on $p.$\medskip

\noindent{\em Step 1: The lower order terms.} Using the decomposition provided in the proof of Lemma \ref{L:32} for the operator $\bB$, we write
\begin{align}\label{MMM0}
\Phi_{\sigma,1}(\tau f)[h]&=f_\tau'\bB_1(f_\tau)[\overline\omega_1']-\bB_2(f_\tau)[\overline\omega_1']+\frac{1}{\pi}C_{0,1}(f_\tau)[\overline\omega_1']+\frac{1}{\pi}f_\tau'C_{1,1}(f_\tau)[f_\tau,\overline\omega_1'],
\end{align}
 where, for the sake of brevity, we have set 
 \[\text{$\overline\omega_1:=\overline\omega_1(\tau f)[h]$ \qquad and\qquad $f_\tau:=\tau f.$}\]
 Using integration by parts, we infer from \eqref{p1} that
 \begin{align}\label{MMM1}
 \|\pi_j^p\big[f_\tau'\bB_1(f_\tau)[\overline\omega_1']-\bB_2(f_\tau)[\overline\omega_1']\big]\|_2\leq C\|\overline\omega_1\|_2\leq C\|h\|_{H^2},
 \end{align}
 and we are left to consider the last two terms in \eqref{MMM0}.\medskip

\noindent{\em Step 2: The first leading  order term.}
Given $1\leq j\leq 2^{p+1}$ and $\tau \in[0,1]$, let
\[
\bA_{j,\tau}^1:=-\frac{f_\tau'^2(x_j^p)}{(1+f_\tau'^2(x_j^p))^{5/2}}(\p_x^4)^{3/4},
\]
where $x_j^p\in I_j^p$.
In this step we show that if $p$ is sufficiently large, then
  \begin{equation}\label{MMM2}
  \| \pi_j^p f_\tau'C_{1,1}(f_\tau)[f_\tau,\overline\omega_1']-\pi\bA^1_{j,\tau}[\pi^p_j h]|_{2}\leq \frac{\mu}{2} \|\pi_j^p h\|_{H^3}+K\|  h\|_{H^{11/4}}
 \end{equation}
 for all $ j\in\{1,\ldots, 2^{p+1}\}$, $\tau\in[0,1],$ and  $h\in H^3(\s)$. 
 To this end we write
 \[
 \pi_j^pf_\tau'C_{1,1}(f_\tau)[f_\tau,\overline\omega_1']-\pi\bA^1_{j,\tau}[\pi^p_j h]=T_1[h]+T_2[h]+T_3[h],
 \]
 where
 \begin{align*}
 T_1[h]&:=\pi_j^pf_\tau'C_{1,1}(f_\tau)[f_\tau,\overline\omega_1']- f_\tau'(x_j^p)C_{1,1}(f_\tau)[f_\tau,\pi_j^p\overline\omega_1'],\\[1ex]
  T_2[h]&:=  f_\tau'(x_j^p)C_{1,1}(f_\tau)[f_\tau,\pi_j^p\overline\omega_1']- \frac{f_\tau'^2(x_j^p)}{1+f_\tau'^2(x_j^p)}C_{0,0}[\pi_j^p\overline\omega_1'],\\[1ex]
  T_3[h]&:=   \frac{f_\tau'^2(x_j^p)}{1+f_\tau'^2(x_j^p)}\Big[C_{0,0}[\pi_j^p\overline\omega_1']+\frac{\pi}{(1+f_\tau'^2(x_j^p))^{3/2}}(\p_x^4)^{3/4}[\pi_j^p h]\Big].
 \end{align*}
 
 We first consider $T_1[h].$ Recalling that $\chi_j^p\pi_j^p=\pi_j^p$, algebraic manipulations lead us to
 \begin{align*}
 T_1[h]&:=\chi_j^p  (f'_\tau-f'_\tau(x_j^p))C_{1,1}(f_\tau)[f_\tau,\pi_j^p\overline\omega_1']+T_{11}[h],
 \end{align*}
and the term $T_{11}[h]$ may be expressed, after  integrating  by parts, as
 \begin{align*}
 T_{11}[h]&=f_{\tau}'C_{1,1}(f_\tau)  [f_{\tau},(\pi_j^p)'\overline\omega_1 ]-2f_{\tau}'C_{2,1}( f_\tau)  [\pi_j^p,f_{\tau},\overline\omega_1]+f_\tau'C_{1,1}(f_\tau)  [\pi_j^p,f_\tau'\overline\omega_1]\\[1ex]
 &\hspace{0.424cm}-2 f_{\tau}'C_{3,2}(f_\tau, f_\tau)  [\pi_j^p,f_\tau,f_\tau,f_\tau' \overline\omega_1]+2 f_\tau'C_{4,2}(f_\tau, f_\tau)  [\pi_j^p,f_\tau,f_\tau,f_\tau, \overline\omega_1]\\[1ex]
 &\hspace{0.424cm}+(f_\tau'(x_j^p)-f_\tau')(1-\chi_j^p)C_{1,1}(f_\tau)  [f_\tau,(\pi_j^p)'\overline\omega_1]\\[1ex]
 &\hspace{0.424cm}+(f_\tau'(x_j^p)-f_\tau')C_{1,1}(f_\tau)  [\chi_j^p,\pi_j^p f_\tau'\overline\omega_1]-2(f_\tau'(x_j^p)-f_\tau')C_{2,1}( f_\tau)  [\chi_j^p,f_\tau,\pi_j^p \overline\omega_1]\\[1ex]
 &\hspace{0.424cm}-2 (f_\tau'(x_j^p)-f_\tau')C_{3,2}(f_\tau, f_\tau)  [\chi_j^p,f_\tau,f_\tau,\pi_j^p f_\tau' \overline\omega_1]\\[1ex]
 &\hspace{0.424cm}+2(f_\tau'(x_j^p)-f_\tau')C_{4,2}(f_\tau, f_\tau)  [\chi_j^p,f_\tau,f_\tau,f_\tau,\pi_j^p \overline\omega_1].
\end{align*}
Lemma \ref{L:A1} $(i)$  together with \eqref{p1} yields
 \begin{align}\label{MM3a-}
 \|T_{11}[h]\|_2\leq K\|\overline\omega_1\|_2\leq K\|h\|_{H^2},
 \end{align}
 and
 \begin{align}\label{MM3a}
 \|C_{1,1}(f_\tau)[f_\tau,\pi_j^p\overline\omega_1']\|_2\leq C\|\pi_j^p\overline\omega_1'\|_2.
 \end{align}
Hence,  we need to estimate the term $\|\pi_j^p\overline\omega_1'\|_2$ appropriately. 
The relation \eqref{Aprime1} and the definition of $\overline\omega_1 $ (see Proposition \ref{P:I1} $(b)$), yield
\begin{equation}\label{MM3b}
\begin{aligned}
(1+a_\mu\bA(f_\tau))[(\pi_j^p\overline\omega_1)']&=\frac{\pi_j^ph'''}{(1+f_\tau'^2)^{3/2}}-\frac{3\pi_j^pf_\tau'f_\tau''h''}{(1+f_\tau'^2)^{5/2}}-a_\mu\pi_j^pT_{\rm lot}^A(f_\tau)[\overline\omega_1]\\[1ex]
&\hspace{0.424cm}+(1+a_\mu\bA(f_\tau))[(\pi_j^p)'\overline\omega_1]+a_\mu\big(\bA(f_\tau)[\pi_j^p\overline\omega_1']-\pi_j^p\bA(f_\tau)[\overline\omega_1']\big),
\end{aligned}
\end{equation}
and   the  last  term  on the right hand side of \eqref{MM3b} can be recast    as
\begin{align*}
\pi(\bA(f_\tau)[\pi_j^p\overline\omega_1']-\pi_j^p\bA(f_\tau)[\overline\omega_1'])
&= \pi f_\tau'(\pi_j^p\bB_2(f_\tau)[\overline\omega_1']-\bB_2(f_\tau)[\pi_j^p\overline\omega_1'])\\[1ex]
& \hspace{0.424cm}+\pi(\pi_j^p\bB_1(f_\tau)[\overline\omega_1']-\bB_1(f_\tau)[\pi_j^p\overline\omega_1'])\\[1ex]
&\hspace{0.424cm}+f_\tau'(C_{0,1}(f_\tau)[\pi_j^p\overline\omega_1']- \pi_j^pC_{0,1}(f_\tau)[\overline\omega_1'])\\[1ex]
&\hspace{0.424cm}-(C_{1,1}(f_\tau)[f_\tau,\pi_j^p\overline\omega_1']- \pi_j^pC_{1,1}(f_\tau)[f_\tau,\overline\omega_1']).
\end{align*}
Integration  by parts and   Lemma  \ref{L:A1} $(i)$ lead us to 
\begin{equation}\label{MM3b'}
\begin{aligned}
&\| f_\tau'\bB_2(f_\tau)[\pi_j^p\overline\omega_1']\|_2+\|f_\tau'\pi_j^p\bB_2(f_\tau)[\overline\omega_1']\|_2 +\|\pi_j^p\bB_1(f_\tau)[\overline\omega_1']\|_2+\|\bB_1(f_\tau)[\pi_j^p\overline\omega_1']\|_2\\[1ex]
&\hspace{0.424cm}+\|f_\tau'(C_{0,1}(f_\tau)[\pi_j^p\overline\omega_1']- \pi_j^pC_{0,1}(f_\tau)[\overline\omega_1'])\|_2+\|C_{1,1}(f_\tau)[f_\tau,\pi_j^p\overline\omega_1']- \pi_j^pC_{1,1}(f_\tau)[f_\tau,\overline\omega_1']\|_2\\[1ex]
&\hspace{3cm}\leq K\|\overline\omega_1\|_2\leq K\|h\|_{H^2}.
\end{aligned}
\end{equation}
Theorem \ref{T:I1}, Lemma \ref{L:31} (which can be applied as $(\pi_j^p\overline\omega_1)'\in\wh L^2(\s)$),   \eqref{Aprime3} and  \eqref{p1} (both for $\tau=3/4$), and 
\eqref{MM3b}-\eqref{MM3b'} combined yield
\begin{equation*} 
\|(\pi_j^p\overline\omega_1)'\|_2\leq C\|\pi_j^ph\|_{H^3}+K\|h\|_{H^{11/12}}+K\|\overline\omega_1\|_{H^{3/4}}\leq C\|\pi_j^ph\|_{H^3}+K\|h\|_{H^{11/12}},
\end{equation*}
and \eqref{p1} now entails  
\begin{equation}\label{MM3c}
\begin{aligned}
\|\pi_j^p\overline\omega_1'\|_2&\leq \|(\pi_j^p\overline\omega_1)'\|_2+\|(\pi_j^p)'\overline\omega_1\|_2\leq C\|\pi_j^ph\|_{H^3}+K\|h\|_{H^{11/12}}.
\end{aligned}
\end{equation}
Recalling that  $x_j^p\in I_j^p\subset J_j^p$  and  $\supp \chi_j^p=\cup_{n\in\Z}(2\pi n+J_j^p ), $ the embedding $H^{1}(\s)\hookrightarrow {\rm C}^{1/2}(\s)$ together with \eqref{MM3a-} \eqref{MM3a}, and \eqref{MM3c} finally yield
\begin{equation}\label{MM3}
\begin{aligned}
\|T_1[h]\|_2&\leq C\|\chi_j^p  (f'-f'(x_j^p))\|_\infty\|\pi_j^ph\|_{H^3}+ K\|h\|_{H^{11/12}}\leq \frac{C}{2^{p/2}}\|\pi_j^ph\|_{H^3}+ K\|h\|_{H^{11/12}}\\[1ex]
&\leq \frac{\mu}{6}\|\pi_j^ph\|_{H^3}+ K\|h\|_{H^{11/12}},
\end{aligned}
\end{equation}
provided that $p$ is sufficiently large. \medskip

Noticing that 
\[\frac{f_\tau'(x_j^p)}{1+f_\tau'^2(x_j^p)}C_{0,0}[\pi_j^p\overline\omega_1']=C_{1,1}(f_\tau'(x_j^p){\rm id}_\R)[f_\tau'(x_j^p){\rm id}_\R,\pi_j^p\overline\omega_1']\]
we write  the term $T_2[h]$  as
\[
T_2[h]=f_\tau'(x_j^p)T_{21}[h]-\frac{f_\tau'^2(x_j^p)}{1+f_\tau'^2(x_j^p)}T_{22}[h],
\]
where
\begin{align*}
T_{21}[h]&:=C_{1,1}(f_\tau)[f_\tau-f_\tau'(x_j^p){\rm id}_\R,\pi_j^p\overline\omega_1'],\\[1ex]
T_{22}[h]&:=C_{2,1}(f_\tau)[f_\tau-f_\tau'(x_j^p){\rm id}_\R,f_\tau+f_\tau'(x_j^p){\rm id}_\R,\pi_j^p\overline\omega_1'].
\end{align*}
Though $f_\tau'(x_j^p){\rm id}_\R$ is not $ 2\pi$-periodic, it is easy to see that the functions  $T_{2i}[h]$  still belong  to $L_2(\s) $ for $i\in\{1,2\}$.
Since $\chi_j^p\pi_j^p=\pi_j^p$, we have $$T_{21}[h]:= T_{21a}[h]+T_{21b}[h],$$
 where
\begin{align*}
T_{21a}[h]&:= \chi_j^p\PV\int_{|s|<\frac{\pi}{2^p}}\frac{\delta_{[\cdot,s]} (f_\tau-f_\tau'(x_j^p){\rm id}_\R)/s}
{1+\big(\delta_{[\cdot,s]} f_\tau/s\big)^2}\frac{(\pi_j^p\overline\omega_1')(\cdot-s)}{s}\, ds,\\[1ex]
T_{21b}[h]&:=\chi_j^p\PV\int_{\frac{\pi}{2^p}<|s|<\pi}\frac{\delta_{[\cdot,s]} (f_\tau-f_\tau'(x_j^p){\rm id}_\R)/s}
{1+\big(\delta_{[\cdot,s]} f_\tau/s\big)^2}\frac{(\pi_j^p\overline\omega_1')(\cdot-s)}{s}\, ds\\[1ex]
&\hspace{0.424cm}-\int_{-\pi}^\pi\frac{\big(\delta_{[\cdot,s]} (f_\tau-f_\tau'(x_j^p){\rm id}_\R)/s\big)
\big(\delta_{[\cdot,s]}\chi_j^p/s\big)}{1+\big(\delta_{[\cdot,s]} f_\tau/s\big)^2}(\pi_j^p\overline\omega_1')(\cdot-s)\, ds.
\end{align*}
Integrating by parts we obtain in view of   \eqref{p1} that 
\begin{align*}
\|T_{21b}[h]\|_2 \leq K\|\overline\omega_1\|_{2}\leq K\|h\|_{H^2}.
\end{align*}
Since $T_{21a}[h]\in L_2(\s),$ it holds   $\|T_{21a}[h]\|_2=\| T_{21a}[h]\|_{L_2((-\pi,\pi))}$.
Clearly, if $x\in\supp ({\bf 1}_{(-\pi,\pi)}T_{21a}[h]),$  then 
\[x\in(-\pi,\pi)\cap \big(\cup_{n\in\Z}(2n\pi+J_j^p)\big).\]
 Letting $J_j^p:=[a_j^p,b_j^p]$, $p\geq 3$, $1\leq j\leq 2^{p+1}$,  we distinguish three cases.
\begin{itemize}
 \item[$(i)$] If $1\leq j \leq 2^{p}-1$, then   $(-\pi,\pi)\cap  \big(2\pi n+ J_j^p\big)\neq\emptyset$  if and only if $n=0$ and
\[
(-\pi,\pi)\cap   J_j^p=[a_j^p,b_j^p].
\]
\item[$(ii)$] If $2^p+3\leq j\leq 2^{p+1}$, then  $(-\pi,\pi)\cap  \big(2\pi n+ J_j^p\big)\neq\emptyset$ if and only if $n=-1$ and 
\[
(-\pi,\pi)\cap (-2\pi+  J_j^p) =[a_j^p-2\pi ,b_j^p-2\pi].
\]
\item[$(iii)$] If  $j \in\{2^p, 2^{p}+1, 2^{p}+2\}$, then
  $(-\pi,\pi)\cap  \big(2\pi n+ J_j^p\big)\neq\emptyset$ if and only if $n \in\{-1,0\}$,  and 
\[
(-\pi,\pi)\cap    J_j^p  =[a_j^p,\pi)\qquad\text{and}\qquad(-\pi,\pi)\cap (-2\pi+  J_j^p) =(-\pi ,-2\pi+b_j^p].
\]
\end{itemize}
 Assume   that we are in the first case, that is $1\leq j\leq 2^p-1.$ 
 Let $F_{\tau,j}$ be the Lipschitz continuous function given by
\begin{equation*}
 F_{\tau,j}=f_\tau \quad   \text{on $[a_j^p,b_j^p]$}, \qquad F_{\tau,j}'=f'_\tau(x_j^p) \quad   \text{on $\R\setminus [a_j^p,b_j^p]$.} 
\end{equation*}
Then  $\|F_{\tau,j}'\|_\infty\leq \|f'\|_\infty$. 
Taking into account that $(\supp \pi_j^p)\cap[a_j^p-\pi/2^p,b_j^p+\pi/2^p]\subset[a_j^p,b_j^p],$ it follows that 
 \begin{align*}
{\bf 1}_{(-\pi,\pi)}T_{21a}[h]&= {\bf 1}_{(-\pi,\pi)}\chi_j^p\PV\int_{|s|<\frac{\pi}{2^p}}\frac{\delta_{[\cdot,s]} (F_{\tau,j}-f_\tau'(x_j^p){\rm id}_\R)/s}
{1+\big(\delta_{[\cdot,s]} f_\tau/s\big)^2}\frac{(\pi_j^p\overline\omega_1')(\cdot-s)}{s}\, ds,\\[1ex] 
&={\bf 1}_{(-\pi,\pi)}\chi_j^p C_{1,1}(f_\tau)[F_{\tau,j}-f_\tau'(x_j^p){\rm id}_\R,\pi_j^p\overline\omega_1']\\[1ex]
&\hspace{0.424cm}-{\bf 1}_{(-\pi,\pi)}\chi_j^p\PV\int_{\frac{\pi}{2^p}<|s|<\pi}\frac{\delta_{[\cdot,s]} (F_{\tau,j}-f_\tau'(x_j^p){\rm id}_\R)/s}
{1+\big(\delta_{[\cdot,s]} f_\tau/s\big)^2}\frac{(\pi_j^p\overline\omega_1')(\cdot-s)}{s}\, ds,
\end{align*}
and,  using integration by parts and \eqref{p1},  we arrive at  
 \begin{align*}
\Big\|\chi_j^p\PV\int_{\frac{\pi}{2^p}<|s|<\pi}\frac{\delta_{[\cdot,s]} (F_{\tau,j}-f_\tau'(x_j^p){\rm id}_\R)/s}
{1+\big(\delta_{[\cdot,s]} f_\tau/s\big)^2}\frac{(\pi_j^p\overline\omega_1')(\cdot-s)}{s}\, ds\Big\|_{L_2((-\pi,\pi))}
\leq K\|\overline\omega_1\|_{2}\leq
K\|h\|_{H^2}.
\end{align*}
Moreover, combining  Lemma \ref{L:A1} $(i)$ and \eqref{MM3c}, we find that  
\begin{align*}
\|\chi_j^p C_{1,1}(f_\tau)[F_{\tau,j}-f_\tau'(x_j^p){\rm id}_\R,\pi_j^p\overline\omega_1']\|_{L_2((-\pi,\pi))}&\leq C\|F'_{\tau,j}-f_\tau'(x_j^p)\|_\infty\|\pi_j^p\overline\omega_1'\|_2\\[1ex]
&\leq C\|f'-f'(x_j^p)\|_{L_\infty((a_j^p,b_j^p))}\big(\|\pi_j^p h\|_{H^3}+K\|h\|_{H^{11/12}}\big)\\[1ex]
&\leq \frac{\mu}{12}\|\pi_j^ph\|_{H^3}+ K\|h\|_{H^{11/12}},
\end{align*}
provided that $p$ is sufficiently large. 
Altogether, we conclude that for $1\leq j\leq 2^{p}-1$ it holds 
\begin{equation}\label{MM4a}
\|T_{21}[h]\|_2\leq \frac{\mu}{12}\|\pi_j^ph\|_{H^3}+ K\|h\|_{H^{11/12}}.
\end{equation}
Similar arguments apply also in the cases  $(ii)$ and $(iii)$, and therefore  the latter  estimate actually holds for all $1\leq j\leq 2^{p+1}$. 
Since  $T_{22}[h]$ can be estimated in the same way, we obtain that
\begin{equation}\label{MM4}
\|T_{2}[h]\|_2\leq \frac{\mu}{6}\|\pi_j^ph\|_{H^3}+ K\|h\|_{H^{11/12}}, 
\end{equation}
provided that $p$ is sufficiently large.

With regard to $T_3[h],$ it holds
\[
\|T_3[h]\|_2\leq \Big\|C_{0,0}[\pi_j^p\overline\omega_1']+ \frac{\pi}{(1+f_\tau'^2(x_j^p))^{3/2}}(\p_x^4)^{3/4}[\pi_j^p h]\Big\|_2,
\]
with
\begin{align*}
C_{0,0}[\pi_j^p\overline\omega_1']+ \frac{\pi}{(1+f_\tau'^2(x_j^p))^{3/2}}(\p_x^4)^{3/4}[\pi_j^p h]&=C_{0,0}\Big[\pi_j^p\overline\omega_1'-\frac{ \pi_j^p h'''}{(1+f_\tau'^2(x_j^p))^{3/2}}\Big]\\[1ex]
&\hspace{0.424cm}-\frac{1}{(1+f_\tau'^2(x_j^p))^{3/2}}C_{0,0}[3(\pi_j^p)'h''+3(\pi_j^p)''h'+(\pi_j^p)'''h]\\[1ex]
&\hspace{0.424cm}-\frac{1}{2(1+f_\tau'^2(x_j^p))^{3/2}}\int_{-\pi}^\pi\Big[\frac{1}{t_{[s]}}-\frac{1}{s/2}\Big](\pi_j^ph)'''(\cdot-s)\, ds.
\end{align*}
 Integration by parts and  Lemma \ref{L:A1} $(i)$ lead us to
   \begin{align*}
\Big\|C_{0,0}[\pi_j^p\overline\omega_1']+ \frac{\pi}{(1+f_\tau'^2(x_j^p))^{3/2}}(\p_x^4)^{3/4}[\pi_j^p h]\Big\|_2\leq 
C\Big\|\pi_j^p\overline\omega_1'-\frac{ \pi_j^p h'''}{(1+f_\tau'^2(x_j^p))^{3/2}}\Big\|_2+K\|h\|_{H^2}.
\end{align*}
A straight forward consequence of \eqref{MM3b} is the following identity
\begin{align*}
\pi_j^p\overline\omega_1'-\frac{ \pi_j^p h'''}{(1+f_\tau'^2(x_j^p))^{3/2}}=&\Big[\frac{1}{(1+f_\tau'^2)^{3/2}}-\frac{1}{(1+f_\tau'^2(x_j^p))^{3/2}}\Big]\pi_j^ph'''-a_\mu\bA(f_\tau)[\pi_j^p\overline\omega_1']\\[1ex]
&\hspace{0.424cm}-\frac{3\pi_j^pf_\tau'f_\tau''h''}{(1+f_\tau'^2)^{5/2}}-a_\mu\pi_j^pT_{\rm lot}^A(f_\tau)[\overline\omega_1]+a_\mu\big(\bA(f_\tau)[\pi_j^p\overline\omega_1']-\pi_j^p\bA(f_\tau)[\overline\omega_1']\big).
\end{align*}
Using once more the H\"older continuity of $f'$,  \eqref{Aprime3} and \eqref{p1} (both with $\tau=3/4$) together with \eqref{MM3b'} 
yields that for  $p$ sufficiently large  
 \begin{align}\label{MM5a}
\|T_3[h]\|_2\leq C\| f_\tau'C_{0,1}(f_\tau)[\pi_j^p\overline\omega_1']-C_{1,1}(f_\tau)[f_\tau,\pi_j^p\overline\omega_1']\|_2+\frac{\mu}{24}\|\pi_j^ph\|_{H^3}+ K\|h\|_{H^{11/12}}.
\end{align}
We are left with the term
\[f_\tau'C_{0,1}(f_\tau)[\pi_j^p\overline\omega_1']-C_{1,1}(f_\tau)[f_\tau,\pi_j^p\overline\omega_1']=T_{31}[h]-T_{21}[h],\]
with $T_{21}[h]$ defined above and with
\begin{align*}
T_{31}[h]&:=(f_\tau'-f_\tau'(x_j^p))C_{0,1}(f_\tau)[\pi_j^p\overline\omega_1'].
\end{align*}
Since 
\begin{align*}
T_{31}[h]&=\chi_j^p(f_\tau'-f_\tau'(x_j^p))C_{0,1}(f_\tau)[\pi_j^p\overline\omega_1']
-(f_\tau'-f_\tau'(x_j^p))\int_{-\pi}^\pi\frac{\delta_{[\cdot,s]}\chi_j^p/s}{1+\big(\delta_{[\cdot,s]}f_\tau/s\big)^2}(\pi_j^p\overline\omega_1')(\cdot-s)\, ds,
\end{align*}
the estimate \eqref{MM3c} and Lemma \ref{L:A1} $(i)$ for the first term, respectively
integration by parts for the second term lead us, for $p$ sufficiently large, to
\begin{align}\label{MM5b}
C\|T_{31}[h]\|_2\leq  \frac{\mu}{24}\|\pi_j^ph\|_{H^3}+ K\|h\|_{H^{11/12}}.
\end{align}
Gathering \eqref{MM4a} (which is valid also for $C\|T_{21}[h]\|_2$ provided that we choose a larger $p$ if required), 
\eqref{MM5a}, and \eqref{MM5b}, we conclude that  
 \begin{align}\label{MM5}
\|T_3[h]\|_2\leq \frac{\mu}{6}\|\pi_j^ph\|_{H^3}+ K\|h\|_{H^{11/12}},
\end{align}
provided that  $p$ is sufficiently large. 
The estimate \eqref{MMM2} follows now from \eqref{MM3}, \eqref{MM4}, and \eqref{MM5}.\medskip

\noindent{\em Step 2: The second leading  order term.}
Given $1\leq j\leq 2^{p+1}$ and $\tau \in[0,1]$, let
\[
\bA_{j,\tau}^2:=-\frac{1}{(1+f_\tau'^2(x_j^p))^{5/2}}(\p_x^4)^{3/4},
\]
where $x_j^p\in I_j^p$.
Similarly as in the previous  step, it follows that
  \begin{equation}\label{MMM3}
  \Big\| \pi_j^p  C_{0,1}(f_\tau)[\overline\omega_1']-\pi\bA^2_{j,\tau}[\pi^p_j h]\Big|_{2}\leq \frac{\mu}{2} \|\pi_j^p h\|_{H^3}+K\|  h\|_{H^{11/4}}
 \end{equation}
 for all $ j\in\{1,\ldots, 2^{p+1}\}$, $\tau\in[0,1],$ and  $h\in H^3(\s)$, provided that $p$ is sufficiently large. \medskip
 
 The desired claim \eqref{DE3} follows from \eqref{MMM0}, \eqref{MMM1}, \eqref{MMM2}, and \eqref{MMM3}.

 \end{proof}

We are now in a position to prove \eqref{GP}.
\begin{thm}\label{T:GP1}
Given $f\in H^2(\s)$, it holds that 
\[
-\Phi_\sigma(f)\in\kH(H^3(\s), L_2(\s)).
\]
\end{thm}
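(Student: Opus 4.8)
The plan is to establish the generator property via a localization argument combined with the homotopy $[\tau\mapsto\Phi_{\sigma,1}(\tau f)]$ and the perturbation theory for generators of analytic semigroups. First I would decompose $\Phi_\sigma(f)=\Phi_{\sigma,1}(f)+\Phi_{\sigma,2}(f)$ as indicated before the statement, and recall from Proposition \ref{P:I1}~$(iii)$ that $\Phi_{\sigma,2}(f)[h]=\bB(f)[\overline\omega_2(f)[h]]$ satisfies $\|\Phi_{\sigma,2}(f)[h]\|_2\le C\|h\|_{H^{2+\tau}}$ for some $\tau\in(1/2,1)$, which makes $\Phi_{\sigma,2}(f)$ a lower-order perturbation: since the embedding $H^{2+\tau}(\s)\hookrightarrow L_2(\s)$ factors through $H^3(\s)$ with an interpolation (hence Ehrling-type) estimate $\|h\|_{H^{2+\tau}}\le\e\|h\|_{H^3}+C_\e\|h\|_2$, a standard result (e.g.\ \cite[Theorem I.1.3.1]{Am95}) shows that $\kH(H^3(\s),L_2(\s))$ is stable under such perturbations. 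Thus it suffices to prove $-\Phi_{\sigma,1}(f)\in\kH(H^3(\s),L_2(\s))$.

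The next step is the localization. Fix $\mu>0$ small (to be chosen in terms of the ellipticity constant) and invoke Theorem \ref{T:K1} to obtain $p$, a $p$-partition of unity $\{\pi_j^p\}$, a constant $K=K(p)$, and the model operators $\bA_{j,\tau}=-(1+f_\tau'^2(x_j^p))^{-3/2}(\p_x^4)^{3/4}$ such that \eqref{DE3} holds uniformly in $\tau\in[0,1]$ and $j$. Each $\bA_{j,\tau}$ is, up to the strictly positive constant $(1+f_\tau'^2(x_j^p))^{-3/2}\ge(1+\|f'\|_\infty^2)^{-3/2}=:\eta>0$, exactly the Fourier multiplier $-(\p_x^4)^{3/4}=H\p_x^3$ whose symbol is $(-|k|^3)_{k\in\Z}$; hence $-\bA_{j,\tau}\in\kH(H^3(\s),L_2(\s))$ with resolvent estimates that are uniform in $j$ and $\tau$ (the relevant constants depend only on $\eta$ and $\|f'\|_\infty$). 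The standard localization machinery — summing the squared $L_2$-norms $\sum_j\|\pi_j^p u\|_2^2\simeq\|u\|_2^2$ with analogous equivalences in $H^3$, using that commutators $[\pi_j^p,\Phi_{\sigma,1}(\tau f)]$ are lower order, and absorbing the $\mu\|\pi_j^p h\|_{H^3}$ term into the a priori estimate coming from the model operators provided $\mu$ is chosen smaller than (a fraction of) the ellipticity constant — then yields, for $\re\lambda$ large enough, a uniform resolvent estimate
\[
(|\lambda|+1)\|u\|_2+\|u\|_{H^3}\le C\|(\lambda+\Phi_{\sigma,1}(\tau f))u\|_2,\qquad u\in H^3(\s),\ \tau\in[0,1],
\]
with $C$ independent of $\tau$. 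This is precisely the kind of argument carried out in \cite{E94,ES95,ES97,M16x,M17x,MM17x}, to which I would refer for the routine details.

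Finally, the uniform-in-$\tau$ a priori estimate combined with the continuity of $[\tau\mapsto\Phi_{\sigma,1}(\tau f)]\in\kL(H^3(\s),L_2(\s))$ (which follows from \eqref{RegS} and Proposition \ref{P:I1}) and the method of continuity allows one to pass from $\tau=0$ to $\tau=1$: at $\tau=0$ one has $\Phi_{\sigma,1}(0)=-(\p_x^4)^{3/4}$, for which $\lambda+\Phi_{\sigma,1}(0):H^3(\s)\to L_2(\s)$ is clearly an isomorphism for $\re\lambda>0$, so the uniform estimate propagates invertibility to $\tau=1$. Together with the resolvent bound this gives $-\Phi_{\sigma,1}(f)\in\kH(H^3(\s),L_2(\s))$, and adding back the perturbation $\Phi_{\sigma,2}(f)$ completes the proof. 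The main obstacle is bookkeeping rather than conceptual: one must verify that all commutator and lower-order terms generated by the cutoffs are genuinely of order $<3$ with constants that, after fixing $p$, are harmless, and that the small parameter $\mu$ in \eqref{DE3} can indeed be absorbed — this is where the precise form of Theorem \ref{T:K1} and the freedom to enlarge $p$ are used.
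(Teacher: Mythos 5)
Your proposal is correct and follows essentially the same route as the paper: decompose $\Phi_\sigma(f)=\Phi_{\sigma,1}(f)+\Phi_{\sigma,2}(f)$, treat $\Phi_{\sigma,2}(f)$ as a lower-order perturbation via interpolation and \cite[Theorem I.1.3.1]{Am95}, then use Theorem \ref{T:K1} with $\mu$ chosen small relative to the uniform resolvent constant of the model multipliers $-\alpha(\p_x^4)^{3/4}$, the equivalence of the localized norms, and the method of continuity along $[\tau\mapsto\Phi_{\sigma,1}(\tau f)]$ starting from $\Phi_{\sigma,1}(0)=-(\p_x^4)^{3/4}$. The only cosmetic differences are that the paper works with the complexified operators (via \cite[Corollary 2.1.3]{L95}) to formulate the resolvent estimate for $\re\lambda\geq\omega$, and uses the $\max_j\|\pi_j^p\cdot\|_{H^k}$ norm equivalence rather than the sum of squares.
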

\begin{proof}
Let $\Phi^c_{\sigma}(f)=\Phi^c_{\sigma,1}(f)+\Phi^c_{\sigma,2}(f)$ denote the complexification of $\Phi_\sigma(f)$ (the Sobolev spaces where $\Phi^c_{\sigma}(f)$
acts are now complex valued).  
In view of  \cite[Corollary~2.1.3]{L95} is suffices to show that $-\Phi_\sigma^c(f)\in\kH(H^3(\s), L_2(\s)).$
Moreover,  for the choice $\tau=3/4$ in Proposition \ref{P:I1} $(b)$, we obtain together with Lemma \ref{L:32}, that $\Phi_{\sigma,2}^c(f)\in\kL(H^{11/4}(\s), L_2(\s)).$
Since $[L_2(\s), H^3(\s)]_{11/12}=H^{11/4}(\s),$ cf. \eqref{IP}, by \cite[Theorem I.1.3.1 (ii)]{Am95}  we only need to show  that 
\begin{align}\label{GPS1}
-\Phi_{\sigma,1}^c(f)\in\kH(H^3(\s), L_2(\s)).
\end{align}
Recalling   \cite[Remark I.1.21 (a) ]{Am95}, we are left to find constants $\omega>0$ and $\kappa\geq1$
such that 
\begin{align}
&\omega-\Phi^c_{\sigma,1}(f)\in {\rm Isom}(H^3(\s), L_2(\s)), \label{TBS1}\\[1ex]
&\kappa\|(\lambda-\Phi^c_{\sigma,1}(f))[h]\|_2\geq |\lambda|\cdot \|h\|_{2}+\|h\|_{H^3}\qquad\text{$\forall$ $h\in H^3(\s)$  and $\re\lambda\geq\omega$.}\label{TBS2}
\end{align}
Let $a>1$ be chosen  such that
\[
\frac{1}{a}\leq \frac{1}{(1+\|f'\|^2_\infty)^{3/2}}\leq a.
\]
For each $\alpha\in[a^{-1},a]$, let $\bA_\alpha:H^3(\s)\to L_2(\s)$ denote operator  $\bA_\alpha:=-\alpha(\p_x^4)^{3/4}.$ 
Then it is easy to see that  for $\kappa':=1+a$ the following hold 
\begin{align}
&\lambda-\bA_\alpha\in {\rm Isom}(H^3(\s), L_2(\s))\qquad\text{$\forall$  $\re\lambda\geq1$,} \label{TBS1a}\\[1ex]
&\kappa'\|(\lambda-\bA_\alpha)[h]\|_2\geq |\lambda|\cdot \|h\|_{2}+\|h\|_{H^3}\qquad\text{$\forall$ $h\in H^3(\s)$  and $\re\lambda\geq1$.}\label{TBS2a}
\end{align}
Taking  $\mu:=1/(2\kappa')$  in Theorem \ref{T:K1}, we find  $p\geq3$, a   $p$-partition of unity  $\{\pi_j^p\,:\, 1\leq j\leq 2^{p+1}\} $, a constant $K=K(p)$, and for each 
 $j\in\{1,\ldots,2^{p+1}\}$ and $\tau\in[0,1]$ operators $\bA_{j,\tau}^c\in\kL(H^3(\s), L_2(\s))$ ($\bA_{j,\tau}^c$ is the complexification of $\bA_{j,\tau}$ defined in \eqref{FM1})
 such that 
 \begin{equation}\label{DE3a}
  \|\pi_j^p\Phi_{\sigma,1}^c(\tau f)[h]-\bA_{j,\tau}^c[\pi^p_j h]\|_{2}\leq \mu \|\pi_j^p h\|_{H^3}+K\|  h\|_{H^{11/4}}
 \end{equation}
 for all $ j\in\{1,\ldots, 2^{p+1}\}$, $\tau\in[0,1],$ and  $h\in H^3(\s)$. 
 We note that  the relations \eqref{TBS1a} and \eqref{TBS2a} are both valid for $\bA_{j,\tau}^c$ as $\bA_{j,\tau}^c\in\{\bA_\alpha\,:\,\alpha\in[a^{-1},a]\}$.
 It now follows from  \eqref{TBS2a} and \eqref{DE3a} that 
 \begin{align*}
 \kappa'\|\pi_j^p(\lambda-\Phi_{\sigma,1}^c(\tau f))[h]\|_2&\geq \kappa'\| (\lambda- \bA_{j,\tau}^c(f))[\pi_j^p h]\|_2
 -\kappa'\|\pi_j^p\Phi_{\sigma,1}^c(\tau f)[h]-\bA_{j,\tau}^c[\pi^p_j h]\|_2\\[1ex]
 &\geq |\lambda|\cdot \|\pi^p_j h\|_{2}+\frac{1}{2}\|\pi^p_j h\|_{H^3} -\kappa'K\|  h\|_{H^{11/4}} 
 \end{align*}
for all   $ j\in\{1,\ldots, 2^{p+1}\}$, $\tau\in[0,1],$ and  $h\in H^3(\s)$. 
  Since for each $k\in\N$ 
$$\Big[h\mapsto \max_{1\leq j\leq 2^{p+1}} \|\pi_j^p h\|_{H^k}\Big]: H^k(\s)\to\R,$$
defines a  norm equivalent to the standard  $H^k(\s)$-norm, cf. \cite[Remark 4.1]{MM17x},
Young's inequality together with \eqref{IP} enables us to conclude from the previous inequality the existence of constants $\omega>1$ and $\kappa\geq1$
with
\begin{align}
&\kappa\|(\lambda-\Phi^c_{\sigma,1}(\tau f))[h]\|_2\geq |\lambda|\cdot \|h\|_{2}+\|h\|_{H^3}\qquad\text{$\forall$ $h\in H^3(\s)$, $\tau\in[0,1]$,  and $\re\lambda\geq\omega$.}\label{TBS2'}
\end{align}
Choosing $\tau=1$ in \eqref{TBS2'} we obtain \eqref{TBS2}.
Moreover, the estimate \eqref{TBS2'} for $\lambda=\omega$, \eqref{TBS1a} ($\Phi^c_{\sigma,1}(\tau f)=\bA_1$ for $\tau=0$), and the method of continuity \cite[Proposition I.1.1.1]{Am95}
ensure that the property \eqref{TBS1} also holds and the proof is complete.
\end{proof}\medskip

We now come to the proof our first main result which uses on  the one hand the abstract theory for quasilinear parabolic problems outlined 
 in \cite{Am86, Am86b, Am88, Am93, Am95} (see also \cite[Theorem 1.1]{MW18x}), and on the other hand a parameter trick which has been employed in various 
versions  in \cite{An90, ES96, PSS15, M16x, M17x, MM17x} in the context of improving the regularity of solutions to certain parabolic evolution equations.    
We point out that the parameter trick can only be used because the uniqueness claim of Theorem \ref{MT:1} holds in the setting of classical solution (the solutions in Theorem \ref{MT:1}
possess though additional H\"older regularity properties, see the proof of Theorem \ref{MT:1}).   

\begin{proof}[Proof of Theorem \ref{MT:1}] Let $\E_1:=H^3(\s)$, $\E_0:=L_2(\s)$, $\beta:=2/3$ and $\alpha:=r/3$. Then $\E_1\hookrightarrow \E_0$ is a  compact embedding,
 $0<\beta<\alpha<1$, and it follows from Theorem \ref{T:GP1} and \eqref{RegS} that the abstract result \cite[Theorem 1.1]{MW18x} may be applied in the context of the Muskat problem \eqref{MPS}.
Hence,  given $f_0\in H^r(\s)=[L_2(\s), H^3(\s)]_\alpha$, \eqref{MPS} possesses a unique classical solution 
$f=f(\,\cdot\,; f_0)$, that is
\[
f\in C([0,T_+(f_0)), H^r(\s))\cap C((0,T_+(f_0)), H^3(\s))\cap C^1((0,T_+(f_0)), L_2(\s)),
\]
 where $T_+(f_0)\leq\infty$,  which has the property that
\[
f\in C^{\alpha-\beta}([0,T], H^2(\s)) \qquad\text{for all $T<T_+(f_0)$.}
\]
Concerning the uniqueness statement of Theorem \ref{MT:1} $(i)$, it suffices to prove   that if $T>0$ and  
\begin{equation}\label{UNIV}
f\in C([0,T], H^r(\s))\cap C((0,T], H^3(\s))\cap C^1((0,T], L_2(\s))
\end{equation}
solves \eqref{MPS} pointwise, then
\begin{equation}\label{UNIC}
f\in C^{\eta}([0,T], H^2(\s))  \qquad\text{for $\eta:=\frac{r-2}{r+1}$,}
\end{equation}
cf. \cite[Theorem 1.1]{MW18x}.
Let thus $f$ be a solution to \eqref{MPS} which  satisfies \eqref{UNIV}.
Since  $f\in C([0,T], H^r(\s))$ and $r>2$, we deduce from the Theorems \ref{T:I1} and \ref{T:I2} via interpolation that
\[
\sup_{t\in[0,T]}\|(1+a_\mu\bA(f))^{-1}\|_{\kL(\wh H^{r-2}(\s))}\leq C.
\] 
Since $\langle\kappa(f)\rangle=0$ and $\sup_{t\in[0,T]}\|\kappa(f)\|_{H^{r-2}}\leq C,$ it follows for $\overline\omega_1:=\overline\omega_1(f)[f]=(1+a_\mu \bA(f))^{-1}[\kappa(f)]$ (see Proposition \ref{P:I1}) that
\begin{align}\label{UNI0}
\sup_{t\in[0,T]}\|\overline\omega_1 \|_{ H^{r-2} }\leq C.
\end{align}
We next show that
\begin{align}\label{UNI1}
\sup_{t\in(0,T]}\|\Phi_{\sigma,1}(f)[f]\|_{H^{-1}}+\sup_{t\in(0,T]}\|\Phi_{\sigma,2}(f)[f]\|_{H^{-1}}\leq C.
\end{align}
It follows from the definitions of $\Phi_{\sigma,1}$ and $\overline\omega_1$ that
\begin{align*}
\Phi_{\sigma,1}(f)[f]=f'\bB_1(f)[\overline\omega_1']-\bB_2(f)[\overline\omega_1']+\frac{1}{\pi}\big(C_{0,1}(f)[\overline\omega_1']+f'C_{1,1}(f)[f,\overline\omega_1']\big),\qquad t\in(0,T].
\end{align*}
Using integration by parts, it is not difficult to  derive, with the help of \eqref{UNI0}, the estimate
\begin{align}\label{UNI1A}
\sup_{t\in(0,T]}\|f'\bB_1(f)[\overline\omega_1']\|_{2}+\sup_{t\in(0,T]}\|\bB_2(f)[\overline\omega_1']\|_{2}\leq C,
\end{align}
and we are left to consider the terms $C_{0,1}(f)[\overline\omega_1']$ and $f'C_{1,1}(f)[f,\overline\omega_1']$.
Since $\overline\omega_1\in H^1(\s)$ for $t\in(0,T],$ it is shown in Lemma \ref{L:32} that $C_{1,1}(f)[f,\overline\omega_1]\in H^1(\s)$  with
 \[
 f'C_{1,1}(f)[f,\overline\omega_1']=f'(C_{1,1}(f)[f,\overline\omega_1])'-f'C_{1,1}(f)[f',\overline\omega_1]+2f'C_{3,2}(f,f)[f',f,f,\overline\omega_1].
 \]
We estimate the terms on the right hand side of the latter identity in the $H^{-1}$-norm one by one.
 Given $\varphi\in H^1(\s)$, integration by parts, \eqref{UNI0},  and Lemma \ref{L:A1} $(i)$  yield 
 \begin{align*}
 \Big|\int_{-\pi}^\pi f'(C_{1,1}(f)[f,\overline\omega_1])'\varphi\, dx\Big|&\leq \Big|\int_{-\pi}^\pi f''C_{1,1}(f)[f,\overline\omega_1]\varphi\, dx\Big|
 +\Big|\int_{-\pi}^\pi f'C_{1,1}(f)[f,\overline\omega_1]\varphi'\, dx\Big|\leq C\|\varphi\|_{H^1},
 \end{align*}
 and therewith
 \begin{align}\label{UNI1b}
\sup_{t\in(0,T]}\|f'(C_{1,1}(f)[f,\overline\omega_1])'\|_{H^{-1}}\leq C.
\end{align}
In order to estimate $f'C_{1,1}(f)[f',\overline\omega_1]$ we write
\[
 C_{1,1}(f)[f',\overline\omega_1]=T_1-T_2-T_3,
\]
where
\begin{align*}
T_1&:=\int_{0}^\pi\frac{\delta_{[x,s]}f'/s}{1+\big(\delta_{[x,s]}f/s\big)^2}\frac{\overline\omega_1(x-s)-\overline\omega_1(x+s)}{s}\, ds,\\[1ex]
T_2&:=\int_{0}^\pi\frac{1}{1+\big(\delta_{[x,s]}f/s\big)^2}\frac{f'(x+s)-2f'(x)+f'(x-s)}{s}\frac{\overline\omega_1(x+s)}{s}\, ds,\\[1ex]
T_3&:=\int_{0}^\pi\frac{\big[(\delta_{[x,s]}f/s)-(\delta_{[x,-s]}f/s)\big]\big(\delta_{[x,-s]}f'/s\big)}{\big[1+\big(\delta_{[x,s]}f/s\big)^2\big]\big[1+\big(\delta_{[x,-s]}f/s\big)^2\big]}\frac{f(x+s)-2f(x)+f(x-s)}{s}\frac{\overline\omega_1(x+s)}{s}\, ds.
\end{align*}
   Given $\varphi\in H^1(\s)$,        Fubini's theorem  yields for $t\in(0,T]$
\begin{align*}
 \Big|\int_{-\pi}^\pi f'T_1\varphi\, dx\Big|
 &\leq C\|\varphi\|_{H^1}\int_0^\pi\int_{-\pi}^\pi\Big|\frac{\delta_{[x,s]}f'}{s}\Big|\cdot \Big|\frac{\overline\omega_1(x-s)-\overline\omega_1(x+s)}{s}\Big|\, dx\, ds\\[1ex]
 &\leq C\|\varphi\|_{H^1}\int_0^\pi\frac{1}{s^2}\Big(\int_{-\pi}^\pi|f'-\tau_s f'|^2\, dx\Big)^{1/2} \Big(\int_{-\pi}^\pi|\tau_s \overline\omega_1-\tau_{-s} \overline\omega_1|^2\, dx\Big)^{1/2}\, ds\\[1ex]
  &\leq C\|\varphi\|_{H^1}\int_0^\pi\frac{1}{s^2}\Big(\sum_{k\in\Z}|k|^2|\wh f(k)|^2|e^{iks}-1|^2\Big)^{1/2} \Big(\sum_{k\in\Z}|\wh{ \overline\omega}_1(k)|^2|e^{i2ks}-1|^2 \Big)^{1/2}\, ds,
\end{align*}
and since $|e^{i\xi}-1|\leq C|\xi|$, respectively  $|e^{i\xi}-1|\leq C|\xi|^{r-2}$, for all $\xi\in\R,$ the latter inequality together with \eqref{UNI0} leads   to
\[
\|f'T_1\|_{H^{-1}}\leq C\|f\|_{H^2}\|\overline\omega_1\|_{H^{r-2}}\int_0^\pi s^{r-3} \, ds \leq C.
\]
Arguing along the same lines we  find for $t\in(0,T]$, in view of $|e^{i\xi}-2+e^{-i\xi }|\leq C|\xi|^{r-1}$ for all $\xi\in\R,$ that 
\begin{align*}
 \Big|\int_{-\pi}^\pi f'T_2\varphi\, dx\Big|&\leq C\|\varphi\|_{H^1}\|\overline\omega_1\|_2\int_0^\pi\frac{1}{s^2}\Big(\int_{-\pi}^\pi|\tau_{-s} f'-2f'+\tau_{s} f'|^2\, dx\Big)^{1/2}\, ds\\[1ex]
  &\leq C\|\varphi\|_{H^1}\int_0^\pi\frac{1}{s^2}\Big(\sum_{k\in\Z}|k|^2|\wh f(k)|^2|e^{iks}-2+e^{-iks}|^2\Big)^{1/2}  \, ds \\[1ex]
  &\leq C\|\varphi\|_{H^1}\|f\|_{H^r}\int_0^\pi s^{r-3}  \, ds,
\end{align*}
and therewith 
\[
\|f'T_2\|_{H^{-1}}\leq C.
\]
Finally, the inequality $|e^{i\xi}-2+e^{-i\xi }|\leq C|\xi|^{2}$ for all $\xi\in\R $ together with the Sobolev embedding 
$H^{r-1}(\s)\hookrightarrow {\rm C}^{r-3/2}(\s)$ for $r\neq 5/2,$ yield  for $t\in(0,T]$ that
\begin{align*}
 \Big|\int_{-\pi}^\pi f'T_3\varphi\, dx\Big|&\leq C\|\varphi\|_{H^1}\|\overline\omega_1\|_2\|f\|_{H^r}\int_0^\pi s^{\min\{-2,r-9/2\}}\Big(\int_{-\pi}^\pi|\tau_{-s} f-2f+\tau_{s} f|^2\, dx\Big)^{1/2}\, ds\\[1ex]
  &\leq C\|\varphi\|_{H^1}\int_0^\pi s^{\min\{-2,r-9/2\}}\Big(\sum_{k\in\Z} |\wh f(k)|^2|e^{iks}-2+e^{-iks}|^2\Big)^{1/2}  \, ds \\[1ex]
  &\leq C\|\varphi\|_{H^1}\|f\|_{H^2}\int_0^\pi s^{\min\{0,r-5/2\}}  \, ds,
\end{align*}
hence
\[
\|f'T_3\|_{H^{-1}}\leq C.
\]
The latter estimate clearly holds also for $r=5/2$.
We have thus shown that  
\begin{align}\label{UNI1c}
\sup_{t\in(0,T]}\|f'C_{1,1}(f)[f',\overline\omega_1]\|_{H^{-1}}\leq C 
\end{align} 
holds true. Similarly  
 \begin{align}\label{UNI1d}
\sup_{t\in(0,T]}\|f'C_{3,2}(f,f)[f',f,f,\overline\omega_1]\|_{H^{-1}}\leq C.
\end{align}
Gathering \eqref{UNI1b}-\eqref{UNI1d}, it follows that 
\begin{align}\label{UNI1B}
\sup_{t\in(0,T]}\|f'C_{1,1}(f)[f,\overline\omega_1']\|_{H^{-1}}\leq C.
\end{align} 
Similarly, we get
\begin{align}\label{UNI1C}
\sup_{t\in(0,T]}\| C_{0,1}(f)[\overline\omega_1']\|_{H^{-1}}\leq C,
\end{align} 
and \eqref{UNI1A}, \eqref{UNI1B}, and \eqref{UNI1C} lead to 
\begin{align}\label{UNI1x}
\sup_{t\in(0,T]}\|\Phi_{\sigma,1}(f)[f]\|_{H^{-1}}\leq C.
\end{align}

We now consider the second term $\Phi_{\sigma,2}.$ Given $t\in(0,T]$, it holds 
\[
\Phi_{\sigma,2}(f)[f]=\bB(f)[\overline\omega_2(f)[f]]=-\Theta\bB(f)[(1+a_\mu\bA(f))^{-1}[f']]+a_\mu\bB(f)[(1+a_\mu\bA(f))^{-1}[T_{\rm lot}^A(f)[\overline\omega_1]]],
\] 
and Lemma \ref{L:32} together with Theorem \ref{T:I1} yields
\begin{align}\label{UNI2A1}
\|\bB(f)[(1+a_\mu\bA(f))^{-1}[f']]\|_2\leq C\|(1+a_\mu\bA(f))^{-1}[f']\|_2\leq C\|f'\|_2\leq C \qquad\text{$\forall$ $t\in[0,T]$.}
\end{align}
We now estimate $\|\bB(f)[\overline\omega_3]\|_{H^{-1}},$ where $\overline\omega_3:=\overline\omega_3(f):=(1+a_\mu\bA(f))^{-1}[T_{\rm lot}^A(f)[\overline\omega_1]]\in \wh L_2(\s)$ for  $t\in(0,T]$.
We begin by showing that the function $T_{\rm lot}^A(f)[\overline\omega_1]\in \wh L_2(\s)$, see \eqref{Aprime2},  satisfies
  \begin{align}\label{UNI2H}
\sup_{t\in(0,T]}\|T_{\rm lot}^A(f)[\overline\omega_1]\|_1\leq C.
\end{align}
Firstly we   consider the difference $(f'\bB_2(f)[\overline\omega_1])'-f'\bB_2(f)[\overline\omega_1'],$  which we estimate,
in view of \eqref{UNI0} and Lemma \ref{L:31}, as follows
 \begin{align*}
 \|(f'\bB_2(f)[\overline\omega_1])'-f'\bB_2(f)[\overline\omega_1']\|_1&\leq\|f''\bB_2(f)[\overline\omega_1]\|_1+\|f'\|_\infty\|(\bB_2(f)[\overline\omega_1])'-\bB_2(f)[\overline\omega_1']\|_1\\[1ex]
 &\leq \|f''\|_2\|\bB_2(f)[\overline\omega_1]\|_2+C\|(\bB_2(f)[\overline\omega_1])'-\bB_2(f)[\overline\omega_1']\|_1\\[1ex]
 &\leq C(1+\|(\bB_2(f)[\overline\omega_1])'-\bB_2(f)[\overline\omega_1']\|_1).
 \end{align*}
 Secondly, it is not difficult to see that 
\begin{align*}
 \|(\bB_1(f)[\overline\omega_1])'-\bB_1(f)[\overline\omega_1']\|_1+\|(\bB_2(f)[\overline\omega_1])'-\bB_2(f)[\overline\omega_1']\|_1\leq C\|\overline\omega_1\|_2\leq C.
 \end{align*}

 We still need to estimate  the terms  of $T_{\rm lot}^A(f)[\overline\omega_1]$  defined by means of the operators $C_{n,m}$ introduced in Lemma \ref{L:A1}.
 This is done  as follows
 \begin{align*}
 &\|f''C_{0,1}(f)[\overline\omega_1]\|_1\leq \|f''\|_2\|C_{0,1}(f)[\overline\omega_1]\|_2\leq C\|\overline\omega_1\|_2\leq C,\\[1ex]
 &\|f'C_{2,2}(f,f)[f',f,\overline\omega_1]\|_1 +\|C_{1,1}(f)[f',\overline\omega_1]\|_1+\|f'C_{3,2}(f,f)[f',f,f,\overline\omega_1]\|_1\leq C,
\end{align*}  
  the last estimate following in a similar way as  \eqref{UNI1c}. 
  Altogether,  \eqref{UNI2H} holds true.

Given $t\in(0,T]$,  we compute for $\varphi\in H^1(\s)$ that
\begin{align*}
\Big|\int_{-\pi}^\pi\overline\omega_3\varphi\, dx\Big|&=\Big|\int_{-\pi}^\pi(1+a_\mu\bA(f))^{-1}[T_{\rm lot}^A(f)[\overline\omega_1]]P\varphi\, dx\Big|
\\[1ex]
&=\Big|\int_{-\pi}^\pi T_{\rm lot}^A(f)[\overline\omega_1](1+a_\mu\big(\wh{\bA(f)}\big)^*)^{-1}[P\varphi]\, dx\Big|\\[1ex]
&\leq \|T_{\rm lot}^A(f)[\overline\omega_1]\|_1\|(1+a_\mu\big(\wh{\bA(f)}\big)^*)^{-1}\|_{\kL(\wh H^1(\s))}\|P\varphi\|_{H^1},
\end{align*}
 where   $P $ is the orthogonal projection on $\wh L_2(\s).$
 This inequality together with Theorem \ref{T:I3} and  \eqref{UNI2H} implies 
  \begin{align}\label{UNI2G}
\sup_{t\in(0,T]}\|\overline\omega_3\|_{H^{-1}}\leq C.
\end{align}
Since for $t\in(0,T] $ and $\varphi\in H^1(\s)$  
\begin{align*}
\Big|\int_{-\pi}^\pi\bB(f)[\overline\omega_3]\varphi\, dx\Big|&=\Big|\int_{-\pi}^\pi \overline\omega_3(\bB(f))^*[\varphi]\, dx\Big|
\leq \|\overline\omega_3\|_{H^{-1}}\|(\bB(f))^*\|_{\kL(\wh H^1(\s))}\|\varphi\|_{H^1},
\end{align*}
 Lemma \ref{L:33} together with \eqref{UNI2G} lead us to
 \begin{align}\label{UNI2A2}
\sup_{t\in(0,T]}\|\bB(f)[\overline\omega_3]\|_{H^{-1}}\leq C.
\end{align}

In view of  \eqref{UNI2A1} and  \eqref{UNI2A2}   we conclude that 
 \begin{align}\label{UNI1y}
\sup_{t\in(0,T]}\|\Phi_{\sigma,2}(f)[f]\|_{H^{-1}}\leq C,
\end{align}
and the claim \eqref{UNI1} follows from \eqref{UNI1x} and \eqref{UNI1y}.

Recalling that $f\in {\rm C}^1((0,T], L_2(\s))\cap {\rm C}([0,T], H^r(\s)),$   \eqref{UNI1}  
yields $f\in {\rm BC}^1((0,T], H^{-1}(\s))$ and the property \eqref{UNIC}
is now a straight forward consequence of \eqref{IP}. This proves the uniqueness claim in Theorem~\ref{MT:1} and herewith the assertion $(i)$.
The claim $(ii)$  follows directly from \cite[Theorem~1.1]{MW18x}, while the parabolic smoothing property stated at $(iii)$ is obtain by using 
a parameter trick in the same way as in the proof of \cite[Theorem 1.3]{M16x}. The proof of Theorem \ref{MT:1} is now complete.
 \end{proof}

 %%%%%%%%%%%%%%%%%%%%%%%%%%%%%%%%%%%%%%%%%%%%%%%%%%%%%%%%%%%%%%%%%%%
%%%%%%%%%%%%%%%%%%%%%%%%%%%%%%%%%%%%%%%%%%%%%%%%%%%%%%%%%%%%%%%%%%%%
%%%%%%%%%%%%%%%%%%%%%%%%%%%%%%%%%%%%%%%%%%%%%%%%%%%%%%%%%%%%%%%%%%%%
%%%%%%%%%%%%%%%%%%%%%%%%%%%%%%%%%%%%%%%%%%%%%%%%%%%%%%%%%%%%%%%%%%%
%%%%%%%%%%%%%%%%%%%%%%%%%%%%%%%%%%%%%%%%%%%%%%%%%%%%%%%%%%%%%%%%%%%%
%%%%%%%%%%%%%%%%%%%%%%%%%%%%%%%%%%%%%%%%%%%%%%%%%%%%%%%%%%%%%%%%%%%%
\section{The  Muskat problem without surface tension effects}\label{Sec:5}
 %%%%%%%%%%%%%%%%%%%%%%%%%%%%%%%%%%%%%%%%%%%%%%%%%%%%%%%%%%%%%%%%%%%
%%%%%%%%%%%%%%%%%%%%%%%%%%%%%%%%%%%%%%%%%%%%%%%%%%%%%%%%%%%%%%%%%%%%
%%%%%%%%%%%%%%%%%%%%%%%%%%%%%%%%%%%%%%%%%%%%%%%%%%%%%%%%%%%%%%%%%%%%
%%%%%%%%%%%%%%%%%%%%%%%%%%%%%%%%%%%%%%%%%%%%%%%%%%%%%%%%%%%%%%%%%%%
%%%%%%%%%%%%%%%%%%%%%%%%%%%%%%%%%%%%%%%%%%%%%%%%%%%%%%%%%%%%%%%%%%%%
%%%%%%%%%%%%%%%%%%%%%%%%%%%%%%%%%%%%%%%%%%%%%%%%%%%%%%%%%%%%%%%%%%%%
 We now investigate the evolution problem \eqref{P} in the  absence of the surface tension effects, that is for $\sigma=0$. 
  One of the main features of the Muskat problem with surface tension, namely  the quasilinear character,  seems to be lost  as the curvature term disappears from the equations. 
  Nevertheless, we show below that  \eqref{P} can be recast as a fully nonlinear and  nonlocal evolution problem  
\begin{align}\label{MPOS}
\dot f(t) =\Phi (f(t) ),\quad t\geq0,\qquad f(0)=f_0,
\end{align}
 with   $[f\mapsto \Phi(f)]\in C^\omega( H^2(\s), H^1(\s))$ defined in \eqref{PHOS}.
While the Muskat problem with surface tension is parabolic  regardless of the initial data that are considered, in the case when $\sigma=0$ we can prove that the 
Fr\' echet derivative  $\p\Phi(f_0)$ 
generates  a strongly  continuous and analytic semigroup in $\kL(H^1(\s))$, more precisely that
\begin{align}\label{GPOS}
-\p\Phi(f_0)\in\kH(H^2(\s), H^1(\s)),
\end{align}
only when  requiring that 
the initial data $f_0\in H^2(\s)$ are chosen such that the Rayleigh-Taylor condition is satisfied. 
Establishing \eqref{GPOS} is the first goal of this section and this necessitates some preparations.

To begin, we solve the  equation $\eqref{EP}_2$, which is, up to a factor of 2, equivalent to
\begin{align}\label{CEOS}
 (1+a_\mu \bA(f))[\overline\omega] =-c_\Theta f',
\end{align}
where  
\[
c_{\Theta}:=\displaystyle\frac{k\Theta}{\mu_-+\mu_+}.
\]
It is worth mentioning that in order to solve \eqref{CEOS} for $\overline\omega$ in $\wh H^1(\s)$  it is required in Theorem \ref{T:I2} that the left hand side belongs to $\wh H^1(\s),$ 
that is $f\in H^2(\s)$, and this is precisely the regularity required also for the function in the argument of $\bA$. 
Hence, \eqref{CEOS} is no longer quasilinear,
 unless $a_\mu=0$, see \cite{MM17x}. 
 
 \begin{prop}\label{P:O}
 Given $f\in H^2(\s)$, there exists a unique solution $\overline\omega:=\overline\omega(f)\in \wh H^1(\s)$ to \eqref{CEOS} and
 \begin{align}\label{oo}
 \overline\omega\in C^\omega(H^2(\s), \wh H^1(\s)).
 \end{align}
\end{prop}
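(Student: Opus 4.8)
The plan is to recognize \eqref{CEOS} as the inversion of the operator $1+a_\mu\bA(f)$ applied to the datum $-c_\Theta f'$, and to harvest all three assertions (existence, uniqueness, real-analyticity) from Lemma \ref{L:31} together with Theorems \ref{T:I1}--\ref{T:I2} and the elementary fact that $|a_\mu|<1$.

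\textbf{Step 1: the right-hand side lies in $\wh H^1(\s)$, and $1+a_\mu\bA(f)$ is invertible there.} For $f\in H^2(\s)$ one has $f'\in H^1(\s)$ with $\langle f'\rangle=0$, so $-c_\Theta f'\in\wh H^1(\s)$ and it is legitimate to seek $\overline\omega$ in $\wh H^1(\s)$. If $a_\mu=0$ the equation is the identity and there is nothing to prove, so assume $a_\mu\neq 0$. Since $|a_\mu|<1$ the number $\lambda:=-1/a_\mu$ satisfies $|\lambda|>1$, so by Theorem \ref{T:I2} (whose applicability rests on Lemma \ref{L:31}) we have $\lambda-\bA(f)\in\Isom(\wh H^1(\s))$. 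From the identity
\[
1+a_\mu\bA(f)=-a_\mu\bigl((-1/a_\mu)-\bA(f)\bigr)=-a_\mu\,(\lambda-\bA(f))
\]
we conclude $1+a_\mu\bA(f)\in\Isom(\wh H^1(\s))$.

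\textbf{Step 2: existence and uniqueness.} Setting $\overline\omega(f):=-c_\Theta(1+a_\mu\bA(f))^{-1}[f']$ then produces an element of $\wh H^1(\s)$ solving \eqref{CEOS}; it is the only solution in $\wh H^1(\s)$ because $1+a_\mu\bA(f)$ is injective there. (In fact uniqueness already holds in the larger space $\wh L_2(\s)$, since by the same factorization and Theorem \ref{T:I1} the operator $1+a_\mu\bA(f)$ is also invertible in $\kL(\wh L_2(\s))$.)

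\textbf{Step 3: real-analyticity.} By Lemma \ref{L:31} the map $[f\mapsto 1+a_\mu\bA(f)]$ is affine, hence real-analytic, from $H^2(\s)$ into $\kL(\wh H^1(\s))$, and by Step 1 it takes values in the open subset $\Isom(\wh H^1(\s))\subset\kL(\wh H^1(\s))$. Since operator inversion $T\mapsto T^{-1}$ is real-analytic on $\Isom(\wh H^1(\s))$, the map $[f\mapsto(1+a_\mu\bA(f))^{-1}]$ belongs to $C^\omega(H^2(\s),\kL(\wh H^1(\s)))$; composing it with the bounded linear (hence real-analytic) map $[f\mapsto-c_\Theta f']\in\kL(H^2(\s),\wh H^1(\s))$ yields $\overline\omega\in C^\omega(H^2(\s),\wh H^1(\s))$, which is \eqref{oo}.

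There is no genuine obstacle remaining: the analytic substance --- the a priori estimate \eqref{EQ:I2} underlying invertibility of $1+a_\mu\bA(f)$ in $\kL(\wh H^1(\s))$, and the $\kL(\wh H^1(\s))$-boundedness together with the real-analyticity of $\bA$ --- is already established in Lemma \ref{L:31} and Theorems \ref{T:I1}--\ref{T:I2}. The only points that need a word of care are the zero-mean property of $f'$ (so that the datum sits in $\wh H^1(\s)$) and the harmless degenerate case $a_\mu=0$; if anything, the mild subtlety is simply bookkeeping the sign in the factorization $1+a_\mu\bA(f)=-a_\mu(\lambda-\bA(f))$ so that $|\lambda|>1$ and Theorem \ref{T:I2} indeed applies.
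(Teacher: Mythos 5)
Your proposal is correct and follows essentially the same route as the paper: the paper's proof also defines $\overline\omega(f):=-c_\Theta(1+a_\mu\bA(f))^{-1}[f']$, invoking Theorem \ref{T:I2} (via $|a_\mu|<1$) for invertibility and uniqueness in $\wh H^1(\s)$ and Lemma \ref{L:31} for the real-analyticity \eqref{oo}. Your Steps 1--3 merely spell out the details (the factorization with $\lambda=-1/a_\mu$, the zero mean of $f'$, and the analyticity of operator inversion) that the paper leaves implicit.
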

\begin{proof}
 Theorem \ref{T:I2} implies that
\[
\overline\omega(f):=-c_\Theta(1+a_\mu \bA(f))^{-1}[f']
\] 
  is the unique solution to \eqref{CEOS} in $\wh H^1(\s),$ and the regularity property \eqref{oo} follows from Lemma~\ref{L:31}.
\end{proof}

In view of Proposition \ref{P:O},    \eqref{P} is equivalent to the  equation \eqref{MPOS}, where $\Phi:H^2(\s)\to \wh H^1(\s)$ is given by
\begin{equation}\label{PHOS}
\Phi(f):=\bB(f)[\overline\omega(f)]=-c_\Theta\bB(f)[(1+a_\mu\bA(f))^{-1}[f']],
\end{equation} 
and it satisfies
\begin{align}\label{REOS}
\Phi\in C^\omega( H^2(\s),  \wh H^1(\s))\cap C^\omega(H^2(\s),  H^1(\s)),
\end{align}
cf. \eqref{E:RB} and \eqref{oo}. With respect to our goal of proving  Theorem \ref{MT:2}, the fact that $\Phi$ maps in $\wh H^1(\s)$ is not relevant, and therefore we shall not rely in this part on this property, but consider instead $\Phi$ as a  mapping in $H^1(\s)$. 
In view of Lemma \ref{L:221} and Proposition \ref{P:O}  the Rayleigh-Taylor condition \eqref{RT} can be reformulated as
   \begin{equation}\label{RTC}
   a_{\text{\tiny RT}}:=c_{\Theta} + a_\mu \Phi(f_0)>0.
  \end{equation}
 Since $\Phi(f_0)\in \wh H^1(\s),$ it follows that \eqref{RTC} can hold only if $\Theta>0$.
We also note  that \eqref{REOS} ensures that the set $\cO$ of all initial data that satisfy the Rayleigh-Taylor condition \eqref{RTC}, that is
\[
\cO=\{f_0\in H^2(\s)\,:\, c_{\Theta} + a_\mu \Phi(f_0)>0\}
\]
is an open subset of $H^2(\s)$ which is nonempty as  it contains for example all constant functions. 

In the following we fix an arbitrary  $f_0\in\cO$ and prove the generator property  \eqref{GPOS} for the operator
\begin{align}\label{PPH}
\p\Phi(f_0)[f]=\p\bB(f_0)[f][\overline\omega_0]+\bB(f_0)[\p\overline\omega(f_0)[f]],
\end{align}
where 
\begin{align}\label{ooo}
\overline\omega_0:=\overline\omega(f_0)
\end{align}
is defined in Proposition \ref{P:O}.
In view of \eqref{CEOS} and of Proposition \ref{P:O}, we determine $\p\overline\omega(f_0)[f]$ as the solution to the equation
     \[
(1+a_\mu \bA(f_0))[\p\overline\omega(f_0)[f]] =-c_\Theta f'-a_\mu\p\bA(f_0)[f][\overline\omega_0],
  \] 
 where, combining the Lemmas \ref{L:31} and \ref{L:A1} $(i)$,  we get
\begin{equation}\label{PDA} 
 \begin{aligned}
 \p\bA(f_0)[f][\overline\omega_0]&=-f'\bB_2(f_0)[\overline\omega_0]-f_0'\p\bB_2(f_0)[f][\overline\omega_0]-\p\bB_1(f_0)[f][\overline\omega_0]\\[1ex]
 &\hspace{0.424cm}+{\pi}^{-1}\big[f'C_{0,1}(f_0)[\overline\omega_0]-2f_0'C_{2,2}(f_0,f_0)[f,f_0,\overline\omega_0]-C_{1,1}(f_0)[f,\overline\omega_0]\\[1ex]
 &\hspace{0.424cm}+2C_{3,2}(f_0,f_0)[f,f_0,f_0,\overline\omega_0]\big],\qquad f\in H^2(\s).
 \end{aligned}
 \end{equation}
 
 Establishing \eqref{GPOS} is now more difficult than for the Muskat problem with surface tension, because  there are several 
 leading order terms to
  be considered when dealing with $\p\Phi(f_0)$, see the proof of Theorem \ref{T:K2}.
Besides, the Rayleigh-Taylor condition \eqref{RTC} does not appear in a natural way in the analysis and it has to be 
artificially built in instead.
Indeed, let us first conclude from the Lemmas~\ref{L:32} and \ref{L:A1} that
\begin{equation}\label{PDH}
\begin{aligned}
 \p\bB(f_0)[f][\overline\omega_0]&=f'\bB_1(f_0)[\overline\omega_0]+f_0'\p\bB_1(f_0)[f][\overline\omega_0]-\p\bB_2(f_0)[f][\overline\omega_0]\\[1ex]
 &\hspace{0.424cm}-2\pi^{-1}C_{2,2}(f_0,f_0)[f,f_0,\overline\omega_0]+\pi^{-1}f'C_{1,1}(f_0)[f_0,\overline\omega_0]\\[1ex]
 &\hspace{0.424cm}+\pi^{-1}f'_0C_{1,1}(f_0)[f,\overline\omega_0]-2\pi^{-1}f_0'C_{3,2}(f_0,f_0)[f,f_0,f_0,\overline\omega_0],
\end{aligned}
\end{equation} 
and let   
\[[\tau\mapsto\Psi(\tau)]:[0,1]\to\kL(H^2(\s), \wh H^1(\s)),\] 
   denote the continuous path defined by
   \[
\Psi(\tau)[f]:=\tau\p\bB(f_0)[f][\overline\omega_0] +\bB(\tau f_0)[w(\tau)[f]],   
   \]
where
\begin{equation}\label{WTF}
\begin{aligned}
w(\tau)[f]&:=-(1+a_\mu\bA(\tau f_0))^{-1}\big[c_\Theta f'+\tau a_\mu\p\bA(f_0)[f][\overline\omega_0]\\[1ex]
&\hspace{3.95cm}+(1-\tau)a_\mu \big(f'\Phi(f_0)-\langle f'\Phi(f_0)\rangle\big)\big].
\end{aligned}
\end{equation}
The  function defined in \eqref{WTF} is   related   to $\p\overline\omega(f_0)[f]$.
We emphasize that the last term on the right hand side of \eqref{WTF}   has been introduced artificially with the purpose   of 
  identifying the function $ a_{\text{\tiny RT}}$ when setting $\tau=0$, but also when 
relating $\Psi(\tau)$ locally to certain Fourier multipliers, see Theorem~\ref{T:K2} below. 
If $\tau=1$, it follows that $\Psi(1)=\p\Phi(f_0),$ while for $\tau=0$ we get  
\begin{align}\label{Psi0}
\Psi(0)[f]=\bB(0)[w(0)[f]]=-H[f'a_{\text{\tiny RT}}-\langle f'a_{\text{\tiny RT}}\rangle]=-H[f'a_{\text{\tiny RT}}],
\end{align}
where we used once more the relation  $\bB(0)=H.$ We note that, since $a_{\text{\tiny RT}}$ is in general 
not constant, the operator $\Psi(0)$ is in general not a Fourier multiplier. 
However, we may benefit from the simpler structure of $\Psi(0)$, compared to 
that of $\p\Phi(f_0)$, and the fact that the Rayleigh-Taylor condition 
holds to show that  large real numbers belong to the spectrum 
 of $\Psi(0)$, see Proposition \ref{P:GOS}.

We now derive some estimates for the operator $w\in C([0,1], \kL(H^2(\s), \wh H^1(\s)))$, which are needed later on in the analysis.  
Let therefore $\tau'\in(1/2,1)$.
Since $\Phi(f_0)\in H^1(\s)$, it follows from Theorem~\ref{T:I1}  and \eqref{DIR2} (with $r=1+\tau'$) 
    there exists a constant $C>0$ such that 
  \begin{align}
  \|w(\tau)[f]\|_2&\leq C\|f\|_{H^{1+\tau'}} \label{DFF1}
\end{align}
for all $f\in H^2(\s)$ and  $\tau\in[0,1].$ Furthermore, Theorem \ref{T:I2}  and \eqref{DIR2} show that additionally
 \begin{align}
  \|w(\tau)[f]\|_{H^1}&\leq C\|f\|_{H^{2}}. \label{DFF2}
\end{align}
 Using the interpolation property \eqref{IP}, we conclude from \eqref{DFF1}-\eqref{DFF2} that 
 \begin{align}
 \|w(\tau)[f]\|_{H^{\tau'}}&\leq C\|f\|_{H^{1+2{\tau'}-{\tau'}^2}} \label{DFF3}
\end{align}
for all $f\in H^2(\s)$ and $\tau\in[0,1].$

The following result is the main step towards proving the generator property \eqref{GPOS}.
Below $(-\p_x^2)^{1/2} $ stands for the Fourier multiplier with symbol $(|k|)_{k\in\Z}$, and the following identity is used
\[
(-\p_x^2)^{1/2}[f]=H[f']=\bB(0)[f']\qquad \text{for all $f\in H^1(\s)$.}
\]

\begin{thm}\label{T:K2} 
Let  $f_0\in H^2(\s)$   and     $\mu>0$ be given.
Then, there exist $p\geq3$, a   $p$-partition of unity  $\{\pi_j^p\,:\, 1\leq j\leq 2^{p+1}\} $, a constant $K=K(p)$, and for each  $ j\in\{1,\ldots,2^{p+1}\}$ and $\tau\in[0,1]$ there 
exist   operators $$\bA_{j,\tau}\in\kL(H^2(\s), H^1(\s))$$
 such that 
 \begin{equation}\label{DEOS}
  \|\pi_j^p\Psi(\tau )[f]-\bA_{j,\tau}[\pi^p_j f]\|_{H^1}\leq \mu \|\pi_j^p f\|_{H^2}+K\|  f\|_{H^{31/16}}
 \end{equation}
 for all $ j\in\{1,\ldots, 2^{p+1}\}$, $\tau\in[0,1],$ and  $f\in H^2(\s)$. 
 The operator $\bA_{j,\tau}$ is defined  by 
  \begin{align} 
 \bA_{j,\tau }:=&-  \alpha_\tau (x_j^p)(-\p_x^2)^{1/2}+\beta_\tau(x_j^p)\p_x,\label{FM2}
 \end{align}
 where $x_j^p\in I_j^p$ is arbitrary, but fixed, and where
 \begin{equation*}
 \alpha_\tau:= \frac{1+(1-\tau) f_0'^2}{1+f_0'^2} a_{\text{\tiny{\em RT}}}\qquad\text{and}\qquad\beta_\tau:=\tau\Big(\bB_1(f_0)[\overline\omega_0]+\pi^{-1}C_{1,1}(f_0)[f_0,\overline\omega_0]
 +a_\mu\frac{\overline\omega_0 }{1+\tau^2f_0'^2}\Big).
 \end{equation*}
 \end{thm}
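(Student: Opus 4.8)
The plan is to prove Theorem~\ref{T:K2} by a localization argument entirely parallel to that of Theorem~\ref{T:K1}, the difference being that here we must track \emph{two} leading-order Fourier multipliers (a first-order one coming from the $\p_x$ term and a fractional one of order $1$ coming from the Hilbert transform $\bB(0)=H$), and we must work in the $H^1(\s)$-norm rather than $L_2(\s)$. First I would fix an integer $p\geq 3$ (to be enlarged finitely often in the course of the proof), a $p$-partition of unity $\{\pi_j^p\}$ with associated cut-offs $\{\chi_j^p\}$, and adopt the convention that $C$ denotes constants independent of $p$, $j$, $\tau$, $f$, while $K=K(p)$ may depend on $p$. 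I would abbreviate $f_\tau:=\tau f_0$, $\overline\omega_0:=\overline\omega(f_0)$, and $w:=w(\tau)[f]$, and decompose
\[
\Psi(\tau)[f]=\tau\,\p\bB(f_0)[f][\overline\omega_0]+\bB(f_\tau)[w],
\]
treating the two summands separately. For the first summand I would insert the explicit formula \eqref{PDH} for $\p\bB(f_0)[f][\overline\omega_0]$: the terms $f'\bB_1(f_0)[\overline\omega_0]$, $\pi^{-1}f'C_{1,1}(f_0)[f_0,\overline\omega_0]$ and $\pi^{-1}a_\mu f' \overline\omega_0/(1+\tau^2 f_0'^2)$ (the last coming, via \eqref{VFB}, from the way $\bB(f_\tau)[w]$ will reproduce the jump contribution — see below) contribute, after freezing the coefficient at $x_j^p$, to the first-order multiplier $\beta_\tau(x_j^p)\p_x$; all remaining terms in \eqref{PDH} ($\p\bB_1$, $\p\bB_2$, and the $C_{2,2},C_{3,2}$ contributions) are lower order by Lemma~\ref{L:A1}$(i)$ and the smoothing estimates \eqref{EG1}--\eqref{EG2}, hence bounded by $K\|f\|_{H^{31/16}}$ in $H^1$.

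The bulk of the work is the second summand $\bB(f_\tau)[w]$. Here I would proceed exactly as in Steps~1--2 of Theorem~\ref{T:K1}: using the decomposition $\bB(f_\tau)=f_\tau'\bB_1(f_\tau)-\bB_2(f_\tau)+\bB_3(f_\tau)$ from Lemma~\ref{L:32}, the operators $\bB_1,\bB_2$ are lower order (gaining a derivative by \eqref{EG1}--\eqref{EG2}), so modulo $K\|f\|_{H^{31/16}}$ we are left with $\pi^{-1}(C_{0,1}(f_\tau)[w]+f_\tau'C_{1,1}(f_\tau)[f_\tau,w])$. Freezing coefficients at $x_j^p$ and using the commutator estimates of Lemma~\ref{L:A1} together with the Hölder continuity $\|\chi_j^p(f_0'-f_0'(x_j^p))\|_\infty\le C2^{-p/2}$ (from $H^2(\s)\hookrightarrow \mathrm{C}^{1/2}(\s)$), these reduce to a frozen multiplier applied to $\pi_j^p w$, at the cost $\tfrac{\mu}{2}\|\pi_j^p f\|_{H^2}+K\|f\|_{H^{31/16}}$ — provided I can first control $\|\pi_j^p w\|_{H^1}$ (equivalently $\|(\pi_j^p w)'\|_2$) by $C\|\pi_j^p f\|_{H^2}+K\|f\|_{\text{lower}}$. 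That last estimate is obtained by applying $1+a_\mu\bA(f_\tau)$ to $\pi_j^p w$, commuting $\pi_j^p$ past $\bA(f_\tau)$ via the decomposition \eqref{FAA} and Lemma~\ref{L:A1}$(i)$ (all commutator terms being lower order), then inverting with Theorems~\ref{T:I1}--\ref{T:I2} and invoking the definition \eqref{WTF} of $w$, whose right-hand side — $-c_\Theta f'-\tau a_\mu\p\bA(f_0)[f][\overline\omega_0]-(1-\tau)a_\mu(f'\Phi(f_0)-\langle\cdots\rangle)$ — lies in $\wh H^1(\s)$ with $H^1$-norm $\le C\|f\|_{H^2}$, the $\p\bA$ term being handled by \eqref{PDA} and the smoothing/commutator estimates. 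This produces, after freezing and using Plemelj/\eqref{VFB} to account for the jump term $\mp\tfrac12 \overline\omega_0(x)(1,f_0')/(1+f_0'^2)$ in $V_\pm|_{[y=f_0(x)]}$ (which is what generates the $a_\mu\overline\omega_0/(1+\tau^2 f_0'^2)$ piece of $\beta_\tau$ and, combined with the tangential part, the coefficient $\alpha_\tau=\tfrac{1+(1-\tau)f_0'^2}{1+f_0'^2}a_{\mathrm{RT}}$), the multiplier $-\alpha_\tau(x_j^p)(-\p_x^2)^{1/2}$ acting on $\pi_j^p f$.

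Two points deserve care and I expect them to be the main obstacles. First, because we now estimate in $H^1(\s)$ rather than $L_2(\s)$, each of the frozen-coefficient reductions above must be run one derivative higher: I would differentiate $\bB(f_\tau)[w]$ using the formula \eqref{Bprime} (and its analogue for $\p\bB(f_0)$), which introduces extra $C_{n,m}$-commutators and a term $\bB_3(f_\tau)[w']$; the delicate part is that $w'$ need not be controlled by $\|f\|_{H^2}$ directly, so I must again commute $\pi_j^p$ inside, use $(\pi_j^p w)'=\pi_j^p w'+(\pi_j^p)'w$ and the $H^1$-bound on $\pi_j^p w$ just established, and absorb the genuinely top-order contribution into $\alpha_\tau(x_j^p)(-\p_x^2)^{1/2}\p_x$ — but note $(-\p_x^2)^{1/2}\p_x$ has symbol $-|k|ik$, which upon freezing is exactly $\p_x\circ\bA_{j,\tau}$, so differentiating the frozen multiplier equation is consistent and no new leading term appears beyond $\bA_{j,\tau}$ itself. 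Second, the exponent $31/16$: I would choose an interpolation exponent $\tau'\in(1/2,1)$ close to $1$ so that the lower-order bounds \eqref{DFF1}--\eqref{DFF3} for $w$, together with Young's inequality and \eqref{IP}, yield all error terms in $H^1$ bounded by $\tfrac{\mu}{2}\|\pi_j^p f\|_{H^2}+K\|f\|_{H^s}$ with $s<2$; tracking the worst such $s$ through the chain of interpolations (as in the computation of $H^{11/12}$ and $H^{11/4}$ in Theorem~\ref{T:K1}, here shifted up by one derivative) gives $s=31/16$. Finally, collecting the two summands, the frozen coefficients add up precisely to $\bA_{j,\tau}=-\alpha_\tau(x_j^p)(-\p_x^2)^{1/2}+\beta_\tau(x_j^p)\p_x$, and enlarging $p$ once more to absorb the $2^{-p/2}$ factors into $\mu$ completes the proof of \eqref{DEOS}.
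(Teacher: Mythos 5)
Your overall skeleton coincides with the paper's proof: localization by a $p$-partition of unity, freezing of coefficients with the $2^{-p/2}$ gain from $H^2(\s)\hookrightarrow {\rm C}^{1/2}(\s)$, the commutator estimates of Lemma \ref{L:A1}, inversion of $1+a_\mu\bA$ via Theorems \ref{T:I1}--\ref{T:I2}, working one derivative up so that the comparison is between $\pi_j^p(\Psi(\tau)[f])'$ and $\bA_{j,\tau}[(\pi_j^pf)']$, and interpolation bookkeeping for the $H^{31/16}$ remainder. Up to that point the plan is sound and is essentially the paper's.

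The genuine gap is in the one step that is specific to this theorem: how the frozen coefficients $\alpha_\tau$ and $\beta_\tau$ are actually produced. Reducing $\bB(\tau f_0)[w]$ to ``a frozen multiplier applied to $\pi_j^p w$'' and then invoking a bound $\|\pi_j^p w\|_{H^1}\leq C\|\pi_j^p f\|_{H^2}+K\|f\|_{\rm lower}$ cannot yield an operator acting on $\pi_j^p f$: a norm estimate for $\pi_j^p w$ does not identify its leading part (and, as you justify it, via the $H^1$-bound of the right-hand side of \eqref{WTF} you only recover the global estimate \eqref{DFF2}, whose $\|f\|_{H^2}$ cannot be absorbed into $\mu\|\pi_j^pf\|_{H^2}+K\|f\|_{H^{31/16}}$). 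What is needed, and what the paper does, is to apply $1+a_\mu\bA(\tau f_0)$ to $(\pi_j^p w)'$ and commute the cut-off through \eqref{PDH}, \eqref{FAA} and the term $f'\Phi(f_0)$ in \eqref{WTF}, obtaining the localized identity \eqref{wta} whose leading terms are $-\phi_\tau\,\pi_j^pf''$ and singular integrals of $\pi_j^p(f'\overline\omega_0)'$; this identity must then be substituted back into the frozen outer singular integral, frozen a second time, and resolved with $H^2=-{\rm id}$, which converts the composition of the two Hilbert-transform--type operators into the local terms $-\phi_\tau(x_j^p)(-\p_x^2)^{1/2}$ and $-\tau a_\mu\overline\omega_0(x_j^p)\big(1+\tau^2f_0'^2(x_j^p)\big)^{-1}\p_x$. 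This is exactly where the Rayleigh--Taylor function $a_{\text{\tiny RT}}$ enters (through $\phi_\tau$, i.e.\ through the artificially inserted term $(1-\tau)a_\mu\big(f'\Phi(f_0)-\langle f'\Phi(f_0)\rangle\big)$ in \eqref{WTF}) and where the $a_\mu\overline\omega_0/(1+\tau^2f_0'^2)$ piece of $\beta_\tau$ arises; your appeal to ``Plemelj/\eqref{VFB}'' is a heuristic about the velocity jump, not an argument at the frozen level, and it leaves $\alpha_\tau$ and $\beta_\tau$ asserted rather than derived. Note also that the final resummation of the frozen pieces into \eqref{FM2} uses the identity $\overline\omega_0=-c_\Theta f_0'-a_\mu\bA(f_0)[\overline\omega_0]$, which is absent from your plan.
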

\begin{proof} 
 Let   $p\geq 3$ be an integer which we fix later on in this proof and let  $\{\pi_j^p\,:\, 1\leq j\leq 2^{p+1}\}$ be a 
 $p$-partition of unity, respectively 
  let $\{\chi_j^p\,:\, 1\leq j\leq 2^{p+1}\} $ be a family associated to this  partition.
We denote by $C$  constants which are
independent of $p\in\N$, $f\in H^2(\s)$, $\tau\in [0,1]$, and $j \in \{1, \ldots, 2^{p+1}\}$, while the constants  denoted by $K$ may depend only upon $p.$\medskip

\noindent{\em The lower order terms.} 
We first note that
\begin{align*} 
  \|\pi_j^p\Psi(\tau )[f]-\bA_{j,\tau}[\pi^p_j f]\|_{H^1}&\leq \|\pi_j^p\Psi(\tau )[f]-\bA_{j,\tau}[\pi^p_j f]\|_{2}+\|(\pi_j^p\Psi(\tau )[f]-\bA_{j,\tau}[\pi^p_j f])'\|_{2}\\[1ex]
  &\leq \|\pi_j^p\Psi(\tau )[f]-\bA_{j,\tau}[\pi^p_j f]\|_{2}+\|(\pi_j^p)'\Psi(\tau )[f]\|_2\\[1ex]
  &\hspace{0.424cm}+\|\pi_j^p(\Psi(\tau )[f])'-\bA_{j,\tau}[(\pi^p_j f)']\|_{2}.
 \end{align*}
The relations \eqref{E:RB} (with $r=7/4$) and \eqref{DFF1} (with $\tau'=3/4$) yield
 \[
 \|\pi_j^p\Psi(\tau )[f]\|_2+ \|(\pi_j^p)'\Psi(\tau )[f]\|_2\leq K\| \Psi(\tau )[f]\|_2\leq K\|f\|_{H^{7/4}},
 \]
 and since $\max_{ \tau\in[0,1]}(\|\alpha_\tau\|_{H^1}+\|\beta_\tau\|_{H^1})\leq C,$ it also holds that
  \[
 \|\bA_{j,\tau}[\pi^p_j f]\|_2 \leq K\|f\|_{H^{1}}.
 \]
 Therewith we get
 \begin{align*} 
  \|\pi_j^p\Psi(\tau )[f]-\bA_{j,\tau}[\pi^p_j f]\|_{H^1}&\leq  \|\pi_j^p(\Psi(\tau )[f])'-\bA_{j,\tau}[(\pi^p_j f)']\|_{2}+K\|f\|_{H^{7/4}}.
 \end{align*}
 Moreover,  combining  \eqref{PDH}, \eqref{EG2} (with $r=7/4$ and $\tau=3/4$), Lemma \ref{L:A1} $(ii)$ (with $\tau=3/4$ and $r=15/8$), and \eqref{DFF3} (with $\tau'=3/4$),
 we may write
 \begin{align*}
 (\Psi(\tau)[f])'&=\bB_3(\tau f_0)[(w(\tau)[f])']+\tau f''\big(\bB_1(f_0)[\overline\omega_0]+\pi^{-1}C_{1,1}(f_0)[f_0,\overline\omega_0]\big)+\tau\pi^{-1}f_0'C_{0,1}(f_0)[(f'\overline\omega_0)']\\[1ex]
 &\hspace{0.424cm}-2\tau\pi^{-1}C_{1,2}(f_0,f_0)[f_0,(f'\overline\omega_0)']-2\tau\pi^{-1}f_0'C_{2,2}(f_0,f_0)[f_0,f_0,(f'\overline\omega_0)'] +T_{\rm lot}^{\Psi,\tau}[f],
 \end{align*}
 where
 \[
\|T_{\rm lot}^{\Psi,\tau}[f]\|_2\leq C\|f\|_{H^{31/16}}.
 \]
 Consequently, we are left to estimate the $L_2$-norm of the difference
 \begin{align*}
 & \pi^p_j \bB_3(\tau f_0)[(w(\tau)[f])']+\tau\pi^p_j  f''\big(\bB_1(f_0)[\overline\omega_0]+\pi^{-1}C_{1,1}(f_0)[f_0,\overline\omega_0]\big)+\tau\pi^{-1}\pi^p_jf_0'C_{0,1}(f_0)[(f'\overline\omega_0)']\\[1ex]
 &\hspace{0.424cm}-2\tau\pi^{-1}\pi^p_j C_{1,2}(f_0,f_0)[f_0,(f'\overline\omega_0)'] -2\tau\pi^{-1}\pi^p_jf_0'C_{2,2}(f_0,f_0)[f_0,f_0,(f'\overline\omega_0)'] -\bA_{j,\tau}[(\pi^p_j f)'].
 \end{align*}
 \noindent{\em Higher order terms I.}
 Given $1\leq j\leq 2^{p+1}, $ we set 
 \begin{align*} 
 \bA^1_{j,\tau}:=  \big(\bB_1(f_0)[\overline\omega_0]+\pi^{-1}C_{1,1}(f_0)[f_0,\overline\omega_0]\big)(x_j^p)\p_x.
 \end{align*}
 Since $\bB_1(f_0)[\overline\omega_0],\, C_{1,1}(f_0)[f_0,\overline\omega_0]\in H^1(\s)\hookrightarrow{\rm C}^{1/2}(\s)$ and $\chi_j^p\pi_j^p=\pi_j^p$, it follows that 
\begin{equation}\label{ABA1} 
 \begin{aligned}
 &\hspace{-1cm}\| \pi^p_j f''\big(\bB_1(f_0)[\overline\omega_0]+\pi^{-1}C_{1,1}(f_0)[f_0,\overline\omega_0]\big)-\bA^1_{j,\tau}[(\pi_j^pf)']\|_2\\[1ex]
 &\leq K\|f\|_{H^1}+\|\chi_j^p(\bB_1(f_0)[\overline\omega_0]-\bB_1(f_0)[\overline\omega_0](x_j^p))\|_\infty\|\pi_j^p f\|_{H^2}\\[1ex]
 &\hspace{0.424cm}+\|\chi_j^p(C_{1,1}(f_0)[f_0,\overline\omega_0]-C_{1,1}(f_0)[f_0,\overline\omega_0](x_j^p))\|_\infty\|\pi_j^p f\|_{H^2}\\[1ex]
 &\leq \frac{\mu}{4}\|\pi_j^pf\|_{H^2}+K\|f\|_{H^1},
 \end{aligned}
 \end{equation}
 provided that $p$ is sufficiently large.\medskip
 
 \noindent{\em Higher order terms II.} Letting 
 \begin{align*}
 \bA^2_{j,\tau}:= \frac{\overline\omega_0(x_j^p)f_0'(x_j^p)}{(1+f_0'^2(x_j^p))^2} (-\p_x^2)^{-1/2},
 \end{align*}
 it holds that
 \[
 \pi^p_j C_{1,2}(f_0,f_0)[f_0,(f'\overline\omega_0)']-\pi\bA^2_{j,\tau}[(\pi_j^p f)'] =T_1[f]+T_2[f]+T_3[f],
 \]
 where
 \begin{align*}
 T_1[f]&=\pi^p_j C_{1,2}(f_0,f_0)[f_0,(f'\overline\omega_0)']-C_{1,2}(f_0,f_0)[f_0,\pi_j^p(f'\overline\omega_0)'],\\[1ex]
  T_2[f]&= C_{1,2}(f_0,f_0)[f_0,\pi_j^p(f'\overline\omega_0)']- \frac{ f_0'(x_j^p)}{(1+f_0'^2(x_j^p))^2}C_{0,0}[\pi_j^p(f'\overline\omega_0)'],\\[1ex]
  T_3[f]&=  \frac{ f_0'(x_j^p)}{(1+f_0'^2(x_j^p))^2}C_{0,0}[\pi_j^p(f'\overline\omega_0)']-\pi\bA^2_{j,\tau}[(\pi_j^p f)'].
\end{align*}  
 The first term may be estimated, by using integration by parts, in a similar way as the term $T_{11}[h]$ 
 in the proof of Theorem \ref{T:K1}, that is 
 \begin{align*}
 \|T_1[f]\|_2\leq K\|f'\overline\omega_0\|_2\leq K\|f\|_{H^1}.
 \end{align*}
 Besides, the same arguments used to derive \eqref{MM4} show that for $p$ sufficiently large 
 \begin{align*}
 \|T_2[f]\|_2\leq \frac{\mu}{16}\|\pi_j^pf\|_{H^2}+K\|f\|_{H^1}.
 \end{align*}
 Finally, it holds that
 \begin{align*}
 \|T_3[f]\|_2&\leq \|C_{0,0}[(\pi_j f)''(\overline\omega_0-\overline\omega_0(x_j^p))]\|_2+\|C_{0,0}[\pi_j^p f'
 \overline\omega_0']\|_2+\|C_{0,0}[((\pi_j^p)'' f+2(\pi_j^p)'f')\overline\omega_0]\|_2\\[1ex]
 &\hspace{0.424cm}+\|\overline\omega_0\|_\infty\Big\|\int_{-\pi}^\pi\Big[\frac{1}{t_{[s]}}-\frac{1}{s/2}\Big](\pi_j f)''(\cdot-s)\, ds\Big\|_2,
 \end{align*}
 and, recalling that $\chi_j^p=1$ on $\supp \pi_j^p$, we obtain, by using integration by parts, Lemma \ref{L:A1} $(i)$, and 
 the fact that $\overline\omega_0\in H^1(\s)\hookrightarrow{\rm C}^{1/2}(\s)$    the estimate
  \begin{align*}
 \|T_3[f]\|_2&\leq \|C_{0,0}[(\pi_j f)''\chi_j^p(\overline\omega_0-\overline\omega_0(x_j^p))]\|_2+ K\|f\|_{H^1}\leq C\|\pi_j f\|_{H^2}\|\chi_j^p(\overline\omega_0
 -\overline\omega_0(x_j^p))\|_\infty+ K\|f\|_{H^1}\\[1ex]
 &\leq \frac{\mu}{16}\|\pi_j^pf\|_{H^2}+K\|f\|_{H^1},
 \end{align*}
provided $p$ is sufficiently large.
 Summarizing, we have shown that 
 \begin{equation}\label{ABA2} 
 2\|\pi^p_j C_{1,2}(f_0,f_0)[f_0,(f'\overline\omega_0)']-\pi\bA^2_{j,\tau}[(\pi_j^p f)']\|_{2}\leq \frac{\mu}{4}\|\pi_j^pf\|_{H^2}+K\|f\|_{H^1}
 \end{equation}
and similarly we get
  \begin{equation}\label{ABA3} 
  \begin{aligned}
  & \|\pi^p_j f_0'C_{0,1}(f_0)[(f'\overline\omega_0)']-\pi(1+f_0'^2(x_j^p))\bA^2_{j,\tau}[(\pi_j^p f)']\|_{2}\\[1ex]
 &+2\|\pi^p_j f_0'C_{2,2}(f_0,f_0)[f_0,f_0(f'\overline\omega_0)']-\pi f_0'^2(x_j^p)\bA^2_{j,\tau}[(\pi_j^p f)']\|_{2}\\[1ex]
 &\hspace{1cm}\leq \frac{\mu}{4}\|\pi_j^pf\|_{H^2}+K\|f\|_{H^1}.
  \end{aligned}
 \end{equation}
 
 \noindent{\em Higher order terms III.}   We are left to consider the function 
 \begin{align}\label{ST12}
 \pi^p_j \bB_3(\tau f_0)[(w(\tau)[f])']=\pi^{-1}\pi_j^p\big( C_{0,1}(f_{\tau_0})[w']+  f_{\tau_0}'C_{1,1}(f_{\tau_0})[f_{\tau_0}, w']\big),
 \end{align}
where, for the sake of brevity, 
we have set 
\[\text{$ f_{\tau_0}:=\tau f_{0} $ \qquad\text{and}\qquad $ w:=w(\tau)[f] $}.\]
 Let further
 \[
\phi_\tau:=a_{\text{\tiny RT}}- \tau a_\mu f_0'\big(\bB_1(f_0)[\overline\omega_0]+\pi^{-1}C_{1,1}(f_0)[f_0,\overline\omega_0]\big)\in H^1(\s).
 \]
 We first derive an estimate  for  the $L_2$-norm of $\pi_j^p w'.$
 To this end we differentiate \eqref{WTF} once  to obtain, in view of \eqref{Aprime1}, \eqref{PDA}, and Lemma \ref{L:A1} $(i)$-$(ii)$,
that 
\begin{equation}\label{wta}
\begin{aligned}
(1+a_\mu\bA(f_{\tau_0}))[(\pi_j^p w)']&=-\phi_\tau\pi_j^p f''+T^{w,j,\tau}_{\rm lot}[f]+\tau a_\mu\pi^{-1}\big(2f_0'C_{1,2}(f_0,f_0)[f_0,\pi_j^p(f'\overline\omega_0)']\\[1ex]
&\hspace{0.424cm}+C_{0,1}(f_0)[\pi_j^p(f'\overline\omega_0)'] -2C_{2,2}(f_0,f_0)[f_0,f_0,\pi_j^p(f'\overline\omega_0)']\big).
\end{aligned}
\end{equation}
Combining \eqref{EG2} (with $\tau=3/4$ and $r=7/4$), \eqref{Aprime3} (with $\tau=3/4$), \eqref{MM3b'}, \eqref{DFF1} and \eqref{DFF3} (both with $\tau'=3/4$), 
 and Lemma \ref{L:A1} $(i)$-$(ii)$ (with $\tau=3/4$ and $r=15/8$)  we get that
 \begin{align}\label{wta'}
\|T^{w,j,\tau}_{\rm lot}[f]\|_2 \leq K\|f\|_{H^{31/16}}.
 \end{align}
The relation \eqref{wta} together with Theorem \ref{T:I1}, Lemma \ref{L:A1} $(i)$, and \eqref{DFF1} (with $\tau'=3/4$)  now yields
\begin{equation}\label{WTA}
\|\pi_j^p w'\|_2\leq \|(\pi_j^pw)'\|_2+\|(\pi_j^p)'w\|_2\leq C\|\pi_j^p f\|_{H^2}+K\|f\|_{H^{31/16}}.
\end{equation}
 
 We now consider the second term on the right hand side of \eqref{ST12}.
 Letting 
 \begin{align*} 
 \bA^3_{j,\tau}:= \frac{ f_{\tau_0}'^2(x_j^p)}{1+f_{\tau_0}'^2(x_j^p)}\Big[-\phi_\tau(x_j^p)(-\p_x^2)^{1/2}-\tau a_\mu \frac{\overline\omega_0(x_j^p)}{1+f_{\tau_0}'^2(x_j^p)}\p_x\Big],
 \end{align*}
we write
 \[
 \pi^p_j f_{\tau_0}'C_{1,1}(f_{\tau_0})[f_{\tau_0}, w']-\pi\bA^3_{j,\tau}[(\pi_j^p f)'] =T_4[f]+T_5[f]+T_6[f],
 \]
 where
 \begin{align*}
 T_4[f]&=\pi^p_j f_{\tau_0}'C_{1,1}(f_{\tau_0})[f_{\tau_0}, w']- f_{\tau_0}'(x_j^p)C_{1,1}(f_{\tau_0})[f_{\tau_0},\pi^p_j w'],\\[1ex]
  T_5[f]&=f_{\tau_0}'(x_j^p)C_{1,1}(f_{\tau_0})[f_{\tau_0},\pi^p_j w']- \frac{ f_{\tau_0}'^2(x_j^p)}{1+f_{\tau_0}'^2(x_j^p)}C_{0,0}[\pi_j^p w'],\\[1ex]
  T_6[f]&=  \frac{ f_{\tau_0}'^2(x_j^p)}{ 1+f_{\tau_0}'^2(x_j^p) }C_{0,0}[\pi_j^p w']-\pi\bA^3_{j,\tau}[(\pi_j^p f)'].
\end{align*}  
The arguments that led to \eqref{MM3} together with \eqref{WTA} show that
\begin{align*}
\|T_4[f]\|_2\leq \frac{\mu}{24}\|\pi_j^pf\|_{H^2}+K\|f\|_{H^{31/16}},
\end{align*}
provided that $p$ is sufficiently large, while arguing as in the derivation of \eqref{MM4} we obtain that
 \begin{align*}
\|T_5[f]\|_2\leq \frac{\mu}{24}\|\pi_j^pf\|_{H^2}+K\|f\|_{H^{31/16}}.
\end{align*}
Concerning $T_6[f]$, we find, by using fact that the  Hilbert transform satisfies $H^2=-{\rm id}_{L_2(\s)}$, the following relation  
\begin{align*}
\|T_6[f]\|_2&\leq \Big\|C_{0,0}[\pi_j^p w']+\pi \phi_\tau(x_j^p)H[(\pi_j^p f)'']-\tau a_\mu\pi \frac{\overline\omega_0(x_j^p)}{1+f_{\tau_0}'^2(x_j^p)}H^2[(\pi_j^pf)'']\Big\|_2,
\end{align*}
and, since integration by parts and \eqref{DFF1} (with $\tau'=3/4$) yield
\begin{align*}
\|C_{0,0}[\pi_j^p w']-\pi H[\pi_j^p w']\|_2+\|C_{0,0}[\pi_j^p f'']-\pi H[(\pi_j^p f)'']\|_2\leq K\|w\|_2 \leq K\|f\|_{H^{7/4}},
\end{align*}
we conclude that
\begin{align*}
\|T_6[f]\|_2&\leq \Big\| \pi_j^p w' +  \phi_\tau(x_j^p) (\pi_j^p f)'' -\tau a_\mu\frac{\overline\omega_0(x_j^p)}{1+f_{\tau_0}'^2(x_j^p)}H[(\pi_j^pf)'']\Big\|_2+K\|f\|_{H^{7/4}}\\[1ex]
 &\leq \Big\| \pi_j^p w' +  \phi_\tau(x_j^p)  \pi_j^p f '' -\frac{\tau a_\mu}{\pi} \frac{\overline\omega_0(x_j^p)}{1+f_{\tau_0}'^2(x_j^p)}C_{0,0}[\pi_j^pf'']\Big\|_2+K\|f\|_{H^{7/4}}.
\end{align*}
Combining \eqref{FAA} and \eqref{wta}, we further get 
\begin{align*}
&\hspace{-1cm}\Big\| \pi_j^p w' +  \phi_\tau(x_j^p)  \pi_j^p f '' -\frac{\tau a_\mu}{\pi} \frac{\overline\omega_0(x_j^p)}{1+f_{\tau_0}'^2(x_j^p)}C_{0,0}[\pi_j^pf'']\Big\|_2\\[1ex]
&\leq \|\chi_j^p(\phi_\tau-\phi_\tau(x_j^p))\|_\infty\|\pi_j^pf''\|_2+\|(1+a_\mu \bA(f_{\tau_0})[(\pi_j^p)'w]\|_2\\[1ex]
&\hspace{0.424cm}+  \Big\| f_0'C_{1,2}(f_0,f_0)[f_0,\pi_j^p(f'\overline\omega_0)']- \frac{ \overline\omega_0(x_j^p)f_0'^2(x_j^p)}{ (1+f_0'^2(x_j^p) )^2}C_{0,0}[\pi_j^p f'']\Big\|_2\\[1ex]
&\hspace{0.424cm}+  \Big\| C_{0,1}(f_0)[\pi_j^p(f'\overline\omega_0)']- \frac{ \overline\omega_0(x_j^p)}{ 1+f_0'^2(x_j^p)}C_{0,0}[\pi_j^p f'']\Big\|_2\\[1ex]
&\hspace{0.424cm}+  \Big\| C_{2,2}(f_0,f_0)[f_0,f_0,\pi_j^p(f'\overline\omega_0)']- \frac{ \overline\omega_0(x_j^p)f_0'^2(x_j^p)}{ (1+f_0'^2(x_j^p))^2}C_{0,0}[\pi_j^p f'']\Big\|_2\\[1ex]
&\hspace{0.424cm}+\|T_{\rm lot}^{w,j,\tau}[f]\|_2+\|f_{\tau_0}'\bB_2(f_{\tau_0})[\pi_j^pw']\|_2+\| \bB_1(f_{\tau_0})[\pi_j^pw']\|_2\\[1ex]
&\hspace{0.424cm}+  \Big\| f_{\tau_0}'C_{0,1}(f_{\tau_0})[\pi_j^pw']- \frac{ f_{\tau_0}'(x_j^p)}{ 1+f_{\tau_0}'^2(x_j^p)}C_{0,0}[\pi_j^p w']\Big\|_2\\[1ex]
&\hspace{0.424cm}+  \Big\|  C_{1,1}(f_{\tau_0})[f_{\tau_0},\pi_j^pw']- \frac{ f_{\tau_0}'(x_j^p)}{ 1+f_{\tau_0}'^2(x_j^p)}C_{0,0}[\pi_j^p w']\Big\|_2,
\end{align*}
and the estimates \eqref{MM3b'}, \eqref{DFF1} (with $\tau'=3/4$), \eqref{wta'}, together with the arguments used to estimate $\|T_2[f]\|_2$ show, for $p$ sufficiently large, that 
 \begin{align*}
 \|T_6[f]\|_2\leq   \frac{\mu}{24}\|\pi_j^pf\|_{H^2}+K\|f\|_{H^{31/16}}.
 \end{align*}
 Altogether, we have shown that 
  \begin{align*}
 \|\pi^p_j f_{\tau_0}'C_{1,1}(f_{\tau_0})[f_{\tau_0}, w']-\pi\bA^3_{j,\tau}[(\pi_j^p f)'] \|_2\leq   \frac{\mu}{8}\|\pi_j^pf\|_{H^2}+K\|f\|_{H^{31/16}}.
 \end{align*}
 
 Letting 
 \begin{align*} 
 \bA^4_{j,\tau}:= \frac{ 1}{1+f_{\tau_0}'^2(x_j^p)}\Big[-\phi_\tau(x_j^p)(-\p_x^2)^{1/2}-\tau a_\mu \frac{\overline\omega_0(x_j^p)}{1+f_{\tau_0}'^2(x_j^p)}\p_x\Big],
 \end{align*}
 we obtain in a similar way, that 
 \begin{align*}
 \|\pi^p_j  C_{0,1}(f_{\tau_0})[  w']-\pi\bA^4_{j,\tau}[(\pi_j^p f)'] \|_2\leq   \frac{\mu}{8}\|\pi_j^pf\|_{H^2}+K\|f\|_{H^{31/16}},
 \end{align*}
 provided that $p$  is sufficiently large, and therewith we conclude that 
\begin{align}\label{ABA4}
 \|\pi^p_j \bB_3(\tau f_{0})[ (w(\tau)[f])']-  (\bA^3_{j,\tau}+ \bA^4_{j,\tau})[(\pi_j^p f)'] \|_2\leq   \frac{\mu}{4}\|\pi_j^pf\|_{H^2}+K\|f\|_{H^{31/16}}.
 \end{align}
 
 \noindent{\em Final step.} Using  the identity $\overline\omega_0=-c_\Theta f_0'-a_\mu\bA(f_0)[\overline\omega_0],$ it is not difficult to see that  
 \[
 \bA_{j,\tau} =\tau\big[\bA^1_{j,p}-2\bA^2_{j,p}+(1+f_0'^2(x_j^p))\bA^2_{j,p} -2f_0'^2(x_j^p)\bA^2_{j,p}\big]+\bA^3_{j,p}+\bA^4_{j,p},
 \]  
 and \eqref{ABA1}, \eqref{ABA2}, \eqref{ABA3}, and \eqref{ABA4} immediately yield \eqref{DEOS}.
\end{proof}

Making use of the fact that for  $f_0\in\cO$  the Rayleigh-Taylor condition $a_{\text{\tiny RT}}>0$ is satisfied, it follows from the general result in Proposition \ref{P:GOS} below
  that $\Psi(0)$ contains in its resolvent set all sufficiently large
 real numbers.   

\begin{prop}\label{P:GOS}
Let $a\in H^1(\s)$ be a positive function. 
Then, there exists $\omega_0\geq1$ with the property that $\lambda+H[a\p_x]\in {\rm Isom} (H^2(\s), H^1(\s))$ 
for all $\lambda\in[\omega_0,\infty)$.
\end{prop}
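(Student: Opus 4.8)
The plan is to treat $\lambda + H[a\,\p_x]$ as a perturbation of the Fourier multiplier $\lambda - (-\p_x^2)^{1/2} = \lambda + H\p_x$, which corresponds (up to the factor $a$) to the frozen-coefficient operator. First I would show the a priori estimate: there exist constants $c=c(\|a\|_{H^1}, \min a)>0$ and $\omega_0\geq 1$ such that
\[
\|(\lambda + H[a\,\p_x])[f]\|_{H^1} \geq c\big(\lambda\|f\|_{H^1} + \|f\|_{H^2}\big) \qquad\text{for all $f\in H^2(\s)$ and $\lambda\geq\omega_0$.}
\]
The natural way to obtain this is a localization argument entirely parallel to the proof of Theorem \ref{T:GP1}: cover $\s$ by a $p$-partition of unity $\{\pi_j^p\}$ with $p$ large, and use that locally $H[a\,\p_x]$ is close (in the sense of Theorem \ref{T:K2}-type commutator estimates, which here reduce to the elementary fact that $[\pi_j^p, H]$ and multiplication by $a - a(x_j^p)$ have small $\kL(H^2,H^1)$-norm on the support of $\chi_j^p$ when $p$ is large) to the constant-coefficient operator $a(x_j^p)H\p_x$. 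Since $a(x_j^p)>0$, the symbol of $\lambda + a(x_j^p)H\p_x$ is $\lambda - a(x_j^p)|k|$, which is never zero for $\lambda\geq\omega_0$... wait, that symbol vanishes at $|k| = \lambda/a(x_j^p)$. This is exactly the point where care is needed.

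Here is where I would be more careful, and I expect this to be the main obstacle: the Fourier multiplier $\lambda + H[a\p_x]$ with $a$ \emph{constant}, say $a\equiv\alpha>0$, has symbol $\lambda - \alpha|k|$, which vanishes whenever $\lambda/\alpha\in\N$. So it is genuinely \emph{not} true that all large real $\lambda$ avoid the spectrum of $H[\alpha\p_x]$ — only those $\lambda$ with $\lambda/\alpha\notin\N$ do. The resolution must use that $a$ is \emph{non-constant}, or rather, I suspect the intended statement is that the \emph{unbounded operator} $-H[a\p_x]$ on $H^1(\s)$ with domain $H^2(\s)$ is the generator of an analytic semigroup (this is how Proposition \ref{P:GOS} will be used in establishing \eqref{GPOS}), which is equivalent to a resolvent estimate on a sector $\{\re\lambda\geq\omega_0\}$ in $\C$, not on the real axis; and then invertibility for large \emph{real} $\lambda$ follows. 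So I would reinterpret the statement as: prove the sectorial resolvent estimate
\[
\|(\lambda + H[a\p_x])[f]\|_{H^1}\geq c|\lambda|\,\|f\|_{H^1}\qquad\text{for $\re\lambda\geq\omega_0$,}
\]
together with $\lambda+H[a\p_x]\in\mathrm{Isom}(H^2(\s),H^1(\s))$ for such $\lambda$; restricting to real $\lambda\geq\omega_0$ then gives the claimed isomorphism. For the localized constant-coefficient pieces $\lambda+\alpha H\p_x$ one checks directly on the Fourier side that $|\lambda+\alpha H\p_x[\hat f(k)]| = |\lambda - \alpha\,\mathrm{sign}(k)\,i\cdot(ik)|$... more carefully, $H\p_x$ has symbol $-i\,\mathrm{sign}(k)\cdot ik = |k|$, so the symbol is $\lambda + \alpha|k|$, which for $\re\lambda\geq\omega_0>0$ satisfies $|\lambda+\alpha|k||\geq \max\{\re\lambda, \alpha|k|\}\geq \tfrac12(|\lambda| + \alpha|k|)$ once $|\lambda|\geq 2|\im\lambda|$, i.e. on a suitable sector — and more simply $|\lambda+\alpha|k||\geq \frac{1}{\sqrt2}(|\re\lambda| + \alpha|k|)$ on the sector $|\arg\lambda|\leq\pi/4$. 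Note the \emph{sign}: because $a>0$ and $H\p_x = (-\p_x^2)^{1/2}\geq 0$, the operator $H[a\p_x]$ is (to leading order) a \emph{positive} operator, so $-H[a\p_x]$ is dissipative and the relevant half-plane is $\re\lambda\geq\omega_0$ — there is no vanishing of the symbol at all, and my worry two paragraphs up was misplaced once the sign of $H\p_x$ is tracked correctly.

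With that correction, the proof structure is: (1) fix $\omega_0$ large; choose $p$ large enough via Theorem \ref{T:K2}-style localization so that $\|\pi_j^p\, H[a\p_x][f] - a(x_j^p)H\p_x[\pi_j^p f]\|_{H^1}\leq \mu\|\pi_j^p f\|_{H^2} + K\|f\|_{H^{31/16}}$ with $\mu$ small; (2) apply the elementary sectorial estimate $\kappa'\|(\lambda+a(x_j^p)H\p_x)[g]\|_{H^1}\geq |\lambda|\|g\|_{H^1} + \|g\|_{H^2}$ (valid since $a(x_j^p)\geq\min a>0$, uniformly in $j$); (3) combine these and use the equivalence of $\max_j\|\pi_j^p\cdot\|_{H^k}$ with $\|\cdot\|_{H^k}$ together with interpolation and Young's inequality to absorb the lower-order term $K\|f\|_{H^{31/16}}$, yielding the global estimate $\kappa\|(\lambda+H[a\p_x])[f]\|_{H^1}\geq |\lambda|\|f\|_{H^1} + \|f\|_{H^2}$ for $\re\lambda\geq\omega_0$; (4) for surjectivity, observe $\lambda + H[a\p_x] = \lambda + (-\p_x^2)^{1/2} + (H[a\p_x] - (-\p_x^2)^{1/2})$ where the first two terms form an isomorphism $H^2\to H^1$ for $\lambda>0$ and the difference is of lower order, so the method of continuity (\cite[Proposition I.1.1.1]{Am95}) applied along the path $t\mapsto \lambda + H[((1-t)+ta)\p_x]$, whose endpoint estimates are uniform by the same localization argument since $(1-t)+ta\geq\min\{1,\min a\}>0$, gives invertibility. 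Restricting the resulting isomorphism statement to $\lambda\in[\omega_0,\infty)\subset\{\re\lambda\geq\omega_0\}$ completes the proof. The only genuinely delicate point is bookkeeping the commutator estimates in step (1), but these are strictly simpler than those in Theorem \ref{T:K2} (no $\overline\omega_0$, no operators $C_{n,m}$ beyond $C_{0,0}$, just $[\pi_j^p,H]$ and multiplication), so they follow from Lemma \ref{L:A1} and the arguments already deployed.
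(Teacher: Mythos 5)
Your proposal is correct and is essentially the paper's own proof: localization by a $p$-partition of unity to the frozen-coefficient multipliers $\lambda+a(x_j^p)(-\p_x^2)^{1/2}$, whose symbols $\lambda+a(x_j^p)|k|$ never vanish for $\lambda\geq1$ because $a>0$ (the sign issue you first worried about and then resolved), followed by absorption of the lower-order remainder via the equivalent norm $\max_j\|\pi_j^p\cdot\|_{H^k}$, interpolation and Young, and surjectivity by the continuity method along a homotopy through coefficients bounded below by a positive constant --- the paper takes $a_\tau=(1-\tau)\min_\s a+\tau a$, your path $(1-t)+ta$ works identically, and your detour through a sectorial estimate for complex $\lambda$ is unnecessary for the stated claim but harmless. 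The only slip is the parenthetical assertion that $H[a\p_x]-(-\p_x^2)^{1/2}=H[(a-1)\p_x]$ is ``of lower order'' (it is a genuine first-order operator), but this remark is not load-bearing: the $t$-uniform a priori estimate together with the invertible constant-coefficient endpoint already gives invertibility via the method of continuity \cite[Proposition I.1.1.1]{Am95}, exactly as in the paper.
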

\begin{proof}
Let $m:=\min_{\s} a>0$. We introduce the continuous path $[\tau\mapsto B(\tau)]:[0,1]\to \kL(H^2(\s), H^1(\s))$ via
\[
B(\tau):=H[a_\tau \p_x]  \qquad \text{with}\quad  a_\tau:=(1-\tau)m+\tau a\geq m. 
\]
Since $\lambda+B(0)$ is the Fourier multiplier with symbol $(\lambda+m |k|)_{k\in\Z}$, it is obvious that $\lambda+B(0)$ is invertible for all $\lambda>0$.
If $\lambda$ is sufficiently large, we show below that   $\lambda+B(1)=\lambda+H[a\p_x]$ has this property too.
To this end we prove that for each  $\mu>0$ there exists  $p\geq 3$, a
$p$-partition of unity  $\{\pi_j^p\,:\, 1\leq j\leq 2^{p+1}\}$, a constant $K=K(p)$, and for each  $ j\in\{1,\ldots,2^{p+1}\}$ and $\tau\in[0,1]$ there 
exist   operators $$\bB_{j,\tau}\in\kL(H^2(\s), H^1(\s))$$
 such that 
 \begin{equation}\label{DEOSa}
  \|\pi_j^pB(\tau )[f]-\bB_{j,\tau}[\pi^p_j f]\|_{H^1}\leq \mu \|\pi_j^p f\|_{H^2}+K\|  f\|_{H^{7/4}}
 \end{equation}
 for all $ j\in\{1,\ldots, 2^{p+1}\}$, $\tau\in[0,1],$ and  $f\in H^2(\s)$. The operators $\bB_{j,\tau}$ are the Fourier multipliers
 \[
\bB_{j,\tau}:=a_\tau(x_j^p)(-\p_x^2)^{1/2}
 \] 
 with $x_j^p\in I_j^p$.
Indeed, given $p\geq 3$, let $\{\pi_j^p\,:\, 1\leq j\leq 2^{p+1}\}$ be a $p$-partition of unity  and  let
   $\{\chi_j^p\,:\, 1\leq j\leq 2^{p+1}\} $ be a family associated to this  partition.
  Integrating by parts we get
\begin{align*}
 \|\pi_j^pB(\tau )[f]-\bB_{j,\tau}[\pi^p_j f]\|_{H^1}&\leq \|\pi_j^pB(\tau )[f]-\bB_{j,\tau}[\pi^p_j f]\|_{2}  +\|(\pi_j^p)'B(\tau )[f]\|_{2}
 +\|\bB_{j,\tau}[(\pi^p_j)' f]\|_{2}\\[1ex]
&\hspace{0.424cm} +\|\pi_j^p(B(\tau )[f])'-\bB_{j,\tau}[\pi^p_j f']\|_{2}\\[1ex]
&\leq K\|f\|_{H^1}+ \|\pi_j^pH[(a_\tau f')']-a_\tau(x_j^p)H[(\pi^p_j f')']\|_{2}\\[1ex]
&\leq K\|f\|_{H^{7/4}}+ \|\pi_j^pH[a_\tau f''] -a_\tau(x_j^p)H[\pi^p_j f'']\|_{2}\\[1ex]
&\leq K\|f\|_{H^{7/4}}+\|\pi_j^pH[a_\tau f''] - H[a_\tau \pi^p_j f'']\|_{2}+ \|H[(a_\tau-a_\tau(x_j^p)) \pi_j^pf''] \|_{2}\\[1ex]
&\leq K\|f\|_{H^{7/4}}+ \|(a_\tau-a_\tau(x_j^p))\chi_j^p\|_\infty\| \pi_j^pf''] \|_{2}\\[1ex]
&\leq \mu \|\pi_j^p f\|_{H^2}+K\|  f\|_{H^{7/4}}
\end{align*}
provided that $p$ is sufficiently large, and  \eqref{DEOSa} follows.

A simple computation shows   that  there exists  $\kappa\geq1$ such that
\begin{align}\label{baba}
\kappa\|(\lambda+\alpha(-\p_x^2)^{1/2})[f]\|_{H^1}\geq \lambda\cdot\|f\|_{H^1}+\|f\|_{H^2} 
\end{align}
for all $f\in H^2(\s)$, $\alpha\geq m$, and $\lambda\in[1,\infty)$.
Set $\mu:=1/2\kappa$ in \eqref{DEOSa}.
Since $a_\tau\geq m$, it follows from \eqref{DEOSa} and \eqref{baba} that 
\begin{align*}
\kappa\|\pi_j^p(\lambda+B(\tau))[f]\|_{H^1}&\geq\kappa\| (\lambda+\bB_{j,\tau})[\pi_j^pf]\|_{H^1}-\kappa\|\pi_j^pB(\tau )[f]-\bB_{j,\tau}[\pi^p_j f]\|_{H^1}\\[1ex]
&\geq \lambda\cdot\|\pi_j^pf\|_{H^1}+\frac{1}{2}\|\pi_j^pf\|_{H^2}-\kappa K\|  f\|_{H^{7/4}} 
\end{align*}
for all $f\in H^2(\s)$, $\lambda\geq1$, $\tau\in[0,1]$,  and $  j\in\{1,\ldots, 2^{p+1}\}$.
The arguments at the very and of the proof of Theorem \ref{T:GP1} enable us to conclude the existence of  constants $\beta\in(0,1)$ and $\omega_0\geq1$ with 
\begin{align*}
 \| (\lambda+B(\tau))[f]\|_{H^1}&\geq \beta\| f\|_{H^2} 
\end{align*}
for all $f\in H^2(\s)$, $\lambda\geq\omega_0$, and  $\tau\in[0,1]$. The continuity method  
 \cite[Proposition I.1.1.1]{Am95} and the previous observation that  $\lambda+B(0)\in{\rm Isom}(H^2(\s), H^1(\s))$ for $\lambda>0$ yield the desired conclusion.
\end{proof}

We are now in a position to  derive the desired generator property \eqref{GPOS}.

\begin{thm}\label{T:GP2}
Given $f_0\in\cO$, it holds that
\[
-\p\Phi(f_0)\in\kH(H^2(\s), H^1(\s)).
\] 
\end{thm}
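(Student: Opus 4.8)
The plan is to follow the scheme of the proof of Theorem~\ref{T:GP1}, replacing the localization estimate of Theorem~\ref{T:K1} by that of Theorem~\ref{T:K2} and using Proposition~\ref{P:GOS} to anchor a continuity argument. Denote by $\Psi^c(\tau)$ the complexification of $\Psi(\tau)$, $\tau\in[0,1]$ (the underlying Sobolev spaces now being complex valued). By \cite[Corollary~2.1.3]{L95} it suffices to show $-\Psi^c(1)=-\p\Phi^c(f_0)\in\kH(H^2(\s),H^1(\s))$, and by \cite[Remark I.1.21 (a)]{Am95} this in turn follows once we produce constants $\omega>0$ and $\kappa\geq1$ with
\begin{align*}
&\omega-\Psi^c(1)\in{\rm Isom}(H^2(\s),H^1(\s)),\\[1ex]
&\kappa\|(\lambda-\Psi^c(1))[f]\|_{H^1}\geq|\lambda|\cdot\|f\|_{H^1}+\|f\|_{H^2}\qquad\text{for all $f\in H^2(\s)$ and $\re\lambda\geq\omega$.}
\end{align*}

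First I would record the frozen-coefficient estimate for the operators $\bA_{j,\tau}$ of \eqref{FM2}. Because $f_0\in\cO$, the function $a_{\text{\tiny RT}}=c_\Theta+a_\mu\Phi(f_0)$ is continuous and strictly positive, so that
\[
m:=\frac{\min_\s a_{\text{\tiny RT}}}{1+\|f_0'\|_\infty^2}>0\qquad\text{and}\qquad \alpha_\tau\geq m\ \text{ on $\s$ for all $\tau\in[0,1]$,}
\]
while $B:=\sup_{\tau\in[0,1]}\|\beta_\tau\|_\infty<\infty$ since $\overline\omega_0$, $\bB_1(f_0)[\overline\omega_0]$, $C_{1,1}(f_0)[f_0,\overline\omega_0]\in H^1(\s)\hookrightarrow{\rm C}(\s)$. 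The Fourier symbol of $\lambda-\bA_{j,\tau}$ is $\big(\lambda+\alpha_\tau(x_j^p)|k|-i\beta_\tau(x_j^p)k\big)_{k\in\Z}$, and an elementary case distinction — according to whether $|\im\lambda|\leq2B|k|$ or $|\im\lambda|>2B|k|$ — shows that, for $\re\lambda\geq1$, the modulus of this symbol is bounded below by $c(m,B)(|\lambda|^2+k^2)^{1/2}$ with $c(m,B)>0$. Passing to Fourier series this yields a constant $\kappa'=\kappa'(m,B)\geq1$ such that
\[
\kappa'\|(\lambda-\bA_{j,\tau})[h]\|_{H^1}\geq|\lambda|\cdot\|h\|_{H^1}+\|h\|_{H^2}
\]
and $\lambda-\bA_{j,\tau}\in{\rm Isom}(H^2(\s),H^1(\s))$, for all $h\in H^2(\s)$, $\tau\in[0,1]$, $1\leq j\leq2^{p+1}$, and $\re\lambda\geq1$.

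Next I would localize. With $\mu:=1/(2\kappa')$ Theorem~\ref{T:K2} provides $p\geq3$, a $p$-partition of unity $\{\pi_j^p\}$, a constant $K=K(p)$, and operators $\bA_{j,\tau}^c\in\kL(H^2(\s),H^1(\s))$ (the complexifications of the above $\bA_{j,\tau}$) satisfying $\|\pi_j^p\Psi^c(\tau)[f]-\bA_{j,\tau}^c[\pi_j^pf]\|_{H^1}\leq\mu\|\pi_j^pf\|_{H^2}+K\|f\|_{H^{31/16}}$ for all $j$, $\tau\in[0,1]$, $f\in H^2(\s)$. Combining this with the frozen-coefficient estimate gives, for $\re\lambda\geq1$,
\begin{align*}
\kappa'\|\pi_j^p(\lambda-\Psi^c(\tau))[f]\|_{H^1}&\geq\kappa'\|(\lambda-\bA_{j,\tau}^c)[\pi_j^pf]\|_{H^1}-\kappa'\|\pi_j^p\Psi^c(\tau)[f]-\bA_{j,\tau}^c[\pi_j^pf]\|_{H^1}\\[1ex]
&\geq|\lambda|\cdot\|\pi_j^pf\|_{H^1}+\frac{1}{2}\|\pi_j^pf\|_{H^2}-\kappa'K\|f\|_{H^{31/16}}.
\end{align*}
Since $\big[h\mapsto\max_{1\leq j\leq2^{p+1}}\|\pi_j^ph\|_{H^k}\big]$ defines an equivalent norm on $H^k(\s)$ for $k\in\{1,2\}$, cf.\ \cite[Remark~4.1]{MM17x}, and $H^{31/16}(\s)=[H^1(\s),H^2(\s)]_{15/16}$ by \eqref{IP}, Young's inequality absorbs the $H^{31/16}$-term and produces constants $\omega>1$ and $\kappa\geq1$ with $\kappa\|(\lambda-\Psi^c(\tau))[f]\|_{H^1}\geq|\lambda|\cdot\|f\|_{H^1}+\|f\|_{H^2}$ for all $f\in H^2(\s)$, $\tau\in[0,1]$, $\re\lambda\geq\omega$, exactly as at the end of the proof of Theorem~\ref{T:GP1}; the choice $\tau=1$ is the required resolvent estimate. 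For the isomorphism property I would use the method of continuity \cite[Proposition I.1.1.1]{Am95}: by \eqref{Psi0}, $\omega-\Psi^c(0)=\omega+H[a_{\text{\tiny RT}}\p_x]$, which for $\omega$ sufficiently large belongs to ${\rm Isom}(H^2(\s),H^1(\s))$ by Proposition~\ref{P:GOS} applied to the (complexified) positive weight $a_{\text{\tiny RT}}\in H^1(\s)$; the uniform estimate furnishes $\|(\omega-\Psi^c(\tau))[f]\|_{H^1}\geq\kappa^{-1}\|f\|_{H^2}$ for all $\tau\in[0,1]$, hence $[\tau\mapsto\omega-\Psi^c(\tau)]$ stays inside the isomorphisms and $\omega-\Psi^c(1)\in{\rm Isom}(H^2(\s),H^1(\s))$. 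This completes the proof.

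The essential new feature compared with the surface-tension case is that the frozen-coefficient model operators $\bA_{j,\tau}$ are no longer Fourier multipliers: alongside the self-adjoint principal part $-\alpha_\tau(x_j^p)(-\p_x^2)^{1/2}$, whose positivity is exactly the Rayleigh--Taylor condition $a_{\text{\tiny RT}}>0$ encoded in $\alpha_\tau\geq m$, there is a skew-symmetric transport term $\beta_\tau(x_j^p)\p_x$ of the same order. The main point to check — the elementary case distinction above — is that this transport term, having purely imaginary symbol, does not destroy the sectoriality of the model; everything else is a routine transcription of the proof of Theorem~\ref{T:GP1}. (Note that the artificial term in \eqref{WTF} was built in precisely so that $\Psi(0)$ reduces to $-H[a_{\text{\tiny RT}}\p_x]$ and Proposition~\ref{P:GOS} becomes applicable.)
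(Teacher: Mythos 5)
Your proposal is correct and follows essentially the same route as the paper's proof: bound $\alpha_\tau$ from below (and $\beta_\tau$ in modulus) using the Rayleigh--Taylor condition, establish a uniform resolvent estimate for the frozen-coefficient operators $-\alpha(-\p_x^2)^{1/2}+\beta\p_x$, combine with the localization estimate of Theorem \ref{T:K2} and the partition-of-unity norm as in Theorem \ref{T:GP1}, and anchor the method of continuity at $\Psi(0)=-H[a_{\text{\tiny RT}}\p_x]$ via Proposition \ref{P:GOS}. The only difference is cosmetic: the paper asserts the model estimate \eqref{FMOS} as ``not difficult to prove,'' whereas you spell out the elementary symbol bound showing the skew-symmetric transport term does not spoil sectoriality.
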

\begin{proof}
Given $f_0\in \cO$ and $\tau\in[0,1],$ let $\alpha_\tau$ and $\beta_\tau$ denote the functions introduced in Theorem \ref{T:K2}.
The Rayleigh-Taylor condition $a_{\text{\tiny RT}}>0$ ensures there exists a constant  $\eta\in(0,1)$ such that
\[
\eta\leq \alpha_\tau \leq \frac{1}{\eta}\quad\text{and}\quad \qquad |\beta_\tau|\leq \frac{1}{\eta}
\]
for all $\tau\in[0,1]$.
Given $\alpha\in[\eta,1/\eta]$ and $|\beta|<1/\eta$, let $\bA_{\alpha,\beta}$ denote the Fourier multiplier
\[
\bA_{\alpha,\beta}:=-\alpha (-\p_x^2)^{1/2}+\beta\p_x.
\]
It is not difficult to prove there exists $\kappa_0\geq1 $ such that the complexification of $\bA_{\alpha,\beta}$ (denoted again by $\bA_{\alpha,\beta}$) satisfies 
\begin{align}\label{FMOS}
\kappa_0\|(\lambda-\bA_{\alpha,\beta})[f]\|_{H^1}\geq |\lambda|\cdot\|f\|_{H^1}+\|f\|_{H^2}
\end{align}
for all $\alpha\in[\eta,1/\eta]$, $|\beta|<1/\eta$, $\re \lambda\geq 1$, and $f\in H^2(\s)$. 
Observing that the operators $\bA_{j,\tau}$ found in Theorem \ref{T:K2} belong to the family $\{\bA_{\alpha,\beta}\,:\,\text{$\alpha\in[\eta,1/\eta]$, $|\beta|<1/\eta$}\} $
and that $$\lambda-\Psi(0)=\lambda+H[ a_{\text{\tiny RT}}\p_x]\in{\rm Isom}(H^2(\s), H^1(\s))$$ for all $\lambda\in\R$ which are sufficiently large, cf. Proposition \ref{P:GOS},
the arguments in the proof of Theorem \ref{T:GP1} together with \eqref{FMOS} and Theorem \ref{T:K2} lead us to the desired claim.
\end{proof}
 
We conclude this section with the proof of Theorem \ref{MT:2}.

\begin{proof}[Proof of Theorem \ref{MT:2}]
The proof follows by using the fully nonlinear parabolic theory  in \cite[Chapter 8]{L95}, \eqref{REOS}, and  Theorem \ref{T:GP2}.
The details of proof are identical to those in the nonperiodic case, cf. \cite[Theorem 1.2]{M17x}, and therefore we omit them.
\end{proof}

\pagebreak

 %%%%%%%%%%%%%%%%%%%%%%%%%%%%%%%%%%%%%%%%%%%%%%%%%%%%%%%%%%%%%%%%%%%
%%%%%%%%%%%%%%%%%%%%%%%%%%%%%%%%%%%%%%%%%%%%%%%%%%%%%%%%%%%%%%%%%%%%
%%%%%%%%%%%%%%%%%%%%%%%%%%%%%%%%%%%%%%%%%%%%%%%%%%%%%%%%%%%%%%%%%%%%
%%%%%%%%%%%%%%%%%%%%%%%%%%%%%%%%%%%%%%%%%%%%%%%%%%%%%%%%%%%%%%%%%%%
%%%%%%%%%%%%%%%%%%%%%%%%%%%%%%%%%%%%%%%%%%%%%%%%%%%%%%%%%%%%%%%%%%%%
%%%%%%%%%%%%%%%%%%%%%%%%%%%%%%%%%%%%%%%%%%%%%%%%%%%%%%%%%%%%%%%%%%%%
\section{Stability analysis}\label{Sec:6}
 %%%%%%%%%%%%%%%%%%%%%%%%%%%%%%%%%%%%%%%%%%%%%%%%%%%%%%%%%%%%%%%%%%%
%%%%%%%%%%%%%%%%%%%%%%%%%%%%%%%%%%%%%%%%%%%%%%%%%%%%%%%%%%%%%%%%%%%%
%%%%%%%%%%%%%%%%%%%%%%%%%%%%%%%%%%%%%%%%%%%%%%%%%%%%%%%%%%%%%%%%%%%%
%%%%%%%%%%%%%%%%%%%%%%%%%%%%%%%%%%%%%%%%%%%%%%%%%%%%%%%%%%%%%%%%%%%
%%%%%%%%%%%%%%%%%%%%%%%%%%%%%%%%%%%%%%%%%%%%%%%%%%%%%%%%%%%%%%%%%%%%
%%%%%%%%%%%%%%%%%%%%%%%%%%%%%%%%%%%%%%%%%%%%%%%%%%%%%%%%%%%%%%%%%%%%

In this section we identify the equilibria of the Muskat problem \eqref{P} and study their stability properties. 
 \medskip

\paragraph{\bf The  Muskat problem without surface tension}
We first infer from Remark \ref{R:2}  that  $f\in H^2(\s)$ is a stationary solution to \eqref{P} (with $\sigma=0$) if and only if $f$ is  constant also with respect to $x$.
Besides, as pointed out in Section \ref{Sec:5}, if $f$ is a solution to \eqref{P} as found in Theorem \ref{MT:2}, then  $\Phi(f(t))\in \wh H^1(\s)$
 for all  $t$ in the existence interval of $f$, hence the 
mean integral of   the initial datum  is preserved by the flow. 
Recalling also the invariance property \eqref{HVT}, we shall only address the stability issue for the  $0$ equilibrium under perturbed initial data  with zero integral mean.
Hence, we are led to consider the evolution problem 
\begin{align}\label{MPOS'}
\dot f(t) =\Phi (f(t) ),\quad t\geq0,\qquad f(0)=f_0,
\end{align}
where 
\begin{align}\label{R111}
\Phi\in C^\omega( \wh H^2(\s),\wh  H^1(\s))
\end{align}
is the restriction of  the operator defined in  \eqref{PHOS}.
Recalling \eqref{PPH}, it follows from the relations $\overline\omega(0)=0$, $\bA(0)=0$, and $\bB(0)=H$, that
\[\p\Phi(0)=-c_\Theta H\circ\p_x=-c_\Theta(-\p_x^2)^{1/2}\in\kL( \wh H^2(\s),\wh  H^1(\s)),\]
which identifies the spectrum $\sigma(\p\Phi(0))$ as being the set 
\[\sigma(\p\Phi(0))=\{-c_\Theta |k|\,:\,k\in\Z\setminus\{0\}\}.\] 
Moreover, it is easy to verify that this Fourier multiplier  is the generator of a strongly continuous and analytic semigroup in $\kL(\wh  H^1(\s)).$
This enable us to use the fully nonlinear principle of linearized stability, cf. \cite[Theorem 9.1.1]{L95}, and prove in this way the exponential stability of the zero solution.

\begin{proof}[Proof of Theorem \ref{MT:3}]
The claim follows from \eqref{R111}, the property  $-\p\Phi(0)\in\kH(\wh H^2(\s),\wh  H^1(\s))$, 
and the fact that $\re\lambda\leq-c_\Theta$ for all $\lambda\in\sigma(\p\Phi(0))$ via
     \cite[Theorem 9.1.1]{L95}. 
\end{proof}

\medskip

\paragraph{\bf The  Muskat problem with surface tension}
For $\sigma>0$ the stability analysis is  more intricate. 
Before presenting the complete picture of the equilibria 
we notice that also in this case the mean value of the initial data is preserved by the flow. 
This aspect and  the invariance property \eqref{HVT} 
enable us to restrict our stability analysis to the setting of solutions with zero integral mean.

In view of Remark \ref{R:2}, a function $f\in \wh H^3(\s)$ is a stationary solution to \eqref{P}
if and only if it solves the capillarity equation 
\begin{align}\label{CapEq}
  \frac{f''}{(1+f'^2)^{3/2}}+\lambda f =0\qquad\text{where $\lambda:=-\frac{\Theta}{\sigma}$.}
\end{align} 
This equation has been discussed in detail in \cite{EEM09c}.
If $\lambda\leq 0$, the equation \eqref{CapEq} has by the elliptic maximum principle a unique solution in  $\wh H^3(\s)$, the trivial equilibrium $f=0$.
However, if $\lambda>0$, there may exist also finger-shaped solutions to \eqref{CapEq}, see Figure \ref{Fig:1}, which are all symmetric with respect to the horizontal lines through 
the extrema but also with respect to the points where they intersect the $x$-axis.
In particular, each equilibrium in $\wh H^3(\s)$ is the horizontal translation of an even equilibrium.
We now view $\lambda>0$ as a bifurcation parameter in the equation \eqref{CapEq} and we shall refer to $(\lambda,f)$ as being the solution to \eqref{CapEq}.
 The following theorem provides a complete description of the set of even equilibria to the Muskat problem with surface tension
  (and in virtue of \eqref{HVT} also of the set of all equilibria).
 
  \begin{figure}[h]
 \includegraphics[width=1\textwidth, angle=0]{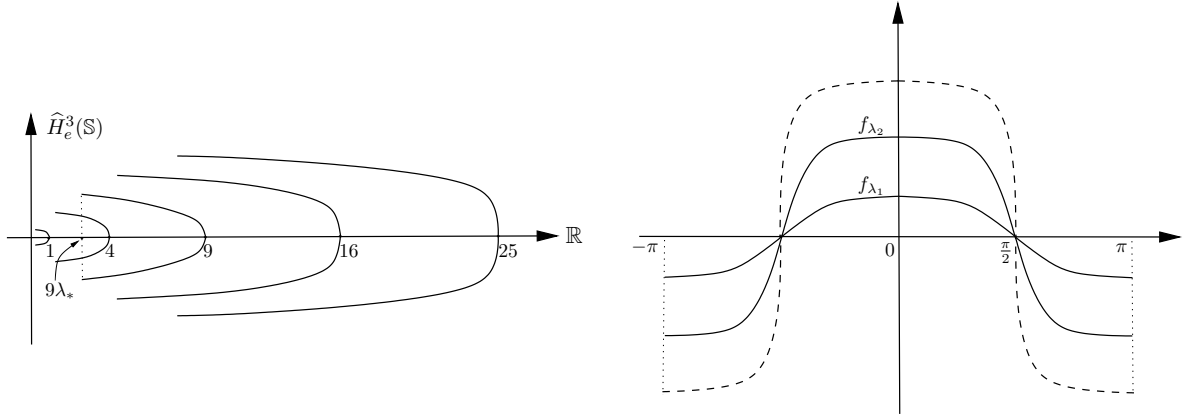}
	\caption{The subcritical global bifurcation branches of \eqref{CapEq} found in Theorem \ref{T:GBP} (left) and the behavior of  the finger-shaped solutions along the first bifurcation branch (right) 
	($\lambda_*<\lambda_2<\lambda_1<1$). The dashed curve is the graph of the function $ \lim_{\lambda\searrow\lambda_*} f_\lambda$ and it has unbounded slope at $x=\pi/2$ and height
	  $\sqrt{2/\lambda_*}.$}
	\label{Fig:1}
\end{figure}
 
 \begin{thm}\label{T:GBP}
 Let 
 $$\lambda_*:=\frac{1}{2\pi^2}B^2\Big(\frac{3}{4},\frac{1}{2}\Big),
 $$
 where $B$ is the beta function.
 The even solutions to \eqref{CapEq} are organized as follows.
 \begin{itemize}
 \item[$(a)$] If $\lambda\leq \lambda_*$,\footnote{A  rough estimate for $\lambda_*$ is $\lambda_*\approx 0.3$.} then \eqref{CapEq} has only the trivial solution.\\[-2ex]
 \item[$(b)$] Let $\lambda>\lambda_* $.
 \begin{itemize}
  \item[$(i)$] The equation \eqref{CapEq} has even solutions  of minimal period $2\pi$ if and only if $\lambda_*<\lambda<1$.  More precisely, for each 
    $\lambda\in(\lambda_*,1)$, \eqref{CapEq} has exactly two even solutions $(\lambda,\pm f_\lambda)$  of minimal period $2\pi$. 
  These solutions are real-analytic, $|f_{\lambda_1}|\leq |f_{\lambda_2}|$ for $\lambda_2<\lambda_1$, $\|f_\lambda\|_\infty\to0$ for $\lambda\nearrow 1$,  and 
  \[
 \text{$\|f_\lambda\|_\infty=|f_\lambda(0)|\nearrow \sqrt{2/\lambda_*}$,\quad $\|f_\lambda'\|_\infty=|f_\lambda'(\pi/2)|\nearrow \infty$
  \quad for $\lambda\searrow \lambda_*.$}
  \]
  \item[$(ii)$] The equation \eqref{CapEq} has even solutions  of minimal period $2\pi/\ell$, $2\leq \ell\in\N$,  if and only if $\ell^{2}\lambda_*<\lambda<\ell^{2}$.
    More precisely, for each 
    $\lambda\in(\ell^{2}\lambda_*,\ell^{2}),$ \eqref{CapEq} has exactly two even solutions $(\lambda,\pm f_\lambda)$ of minimal period $2\pi/\ell$ and
    \[f_\lambda=\ell^{-1}f_{\lambda\ell^{-2}}(\ell \, \cdot\, ) \]
    where $f_{\lambda\ell^{-2}}(\ell \, \cdot\, )$ is the function identified at $(ii)$.
 \end{itemize}
 
 \item[$(c)$] If we  consider \eqref{CapEq} as an abstract bifurcation problem in $\R\times \wh H^3_e(\s),$
  where $$\wh H^3_e(\s):=\{f\in H^3_e(\s)\,:\, \text{$f$ is even}\},$$ then the global bifurcation curve  
 arising from  $(\ell^2,0),$ $1\leq \ell\in\N,$ and described at $(b)$, admits in a neighborhood of $(\ell^2,0)$ a real-analytic parametrization 
  $$(\lambda_\ell,f_\ell):(-\e_\ell,\e_\ell)\to (0,\infty)\times \wh H^3_e(\s)$$ such that
  \[
  \left\{
\begin{array}{lll}
\lambda_\ell(s)=\ell^2-\cfrac{3\ell^4}{8}s^2+O(s^4)  \quad \text{in $\R$,}\\[2ex]
f_\ell(s)= s\cos(\ell x)+O(s^2)  \quad \text{in $ \wh H^3_e(\s)$}
\end{array}
\right.\qquad\text{for $s\to0$.}
  \]
  \end{itemize}
 \end{thm}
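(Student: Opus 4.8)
\textbf{Proof strategy for Theorem \ref{T:GBP}.}
The plan is to reduce the periodic capillarity equation \eqref{CapEq} to a planar Hamiltonian ODE, analyze its closed orbits, and then recover the local and global bifurcation structure. First I would observe that for even solutions the equation
\[
\frac{f''}{(1+f'^2)^{3/2}}+\lambda f=0
\]
has the first integral $H(f,f')=-(1+f'^2)^{-1/2}+\tfrac{\lambda}{2}f^2$, which is constant along orbits; writing $\theta$ for the tangent angle (so $f'=\tan\theta$, $(1+f'^2)^{-1/2}=\cos\theta$) turns \eqref{CapEq} into the pendulum-type system $f'=\tan\theta$, $\theta'=-\lambda f\cos\theta$, with conserved quantity $\cos\theta-\tfrac{\lambda}{2}f^2=:E$. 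A nonconstant even solution of minimal period $2\pi$ corresponds to a closed orbit encircling the origin with $f(0)=f'(0)^{-1}\cdot 0$, i.e. with $f'(0)=0$, $f(\pi/2)=0$, symmetric under the reflections $x\mapsto -x$ and $x\mapsto \pi-x$. The period of such an orbit is computed by separation of variables as a complete integral over a quarter period, and the key is to show this period map is strictly monotone in the amplitude $a:=f(0)$ (equivalently in $E=1-\tfrac{\lambda}{2}a^2$). The amplitude ranges over $a\in(0,\sqrt{2/\lambda})$, with the limiting value $a=\sqrt{2/\lambda}$ corresponding to $E=0$, where $\theta\to\pm\pi/2$ at $x=\pi/2$ — this is exactly the dashed limiting profile with vertical slope in Figure \ref{Fig:1}.

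Next I would carry out the period computation explicitly. Along a quarter-period the relation $\cos\theta=E+\tfrac{\lambda}{2}f^2$ together with $df/dx=\tan\theta$ gives
\[
\frac{\pi}{2}=\int_0^{a}\frac{df}{\tan\theta}=\int_0^a\frac{E+\tfrac{\lambda}{2}f^2}{\sqrt{1-(E+\tfrac{\lambda}{2}f^2)^2}}\,df,
\]
and substituting $f=a\sigma$, $a^2=2(1-E)/\lambda$, reduces this to a one-parameter integral whose value I claim equals $\tfrac{\pi}{2}$ precisely when $\lambda$ takes a specific value determined by $E$; in the degenerate limit $E\to 0$ (so $a\to\sqrt{2/\lambda}$) the integral can be evaluated in closed form in terms of the Beta function, yielding the threshold $\lambda_*=\tfrac{1}{2\pi^2}B^2(3/4,1/2)$. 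Part $(a)$ then follows because for $\lambda\le\lambda_*$ no closed orbit has the required period (and $f=0$ is the only constant solution, which also handles $\lambda\le 0$ via the elliptic maximum principle applied to \eqref{CapEq} directly). Part $(b)(i)$ follows from the monotonicity of the period map together with the endpoint values: as $a\searrow 0$ the linearization $f''+\lambda f=0$ forces $\lambda\nearrow 1$, and as $a\nearrow\sqrt{2/\lambda}$ one reaches $\lambda\searrow\lambda_*$; strict monotonicity gives the ``exactly two'' statement $(\lambda,\pm f_\lambda)$, the ordering $|f_{\lambda_1}|\le|f_{\lambda_2}|$ for $\lambda_2<\lambda_1$, and the stated limits of $\|f_\lambda\|_\infty$ and $\|f_\lambda'\|_\infty$ (the latter blowing up because $\theta(\pi/2)\to\pm\pi/2$). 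Part $(b)(ii)$ is a scaling observation: if $g$ solves \eqref{CapEq} with parameter $\mu$ and minimal period $2\pi$, then $f:=\ell^{-1}g(\ell\,\cdot\,)$ solves it with $\lambda=\ell^2\mu$ and minimal period $2\pi/\ell$; applying $(b)(i)$ with $\mu=\lambda\ell^{-2}\in(\lambda_*,1)$ gives the claimed range $\ell^2\lambda_*<\lambda<\ell^2$ and the formula $f_\lambda=\ell^{-1}f_{\lambda\ell^{-2}}(\ell\,\cdot\,)$.

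For part $(c)$ I would set up \eqref{CapEq} as $F(\lambda,f)=0$ with $F(\lambda,f):=f''(1+f'^2)^{-3/2}+\lambda f$, a real-analytic map $\R\times \wh H^3_e(\s)\to \wh H^1_e(\s)$ (or into $\wh H^1(\s)$; oddness/evenness is preserved). The Fréchet derivative at the trivial branch is $\partial_f F(\lambda,0)[h]=h''+\lambda h$, whose kernel on $\wh H^3_e(\s)$ is one-dimensional, spanned by $\cos(\ell x)$, exactly when $\lambda=\ell^2$, $\ell\ge 1$; the transversality condition $\partial_\lambda\partial_f F(\ell^2,0)[\cos(\ell x)]=\cos(\ell x)\notin\mathrm{im}\,\partial_f F(\ell^2,0)$ holds. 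Hence the Crandall--Rabinowitz theorem (and its analytic refinement, e.g. as in \cite{BT03} or the references used in \cite{EEM09c}) yields a local real-analytic curve $(\lambda_\ell,f_\ell):(-\varepsilon_\ell,\varepsilon_\ell)\to(0,\infty)\times\wh H^3_e(\s)$ through $(\ell^2,0)$ with $f_\ell(s)=s\cos(\ell x)+O(s^2)$ and $\lambda_\ell(0)=\ell^2$, $\lambda_\ell'(0)=0$ (the latter because $F$ is odd in $f$, so the bifurcation is pitchfork-type). The coefficient $\lambda_\ell''(0)=-\tfrac{3\ell^4}{4}$ — equivalently the $s^2$-term $-\tfrac{3\ell^4}{8}s^2$ — is obtained by expanding $F(\lambda_\ell(s),f_\ell(s))=0$ to third order in $s$ and projecting onto $\cos(\ell x)$; the cubic term of $f''(1+f'^2)^{-3/2}$ contributes $-\tfrac32 f''f'^2$ at leading order, and a short Fourier computation with $f=s\cos(\ell x)$ gives the stated value. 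That these local curves extend to the global branches described in $(a)$--$(b)$ follows from the global bifurcation theorem (Rabinowitz / its analytic version): the branch emanating from $(\ell^2,0)$ is a connected analytic set which, by the a priori bounds $\|f_\lambda\|_\infty\le\sqrt{2/\lambda_*}$ and $\lambda\in(\ell^2\lambda_*,\ell^2)$ derived above, cannot return to the trivial branch at another eigenvalue nor escape to infinity, so it must be exhausted by the family $\{(\lambda,\pm f_\lambda)\}$ identified through the ODE analysis.

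\textbf{Main obstacle.} The crux is the strict monotonicity of the period map $a\mapsto (\text{period})$ for the Hamiltonian system — everything in $(a)$ and $(b)$ hinges on it, and such monotonicity results for pendulum-type equations, while classical, typically require a delicate estimate (e.g. differentiating the period integral under the sign, or a time-map/Chicone-type argument). A secondary technical point is the careful identification of $\lambda_*$ with $\tfrac{1}{2\pi^2}B^2(3/4,1/2)$ in the degenerate limit, which involves recognizing the limiting quarter-period integral as a Beta integral after the right substitution. Fortunately, \eqref{CapEq} is precisely the equation analyzed in \cite{EEM09c}, so the sharp period-map monotonicity and the value of $\lambda_*$ can be quoted from there, and the bifurcation-theoretic part $(c)$ is then a standard (if computation-heavy) application of analytic local and global bifurcation theory.
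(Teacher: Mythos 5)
Your proposal is correct and follows essentially the same route as the paper: parts $(a)$ and $(b)$ rest on the phase-plane/period-map analysis of the capillarity equation carried out in \cite{EEM09c} (which you partly reconstruct, with the correct first integral, quarter-period integral, and Beta-function evaluation of $\lambda_*$), while part $(c)$ is the Crandall--Rabinowitz theorem applied to $F(\lambda,f)=f''(1+f'^2)^{-3/2}+\lambda f$ on $\wh H^3_e(\s)$, with $\lambda_\ell'(0)=0$ by oddness and the third-order expansion giving the coefficient $-3\ell^4/8$, exactly as in the paper (which refers to \cite{CR71} and \cite{EM11a}). The only inessential difference is your appeal to global bifurcation theory at the end of $(c)$, which is not needed since the global branches are already furnished by the ODE analysis in $(b)$.
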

 \begin{proof}
 The claims $(a)$ and $(b)$ are established in \cite{EEM09c}.
  The last claim follows by applying   the theorem on bifurcations from simple eigenvalues due to Crandall and Rabinowitz, cf. \cite{CR71}.
   The details are similar to those in the proof of \cite[Theorem 6.1]{EM11a}.
 \end{proof}

With respect to Theorem \ref{T:GBP} we add the following remark.
\begin{rem}\label{R:3}
\begin{itemize}
\item[$(i)$] Because $\lambda_*\approx 0.3$, for certain $\lambda\in(\ell^2\lambda_*,\ell^2)$ with $\ell\geq2$  there exist nontrivial solutions to \eqref{CapEq} with 
minimal period different than $2\pi/\ell$, see Figure \ref{Fig:1}.
\item[$(ii)$] As pointed out in \cite{EEM09c}, these finger-shaped equilibria are in correspondence to certain solutions to the mathematical pendulum equation
\[\theta''+\lambda\sin\theta=0.\] 
\item[$(iii)$] The global bifurcation curves may be continued beyond $\lambda_*\ell^2$, but outside the setting of interfaces parametrized as graphs.
\item[$(iv)$] Because $\lambda_\ell'(0)=0>\lambda_\ell''(0)$, we may assume that $s\lambda_\ell'(s)<0$  for all   $s\in(-\e_\ell,\e_\ell)\setminus\{0\}$.
This aspect is of relevance when studying the stability properties of the finger-shaped equilibria identified above.
\end{itemize}
\end{rem}

 In order to address the stability properties of  the  equilibria to \eqref{P}, we first reformulate the problem by incorporating  $\lambda$ as a parameter.
 To this end we  define  $\Phi:\R\times   \wh H^2(\s)\to \kL(\wh H^3(\s),\wh L_{2}(\s))$  according to
 \begin{align}\label{PLH}
 \Phi(\lambda,f)[h]:=\sigma b_\mu \bB(f)\big[(1+a_\mu\bA(f))^{-1}\Big[\frac{h'''}{(1+f'^2)^{3/2}}-3\frac{f'f''h''}{(1+f'^2)^{3/2}}+\lambda h'\Big],
 \end{align}
where $b_\mu$ is the constant introduced in \eqref{bmu}.
Then, it follows from the analysis in Section \ref{Sec:4} that $\Phi\in C^\omega(\R\times \wh H^2(\s),\kL(\wh H^3(\s), \wh L_{2}(\s))),$ and 
the problem \eqref{P} is equivalent, for  solutions with zero integral mean,  to the quasilinear  evolution problem
\begin{align}\label{MPSL}
\dot f(t) =\Phi(\lambda,f(t))[f(t)],\quad t>0,\qquad f(0)=f_0.
\end{align}
It is not difficult to see that the linearization  $ \Phi(\lambda,0)\in\kL(\wh H^3(\s), \wh L_{2}(\s))$  is a Fourier multiplier 
with spectrum $\sigma( \Phi(\lambda,0))$ that consists only of the eigenvalues  $\{-\sigma b_\mu(|k|^3-\lambda |k|)\,:\,k\in\Z\setminus\{0\}\}.$ 
Moreover, $ \Phi(\lambda,0)$ generates a strongly continuous and analytic semigroup in $\kL( \wh L_{2}(\s)) $ for all $\lambda\in\R$.
We are now in a position to prove Theorem \ref{MT:4} where we exploit the quasilinear principle of linearized stability in \cite[Theorem 1.3]{MW18x}.

\begin{proof}[Proof of Theorem \ref{MT:4}] 
We first address the stability of the zero solution $f=0$ to \eqref{P}.
Assume first that  $\lambda<1$. In this case  all eigenvalues of $\Phi(\lambda,0)$ are negative,
 more precisely $\re z\leq -\sigma b_\mu (1-\lambda)<0$ for all $z\in\sigma(\Phi(\lambda,0))$.
 The quasilinear principle of linearized stability \cite[Theorem 1.3]{MW18x} applied to \eqref{MPSL} yields the first claim of Theorem \ref{MT:4}.
 
In the second case when $\lambda>1$, the intersection $ \sigma(\p_f\Phi(\lambda,0))\cap[\re\lambda>0]$ consists of a finite number of positive
 eigenvalues and we may apply  the instability result in 
\cite[Theorem 1.4]{MW18x} to derive the assertion $(ii)$ in Theorem \ref{MT:4}. 

In the remaining part we discuss the stability properties of small finger-shaped solutions. 
To this end we denote by $\bA_\ell(s)$ the linearized operator
 $$\bA_\ell(s):=\Phi(\lambda_\ell(s),f_\ell(s))+(\p_f \Phi(\lambda_\ell(s),f_\ell(s))[\cdot])[f_\ell(s)]\in\kL(\wh H^3(\s),\wh L_2(\s)),$$
where $\p_f \Phi\in\kL( \wh H^2(\s),\kL(\wh H^3(\s), \wh L_{2}(\s)))$ is the   Fr\'echet derivative  of the mapping $ \Phi $ with respect to the variable $f$.  
We point out that $\bA_\ell(0)=\Phi(\ell^2,0).$

Let us first note that for $\ell\geq2$  the spectrum  $ \sigma(\bA_\ell(0))$ contains a finite number of positive eigenvalues (this number increases with $\ell$).
Since a set consisting of finitely many eigenvalues of $\bA_\ell(s)$ changes continuously with $s\in(-\e_\ell,\e_\ell)$,
cf. \cite[Chapter IV]{Ka95}, we infer from  \cite[Theorem~I.1.3.1~(i)]{Am95} that $-\bA(s)\in\kH(\wh H^3(\s),\wh L_2(\s))$ and
that $\sigma(\bA(s))$ contains only  finitely many  eigenvalues
 with positive real part if $\e_\ell$ is sufficiently small. 
Thus, we may appeal to \cite[Theorem 1.4]{MW18x} to conclude that if $\lambda=\lambda_\ell(s),$ $0<|s|<\e_\ell,$ $\ell\geq 2$,  then $f_\ell(s)$ is an unstable equilibrium to \eqref{P}.

The situation when $\ell=1$ is special, because $\sigma(\bA_1(s))$ has for $s=0$, excepting for the eigenvalue $0$,  only negative eigenvalues. 
We show below that when letting $s$ vary in $(-\e_1,\e_1)$ the operator  $\bA_1(s)$, $0<|s|<1$, has 
a positive eigenvalue $z(s)$ which corresponds to the zero eigenvalue of 
 $ \bA_1(0)$.
To this end we associate to a periodic  function $h$  the function  $\check{h}$ defined by
\[
\check{h}(x):=h(-x),\qquad x\in\R.
\]
Observing that  $(\bB(f)[\overline\omega])\!\check{\phantom{a}}=-\bB(\check{f})[\check{\overline\omega}]$ and $(\bA(f)[\overline\omega])\!\check{\phantom{a}}=\bA(\check{f})[\check{\overline\omega}]$, $f\in\wh  H^2(\s)$, $\overline\omega\in \wh L_2(\s)$,  and that 
\[
\overline\omega(\check{f})[\check{h}]=- (\overline\omega(f)[h])\!\check{\phantom{a}},\qquad\text{$f\in \wh H^2(\s),$ $h\in \wh H^3(\s)$,}
\]
cf. Proposition \ref{P:I1},
it follows that the operator  $\Phi$ introduced in \eqref{PLH} satisfies
\begin{align*}
(\Phi(\lambda,f)[h])\!\check{\phantom{a}}=\Phi(\lambda,\check{f})[\check{h}]\qquad\text{for $\lambda\in\R$, $f\in \wh H^2(\s)$, $h\in \wh H^3(\s)$}.
\end{align*}
Hence, letting $\wh L_{2,e}(\s):=\{f\in \wh L_2(\s)\,:\, \text{$f$ is even}\}$ and $\wh H^r_e(\s):=\wh H^r(\s)\cap L_{2,e}(\s),$ $r\geq0$, it follows that
 $\Phi\in C^\omega(\R\times \wh H^2_e(\s), \kL(\wh H^3_e(\s),\wh L_{2,e}(\s))),$
the linearization   $ \bA_1(0)\in\kL(\wh H^3_e(\s), \wh L_{2,e}(\s))$  being the Fourier multiplier 
\[
\bA_1(0)\sum_{k=1}^\infty a_k\cos (kx)=-\sigma b_\mu\sum_{k=1}^\infty (k^3-\lambda k)a_k\cos (kx).
\]
Let $\Psi:\R\times \wh H^3_e(\s)\to\wh L_{2,e}(\s)$ be the real-analytic mapping defined by $\Psi(\lambda,f):=\Phi(\lambda,f)[f]$.
Noticing that $\p_f\Psi(\lambda_1(s),f_1(s))=\bA_1(s)$,  it follows that $0$ is a simple eigenvalue of $\p_f\Psi(1,0)$ and  ${\rm Ker\,}  \p_f\Psi(1,0)={\rm span}\{\cos(x)\}.$
Since additionally $\p_{\lambda f}\Psi(1,0)[\cos (x)]=\sigma b_\mu \cos(x)\not\in\im \p_f\Psi(1,0)$, the principle of exchange of stability, cf. \cite[Theorem 1.16]{CR73},
 together with Remark \ref{R:3} $(iv)$ implies that the zero eigenvalue of $\p_f\Psi(1,0)$ perturbs along the bifurcation curve through $(\lambda_1,f_1)$ into a positive eigenvalue
 $z(s)$ of  $\bA_1(s)$, $0<|s|<\e_1$, 
 and moreover
 \[
\lim_{s\to 0}\frac{-s\lambda'_1(s)}{z(s)}=\frac{1}{\sigma b_\mu}. 
 \]
 Hence,  if $\e_1$ is 
 sufficiently small, the  operator $ \bA_1(s)$, $0<|s|<\e_1$,  has a positive eigenvalue $z(s)$. 
 Moreover, $ \bA_1(s)$ has at most two eigenvalues with positive real part.
 \cite[Theorem 1.4]{MW18x} yields now that if $\lambda=\lambda_1(s),$ $0<|s|<\e_1$,  then $f_1(s)$ is an unstable equilibrium.
\end{proof}

 %%%%%%%%%%%%%%%%%%%%%%%%%%%%%%%%%%%%%%%%%%%%%%%%%%%%%%%%%%%%%%%%%%%
%%%%%%%%%%%%%%%%%%%%%%%%%%%%%%%%%%%%%%%%%%%%%%%%%%%%%%%%%%%%%%%%%%%%
%%%%%%%%%%%%%%%%%%%%%%%%%%%%%%%%%%%%%%%%%%%%%%%%%%%%%%%%%%%%%%%%%%%%
%%%%%%%%%%%%%%%%%%%%%%%%%%%%%%%%%%%%%%%%%%%%%%%%%%%%%%%%%%%%%%%%%%%
%%%%%%%%%%%%%%%%%%%%%%%%%%%%%%%%%%%%%%%%%%%%%%%%%%%%%%%%%%%%%%%%%%%%
%%%%%%%%%%%%%%%%%%%%%%%%%%%%%%%%%%%%%%%%%%%%%%%%%%%%%%%%%%%%%%%%%%%%
\appendix
\section{Some technical results}\label{S:A}
 %%%%%%%%%%%%%%%%%%%%%%%%%%%%%%%%%%%%%%%%%%%%%%%%%%%%%%%%%%%%%%%%%%%
%%%%%%%%%%%%%%%%%%%%%%%%%%%%%%%%%%%%%%%%%%%%%%%%%%%%%%%%%%%%%%%%%%%%
%%%%%%%%%%%%%%%%%%%%%%%%%%%%%%%%%%%%%%%%%%%%%%%%%%%%%%%%%%%%%%%%%%%%
%%%%%%%%%%%%%%%%%%%%%%%%%%%%%%%%%%%%%%%%%%%%%%%%%%%%%%%%%%%%%%%%%%%
%%%%%%%%%%%%%%%%%%%%%%%%%%%%%%%%%%%%%%%%%%%%%%%%%%%%%%%%%%%%%%%%%%%%
%%%%%%%%%%%%%%%%%%%%%%%%%%%%%%%%%%%%%%%%%%%%%%%%%%%%%%%%%%%%%%%%%%%%

In Lemma \ref{L:A1} we establish  the boundedness of a family of   multilinear singular integral operators in certain settings that are motivated by the analysis in the previous sections.
The  nonperiodic counterparts   
of the estimates derived below have been obtained  previously in     \cite{M16x, M17x}\footnote{In \cite{M16x, M17x} the operators
  \begin{equation*} 
B_{n,m}(a_1,\ldots, a_{m})[b_1,\ldots,b_n,\overline\omega](x):=\PV\int_{\R}  \frac{\overline\omega(x-s)}{s}\cfrac{\prod_{i=1}^{n}\big(\delta_{[x,s]} b_i /s\big)}{\prod_{i=1}^{m}\big[1+\big(\delta_{[x,s]}  a_i /s\big)^2\big]}\, ds 
\end{equation*}
are considered.
 The functions $a_1,\ldots, a_{m},\, b_1, \ldots, b_n:\R\to\R$    are Lipschitz functions  and $\overline\omega\in L_2(\R).$ 
 It is shown in \cite{M16x, M17x} that these operators  extend to  bounded multilinear operators on certain products of Sobolev spaces on $\R$.}. 

  \begin{lemma}\label{L:A1}
  \begin{itemize}
  \item[$(i)$] Given $m,\, n\in\N$ and   Lipschitz functions $a_1,\ldots, a_{m},\, b_1, \ldots, b_n:\R\to\R $, the singular integral operator 
  $C_{n,m}(a_1,\ldots, a_{m})[b_1,\ldots,b_n,\,\cdot\,]$ defined by
\[
C_{n,m}(a_1,\ldots, a_{m})[b_1,\ldots,b_n,\overline\omega](x):=\PV\int_{-\pi}^\pi  \frac{\overline\omega(x-s)}{s}
\cfrac{\prod_{i=1}^{n}\big(\delta_{[x,s]} b_i /s\big)}{\prod_{i=1}^{m}\big[1+\big(\delta_{[x,s]}  a_i /s\big)^2\big]}\, ds 
\]
satisfies $\|C_{n,m}(a_1,\ldots, a_{m})[b_1,\ldots,b_n,\,\cdot\,]\|_{\kL(L_2(\s),L_2((-\pi,\pi)))}\leq C\prod_{i=1}^{n} \|b_i'\|_{\infty},$ 
with a constant $C$ that  depends only 
on $n,\, m$ and $\max_{i=1,\ldots, m}\|a_i'\|_{\infty}.$

In particular,   $C_{n,m}\in {\rm C}^{1-}((W^1_\infty(\mathbb{S}))^{m},
\mathcal{L}_{n+1}( (W^1_\infty(\mathbb{S}))^{n}\times L_2(\mathbb{S}), L_2(\s))).$\\[-2ex]

\item[$(ii)$] Let $m\in\N$, $1\leq n\in\N$, $r\in(3/2,2)$, and $\tau\in(5/2-r,1).$ Then:\\[-2ex]
 \begin{itemize}
\item[$(ii1)$] Given $a_1,\ldots, a_m\in H^r(\mathbb{S})$ and $ b_1,\ldots, b_n,\, \overline\omega\in {\rm C}^\infty(\s)$,
 there exists a constant $C$ that depends only on $n, $ $m$, $r$, $\tau$, and $\max_{i=1,\ldots, m}\|a_i\|_{H^r(\s)}$ such that
 \begin{align}  
&\|C_{n,m}(a_1,\ldots, a_{m})[b_1,\ldots, b_n,\overline\omega]\|_{L_2(\s)}\leq C\|\overline\omega\|_{H^\tau(\s)}\|b_1\|_{H^1(\s)}\prod_{i=2}^n\|b_i\|_{H^r(\s)} \label{REF1}
\end{align}
and
\begin{equation}\label{REF2}
\begin{aligned} 
&\|C_{n,m}(a_1,\ldots, a_{m})[b_1,\ldots, b_n, \overline\omega]-C_{n-1,m}(a_1,\ldots, a_{m})[b_2,\ldots, b_n,b_1' \overline\omega]\|_{L_2(\s)}\\[1ex]
&\hspace{2.6cm}\leq C\|b_1\|_{H^{\tau}(\s)}\| \overline\omega\|_{H^1(\s)}\prod_{i=2}^n\|b_i\|_{H^r(\s)}.
\end{aligned}
 \end{equation}
 In particular,   $C_{n,m}(a_{1}, \ldots, a_{m})$ has an extension in  
$$  \mathcal{L}_{n+1}(H^1(\mathbb{S})\times  (H^r(\mathbb{S}))^{n-1}\times H^{\tau}(\mathbb{S}), L_2(\s)).$$
  \item[$(ii2)$]  $C_{n,m}\in {\rm C}^{1-}((H^r(\mathbb{S}))^m,\mathcal{L}_{n+1}(H^1(\mathbb{S})\times 
  (H^{r}(\mathbb{S}))^{n-1}\times H^{\tau}(\mathbb{S}), L_2(\s))).$\\[-2ex]
 \end{itemize} 
 \item[$(iii)$] Let $m,\, n\in\N$, $r\in(3/2,2)$, and $\tau\in(1/2,1).$ Then:\\[-2ex]
 \begin{itemize}
\item[$(iii1)$] Given $a_1,\ldots, a_m\in H^r(\mathbb{S})$ and $ b_1,\ldots, b_n,\, \overline\omega\in {\rm C}^\infty(\s),$
 there exists a constant $C$ that depends only on $n, $ $m$, $r$, $\tau$, and $\max_{i=1,\ldots, m}\|a_i\|_{H^r(\s)}$ such that
 \begin{align} 
&\|C_{n,m}(a_1,\ldots, a_{m})[b_1,\ldots, b_n,\overline\omega]\|_{\infty}\leq C\|\overline\omega\|_{H^\tau(\s)} \prod_{i=1}^n\|b_i\|_{H^r(\s)}. \label{REF3}
\end{align}
 In particular,   $C_{n,m}(a_{1}, \ldots, a_{m})$ has an extension in
$  \mathcal{L}_{n+1}(  (H^r(\mathbb{S}))^{n}\times H^{\tau}(\mathbb{S}), L_\infty(\s)).$\\[-2ex]
  \item[$(iii2)$]  $C_{n,m}\in {\rm C}^{1-}((H^r(\mathbb{S}))^m,\mathcal{L}_{n+1}( (H^{r}(\mathbb{S}))^{n}\times H^{\tau}(\mathbb{S}), L_\infty(\s))).$
 \end{itemize}
  \end{itemize}
\end{lemma}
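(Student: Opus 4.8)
The statement is Lemma \ref{L:A1}, which asserts boundedness (and local Lipschitz dependence on the parameters $a_i$) of the periodic multilinear singular integral operators $C_{n,m}$ in three different functional settings. The guiding principle throughout is to \emph{transfer the corresponding nonperiodic estimates} for the operators $B_{n,m}$ from \cite{M16x,M17x} to the periodic operators $C_{n,m}$. The plan is to first record the elementary but crucial observation that, because the kernel variable $s$ ranges only over $(-\pi,\pi)$, one may freely modify the Lipschitz/Sobolev functions $a_i,b_i$ outside a fixed compact interval without changing the operator on a $2\pi$-period; more precisely, for $x\in(-\pi,\pi)$ and $|s|<\pi$ the difference quotients $\delta_{[x,s]}g/s$ depend only on the values of $g$ on $(-2\pi,2\pi)$. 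This lets us replace each periodic input $g$ by a compactly supported function $\widetilde g\in W^1_\infty(\R)$ (resp. $H^r(\R)$, $H^\tau(\R)$) with $\|\widetilde g\|\leq C\|g\|$ and with $\widetilde g=g$ on $(-2\pi,2\pi)$, after which the periodic kernel $1/s$ coincides with the nonperiodic one on the relevant domain. The resulting operator applied to a cutoff of $\overline\omega$ on $(-\pi,\pi)$ is then exactly a nonperiodic operator $B_{n,m}$ of the type studied in \cite{M16x,M17x}, restricted to $(-\pi,\pi)$.

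\emph{Step 1 (part $(i)$).} For Lipschitz inputs this is the cleanest case: one fixes $x\in(-\pi,\pi)$, extends the $a_i,b_i$ to compactly supported Lipschitz functions on $\R$ as above, and writes
\[
C_{n,m}(a_1,\dots,a_m)[b_1,\dots,b_n,\overline\omega](x)=B_{n,m}(\widetilde a_1,\dots,\widetilde a_m)[\widetilde b_1,\dots,\widetilde b_n,\,{\bf 1}_{(-\pi,\pi)}\overline\omega](x)
\]
for $x\in(-\pi,\pi)$; then one invokes the $L_2(\R)$-bound from \cite[Theorem 1]{TM86} / \cite{M16x} for $B_{n,m}$, with the stated constant dependence $C\prod_i\|b_i'\|_\infty$ and $C=C(n,m,\max_i\|a_i'\|_\infty)$, and notes $\|\cdot\|_{L_2((-\pi,\pi))}=\|{\bf 1}_{(-\pi,\pi)}(\cdot)\|_{L_2(\R)}$. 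The multilinearity and the $C^{1-}$ (local Lipschitz) statement follow by expanding $C_{n,m}$ in telescoping differences in each slot and reapplying the same bound; the Lipschitz dependence on the $a_i$ comes from the algebraic identity
$\tfrac{1}{1+u^2}-\tfrac{1}{1+v^2}=-\tfrac{(u-v)(u+v)}{(1+u^2)(1+v^2)}$
applied to $u=\delta_{[x,s]}a_i/s$, which introduces an extra bounded factor and one more difference quotient — again of the form handled by part $(i)$.

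\emph{Step 2 (parts $(ii)$ and $(iii)$).} Here the inputs $a_i$ live in $H^r(\s)$, $r\in(3/2,2)$, hence in $C^1(\s)$, so the Lipschitz norms $\|a_i'\|_\infty$ are controlled by $\|a_i\|_{H^r}$ and part $(i)$ still applies as a crude bound; the point of $(ii)$, $(iii)$ is the \emph{refined} estimates \eqref{REF1}--\eqref{REF3} with low regularity $H^\tau$ on $\overline\omega$ and only $H^1$ on one of the $b_i$. The plan is again transference: extend all inputs to compactly supported functions on $\R$ with comparable Sobolev norms (using a standard extension-times-cutoff operator, which is bounded on $H^s(\s)\to H^s_{\mathrm{comp}}(\R)$ for every $s$), identify $C_{n,m}$ on $(-\pi,\pi)$ with $B_{n,m}$, and quote the nonperiodic inequalities from \cite[Appendix / §5]{M16x} and \cite{M17x}. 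The constant dependence $C=C(n,m,r,\tau,\max_i\|a_i\|_{H^r})$ is inherited directly. The commutator-type identity \eqref{REF2} is obtained in the same way, by transferring the corresponding nonperiodic commutator estimate; alternatively one can prove it intrinsically by writing the difference as
$\int_{-\pi}^\pi \frac{\delta_{[x,s]}b_1/s-b_1'(x)}{s}\,(\cdots)\,\overline\omega(x-s)\,ds$
plus a term where $b_1'(x)-b_1'(x-s)$ appears, each of which gains a derivative on $b_1$ and loses one on $\overline\omega$ — this is precisely the mechanism behind the exponent $H^\tau(b_1)\times H^1(\overline\omega)$ on the right-hand side. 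The $L_\infty$ bound \eqref{REF3} uses in addition the embedding $H^\tau(\s)\hookrightarrow L_\infty(\s)$ for $\tau>1/2$ together with a pointwise estimate of the kernel after one of the difference quotients has absorbed a factor $s$ (so that the singularity is integrable), again mirroring \cite{M16x}. Finally, the $C^{1-}$ statements $(ii2)$, $(iii2)$ follow by the same telescoping-in-the-$a_i$-slots argument as in Step 1, the extra difference quotient being absorbed by \eqref{REF1}/\eqref{REF3} respectively.

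\emph{Main obstacle.} The genuinely hard analytic input — the $L_2$-boundedness of the multilinear Calder\'on-commutator-type kernels and the low-regularity refinements \eqref{REF1}--\eqref{REF3} — is \emph{not} reproved here; it is imported from the nonperiodic literature \cite{M16x,M17x,TM86}. Consequently the only real work, and the place where care is needed, is the transference step: one must check that the localization to $|s|<\pi$, the compactly supported extensions, and the replacement of the periodic Hilbert kernel $1/s$ (note $t_{[s]}=\tan(s/2)$ does \emph{not} appear in $C_{n,m}$, whose kernel is genuinely $1/s$) by the nonperiodic one introduce only errors that are themselves bounded operators of strictly lower order, so that the claimed constants and norm dependencies are preserved. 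The bookkeeping of which Sobolev index goes on which factor in the telescoping expansions — so that \eqref{REF1} and \eqref{REF2} are applied with exactly the hypotheses $r\in(3/2,2)$, $\tau\in(5/2-r,1)$ (resp. $\tau\in(1/2,1)$) under which they hold — is the most error-prone part and is where the bulk of a full write-up would go.
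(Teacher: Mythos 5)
Your plan for part $(i)$ is in the same spirit as the paper's (reduction to the nonperiodic operators $B_{n,m}$ of \cite{M16x,M17x}), but the identity you build it on is false, and with your choice of cutoff the resulting error is not of lower order. For $x\in(-\pi,\pi)$ the periodic operator integrates over $s\in(-\pi,\pi)$ with the $2\pi$-periodic $\overline\omega$, whereas $B_{n,m}[\ldots,{\bf 1}_{(-\pi,\pi)}\overline\omega](x)$ integrates over $s\in(x-\pi,x+\pi)$; the mismatch consists of two integrals, one of which (e.g. over $s\in(-\pi,x-\pi)$ for $x\in(0,\pi)$) has its endpoint approaching the singularity $s=0$ as $x\to\pi$, so it is itself a truncated singular integral of Calder\'on type and cannot be dismissed as an ``error of strictly lower order''; a naive pointwise bound even fails to be square integrable in $x$. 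The paper's actual reduction avoids this: one multiplies $\overline\omega$ by a fixed smooth cutoff $\varphi$ with $\varphi=1$ on $[-2\pi,2\pi]$ and $\varphi=0$ for $|x|\ge 4\pi$, and then, for $|x|<\pi$, $C_{n,m}[\ldots,\overline\omega](x)=B_{n,m}[\ldots,\varphi\overline\omega](x)$ minus an integral over $\pi<|s|<5\pi$, which is manifestly non-singular and bounded by Cauchy--Schwarz; no extension of the $a_i,b_i$ is needed since in $(i)$ they are already Lipschitz on $\R$. This repair is small but essential, and it also affects your Step 2, where the same ``identification on $(-\pi,\pi)$'' is invoked.

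The more serious gap is in parts $(ii)$--$(iii)$, where you propose pure transference while the paper does genuine periodic work. Even granting that nonperiodic analogues of \eqref{REF1}--\eqref{REF3} are available in \cite{M16x,M17x}, the commutator bound \eqref{REF2} cannot be imported operator-by-operator: whichever cutoff you use, the correction term produced by $C_{n-1,m}[\ldots,b_1'\overline\omega]$ contains $b_1'$, which the norm $\|b_1\|_{H^\tau}$ with $\tau<1$ does not control, so recovering the asymmetric right-hand side $\|b_1\|_{H^\tau}\|\overline\omega\|_{H^1}\prod_{i\ge2}\|b_i\|_{H^r}$ forces an integration by parts in $s$ and cancellation between the two correction terms --- precisely the kind of manipulation your proposal labels ``bookkeeping'' and does not carry out. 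In fact the paper does not quote a nonperiodic statement at all here: it proves \eqref{REF1}--\eqref{REF2} intrinsically by writing $\delta_{[x,s]}b_1/s^2$ as $b_1'(x-s)/s-\partial_s\big(\delta_{[x,s]}b_1/s\big)$, integrating by parts in $s$ (which produces boundary terms at $s=\pm\pi$, the commutator $b_1C_{n-1,m}[\ldots,\overline\omega']-C_{n-1,m}[\ldots,b_1\overline\omega']$ and kernels $K_{1,j},K_{2,j}$), and estimating these via Minkowski's inequality and Fourier-coefficient bounds such as $|e^{i\xi}-1-i\xi|\le 2|\xi|^{r}$; only isolated kernel estimates adapt arguments from \cite{M17x}. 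Part $(iii)$ is then obtained from the splitting $C_{n,m}[\ldots,\overline\omega]=\overline\omega A-B$ together with $H^\tau(\s)\hookrightarrow{\rm C}^{\tau-1/2}(\s)$. So while your proposal identifies the correct ingredients (transference for $(i)$, a derivative-shifting mechanism for \eqref{REF2}, the embedding for \eqref{REF3}), the step where the real difficulty sits is asserted rather than proved, and in the form written (exact identification with $B_{n,m}$ via a sharp indicator) it would fail.
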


\begin{proof}
We first address $(i)$. 
To this end we    fix  $\varphi\in {\rm C}^\infty_0(\R,[0,1])$  with $\varphi=1$ for $|x|\leq 2\pi$ and $\varphi=0$
 for $|x|\geq 4\pi$.
 Then, it is easy to see that 
\begin{align}\label{LP}
 \text{$\|\overline\omega\|_{L_2(\s)}\leq \|\overline\omega\varphi\|_{L_2(\R)}\leq 4\|\overline\omega\|_{L_2(\s)}$\qquad for all $\overline\omega\in L_2(\s).$}
\end{align}
For $|x|<\pi$ we have
 \begin{align*}
  C_{n,m}(a_1,\ldots, a_{m})[b_1,\ldots,b_n,\overline\omega](x)&=B_{n,m}(a_1,\ldots, a_{m})[b_1,\ldots,b_n,\varphi\overline\omega](x)\\[1ex]
 &\hspace{0.424cm} -\int_{\pi<|s|<5\pi}\frac{(\varphi\overline\omega)(x-s)}{s}\cfrac{\prod_{i=1}^{n}\big(\delta_{[x,s]} b_i /s\big)}{\prod_{i=1}^{m}\big[1+\big(\delta_{[x,s]}  a_i /s\big)^2\big]}\, ds,
 \end{align*}
and it follows from \cite[Lemma 3.1]{M17x} and \eqref{LP}  that  
\begin{align*}
 \|B_{n,m}(a_1,\ldots, a_{m})[b_1,\ldots,b_n,\varphi\overline\omega]\|_{L_2((-\pi,\pi))}&\leq C\|\varphi\overline\omega\|_{L_2(\R)}\prod_{i=1}^{n} \|b_i'\|_{\infty}\leq C\|\overline\omega\|_{L_2(\s)}\prod_{i=1}^{n} \|b_i'\|_{\infty}.
\end{align*}
Moreover, it holds that
\begin{align*}
 \Big\|\int_{\pi<|s|<5\pi}\frac{(\varphi\overline\omega)(\,\cdot\,-s)}{s}\cfrac{\prod_{i=1}^{n}\big(\delta_{[\,\cdot\,,s]} b_i /s\big)}{\prod_{i=1}^{m}\big[1+\big(\delta_{[\,\cdot\,,s]}  a_i /s\big)^2\big]}\, ds\Big\|_{\infty}
 &\leq C\|\overline\omega\|_{L_2(\s)}\prod_{i=1}^{n} \|b_i'\|_{\infty}.
\end{align*}
Herewith we established  the  estimate stated  at $(i)$. If $a_1,\ldots, a_{m},\, b_1, \ldots, b_n  $ are $ 2\pi$-periodic, then so is also the function 
$C_{n,m}(a_1,\ldots, a_{m})[b_1,\ldots, b_n, \overline\omega]$, and the local Lipschitz continuity property of $C_{n,m}$ follows directly from the estimate.\medskip

In order to prove $(ii)$ we start by noticing that for $h\in {\rm C}^\infty(\s)$ it holds that 
\begin{align*}
\frac{\p}{\p s}\Big(\frac{\delta_{[x,s]}h}{s}\Big)=\frac{h'(x-s)}{s}-\frac{\delta_{[x,s]}h}{s^2}=-\frac{\delta_{[x,s]}h-sh'(x-s)}{s^2}\qquad \text{for $x\in\R$, $s\neq 0$.}
\end{align*}
Using this relation we get
\begin{align*}
  C_{n,m}(a_1,\ldots, a_{m})[b_1,\ldots, b_n,\overline\omega](x)&=\PV\int_{-\pi}^\pi\frac{\delta_{[x,s]} b_1}{s^2}\cfrac{\prod_{i=2}^{n}\big(\delta_{[x,s]} b_i /s\big)}{\prod_{i=1}^{m}\big[1+\big(\delta_{[x,s]}  a_i /s\big)^2\big]} \overline\omega(x-s)\, ds\\[1ex]
  &=C_{n-1,m}(a_1,\ldots, a_{m})[b_2,\ldots, b_n,b_1'\overline\omega](x)\\[1ex]
  &\hspace{0.424cm}-\PV\int_{-\pi}^\pi\frac{\p}{\p s}\Big(\frac{\delta_{[x,s]}b_1}{s}\Big)
  \cfrac{\prod_{i=2}^{n}\big(\delta_{[x,s]} b_i /s\big)}{\prod_{i=1}^{m}\big[1+\big(\delta_{[x,s]}  a_i /s\big)^2\big]} \overline\omega(x-s)\, ds,
\end{align*}
and the estimate established at $(i)$ yields
\begin{align}\label{KE1}
\|C_{n-1,m}(a_1,\ldots, a_{m})[b_2,\ldots, b_n,b_1'\overline\omega]\|_{L_2(\s)}\leq C \|\overline\omega\|_{H^\tau(\s)}\|b_1\|_{H^1(\s)}\prod_{i=2}^n\|b_i\|_{H^r(\s)}.
\end{align}
We are left  with the singular integral term 
\begin{align}
  &\hspace{-1cm} \PV\int_{-\pi}^\pi\frac{\p}{\p s}\Big(\frac{\delta_{[x,s]}b_1}{s}\nonumber\Big)
  \cfrac{\prod_{i=2}^{n}\big(\delta_{[x,s]} b_i /s\big)}{\prod_{i=1}^{m}\big[1+\big(\delta_{[x,s]}  a_i /s\big)^2\big]} \overline\omega(x-s)\, ds\\[1ex]
  &=(1-(-1)^n)\cfrac{\prod_{i=1}^{n}\big(\delta_{[x,\pi]} b_i /\pi\big)}{\prod_{i=1}^{m}\big[1+\big(\delta_{[x,\pi]}  a_i /\pi\big)^2\big]} \overline\omega(x-\pi)\nonumber\\[1ex]
  &\hspace{0.424cm}+(b_1C_{n-1,m}(a_1,\ldots, a_{m})[b_2,\ldots, b_n, \overline\omega'](x)-C_{n-1,m}(a_1,\ldots, a_{m})[b_2,\ldots, b_n, b_1\overline\omega'])(x)\nonumber\\[1ex]
  &\hspace{0.424cm}+\sum_{j=2}^n\int_{-\pi}^\pi  K_{1,j}(x,s)  \overline\omega(x-s)\, ds-2\sum_{j=1}^m\int_{-\pi}^\pi K_{2,j}(x,s) \   \overline\omega(x-s)\, ds,\label{BGG}
\end{align}
where
\begin{align*}
 K_{1,j}(x,s)&:= \cfrac{\prod_{i=1, i\neq j }^{n}\big(\delta_{[x,s]} b_i /s\big)}{\prod_{i=1}^{m}\big[1+\big(\delta_{[x,s]}  a_i /s\big)^2\big]}\frac{\delta_{[x,s]}b_j-sb_j'(x-s)}{s^2},\\[1ex]
 K_{2,j}(x,s)&:= \cfrac{\prod_{i=1 }^{n}\big(\delta_{[x,s]} b_i /s\big)}{\prod_{i=1}^{m}\big[1+\big(\delta_{[x,s]}  a_i /s\big)^2\big]}
 \cfrac{\delta_{[x,s]}a_j/s}{1+\big(\delta_{[x,s]}  a_j /s\big)^2} \frac{\delta_{[x,s]}a_j-sa_j'(x-s)}{s^2} 
\end{align*}
  for $x\in\R$ and $s\neq0$. 
  The relation \eqref{BGG} is obtained by using integration by parts.
  We next estimate the terms on the right hand side of \eqref{BGG} separately.
  Firstly, it is easy to see that 
   \begin{align}\label{KE2}
\Big\|\cfrac{\prod_{i=1}^{n}\big(\delta_{[\,\cdot\,,\pi]} b_i /\pi\big)}{\prod_{i=1}^{m}\big[1+\big(\delta_{[\,\cdot\,,\pi]}  a_i /\pi\big)^2\big]}
 \overline\omega(\,\cdot\,-\pi)\Big\|_{L_2(\s)}\leq C \|\overline\omega\|_{L_2(\s)}\|b_1\|_{\infty}\prod_{i=2}^n\|b_i\|_{H^r(\s)}.
\end{align}
Secondly, concerning the last two terms in \eqref{BGG}, we may adapt the arguments from the nonperiodic case \cite[Lemma 3.2]{M17x}, to arrive at  
\begin{equation}\label{KE3}
\begin{aligned}
&\Big(\int_{-\pi}^\pi\Big|\int_{-\pi}^\pi  K_{1,j}(x,s)  \overline\omega(x-s)\, ds\Big|^2dx\Big)^{1/2}\leq C\|\overline\omega\|_\infty\|b_1\|_{H^\tau(\s)}\prod_{i=2}^n\|b_i\|_{H^r(\s)},\quad 2\leq j\leq n,\\[1ex]
&\Big(\int_{-\pi}^\pi\Big|\int_{-\pi}^\pi  K_{2,j}(x,s)  \overline\omega(x-s)\, ds\Big|^2dx\Big)^{1/2}\leq C\|\overline\omega\|_\infty\|b_1\|_{H^\tau(\s)}\prod_{i=2}^n\|b_i\|_{H^r(\s)},\quad 1\leq j\leq m.
\end{aligned}
\end{equation} 
Indeed, since $H^\tau(\s)\hookrightarrow {\rm C}^{\tau-1/2}(\s)$, we obtain after appealing to Minkowski's inequality that\footnote{Recall that $\tau_s $ stands for the right translation.
 Moreover, $\wh h(k)$, $k\in\Z$, is the $k$-th Fourier coefficient of   $h\in L_1(\s)$.} 
\begin{align*}
 &\hspace{-1cm}\Big(\int_{-\pi}^\pi\Big|\int_{-\pi}^\pi  K_{1,j}(x,s)  \overline\omega(x-s)\, ds\Big|^2dx\Big)^{1/2}\leq  \int_{-\pi}^\pi \Big(\int_{-\pi}^\pi  |K_{1,j}(x,s)  \overline\omega(x-s)|^2\, dx\Big)^{1/2}ds\\[1ex]
& \leq C\|\overline\omega\|_\infty\|b_1\|_{H^\tau(\s)}\Big(\prod_{i=2,i\neq j}^n\|b_i'\|_{\infty}\Big) \int_{-\pi}^\pi s^{\tau-7/2} \Big(\int_{-\pi}^\pi  |b_j-\tau_s b_j-s\tau_s b_j'|^2(x)\, dx\Big)^{1/2}ds,
\end{align*}
where, taking into account that $|e^{ix}-1-ix|\leq 2|x|^r$ for all $x\in\R,$ we have
\begin{align*}
 \int_{-\pi}^\pi  |b_j-\tau_s b_j-s\tau_s b_j'|^2(x)\, dx&=\sum_{k\in\Z}|\wh b_j(k)|^2|e^{iks}-1-iks|^2\leq C|s|^{2r}\sum_{k\in\Z}|\wh b_j(k)|^2(1+k^2)^r\\[1ex]
 &= C|s|^{2r}\|b_j\|_{H^r(\s)}^2.
\end{align*}
Since $r+\tau-7/2>-1$, the estimate $\eqref{KE3}_1$  follows immediately (similarly for $\eqref{KE3}_2.$)

Thirdly, for the remaining  term 
$$T:=b_1C_{n-1,m}(a_1,\ldots, a_{m})[b_2,\ldots, b_n, \overline\omega']-C_{n-1,m}(a_1,\ldots, a_{m})[b_2,\ldots, b_n, b_1\overline\omega']$$ in \eqref{BGG} we obtain, in virtue of  $(i)$, that 
\begin{align}
\|T\|_{L_2(\s)}\leq C\|\overline\omega\|_{H^1(\s)}\|b_1\|_{\infty}\prod_{i=2}^n\|b_i\|_{H^r(\s)},\label{KE4}
\end{align}
and \eqref{REF2} follows from \eqref{KE2}, \eqref{KE3}, and \eqref{KE4}.

In order to derive \eqref{REF1}, we use the identity $\p(\delta_{[x,s]}\overline\omega)/\p s=\overline\omega'(x-s)$ and integration by parts to recast $T$ as
\begin{align}
T(x)&=(1-(-1)^n)\cfrac{(\delta_{[x,\pi]}\overline\omega  )\prod_{i=1}^{n}\big(\delta_{[x,\pi]} b_i /\pi\big)}{\prod_{i=1}^{m}\big[1+\big(\delta_{[x,\pi]}  a_i /\pi\big)^2\big]} 
+\sum_{j=1}^n\int_{-\pi}^\pi  K_{1,j}(x,s)  \delta_{[x,s]}\overline\omega\, ds\nonumber\\[1ex]
&\hspace{0.424cm}-2\sum_{j=1}^m\int_{-\pi}^\pi K_{2,j}(x,s) \delta_{[x,s]}\overline\omega\, ds,\label{FTT}
\end{align}
with
  \begin{align}\label{KE5}
\Big\|\cfrac{(\delta_{[\,\cdot\,,\pi]}\overline\omega  )\prod_{i=1}^{n}\big(\delta_{[\,\cdot\,,\pi]} b_i /\pi\big)}{\prod_{i=1}^{m}\big[1+\big(\delta_{[\,\cdot\,,\pi]}  a_i /\pi\big)^2\big]}
\Big\|_{L_2(\s)}\leq C \|\overline\omega\|_{L_2(\s)}\|b_1\|_{\infty}\prod_{i=2}^n\|b_i\|_{H^r(\s)}.
\end{align}
Concerning the integral terms in the last sum in \eqref{FTT}, the embedding $H^{1}(\s)\hookrightarrow {\rm C}^{r-3/2}(\s)$ together with Minkowski's inequality yields
\begin{align}
 &\hspace{-1cm}\Big(\int_{-\pi}^\pi\Big|\int_{-\pi}^\pi  K_{2,j}(x,s) \delta_{[x,s]}\overline\omega\, ds\Big|^2dx\Big)^{1/2}\leq  \int_{-\pi}^\pi \Big(\int_{-\pi}^\pi  |K_{2,j}(x,s)  \delta_{[x,s]}\overline\omega|^2\, dx\Big)^{1/2}ds\nonumber\\[1ex]
& \leq C \|b_1\|_{H^{1}(\s)}\Big(\prod_{i=2,i\neq j}^n\|b_i'\|_{\infty} \Big)\int_{-\pi}^\pi s^{r-7/2} \Big(\int_{-\pi}^\pi  |\overline\omega-\tau_s \overline\omega|^2(x)\, dx\Big)^{1/2}ds\nonumber\\[1ex]
 &=C \|b_1\|_{H^{1}(\s)}\Big(\prod_{i=2,i\neq j}^n\|b_i'\|_{\infty}\Big) \int_{-\pi}^\pi s^{r-7/2} \Big(\sum_{k\in\Z}|\wh{\overline\omega}(k)|^2|e^{iks}-1|^2\Big)^{1/2}ds\nonumber
 \end{align}
\begin{align}
  &\leq C \|b_1\|_{H^{1}(\s)}\Big(\prod_{i=2,i\neq j}^n\|b_i'\|_{\infty} \Big)\int_{-\pi}^\pi s^{r+\tau -7/2} \Big(\sum_{k\in\Z}|\wh{\overline\omega}(k)|^2(1+k^2)^\tau\Big)^{1/2}ds\nonumber\\[1ex]
 &=C\|\overline\omega\|_{H^\tau(\s)}\|b_1\|_{H^1(\s)}\prod_{i=2}^n\|b_i\|_{H^r(\s)},\qquad 1\leq j\leq m,\label{KE6}
\end{align}
where we have used the relation $|e^{ix}-1|\leq C|x|^{\tau}$, $x\in\R,$ when deriving the fourth line. 

Similarly, we find for $2\leq j\leq n$ that 
\begin{align}
\Big(\int_{-\pi}^\pi\Big|\int_{-\pi}^\pi  K_{1,j}(x,s) \delta_{[x,s]}\overline\omega\, ds\Big|^2dx\Big)^{1/2}\leq C\|\overline\omega\|_{H^\tau(\s)}\|b_1\|_{H^1(\s)}\prod_{i=2}^n\|b_i\|_{H^r(\s)}.\label{KE7}
\end{align}
In the special case when $j=1$, we  use the procedure which led to \eqref{KE6} together with   $(i)$ to conclude that 
 \begin{align}
\Big(\int_{-\pi}^\pi\Big|\int_{-\pi}^\pi  K_{1,1}(x,s) \delta_{[x,s]}\overline\omega\, ds\Big|^2dx\Big)^{1/2}&\leq C\|\overline\omega\|_{H^\tau(\s)}\|b_1\|_{H^1(\s)}\prod_{i=2}^n\|b_i\|_{H^r(\s)}\nonumber\\[1ex]
&\hspace{0.424cm}+\|\overline\omega C_{n-1,m}(a_1,\ldots,a_m)[b_2,\ldots, b_n,b_1']\|_2\nonumber\\[1ex]
&\hspace{0.424cm}+\| C_{n-1,m}(a_1,\ldots,a_m)[b_2,\ldots, b_n,\overline\omega b_1']\|_2\nonumber\\[1ex]
&\leq C\|\overline\omega\|_{H^\tau(\s)}\|b_1\|_{H^1(\s)}\prod_{i=2}^n\|b_i\|_{H^r(\s)}.\label{KE8}
\end{align}
The property \eqref{REF1} follows now from \eqref{KE1}, \eqref{KE2}, \eqref{KE3}, and \eqref{KE5}-\eqref{KE8}.
The extension property left at $(ii1)$ follows from \eqref{REF1}. 
The claim $(ii2)$ is a straight forward consequence of \eqref{REF1}.\bigskip

With respect to $(iii)$ we decompose
\[
C_{n,m}(a_1,\ldots,a_m)[b_1,\ldots, b_n,\overline\omega]=\overline\omega A-B,
\]
with
\[
A(x):=\PV\int_{-\pi}^\pi\frac{1}{s}\frac{\prod_{i=1}^{n}\big(\delta_{[x,s]} b_i /s\big)}{\prod_{i=1}^{m}\big[1+\big(\delta_{[x,s]}  a_i /s\big)^2\big]}\, ds\quad\text{and}\quad
B(x):=\int_{-\pi}^\pi\frac{(\delta_{[x,s]}\overline\omega/s)\prod_{i=1}^{n}\big(\delta_{[x,s]} b_i /s\big)}{\prod_{i=1}^{m}\big[1+\big(\delta_{[x,s]}  a_i /s\big)^2\big]}\, ds.
\]
Since $\tau>1/2$ and $H^{\tau}(\s)\hookrightarrow {\rm C}^{\tau-1/2}(\s)$, it holds
\begin{align}
 \|B\|_\infty \leq C\|\overline\omega\|_{H^\tau(\s)}\prod_{i=1}^n\|b_i\|_{H^r(\s)}\int_{-\pi}^\pi |s|^{\tau-3/2}\, ds\leq C\|\overline\omega\|_{H^\tau(\s)}\prod_{i=1}^n\|b_i\|_{H^r(\s)},\label{BUB1}
\end{align}
and we are left with the function $A$. Taking advantage of the embedding $H^r(\s)\hookrightarrow {\rm C}^{r-1/2}(\s),$ the arguments  in the proof of \cite[Lemma 3.1]{M16x} show that indeed
\begin{align}
 \|A\|_\infty \leq C \prod_{i=1}^n\|b_i\|_{H^r(\s)}.\label{BUB2}
\end{align}
The estimates \eqref{BUB1}-\eqref{BUB2} lead us to the estimate \eqref{REF3}.
The last two claims follow directly from \eqref{REF3} and the proof is complete.
\end{proof}

\bibliographystyle{siam}
\bibliography{B55}
\end{document}